  \newcommand{\C}{\ensuremath{\mathbb{C}}}%
  \newcommand{\R}{\ensuremath{\mathbb{R}}}%
  \newcommand{\N}{\ensuremath{\mathbb{N}}}%
  \newcommand{\Z}{\ensuremath{\mathbb{Z}}}%
  \newcommand{\Q}{\ensuremath{\mathbb{Q}}}
  \newcommand{\sphere}{\ensuremath{\mathbb{S}^2}}%
\newcommand\conv{\ensuremath{\star}}
\newcommand\chap{\ensuremath{\check}}
\newcommand\CASch[2]{\ensuremath{\Lambda^{Schur}_{#1 cb}(#2)}}
\newcommand\brisure{break}
\newcommand\brisures{breaks}
\newcommand\SCHURCBAP[1]{\ensuremath{\mathrm{AP}^{Schur}_{#1 cb}}}
\def\eps{\varepsilon}
\def\R{{\mathbb R}}
\def\Z{{\mathbb Z}}
\def\N{{\mathbb N}}
\def\C{{\mathbb C}}
\def\lam{\lambda}
\def\Q{{\mathbb Q}}
\def\OO{\mathcal{O}}
\def\F{\mathbb{F}}
\def\p{\pi}
\newtheorem{thmA}{Theorem}
\newtheorem{thm}{Theorem}[section]
\newtheorem{lemma}[thm]{Lemma}
\newtheorem{corollary}[thm]{Corollary}
\newtheorem{prop}[thm]{Proposition}
\newtheorem{question}[thm]{Question}
\newtheorem{conjecture}[thm]{Conjecture}
\theoremstyle{remark}
\newtheorem{rem}[thm]{Remark}
\theoremstyle{definition}
\newtheorem{dfn}[thm]{Definition}
\newtheorem{notation}[thm]{Notation}
\begin{document}
\author{Vincent Lafforgue and Mikael de la Salle\footnote{The research of
    the second author was partially supported by ANR-06-BLAN-0015.}  }
\title{Non commutative $L^p$ spaces without the completely bounded
  approximation property}

\maketitle

\begin{abstract}
  For any $1 \leq p \leq \infty$ different from $2$, we give examples of
  non-commutative $L^p$ spaces without the completely bounded approximation
  property. Let $F$ be a non-archimedian local field. If $p>4$ or $p<4/3$
  and $r\geq 3$ these examples are the non-commutative $L^p$-spaces of the
  von Neumann algebra of lattices in $SL_r(F)$ or in $SL_r(\R)$. For other
  values of $p$ the examples are the non-commutative $L^p$-spaces of the
  von Neumann algebra of lattices in $SL_r(F)$ for $r$ large enough
  depending on $p$.
  
  We also prove that if $r \geq 3$ lattices in $SL_r(F)$ or $SL_r(\R)$ do
  not have the Approximation Property of Haagerup and Kraus. This provides
  examples of exact $C^*$-algebras without the operator space approximation
  property.
\end{abstract}

\section*{Introduction}

There are various notions of finite-dimensional approximation properties
for $C^*$-algebras and more generally operator algebras. Among others, we
can cite nuclearity, completely bounded approximation property (CBAP),
operator space approximation property (OAP), exactness... Although some of
these notions will be defined precisely in this paper, the reader is
refered to \cite{MR2391387} for an exposition of these concepts.

For the reduced $C^*$-algebra of a discrete group, most of these
approximation properties have equivalent reformulations in term of the
group~: the nuclearity of $C^*_{\mathrm{red}}(G)$ is equivalent to the amenability
of $G$. Haagerup proved in \cite{haagerup1} that the CBAP for
$C^*_{\mathrm{red}}(G)$ is equivalent to the weak amenability of $G$, and Haagerup
and Kraus \cite{MR1220905} proved that the OAP of $C^*_{\mathrm{red}}(G)$ is
equivalent to Haagerup's and Kraus' approximation property (AP) of $G$. For
equivalent formulation of exactness for a group, see \cite{MR2391387},
Chapter 5. For a discrete group, the following implications are known:
\begin{equation}\label{eq=implications_approximation} \textrm{amenability}
  \Longrightarrow \textrm{weak amenability} \Longrightarrow AP
  \Longrightarrow \textrm{exactness}. 
\end{equation}

It is also known that the first two implications are not
equivalences~: for the first one, it was proved in \cite{haagerup1}
that non-abelian free groups are weakly amenable, whereas they are not
amenable. For the second implication, a counter-example is given by
$SL_2(\Z) \ltimes \Z^2$: since AP is stable by semi-direct product
(\cite{MR1220905}), this group has the AP. But it was proved in
\cite{haagerup2} that it does not have the CBAP. In fact Haagerup
proved in \cite{haagerup2} that the reduced $C^*$-algebra of any
lattice in a locally compact simple lie group of real rank $\geq 2$
with finite center does not have the CBAP. To the knowledge of the
authors, before the present work there were no counter-example for the
implication ``$\textrm{exactness} \Longrightarrow OAP$''. But it was
conjectured by Haagerup and Kraus (\cite{MR1220905}) that the (exact)
group $SL_3(\Z)$ fails AP. We prove this conjecture (Theorem
\ref{thm=main_theorem_AP}).

Let us recall some definitions: an operator space $E$ is said to have the
completely bounded approximation property (abreviated by CBAP) if there
exists a net of finite rank linear maps $T_\alpha:E \to E$, such that
$\|T_\alpha x-x\|\to 0$ for any $x \in E$ and such that $\sup_\alpha
\|T_\alpha\|_{cb}<\infty$. The infimum over all such $T_\alpha$ of
$\sup \|T_\alpha\|_{cb}$ is the CBAP constant of $E$ and is denoted by
$\Lambda(E)$. This is the natural analogue for operator spaces of
Grothendieck's bounded approximation property (for Banach spaces). The
analogue of the metric approximation property is the completely contractive
approximation property (CCAP), and corresponds to the case when the maps
$T_\alpha$ can be taken as complete contractions. The approximation
property has also an analogue: $E$ is said to have the operator space
approximation property (OAP) if there exists a net of finite rank linear
maps $T_\alpha:E \to E$ such that for all $x \in \mathcal K(\ell^2)
\otimes_{\mathrm{min}} E$, $\|id \otimes T_\alpha(x)-x\| \to 0$. The CBAP is
stronger than OAP. As explained above these notions are of particular
interest when $E$ is an operator algebra. They are also interesting for
non-commutative $L^p$-spaces (which have a natural operator space
structure, see \cite{MR2006539}, and subsection
\ref{sec=cb_sur_LpNC}). This has been studied in \cite{MR1971296}, where
the authors discovered some nice phenomena, as a consequence of the
unpublished work from \cite{junge}~: for $1<p<\infty$, under the assumption
that the underlying von Neumann algebra is QWEP (see Remark
\ref{rem=mauvaise_extension_a_Lp}), the OAP, the CBAP and the CCAP are
equivalent properties for a non-commutative $L^p$-space.

In this paper we give examples of non-commutative $L^p$ spaces that
fail CBAP (and hence OAP by \cite{MR1971296}) for any $p \neq 2$. To
our knowledge, the only results in this direction for non-commutative
$L^p$ spaces ($p\neq 1,\infty$) were consequences of Szankowski's work
\cite{MR788259}~: he indeed proved that for $p>80$ (or $p<80/79$),
$S^p$ does not have the uniform approximation property. By an
ultrapower argument this implies the existence of non-commutative
$L^p$-spaces without the BAP (and hence without CBAP) for $p>80$ or
$p<80/79$, see Theorem 2.19 in \cite{MR2078339}.  Here we get concrete
examples for any $p\neq 2$. They are non-commutative $L^p$-spaces
associated to discrete groups, more precisely lattices in $SL_r(F)$
for $F$ a non-archimedian local field (the typical example is to take
$F$ as the field of $q$-adic numbers $\Q_q$ for some prime number $q$)
and $r$ depending on $p$, or in $SL_r(\R)$ with $r \ge3$ if $p>4$ or
$p<4/3$. More precisely, we prove the following (in the theorem below
and in the rest of the paper by a lattice in a locally compact group
$G$ we mean a discrete subgroup with finite covolume)~:

\begin{thmA}\label{thm=main_theorem}
  Let $F$ be a non-archimedian local field, $r \in \N$ with $r\geq 3$, and
  $\Gamma$ be a lattice in $SL_{r}(F)$.


If $1\leq p<\infty$ and $n \in \N^*$ are such that $r \geq 2n+1$ and
  $1 \leq p<2-2/(n+2)$ or $2+\frac{2}{n}<p < \infty$, then the
  non-commutative $L^{p}$ space of the von Neumann algebra of $\Gamma$ does
  not have the OAP or CBAP.
\end{thmA}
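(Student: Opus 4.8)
The plan is to transfer the failure of CBAP for $L^p(\mathcal L\Gamma)$ into a purely geometric statement about $G=SL_r(F)$ and then to defeat it with a harmonic-analytic lower bound on the Bruhat--Tits building. Since $\Gamma$ is linear, hence residually finite and sofic, $\mathcal L\Gamma$ is QWEP, so for $1<p<\infty$ the result of \cite{MR1971296} quoted above identifies OAP with CBAP and it is enough to rule out CBAP; the endpoint $p=1$ amounts to weak amenability of $\Gamma$, which fails by the theorem of Haagerup recalled in the introduction. Moreover the two ranges are conjugate under $p\mapsto p'$ (a direct computation shows $2+2/n$ and $2-2/(n+2)$ are conjugate exponents), and since a map is completely bounded on $L^p$ iff its adjoint is completely bounded on $L^{p'}$, I may assume throughout that $p>2+2/n$. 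Arguing by contradiction, suppose $L^p(\mathcal L\Gamma)$ has CBAP with constant $\Lambda$. A Herz--Schur transference averages the approximating completely bounded maps into finitely supported Fourier multipliers $m_\alpha\to 1$ on $\Gamma$, the $S^p$-cb Schur multiplier norm of the kernel $(s,t)\mapsto m_\alpha(st^{-1})$ remaining $\le\Lambda$. In the language set up in this paper this says exactly that $\Gamma\in\SCHURCBAP{p}$ with $\CASch{p}{\Gamma}\le\Lambda$.

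The second step promotes this from the lattice to the ambient group. Because $\Gamma\subset G$ has finite covolume, restriction of a $K$-bi-invariant function on $G$ to $\Gamma$ does not increase its $S^p$-cb Schur norm, while conversely the finitely supported multipliers on $\Gamma$ can be induced and then averaged over $K\times K$ (using compactness of the maximal compact subgroup $K$) to produce $K$-bi-invariant functions $\phi_\alpha$ on $G$, i.e. functions on the double coset space $K\backslash G/K$, with $\phi_\alpha\to 1$ locally and $\sup_\alpha\|\phi_\alpha\|_{S^p\mathrm{-cb}}<\infty$. One records this as $\CASch{p}{G}\le C\,\CASch{p}{\Gamma}<\infty$. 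Thus it suffices to show that $G=SL_r(F)$ fails $\SCHURCBAP{p}$ in the stated range: there is no net of compactly supported radial functions $\phi_\alpha\to 1$ with uniformly bounded $S^p$-cb Schur multiplier norm.

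The core of the proof, and the main obstacle, is the lower bound that makes this last reduction impossible. I identify $K\backslash G/K$ with the combinatorial distances (dominant coweights) in the building $\mathcal B$ of $SL_r(F)$, so that a radial function becomes a profile along a geodesic of an apartment. The key estimate is that the $S^p$-cb Schur multiplier norm of such a radial function controls a Besov-type norm of its profile, with a critical smoothness exponent that increases with the number of independent tree directions one can fit inside $\mathcal B$. The hypothesis $r\ge 2n+1$ is precisely what guarantees room for an $n$-fold such configuration, on which the restriction behaves like an $n$-parameter product of trees (a piece of $\mathbb Z^n$ in an apartment), and a scaling and complex-interpolation computation between the trivial Hilbertian case $p=2$ and the endpoint estimate produces exactly the threshold $p>2+2/n$ (dually $p<2-2/(n+2)$) beyond which the relevant Besov exponent lands in the regime forcing blow-up. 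In that regime a compactly supported profile that tends to $1$ at the origin cannot have bounded Besov norm, so $\sup_\alpha\|\phi_\alpha\|_{S^p\mathrm{-cb}}=\infty$, contradicting the second step.

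The \emph{hard part} is this final lower bound: first establishing the Besov-space description of radial $S^p$ Schur multipliers on a regular tree (equivalently on $SL_2$ over $F$) with the sharp critical index, and then propagating it to the higher-rank building through the $n$ independent directions, keeping all constants uniform in $\alpha$. By contrast the transference of the first two paragraphs is comparatively routine, resting on standard Herz--Schur arguments and averaging over a compact group. Pinning down the exponents $2+2/n$ and $2-2/(n+2)$, and showing they are the true thresholds, is where the combinatorics of the building together with interpolation genuinely enter.
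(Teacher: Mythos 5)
Your overall architecture coincides with the paper's: the theorem follows once one knows (a) that OAP or CBAP of $L^p(\tau_\Gamma)$ forces $\CASch{p}{\Gamma}<\infty$, (b) that $\CASch{p}{\Gamma}=\CASch{p}{SL_r(F)}$, and (c) that $\CASch{p}{SL_r(F)}=\infty$ in the stated range. Your steps (a) and (b) are, in substance, Proposition~\ref{thm=OAP_implique_MultSchur}/Corollary~\ref{coro=OAP_Lp_implieque_notrePte} and Theorem~\ref{thm=characterzation} (with the variant that you dispose of OAP via QWEP of the group von Neumann algebra and the equivalence of \cite{MR1971296}, where the paper instead argues elementarily through Theorem~\ref{thm=approxiimplCB} precisely to avoid the unpublished \cite{junge}). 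The genuine gap is step (c), which is the actual content of the theorem (Theorem~\ref{thm=main_thm_continu}, proved in Section~\ref{sect=Contre-exemple}): you do not prove it. You offer a strategy --- ``the $S^p$-cb norm of a radial function controls a Besov-type norm of its profile'', ``an $n$-parameter product of trees'', ``complex interpolation between $p=2$ and an endpoint'' --- with no precise statement and no proof of the key estimate, and you yourself flag it as the hard part. Moreover the sketch does not describe how the threshold actually arises. In the paper, the exponent $\eps=n(\tfrac12-\tfrac1p)-\tfrac1p>0$ comes not from interpolation or trees but from an explicit family of operators $T_m$ on $\ell^2\bigl((\OO/\pi^m\OO)^{n+1}\bigr)$ encoding the relation $y=\sum_i a_ix_i+b$ (Lemma~\ref{lem-Tkk-1}): nondegenerate characters of $\OO/\pi^m\OO$ give unitary Fourier matrices, hence exact singular values and the bound $\|T_m-T_{m-1}\|_{S^p}\le 2q^{-\eps m}$, while the constant eigenvector gives $\|uT_m-vT_{m-1}\|_{S^p}\ge|u-v|$. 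These configurations are then realized inside $(n+2)\times(n+2)$ blocks of the Borel subgroup --- this, together with the symmetrization $\theta$, is where $r\ge 2n+1$ enters, not through fitting a copy of $\Z^n$ --- and Lemmas~\ref{lemTrenf} and~\ref{lemTrenf-sauts} telescope the resulting oscillation bounds along polygons with all \brisures{} equal to $2m$ to force $|f(D(\lam^m))|\le Cq^{-2\eps m}\|\chap f\|_{MS^p(L^2(G))}$, contradicting $f\approx 1$. Without this estimate, or a genuine substitute with proofs, your argument establishes nothing.

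Two secondary problems. The endpoint $p=1$, which the statement includes, is not covered by your reduction: the OAP/CBAP equivalence of \cite{MR1971296} is for $1<p<\infty$, the identification of CBAP of the predual with weak amenability (and of its OAP with AP) is itself something to prove, and the failure of weak amenability of lattices in $SL_r(F)$ for non-archimedean $F$ is not the theorem of Haagerup recalled in the introduction (which concerns real Lie groups of real rank $\ge 2$); in the paper it is again a consequence of the same core estimate, applied at $p=\infty$. Finally, note that your QWEP shortcut, while plausible (lattices here are cocompact, hence finitely generated linear, hence residually finite), rests on the unpublished \cite{junge}; one of the points of Section~\ref{sect=lien_OAP} is that this can be avoided entirely.
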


This theorem is proved at the end of section
\ref{sect=Contre-exemple}. Taking a direct sum of such discrete
groups, we even get a group such that the corresponding
non-commutative $L^p$ spaces do not have the CBAP for any $p \neq
2$. In the real case, we prove the following at the end of section
\ref{sect=Contre-exemple_R}~:

\begin{thmA}\label{thm=main_theorem_R}
Let $r \in \N$ with $r\geq 3$, and $\Gamma$ be a lattice in $SL_{r}(F)$
(for example $\Gamma = SL_3(\Z)$). Let $1\leq p < \infty$ with $p>4$ or
$p<4/3$.

The non-commutative $L^{p}$ space of the von
Neumann algebra of $\Gamma$ does not have the OAP or CBAP.
\end{thmA}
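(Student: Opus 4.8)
The plan is to run, for the real group $G=SL_r(\R)$, the scheme already used for Theorem \ref{thm=main_theorem}, replacing the Bruhat--Tits building by the Riemannian symmetric space of $G$ and its boundary, and to localize the whole argument to the subgroup $SL_3(\R)\subset SL_r(\R)$, which is available since $r\geq 3$. There the relevant boundary piece is the $2$-sphere $\sphere=SO(3)/SO(2)$, the space of rays in $\R^3$ on which $SO(3)$ acts transitively. The exponents $p>4$ and $p<4/3$ are exactly the case $n=1$ of Theorem \ref{thm=main_theorem}, since $2+\tfrac{2}{1}=4$ and $2-\tfrac{2}{1+2}=\tfrac{4}{3}$, and this is the only case the spherical analysis on $\sphere$ provides for the real groups (which is why, unlike in the non-archimedean case, the range does not improve as $r$ grows).

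First, since the OAP is weaker than the CBAP, it suffices to show that the non-commutative $L^p$-space $L^p(\mathcal{L}(\Gamma))$ fails the OAP. Here I would invoke the reduction established for Theorem \ref{thm=main_theorem}: if this $L^p$-space had the OAP, then the lattice $\Gamma$ would admit an approximation of the identity by finitely supported Herz--Schur multipliers whose $(p,cb)$-Schur multiplier norms are uniformly bounded, that is, $\Gamma$ would have a finite $(p,cb)$ approximation constant. Because $\Gamma$ has finite covolume in $G$, a transference argument (relating the approximation constant of a lattice to that of the ambient group, as for the Cowling--Haagerup constant) then forces $G=SL_r(\R)$ to have a finite $(p,cb)$ approximation constant as well: its identity would be approximable, in the $(p,cb)$-Schur sense, by compactly supported $K$-bi-invariant (radial) multipliers with uniformly bounded norm. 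It therefore suffices to contradict this for $G$.

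To produce the contradiction I would exhibit an explicit family of radial multipliers on $G$, built from the matrix coefficients (spherical functions, Poisson-type kernels) attached to the principal series of $SL_3(\R)$, indexed by a spectral parameter. Any approximation of the identity by compactly supported radial multipliers would force the $(p,cb)$-Schur norms of these test multipliers to converge to $1$. I would show instead that these norms are bounded below by a constant $c>1$ as the parameter tends to infinity: the key point is that each such norm dominates the completely bounded $L^p$-Schur norm of a fixed kernel of the form $k(x,y)=f(\scal{x}{y})$ on $\sphere\times\sphere$, and this operator fails to be completely bounded on the Schatten class $S^p(\sphere)$, uniformly in the parameter, precisely when $p>4$ or $p<4/3$. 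The dichotomy at $4$ and $4/3$ is dictated by the dimension $2$ of $\sphere$, through the $L^p\to L^p$ mapping threshold of the associated singular kernel together with a duality between $p$ and its conjugate exponent.

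The main obstacle is exactly this last estimate: one must produce a lower bound, uniform in the spectral parameter, for the \emph{completely bounded} (operator-space) $L^p$-Schur norm of the spherical kernels on $\sphere$, not merely for their scalar norms, and pin the blow-up to the exponents $p>4$ and $p<4/3$. This is genuinely analytic---requiring the precise asymptotics of the kernels on $\sphere$, an interpolation argument between $p$ and $p'$, and the passage from bounded to completely bounded maps---and it is the step that admits no shortcut from the combinatorial computation behind Theorem \ref{thm=main_theorem} on the building. The remaining ingredients, namely the reduction from OAP of the $L^p$-space to the $(p,cb)$-Schur approximation property of $\Gamma$ and the transference from $\Gamma$ to $G$, are identical to those used for Theorem \ref{thm=main_theorem}; they require only that $r\geq 3$, so that an $SL_3(\R)$-factor, and hence the sphere $\sphere$, is present.
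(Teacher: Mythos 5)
Your structural reductions are exactly the ones the paper uses, and they are correct: failure of OAP (hence of CBAP) for the non-commutative $L^p$ space follows once $\Gamma$ fails $\SCHURCBAP{p}$ (Corollary \ref{coro=OAP_Lp_implieque_notrePte}), the transference between the lattice and the ambient group is Theorem \ref{thm=characterzation}, and the localization to the closed subgroup $SL_3(\R)\subset SL_r(\R)$ is justified by Theorem \ref{thm=mult_de_symb_continu} (last point of Proposition \ref{thm=basic_propertiesofCASch}). But the entire content of the theorem lies in the step you leave open, namely $\CASch{p}{SL_3(\R)}=\infty$ for $p>4$, and the route you sketch for it would fail as stated, on two counts. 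First, the logic: you propose fixed ``test multipliers'' built from spherical functions and assert that an approximation of the identity ``would force the $(p,cb)$-Schur norms of these test multipliers to converge to $1$.'' Nothing forces that; the norms of fixed auxiliary multipliers are what they are and cannot be constrained by the existence of some other approximating net, so a lower bound $c>1$ on them contradicts nothing. Second, the key estimate you postulate --- that a fixed kernel $f(\scal{x}{y})$ on $\sphere\times\sphere$ ``fails to be completely bounded on $S^p(L^2(\sphere))$ uniformly in the parameter'' precisely for $p>4$ or $p<4/3$ --- is not a well-formed statement and mislocates the role the sphere plays. (Your side worry about passing from bounded to completely bounded is also moot: by Theorems \ref{thm=norm_egal_cbnorm} and \ref{thm=mult_de_symb_continu}, norm and cb-norm coincide for the continuous symbols involved here.)

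What the paper actually proves in section \ref{sect=Contre-exemple_R} uses the sphere differently: the test objects are not multipliers but \emph{operators} $T_\delta\in B(L^2(\sphere))$ averaging over the circles $\{y:\scal{x}{y}=\delta\}$. Legendre polynomial asymptotics (eigenvalues $P_n(\delta)$ with multiplicity $2n+1$, $|P_n|\sim n^{-1/2}$) give $\|T_0-T_\delta\|_{S^p}\leq C_1|\delta|^{1/2-2/p}$ --- the summability of $\sum_n (2n+1)|P_n(0)-P_n(\delta)|^p$ is exactly where the threshold $p>4$ enters --- together with the lower bound $\|aT_0-bT_\delta\|_{S^p}\geq|a-b|$ (Lemma \ref{lem-Tkk-1_R}). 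The maps $\alpha,\beta:\sphere\to G/K$ of \cite{duke} send pairs with $\scal{x}{y}=0$ into $KD(s,t)K$ and pairs with $\scal{x}{y}=\delta$ into $KD(s',t')K$, so a $K$-biinvariant symbol $\chap\varphi$ acts on the embedded copies of $T_0$ and $T_\delta$ by the scalars $\varphi(D(s,t))$ and $\varphi(D(s',t'))$; comparing the two Schatten bounds yields $|\varphi(D(s,t))-\varphi(D(s',t'))|\leq C_1 e^{-(1/2-2/p)t'}\|\chap \varphi\|_{MS^p(L^2(G))}$ (Lemma \ref{lemma=key_estimate_SL3R}), and a telescoping argument along the Weyl chamber then shows every $K$-biinvariant $\varphi\in C_0(G)$ with bounded multiplier norm is \emph{exponentially small} at the points $D(t,t)$ (Lemma \ref{lemma=SL3R_main_estimate}), contradicting uniform convergence to $1$ on compact sets. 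The closest correct version of your ``kernel'' idea is this: the restriction of $\chap\varphi$ to $\alpha(\sphere)\times\beta(\sphere)$ is indeed a kernel of the form $f(\scal{x}{y})$, and the $T_\delta$ estimates show that any such kernel taking values $u$ on $\{\scal{x}{y}=0\}$ and $v$ on $\{\scal{x}{y}=\delta\}$ has $S^p$-multiplier norm at least $|u-v|/\bigl(C_1|\delta|^{1/2-2/p}\bigr)$. In other words, the quantitative constraint falls on the coefficients of the would-be approximating multipliers themselves, not on the norms of auxiliary test multipliers; until an estimate of this kind is actually supplied, your proposal establishes nothing beyond the reductions it shares with the paper.
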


As a consequence of \cite{MR1971296}, the corresponding discrete
groups fail the AP. We also give an elementary proof of this. Since
linear groups are exact (\cite{MR2217050}), this gives examples of
exact groups without the AP.

\begin{thmA}\label{thm=main_theorem_AP}
Let $\Gamma=SL_3(\Z)$ or more generally a lattice in $SL_r(F)$ with
$r\geq 3$ and $F$ denoting either $\R$ or a non-archimedian local
field. $\Gamma$ does not have $AP$; equivalently the reduced
$C^*$-algebra of $\Gamma$ does not have the OAP.
\end{thmA}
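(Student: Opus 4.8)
The plan is to argue by contradiction and to isolate the obstruction on the ambient group. Recall (Haagerup--Kraus) that a locally compact group $G$ has the approximation property precisely when the constant $1$ lies in the closure of the Fourier algebra $A(G)$ for the weak-$*$ topology $\sigma(M_{cb}A(G),Q(G))$, where $M_{cb}A(G)$ is the algebra of completely bounded Fourier multipliers and $Q(G)$ is its canonical predual; the same description holds for the discrete group $\Gamma$. Suppose $\Gamma$ had AP. Since $\Gamma$ is a lattice in $G:=SL_r(F)$, the invariance of AP under passage between a lattice and its ambient group (\cite{MR1220905}) shows that $G$ itself has AP. It therefore suffices to prove that $SL_r(F)$, $r\geq 3$, fails AP. (One also obtains the theorem at once by combining Theorems~\ref{thm=main_theorem} and \ref{thm=main_theorem_R}, which for $r\geq 3$ produce a non-commutative $L^p$-space of $\Gamma$ without OAP when $p>4$, with the implication from \cite{MR1971296} that AP of $\Gamma$ would force OAP of every $L^p(VN(\Gamma))$; what follows is the promised self-contained argument.)

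First I would reduce to radial multipliers. Fix a maximal compact subgroup $K\leq G$ and a ray $(a_t)_{t\ge0}$ tending to infinity in the positive Weyl chamber (in the non-archimedean case, a geodesic ray in the apartment of the Bruhat--Tits building). If $G$ has AP, choose a net $u_\alpha\in A(G)\subset C_0(G)$ with $u_\alpha\to 1$ weak-$*$, and average over $K\times K$: $\widetilde u_\alpha(g)=\int_K\int_K u_\alpha(kgk')\,dk\,dk'$. Each $\widetilde u_\alpha$ is $K$-bi-invariant and lies in $C_0(G)$, satisfies $\|\widetilde u_\alpha\|_{cb}\le\|u_\alpha\|_{cb}$, and, because the averaging map is weak-$*$ continuous and fixes $1$, one still has $\widetilde u_\alpha\to 1$ weak-$*$. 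Thus $G$ would carry a net of radial multipliers, each vanishing at infinity, converging weak-$*$ to $1$.

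The heart of the proof is a rigidity estimate for radial multipliers, which I would establish in the technical sections and invoke here: there is a function $\varepsilon(t)\to 0$ such that every $K$-bi-invariant $\varphi\in M_{cb}A(G)$ satisfies $|\varphi(a_t)-\varphi(a_s)|\le(\varepsilon(t)+\varepsilon(s))\,\|\varphi\|_{cb}$. Let $\psi_t\in Q(G)$ be the normalized Haar average over the double coset $Ka_tK$, so that $\langle\varphi,\psi_t\rangle=\varphi(a_t)$ for radial $\varphi$ and, in general, $\langle\varphi,\psi_t\rangle$ equals the value at $a_t$ of the $K\times K$-average of $\varphi$. The rigidity estimate then reads $\|\psi_t-\psi_s\|_{Q(G)}\le\varepsilon(t)+\varepsilon(s)$, so $(\psi_t)$ is norm-Cauchy in $Q(G)$ and converges to some $L\in Q(G)$ with $L(1)=\lim_t\psi_t(1)=1$. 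On the other hand, for each $\alpha$ we have $\langle\widetilde u_\alpha,L\rangle=\lim_t\widetilde u_\alpha(a_t)=0$ because $\widetilde u_\alpha\in C_0(G)$. Since $L\in Q(G)$, weak-$*$ convergence forces $\langle\widetilde u_\alpha,L\rangle\to\langle 1,L\rangle=1$, contradicting $\langle\widetilde u_\alpha,L\rangle\equiv 0$. Hence $SL_r(F)$, and therefore $\Gamma$, fails AP.

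The main obstacle is precisely the rigidity estimate of the previous paragraph, and it is here that the hypothesis $r\geq 3$, i.e. rank at least two, is indispensable. I would prove it by transporting the radial Fourier multiplier to a Schur multiplier on the vertices of the building (via the Bozejko--Fendler identification of the two completely bounded norms) and analysing its asymptotics along two independent simple-root directions simultaneously; the coupling of these directions, impossible in rank one, is what bounds the variation of $\varphi$ along $(a_t)$ by $\|\varphi\|_{cb}$ alone. This is consistent with the fact that free groups, and hence $SL_2$, do possess AP, so no such rigidity can hold in rank one. The archimedean case $F=\R$ is handled by the same mechanism, with the building replaced by the Riemannian symmetric space and $(a_t)$ a ray in a maximal flat. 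Finally, the last clause of the statement is immediate: by \cite{MR1220905} a discrete group has AP if and only if its reduced $C^*$-algebra has the OAP, so the failure of AP for $\Gamma$ is exactly the failure of OAP for $C^*_{\mathrm{red}}(\Gamma)$.
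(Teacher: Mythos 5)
Your architecture is genuinely different from the paper's. The paper never discusses AP for the continuous group at all: it transfers the failure of $\SCHURCBAP{p}$ (for a \emph{finite} $p>4$) from $SL_r(F)$ down to the lattice via Theorem \ref{thm=characterzation}, and then applies Corollary \ref{coro=AP_implique_notrePte} (AP of a discrete group forces $\CASch{p}{\Gamma}=1$, proved by the interpolation argument of Proposition \ref{prop=APimpliqueAPp} together with Theorem \ref{thm=approxiimplCB}), so that everything happens for discrete groups and finite $p$, where only pointwise evaluations are ever paired against multipliers. You instead push AP \emph{up} to $G=SL_r(F)$ by the Haagerup--Kraus lattice invariance of AP and try to contradict AP on $G$ directly at $p=\infty$. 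Your two external ingredients are legitimate: lattice invariance of AP is indeed in \cite{MR1220905}, and the rigidity estimate you defer is in substance what Sections \ref{sect=Contre-exemple} and \ref{sect=Contre-exemple_R} prove, valid at $p=\infty$, where $\|\chap\varphi\|_{cbMS^\infty(L^2(G))}=\|\varphi\|_{M_0A(G)}$ by Bo{\.z}ejko--Fendler. This route is essentially the one carried out in later work of Haagerup and de Laat.

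The genuine gap is the sentence ``Let $\psi_t\in Q(G)$ be the normalized Haar average over the double coset $Ka_tK$.'' When $F=\R$ (the case containing the headline example $SL_3(\Z)$), $K=SO(r)$ is of Haar measure zero, so $\psi_t$ is a \emph{singular} measure, and singular measures do not in general define elements of $Q(G)$: by definition $Q(G)$ is the norm closure of $L^1(G)$ inside $M_{cb}A(G)^*$, and membership there is precisely the weak-$*$ continuity your argument needs. It can genuinely fail: for $G=\R$ one has $M_{cb}A(\R)=B(\R)\cong M(\widehat{\R})$ and $Q(\R)\cong C_0(\widehat{\R})$, while the functional $\varphi\mapsto\varphi(0)$ corresponds under these identifications to the constant function $1\notin C_0(\widehat{\R})$; so evaluation-type functionals can lie in $M_{cb}A(G)^*\setminus Q(G)$. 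Your rigidity estimate does make $(\psi_t)$ norm-Cauchy, but only in $M_{cb}A(G)^*$; if the $\psi_t$ are not known to lie in $Q(G)$, the limit $L$ is merely an element of $M_{cb}A(G)^*$, and the weak-$*$ convergence $\widetilde u_\alpha\to1$ then gives no information about $\langle\widetilde u_\alpha,L\rangle$. This is exactly the difference between refuting weak amenability (cb-bounded nets, pointwise estimates suffice) and refuting AP (unbounded nets, only pairings against $Q(G)$ survive), so the gap sits at the heart of the theorem rather than at its periphery. The repair is to replace $\psi_t$ by absolutely continuous averages (normalized indicators of full-dimensional thickenings such as $K\{D(u,v):u,v\in[t,t+1]\}K$), which lie in $L^1(G)\subset Q(G)$, and to prove the Cauchy property for \emph{those}; this requires the rigidity estimates uniformly over neighborhoods in the Weyl chamber, not only along the ray, after which the limit $L$ does lie in $Q(G)$ and your contradiction goes through. (In the non-archimedean case the issue disappears, since $K$ is open, $Ka_tK$ is compact open, and $\psi_t$ is an honest $L^1$ function; but your argument as written does not distinguish the two cases, and the real case is where it breaks.)
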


To prove Theorem \ref{thm=main_theorem}, \ref{thm=main_theorem_R}, and
\ref{thm=main_theorem_AP} we introduce, for $1 \leq p \leq \infty$, a
different approximation property for a group $G$, (property
$\SCHURCBAP{p}$), in terms of completely bounded Schur
multipliers on the $p$-Schatten class on $L^2(G)$. These properties
for $p$ and $p'$ coincide if $1/p+1/p'=1$. When $p$ is $1$ or
$\infty$, this property coincides with weak amenability, and when $p$
decreases from $\infty$ to $2$, this property becomes weaker. For
discrete groups this property is implied by the completely bounded
approximation property of the corresponding non-commutative $L^p$
space, and by Haagerup's and Kraus' AP. As for the weak amenability of
a group, we introduce a constant $\CASch p G$ of the property
$\SCHURCBAP{p}$ for $G$. We notice however that for discrete
groups and $1<p<\infty$, $\CASch p G \in \{1,\infty\}$. We also prove
that the property $\SCHURCBAP{p}$ is equivalent for a locally
compact group second countable $G$ or for a lattice in $G$ (this was
proved by Haagerup in \cite{haagerup1} for the weak amenability).

The theorems above are thus consequences of the following results,
which are proved in section \ref{sect=Contre-exemple} and
\ref{sect=Contre-exemple_R} using ideas close to \cite{anniversaire}.

\begin{thmA}\label{thm=main_thm_continu}
  Let $F$ be a non-archimedian local field, $r \in \N$ with $r\geq 3$.

 If $1\leq p\leq\infty$ and $n \in \N^*$ are such that $r \geq 2n+1$
 and $1 \leq p<2-2/(n+2)$ or $2+\frac{2}{n}<p < \infty$, then
 $SL_r(F)$ does not have the property $\SCHURCBAP{p}$.
\end{thmA}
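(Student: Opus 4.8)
The plan is to argue by contradiction. Since the property $\SCHURCBAP{p}$ coincides for an exponent and its conjugate, I may assume $p>2$, so that $2+\tfrac 2n<p<\infty$. Suppose $G=SL_r(F)$ had $\SCHURCBAP{p}$: then there would be a net of compactly supported functions $\phi_\alpha$ on $G$ with $\sup_\alpha\|M_{\phi_\alpha}\|_{cb,S^p}\le C<\infty$ and $\phi_\alpha\to 1$ in the relevant sense. The goal is to show this is impossible, which yields $\CASch{p}{SL_r(F)}=\infty$.

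First I would reduce to radial symbols. Let $K=SL_r(\OO)$ be the standard maximal compact open subgroup, $\OO$ the ring of integers of $F$. Replacing $\phi_\alpha$ by its average $\psi_\alpha(g)=\int_{K\times K}\phi_\alpha(k_1 g k_2)\,dk_1\,dk_2$ keeps the net compactly supported, preserves convergence to $1$ at each fixed point, and does not increase $\sup_\alpha\|M_{\psi_\alpha}\|_{cb,S^p}$: the $S^p$-cb Schur-multiplier norm is invariant under the substitutions $\phi(\cdot)\mapsto\phi(k_1\cdot k_2)$, and averaging over the compact group $K$ is a contraction by convexity of the norm. By the Cartan decomposition, a bi-$K$-invariant function is a function of the dominant coweights $\mu=(\mu_1\ge\cdots\ge\mu_r)$, $\sum_i\mu_i=0$, via $g=k_1\,\mathrm{diag}(\pi^{\mu_1},\dots,\pi^{\mu_r})\,k_2$ with $\pi$ a uniformizer of $F$; write $\psi_\alpha(\mu)$ for the common value. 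Compact support becomes finite support on the coweight lattice.

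The heart of the argument, and the main obstacle, is a rigidity estimate for radial symbols along the coweight direction $\varpi_n$ associated with passing to $n$-dimensional subspaces (this is where $r\ge 2n+1$ enters, so that an $n$-space and a complementary subspace coexist in $F^r$). Using the action of $SL_{2n+1}(F)\subset G$ on configurations of such subspaces, I would restrict $M_\psi$ to the finite $K$-orbit they form and realize it there as an explicit finite matrix, whose entries are built from the values $\psi(\mu),\psi(\mu+\varpi_n),\psi(\mu+2\varpi_n),\dots$ and from incidence counts of subspaces over the residue field (Gaussian binomial coefficients). Computing the $S^p\to S^p$ norm of this matrix — the genuinely hard, sharp step — should yield a contraction estimate of the form
\[
\big|\psi(\mu+\varpi_n)-\psi(\mu+2\varpi_n)\big|\;\le\;\theta(p,n)\,\big|\psi(\mu)-\psi(\mu+\varpi_n)\big|,
\]
valid for every radial symbol with completely bounded $S^p$-Schur multiplier, where the decisive point is that $\theta(p,n)<1$ precisely when $p>2+\tfrac 2n$ (equivalently, dually, when $p<2-\tfrac2{n+2}$). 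Pinning down this threshold is exactly where the combinatorics and the techniques of \cite{anniversaire} must be pushed to their sharp form.

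Granting this estimate, the conclusion is immediate. Writing $a_k=|\psi(k\varpi_n)-\psi((k+1)\varpi_n)|$, the inequality reads $a_{k+1}\le\theta(p,n)\,a_k$, so $a_k\le\theta(p,n)^k a_0$; in particular $\psi_\infty=\lim_k\psi(k\varpi_n)$ exists and $|\psi(0)-\psi_\infty|\le\sum_k a_k\le a_0/(1-\theta(p,n))$. Applied to the net this is a contradiction: since $\phi_\alpha\to1$ we have $\psi_\alpha(0)\to1$ and $\psi_\alpha(\varpi_n)\to1$, hence $a_{0,\alpha}\to0$, whereas each $\psi_\alpha$ has finite support, so $\psi_{\alpha,\infty}=0$ and $|\psi_\alpha(0)-\psi_{\alpha,\infty}|=|\psi_\alpha(0)|\to1$. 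Combining, $1=\lim_\alpha|\psi_\alpha(0)|\le\lim_\alpha a_{0,\alpha}/(1-\theta(p,n))=0$, which is absurd. Thus no uniformly completely bounded approximating net exists, i.e. $SL_r(F)$ does not have $\SCHURCBAP{p}$. The remaining range $1\le p<2-\tfrac2{n+2}$ follows from the coincidence of the property for conjugate exponents.
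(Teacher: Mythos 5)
Your outer scaffolding is sound and matches the paper's: reduce to $p>2$ by duality, average over $K\times K$ to get $K$-biinvariant symbols (the Schur norm is unchanged under $\phi\mapsto\phi(k_1\cdot k_2)$ and averaging is contractive), parametrize by the Cartan decomposition, and derive a contradiction by combining a decay estimate along a ray with the fact that a finitely supported symbol vanishes at infinity while the net converges to $1$. But everything of substance in the theorem is concentrated in the step you label ``the genuinely hard, sharp step'', and you do not prove it --- you assert that the restriction-to-a-finite-orbit computation ``should yield'' it. Worse, the estimate as you formulate it, namely
\[
\big|\psi(\mu+\varpi_n)-\psi(\mu+2\varpi_n)\big|\;\le\;\theta(p,n)\,\big|\psi(\mu)-\psi(\mu+\varpi_n)\big| ,
\]
with $\theta(p,n)<1$ and no factor of the multiplier norm on the right, is \emph{false}. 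Since $K$ is open, each double coset $KDK$ is clopen, so $1_{KDK}\in C_c(G)$; moreover $1_{KDK}=\mu(K)^{-1}\,1_{KDK}\conv\widetilde{1_K}\in A(G)$, hence $\|\chap{1_{KDK}}\|_{cbMS^p(L^2(G))}\le\|1_{KDK}\|_{A(G)}<\infty$ for every $p$. Taking $\psi$ to be the sum of the indicators of the double cosets of $\varpi_n$ and $2\varpi_n$ gives a radial, compactly supported, completely bounded symbol with $a_1=|\psi(\varpi_n)-\psi(2\varpi_n)|=0$ but $a_2=|\psi(2\varpi_n)-\psi(3\varpi_n)|=1$, contradicting $a_2\le\theta a_1$. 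More generally, any prescribed values on finitely many double cosets are attained by an $A(G)$ function, so no pointwise rigidity statement that omits $\|\chap\psi\|_{MS^p}$ can hold: the multiplier norm must appear as a factor, and the estimate must degrade as that norm grows.

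This is exactly the form of the paper's key estimate (Lemma \ref{lemTrenf}): for $K$-biinvariant $f\in C_c(G)$,
\[
\big|f(D(\lam_1,\dots,\lam_r))-f(D(\lam_1,\dots,\lam_i+1,\dots,\lam_r))\big|\;\le\;C\,q^{-\eps m}\,\|\chap f\|_{MS_p(L^2(G))},
\]
where $m$ is the \brisure{} of the polygon at a neighbouring vertex, and $\eps=n(\tfrac12-\tfrac1p)-\tfrac1p>0$ precisely when $p>2+\tfrac2n$. The decay does not come from iterating a contraction; it comes from the fact that the relevant incidence configuration lives over the congruence quotient $\OO/\pi^m\OO$ whose level $m$ \emph{grows} with the depth into the Weyl chamber: one builds injections $\alpha,\beta:(\OO/\pi^m\OO)^{n+1}\to B$ so that $\alpha(a)\beta(x)$ lies in $KD(\lam)K$ or in the neighbouring double coset according to whether $y=\sum_i a_ix_i+b$ or $y=\sum_i a_ix_i+b+\pi^{m-1}$ in $\OO/\pi^m\OO$, and then uses the Fourier-analytic bound $\|T_m-T_{m-1}\|_{S^p}\le 2q^{-\eps m}$ (nondegenerate characters of $\OO/\pi^m\OO$ give unitary Fourier matrices of size $q^m$) together with the restriction principle of Theorem \ref{thm=mult_de_symb_continu}. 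Your sketch, which fixes a single incidence geometry over the residue field (Gaussian binomials, level one), cannot produce any decay along the ray, which is why it is forced into the untenable relative form above. Finally, the hypothesis $r\ge 2n+1$ does not enter through the coexistence of an $n$-subspace and a complement: it is needed so that, when passing from $\lam^m$ to $\lam^{m+1}$ by unit steps, every modified vertex $i$ has room for an $(n+2)\times(n+2)$ block either to its right ($i\le r-n$) or, after applying the outer automorphism $\theta$, to its left ($i-1\ge n$), while keeping all \brisures{} $\ge 2m-2$; this is what makes the telescoping sum $\sum_m q^{-2\eps m}$ converge and yields $|f(D(\lam^m))|\le Cq^{-2\eps m}\|\chap f\|_{MS_p(L^2(G))}$, from which the contradiction with $f_\alpha\to1$ follows.
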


\begin{thmA}\label{thm=main_thm_continu_R}
Let $r\geq 3$. If $4<p\leq \infty$ or $1 \leq p<4/3$ then $SL_r(\R)$
does not have the property $\SCHURCBAP{p}$.
\end{thmA}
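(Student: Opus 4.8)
The plan is to proceed by contradiction, reducing the statement to a single harmonic-analytic estimate on the symmetric space of $SL_r(\R)$. Since the properties $\SCHURCBAP{p}$ and $\SCHURCBAP{p'}$ coincide for conjugate exponents $1/p+1/p'=1$ (as recalled in the introduction), it suffices to treat $4<p\leq\infty$; the range $1\leq p<4/3$ then follows by applying the conclusion to $p'>4$. So fix $4<p\leq\infty$ and suppose, for contradiction, that $G=SL_r(\R)$ has $\SCHURCBAP{p}$: there is a net of Schur multipliers with symbols $\phi_\alpha$ on $G\times G$, with $\sup_\alpha\|\phi_\alpha\|_{cb,S^p}\leq C<\infty$, converging to $1$ uniformly on compact sets, and whose symbols satisfy the vanishing-at-infinity (finiteness) condition built into the definition of $\SCHURCBAP{p}$.

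The first step is to symmetrize. Let $K=SO(r)$. Left and right translations by $g\in G$ are unitary on $L^2(G)$ and carry the Schur multiplier with symbol $\phi(x,y)$ to the one with symbol $\phi(gx,gy)$, respectively $\phi(xg,yg)$; conjugating a Schur multiplier by a unitary preserves its operator norm on each $S^p$, hence its $cb$-norm. Averaging $\phi_\alpha$ over $K\times K$ thus yields bi-$K$-invariant symbols $\psi_\alpha$ with $\|\psi_\alpha\|_{cb,S^p}\leq C$, still converging to $1$ and still vanishing at infinity. By the Cartan ($KAK$) decomposition each $\psi_\alpha$ descends to a function on the closed positive Weyl chamber $\overline{\aa^+}$; in particular, fixing one regular one-parameter direction $a_t=\mathrm{diag}(e^t,1,\dots,1,e^{-t})$, the values $\psi_\alpha(a_t)$ form functions of $t\geq0$ that tend to $1$ for each fixed $t$ but tend to $0$ as $t\to\infty$ (for each fixed $\alpha$).

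The heart of the proof is a lower bound for the $cb$-$S^p$-norm of a bi-$K$-invariant Schur multiplier along this direction. The idea is to test $\psi$ against an explicit family of finite-rank operators $T_s\in S^p(L^2(G))$ built from the $K$-finite matrix coefficients of the quasi-regular representation adapted to $a_t$, the essential contribution coming from the restriction to the subgroup $SL_2(\R)\ltimes\R^2\subset SL_r(\R)$ (sitting in the upper-left block), where the rank-two interaction between the $SL_2$-factor and the abelian part lives. The generic representation of $SL_2(\R)\ltimes\R^2$ produced by Mackey's construction restricts on the $SL_2(\R)$-side to the quasi-regular representation on $SL_2(\R)/\R$, whose matrix coefficients are square-integrable but only just; the resulting operators $T_s$ belong to $S^p$ precisely when $p>4$, which is exactly where the threshold enters. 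Pairing $\psi\cdot T_s$ against a companion operator extracts the increments of $\psi$ along the ray, and the completely bounded hypothesis gives an estimate of the form
\[
\|\psi\|_{cb,S^p}\ \geq\ c(p)\,\sum_{s\geq0} w_s\,\bigl|\psi(a_{s+1})-\psi(a_s)\bigr|,
\]
in which the weights $w_s$ are governed by the $S^p$-norms of the $T_s$ and grow without bound exactly when $p>4$.

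This inequality yields the contradiction. Because $\psi_\alpha\to1$ pointwise, for any fixed $M$ one has $\psi_\alpha(a_M)\to1$, while $\psi_\alpha$ vanishes at infinity along the ray; telescoping gives $\sum_{s\geq M}|\psi_\alpha(a_{s+1})-\psi_\alpha(a_s)|\geq|\psi_\alpha(a_M)|$, so the right-hand side above is at least $c(p)\,(\min_{s\geq M}w_s)\,|\psi_\alpha(a_M)|$. Letting $\alpha$ grow pushes the transition of $\psi_\alpha$ from near $1$ to $0$ arbitrarily far out along the chamber, so $M$ may be taken arbitrarily large; since $w_s\to\infty$ for $p>4$, the lower bound forces $\|\psi_\alpha\|_{cb,S^p}\to\infty$, contradicting $\|\psi_\alpha\|_{cb,S^p}\leq C$. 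I expect the key estimate of the third paragraph to be the main obstacle: choosing the test operators $T_s$ correctly, computing their $S^p$-norms uniformly, and verifying that the constant $c(p)$ and the growth of the weights degenerate exactly at $p=4$. This is the continuous counterpart of the combinatorial computation underlying the $n=1$ case of the non-archimedean Theorem~\ref{thm=main_thm_continu}, transported to the spherical analysis of $SL_r(\R)/SO(r)$, and keeping all constants uniform in $\alpha$ is the delicate part.
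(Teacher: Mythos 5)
Your overall architecture --- reduction to $4<p\leq\infty$ via $\CASch{p}{G}=\CASch{p'}{G}$, averaging over $K\times K$ to get bi-$K$-invariant symbols, a weighted increment estimate along a Cartan ray whose weights blow up, and a telescoping contradiction with $\psi_\alpha\to 1$ --- is exactly the architecture of the paper's proof (Proposition \ref{prop=SLrR}, deduced from Lemma \ref{lemma=SL3R_main_estimate}), and your final paragraph is sound \emph{granted} the key estimate. But that estimate is the entire content of the theorem, you do not prove it, and the mechanism you sketch for it cannot work when $p<\infty$. You propose to build the test operators from matrix coefficients of the generic Mackey representations of $H=SL_2(\R)\ltimes\R^2$, asserting that the ``rank-two interaction between the $SL_2$-factor and the abelian part'' is the source of the obstruction. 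That is Haagerup's route \cite{haagerup2}, and it is specific to $p=\infty$: for every $1<p<\infty$ the group $H$ \emph{has} the property $\SCHURCBAP{p}$, with constant $1$. Indeed its lattice $SL_2(\Z)\ltimes\Z^2$ has the AP of Haagerup and Kraus (AP is stable under semidirect products, \cite{MR1220905}), so $\CASch{p}{SL_2(\Z)\ltimes\Z^2}=1$ by Corollary \ref{coro=AP_implique_notrePte}, hence $\CASch{p}{H}=1$ by Theorem \ref{thm=characterzation}. Consequently there is a net $\phi_\alpha\in A(H)$ tending to $1$ uniformly on compact subsets of $H$ with $\sup_\alpha\|\chap\phi_\alpha\|_{cbMS^p(L^2(H))}$ finite. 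Now, pairings of a Schur multiplier against operators manufactured from the representation theory of $H$ involve only the values of the symbol on $H$, hence only the quantity $\|\chap(f|_H)\|_{MS^p(L^2(H))}$, which by Theorem \ref{thm=mult_de_symb_continu} is dominated by the norm on $G$; so if your inequality $\|\chap f\|_{MS^p(L^2(H))}\geq c(p)\sum_s w_s|f(h_{s+1})-f(h_s)|$ with $w_s\to\infty$ could be proved this way, your own telescoping argument applied to $\phi_\alpha$ would show $\CASch{p}{H}=\infty$, a contradiction. For $4<p<\infty$ the obstruction simply cannot be localized on $SL_2(\R)\ltimes\R^2$; tellingly, your sketch never uses the bi-$SO_3(\R)$-invariance after establishing it, whereas that invariance (a genuinely $SL_3$-feature, since $SO_3(\R)\not\subset H$) is what must carry the proof. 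Moreover, coefficients that are ``square-integrable but only just'' would heuristically produce a threshold at $p=2$, not $p=4$.

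The actual source of the exponent $4$ is compact, not semisimple-times-abelian: it is the Gelfand pair $(SO_3(\R),SO_2(\R))$. The paper takes the operators $T_\delta$ on $L^2(\sphere)$ averaging a function over the circles $\{y:\langle x,y\rangle=\delta\}$; these act on the space of degree-$n$ spherical harmonics (of dimension $2n+1$) as the scalar $P_n(\delta)$, and the Legendre estimate $|P_n(0)-P_n(\delta)|\lesssim\min(n|\delta|,1)/\sqrt{n+1}$ gives
\[ \|T_0-T_\delta\|_{S^p(L^2(\sphere))}^p=\sum_{n\geq 0}(2n+1)\,|P_n(0)-P_n(\delta)|^p\lesssim |\delta|^{p/2-2},\]
where both the convergence of the series and the positivity of the exponent $1/2-2/p$ are \emph{exactly} the condition $p>4$ (Lemma \ref{lem-Tkk-1_R}). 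The second indispensable ingredient, entirely absent from your sketch, is the pair of injective maps $\alpha,\beta:\sphere\to G/K$ of \cite{duke} satisfying $\alpha(x)^{-1}\beta(y)\in KD(s,t)K$ when $\langle x,y\rangle=0$ and $\alpha(x)^{-1}\beta(y)\in KD(s',t')K$ when $\langle x,y\rangle=\delta$: bi-$K$-invariance of $\psi$ then makes the transported $T_0$ and $T_\delta$ \emph{eigenvectors} of the Schur multiplier $M_{\chap\psi}$, with eigenvalues $\psi(D(s,t))$ and $\psi(D(s',t'))$, and together with $\|aT_0-bT_\delta\|_{S^p}\geq|a-b|$ this yields the increment bound of Lemma \ref{lemma=key_estimate_SL3R}. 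Note also that this bound compares values at points of the two-dimensional Weyl chamber related by $s+2t=s'+2t'$ or $2s+t=2s'+t'$; the decay $|\psi(a_t)|\lesssim e^{-\eps t}\,\|\chap\psi\|_{MS^p(L^2(G))}$ along your single ray comes only from the zigzag combination of these two cases in the proof of Lemma \ref{lemma=SL3R_main_estimate}, not from estimating the increments $\psi(a_{s+1})-\psi(a_s)$ directly. Without this construction, or a genuine substitute for it, the proposal is not a proof for any $4<p<\infty$.
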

We expect that a result analogous to Theorem
\ref{thm=main_thm_continu} (with $r \to \infty$ as $p \to 2$) holds in
the real case, but this would require more work.

Let us mention that there has also been some recent activity in the study of
Herz-Schur multipliers (for $p=\infty$) for the groups $PGL_2(\Q_q)$ in
\cite{HaagSteSzw} (in relation with Schur multipliers on homogeneous trees)
and for $SL_2(\R)$ in \cite{losert}.

Let us review the organization of this paper. In a first section, we
review some basic notions on completely bounded maps between
non-commutative $L^p$-spaces, and on Schur multipliers. We give
definitions and facts on Schur multipliers on the $p$-Schatten class
on $L^2(X,\mu)$ for a general ($\sigma$-finite) measure space
$(X,\mu)$. In a digression (subsection \ref{sec=digression}), we
discuss Pisier's conjecture that there exist Schur multipliers that
are bounded on $S^p = S^p(\ell^2)$ but not completely bounded. This
conjecture is left wide open, but we reformulate it (Proposition
\ref{prop=conj_Pisier_equivalent}) and we observe that when $(X,\mu)$
has no atom, no such phenomenon can occur (Theorem
\ref{thm=norm_egal_cbnorm}), \emph{i.e.} the norm and the completely
bounded norm of a Schur multiplier coincide. Finally we prove a
characterization of Schur multipliers with continuous symbol when
$\mu$ is a Radon measure on a locally compact space~: Theorem
\ref{thm=mult_de_symb_continu}. Apart from the definitions and from
this Theorem, this section is quite independent from the rest of the
paper.

In section \ref{sect=approximation}, we introduce, for any $1 \leq p \leq
\infty$, the property of completely bounded approximation by Schur
multipliers on $S^p$ for a group and the corresponding constant $\CASch p
G$.  The main result is Theorem \ref{thm=characterzation}, which states
that the property of completely bounded approximation by Schur multipliers
on $S^p$ for a locally compact group is equivalent to the same property for
a lattice.

In section \ref{sect=lien_OAP} we restrict ourselves to discrete
groups and investigate the relationship between the property
$\SCHURCBAP{p}$ when $1<p<\infty$ and other approximation
properties (AP for the group or OAP for the non-commutative
$L^p$-space). The main results are Corollary \ref{coro=CASch1ouRien},
where we prove that $\CASch p G$ can only take the values $1$ and
$\infty$, Corollary \ref{coro=AP_implique_notrePte}, where we prove
that AP implies $\SCHURCBAP{p}$, and Corollary
\ref{coro=OAP_Lp_implieque_notrePte}, where we prove that the OAP (and
CBAP) of the associated non-commutative $L^p$-space implies
$\SCHURCBAP{p}$. The results in this section are close to
\cite{MR1971296}, but since we are working with Schatten classes $S^p$
instead of general non-commutative $L^p$-spaces, we are able to give
elementary proofs.

In section \ref{sect=Contre-exemple}, we prove Theorem
\ref{thm=main_thm_continu}. The method of the proof is similar to the
method of the proof of strong property (T) for $SL_3(F)$ in
\cite{duke}. We also derive Theorem \ref{thm=main_theorem} and the
non-archimedian case of Theorem \ref{thm=main_theorem_AP}.

In section \ref{sect=Contre-exemple_R}, we prove the same results for
$SL_{r}(\R)$ for $r \geq 3$, using again the methods close to
\cite{duke}.

\paragraph*{Acknowledgement.} We thank Gilles Pisier for numerous fruitful
discussions. We thank the anonymous referees for their comments, and
for pointing out to us the existence of \cite{losert}. We also thank
V. Losert for providing us a version of \cite{losert}.
\section{Schur multipliers on Schatten classes}
\label{sec=Schur_multi}
In this section we fix $1 \leq p \leq \infty$. Given a Hilbert space $H$,
$S^p(H)$ will denote the Schatten class on $H$: if $p=\infty$ it is the
compact operators on $H$ (equipped with the operator norm) and for
$p<\infty$ it is the set of operators $A$ on $H$ such that $\|A\|_p :=
Tr(|A|^p)^{1/p}<\infty$. This quantity is a norm which makes $S^p(H)$ a
Banach space. When $H=\ell^2_n$ then $S^p(H)$ is denoted by $S^p_n$. When
no confusion is possible we might denote $S^p(H)$ simply by $S^p$.

\subsection{CB maps on non-commutative $L^p$ spaces}
\label{sec=cb_sur_LpNC}
Note that for Hilbert spaces $H$ and $K$, the algebraic tensor product
$S^p(H) \otimes S^p(K)$ is naturally embedded in $S^p(H \otimes_2 K)$ as a
dense subspace.

A linear map $T:S^p(H) \to S^p(H)$ is called completely bounded if for any
Hilbert space $K$, the map $T^{(K)} = T \otimes id$ on $S^p(H) \otimes
S^p(K)$ extends to a bounded map on $S^p(H \otimes K)$. The completely
bounded norm of $T$ is  $\|T\|_{cb} = \sup_K \|T^{(K)}\|$. Note that
\begin{equation}\label{eq=normeCB}
\|T\|_{cb} = \|T^{(\ell^2)}\| = \sup_n \|T^{(\ell^2_n)}\|.
\end{equation}
The $n$-norm of $T$ is $\|T^{(\ell^2_n)}\|$.

This definition agrees with the definition by Pisier of the natural
operator space structure on $S^p(H)$ (\cite{MR1648908}).

More generally (if $p<\infty$) if $\mathcal M$ is a von Neumann algebra
with a semi-finite trace $\tau$, a linear map $T$ on $L^p(\mathcal M,\tau)$
is called completely bounded if $T \otimes id$ extends to a bounded
operator on $L^p(\mathcal M \bar \otimes B(H),\tau \otimes Tr)$ for any
Hilbert space $H$. Again we have that
\begin{equation}\label{eq=normeCBbis}
  \|T\|_{cb} = \sup_n \|T \otimes id: L^p(\mathcal M \otimes M_n) \to L^p(\mathcal M \otimes M_n)\|.
\end{equation}

\begin{rem}\label{rem=mauvaise_extension_a_Lp}
  When $1<p \neq 2<\infty$, it is not known whether $T$ being completely
  bounded implies that $T \otimes id$ extends to a bounded map, or
  completely bounded map, (with norm not greater than $\|T\|_{cb}$) on
  $L^p(\mathcal M\bar \otimes \mathcal N, \tau \otimes \widetilde \tau)$
  for any von Neumann algebra $\mathcal N$ with semi-finite trace
  $\widetilde \tau$. This is related to Connes' embedding problem (which is
  equivalent to the QWEP conjecture, see \cite{MR2072092} for a survey).
  When $\widetilde \tau$ if finite and $(\mathcal N,\widetilde \tau)$
  embeds in an ultraproduct of the hyperfinite $II_1$ factor, then an
  ultraproduct argument shows that the previous holds. More generally Junge
  proved \cite{junge} that this is the case when $\mathcal N$ has QWEP
  ($\mathcal N$ is said to have QWEP if $\mathcal N$ is a quotient of a
  $C^*$-aglebra with Lance's weak expectation property). For a separable
  finite von Neumann algebra, Kirchberg \cite{MR1218321} proved that QWEP
  is equivalent to the embedding into an ultraproduct of the hyperfinite
  $II_1$ factor.
\end{rem}

\subsection{Schur multipliers on $S^p(L^2(X,\mu))$.}
A Schur multiplier on $M_n(\C)$ is a linear map $T : M_n(\C) \to M_n(\C)$
of the form $T: (a_{i,j}) \mapsto (\varphi_{i,j} a_{i,j})$ for some family
$ \varphi = (\varphi_{i,j})_{1 \leq i,j \leq n}$ called the \emph{symbol}
of $T$. The multiplier $T$ is then also denoted by $M_\varphi$. We study
Schur multipliers on $S^p_n$ and their continuous generalizations.

We wish to study this notion by replacing $M_n(\C) = B(\ell^2_n)$ by more
generally $B(L^2(X,\mu))$ (or $S^p(L^2(X,\mu))$) for a $\sigma$-finite
measure space $(X,\mu)$.  Informally for a function $\varphi:X\times X \to
\C$ we are interested in the map sending an operator $T$ on $L^2(X,\mu)$
having a representation $(T_{x,y})_{x,y\in X}$ to the operator having
$(\varphi(x,y) T_{x,y})_{x,y\in X}$ as representation. But since all the
operators cannot be represented in this way we have to be more
careful. This is closely related to the notion of double operator
integrals.

For $p=2$ and any Hilbert space $H$, $S^2(H)$ is identified with the
Hilbert space tensor product $H^* \otimes_2 H$ (with the usual
identification of $\xi^* \otimes \xi$ with the rank one operator on $H$,
$\eta \mapsto \xi^*(\eta) \xi$). Let us identify (linearly isometrically)
the dual of $L^2(X,\mu)$ with $L^2(X,\mu)$ for the duality $\langle
f,g\rangle = \int f g d\mu$. We thus can identify $S^2(L^2(X,\mu))$ with
$L^2(X,\mu) \otimes_2 L^2(X,\mu) \simeq L^2(X \times X,\mu \otimes
\mu)$. We therefore have a good notion of Schur multipliers on
$S^2(L^2(X,\mu))$, which coincides with $L^\infty(X \times X,\mu \otimes
\mu)$ acting by multiplication on $L^2(X \times X,\mu \otimes
\mu)$. Thus for any $p$ and any function $\varphi \in L^\infty(X \times
X,\mu \otimes \mu)$ we say that the Schur multiplier with symbol $\varphi$
is completely bounded on $S^p$ if it maps $S^2 \cap S^p$ into $S^p$, and if
it extends to a completely bounded map from $S^p$ to $S^p$. This extension
is then necessarily unique because $S^2 \cap S^p$ is dense in $S^p$. We
denote by $M_\varphi$ this map. We will denote by
$\|\varphi\|_{MS^p(L^2(X))}$ (resp. $\|\varphi\|_{cbMS^p(L^2(X))}$) its
norm (resp. completely bounded norm).

\begin{rem} If $A$ and $B$ belong to $S^2(L^2(X,\mu))$ and correspond in
  the identification above to functions $f$ and $g$ in $L^2(X \times X,\mu
  \otimes \mu)$, then 
\begin{equation}\label{eq=lien_trace_integrale}
Tr(AB)= \int f(x,y) g(y,x) d\mu(x)d\mu(y).
\end{equation}
\end{rem}
\begin{rem}\label{rem=duality} By duality, if $1/p+1/p'=1$, the norm (resp. completely bounded
  norm) on $S^p(L^2(X))$ and $S^{p'}(L^2(X))$ of a Schur multiplier are the
  same.
\end{rem}
\begin{rem}\label{rem=monotonie_normes_Sp}
  By interpolation, this duality property implies that if $\varphi \in
  L^\infty(X \times X)$ and $2\leq p \leq q \leq \infty$, then
  $\|\varphi\|_{MS^p(L^2(X))} \leq \|\varphi\|_{MS^q(L^2(X))}$. This holds
  because $S^q(H)$ coincides isometrically with the interpolation space
  (for the complex interpolation method) $[S^p(H),S^{p'}(H)]_\theta$ for
  $1/q=\theta/{p'}+(1-\theta)/p$. In particular, for any $p$,
\[\|\varphi\|_{L^\infty(X\times X)} \leq \|\varphi\|_{MS^p(L^2(X))} \leq
\|\varphi\|_{MS^\infty(L^2(X))}.\]
The same inequalities hold for the cb-norm.
\end{rem}
The following is immediate from \eqref{eq=normeCB}.
\begin{lemma}\label{lemma=normeCBdesmult} The Schur multiplier
  corresponding to $\varphi \in L^\infty(X
  \times X,\mu \otimes \mu)$ is completely bounded on $S^p(L^2(X))$ if and
  only if the Schur multiplier corresponding to $ \widetilde \varphi
  (x,i,y,j) = \varphi(x,y)$ is bounded on $S^p(L^2(X \times \N))$ (where
  $\widetilde X=X \times \N$ is equipped with the product measure of $\mu$
  and the counting measure on $\N$). More precisely
\[\|\varphi\|_{cbMS^p(L^2(X))} = \|\widetilde
\varphi\|_{cbMS^p(L^2(\widetilde X))} = \|\widetilde
\varphi\|_{MS^p(L^2(\widetilde X))}.\] 
\end{lemma}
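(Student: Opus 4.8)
The plan is to reduce everything to the identity \eqref{eq=normeCB}, which computes the completely bounded norm of a map on $S^p$ by amplifying with $\mathrm{id}$ on $S^p(\ell^2)$. The single substantive point is to recognize the amplification of a Schur multiplier as again a Schur multiplier, now on a larger index set; everything else is formal.

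First I would record the identification of the amplified map. Using the embedding $S^p(L^2(X)) \otimes S^p(\ell^2) \hookrightarrow S^p(L^2(X) \otimes_2 \ell^2)$ from the beginning of this subsection, together with $L^2(X) \otimes_2 \ell^2(\N) \simeq L^2(X \times \N, \mu \otimes c) = L^2(\widetilde X)$ (with $c$ the counting measure), I claim that the amplification $M_\varphi^{(\ell^2)} = M_\varphi \otimes \mathrm{id}_{S^p(\ell^2)}$, viewed on $S^p(L^2(\widetilde X))$, is exactly the Schur multiplier $M_{\widetilde\varphi}$. On the level of $S^2$ this is transparent: under $S^2(L^2(\widetilde X)) \simeq L^2(\widetilde X \times \widetilde X)$, the map $M_\varphi \otimes \mathrm{id}$ is multiplication by the function $((x,i),(y,j)) \mapsto \varphi(x,y)\cdot 1 = \widetilde\varphi(x,i,y,j)$, since $M_\varphi$ multiplies the $X\times X$ variables by $\varphi$ while $\mathrm{id}$ leaves the $\N\times\N$ variables untouched. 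As both maps are Schur multipliers with the same $L^\infty$ symbol, they agree on $S^2\cap S^p$ and hence, by density, as maps on $S^p(L^2(\widetilde X))$.

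With this identification in hand, \eqref{eq=normeCB} gives at once
\[
\|\varphi\|_{cbMS^p(L^2(X))} = \|M_\varphi\|_{cb} = \|M_\varphi^{(\ell^2)}\| = \|M_{\widetilde\varphi}\| = \|\widetilde\varphi\|_{MS^p(L^2(\widetilde X))},
\]
which already proves the equality of the first and last quantities in the statement. It then remains to insert the middle quantity $\|\widetilde\varphi\|_{cbMS^p(L^2(\widetilde X))}$. One inequality is free, since the norm never exceeds the completely bounded norm:
\[
\|\widetilde\varphi\|_{MS^p(L^2(\widetilde X))} \leq \|\widetilde\varphi\|_{cbMS^p(L^2(\widetilde X))}.
\]
For the reverse inequality I would amplify $M_{\widetilde\varphi}$ once more: for any Hilbert space $K$,
\[
M_{\widetilde\varphi} \otimes \mathrm{id}_{S^p(K)} = M_\varphi \otimes \mathrm{id}_{S^p(\ell^2)} \otimes \mathrm{id}_{S^p(K)} = M_\varphi \otimes \mathrm{id}_{S^p(\ell^2 \otimes_2 K)},
\]
so its norm is at most $\sup_{K'} \|M_\varphi \otimes \mathrm{id}_{S^p(K')}\| = \|\varphi\|_{cbMS^p(L^2(X))}$. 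Taking the supremum over $K$ yields $\|\widetilde\varphi\|_{cbMS^p(L^2(\widetilde X))} \leq \|\varphi\|_{cbMS^p(L^2(X))}$, and combining the three displays closes the chain of equalities.

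There is essentially no obstacle here, which is exactly why the statement is labelled immediate. The only place demanding a moment of care is the symbol bookkeeping in the first identification: checking that amplifying a Schur multiplier by $\mathrm{id}_{S^p(\ell^2)}$ produces the symbol $\widetilde\varphi$ that is constant in the two $\N$-directions, and that $\widetilde X = X \times \N$ carries the correct product measure so that the tensor-product Hilbert space really is $L^2(\widetilde X)$. Everything after that is a formal manipulation of \eqref{eq=normeCB} and of the definition of the cb-norm.
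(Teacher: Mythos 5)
Your proof is correct and coincides with the paper's intended argument: the paper offers no proof beyond declaring the lemma ``immediate from \eqref{eq=normeCB}'', and your identification of the amplification $M_\varphi \otimes \mathrm{id}_{S^p(\ell^2)}$ with the Schur multiplier $M_{\widetilde\varphi}$ on $S^p(L^2(\widetilde X))$ (checked on $S^2\cap S^p$ and extended by density), followed by the re-amplification $M_{\widetilde\varphi}\otimes \mathrm{id}_{S^p(K)} = M_\varphi \otimes \mathrm{id}_{S^p(\ell^2\otimes_2 K)}$ to pin down the middle quantity, is exactly that argument written out in full.
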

\begin{rem}\label{rem=produit_cartesien} 
  In fact we can replace $\N$ by any $\sigma$-finite measure space
  $(\Omega,\nu)$~: for $\varphi \in L^\infty(X \times X)$, define again
  $\widetilde X = X \times \Omega$ and $\widetilde \varphi \in
  L^\infty(\widetilde X \times \widetilde X)$ by $\widetilde \varphi
  (x,\omega,y,\omega') = \varphi(x,y)$. Then
\[\|\varphi\|_{cbMS^p(L^2(X))} = \|\widetilde
\varphi\|_{cbMS^p(L^2(\widetilde X))},\]
and this is equal to $\|\widetilde
\varphi\|_{MS^p(L^2(\widetilde X))}$ provided that $L^2(\Omega,\nu)$ is
infinite dimensional. 
\end{rem}
When $p=2$ we obviously have 
\[\|\varphi\|_{cbMS^2(L^2(X))} = \|\varphi\|_{MS^2(L^2(X))} =
\|\varphi\|_{L^\infty(X\times X)}.\] 

For $p=1,\infty$, the following characterization is well-known, and goes
back to Grothendieck (see chapter 5 of \cite{MR1818047}). The result is
more often expressed when $X=\N$, but the general statement below follows
by a martingale/ultraproduct argument. For completeness we include a proof
of this generalization, that uses Lemma \ref{lemma=martingales} below. This
proof was indicated to us by Gilles Pisier.
\begin{thm}\label{thm=caract_SchurMult} Let $(X,\mu)$ be a $\sigma$-finite
  measure space. If $p=\infty$ (or $p=1$) and $\varphi \in L^\infty(X
  \times X)$, we have that
  \[\|\varphi\|_{MS^p(L^2(X))} = \inf \|f\|_{L^\infty(X,\mu;H)}
  \|g\|_{L^\infty(X,\mu;H)}\] where the infimum runs over all separable
  Hilbert spaces, all measurable functions $f,g :X \to H$ such that
  $\varphi(x,y) = \langle f(x),g(y)\rangle$ almost everywhere.
\end{thm}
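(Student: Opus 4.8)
The plan is to prove the two inequalities separately, treating only $p=\infty$: by Remark \ref{rem=duality} one has $\|\varphi\|_{MS^1(L^2(X))}=\|\varphi\|_{MS^\infty(L^2(X))}$, and the right-hand side of the asserted formula is independent of $p$, so the case $p=1$ follows immediately from the case $p=\infty$.

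For the inequality $\leq$, I would argue by an explicit factorization. Given $f,g\in L^\infty(X,\mu;H)$ with $\varphi(x,y)=\langle f(x),g(y)\rangle$ a.e., introduce the bounded operators $M_f,M_g\colon L^2(X)\to L^2(X)\otimes_2 H\simeq L^2(X;H)$ defined by $(M_f\xi)(x)=\xi(x)f(x)$, which satisfy $\|M_f\|\leq\|f\|_{L^\infty(X;H)}$ and likewise for $g$. A direct kernel computation shows that, up to the conjugations built into the $S^2$-identification, $M_\varphi(A)=M_g^*(A\otimes\mathrm{id}_H)M_f$ for every $A\in B(L^2(X))$, whence $\|M_\varphi(A)\|\leq\|f\|_\infty\|g\|_\infty\|A\|$. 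Since this factorization tensorizes with the identity on $S^p(K)$, it even controls the cb-norm; taking the infimum over factorizations gives $\|\varphi\|_{MS^\infty(L^2(X))}\leq\inf\|f\|_\infty\|g\|_\infty$.

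For the reverse inequality I would reduce to the classical discrete statement (the case $X=\N$ with the counting measure, where the factorization with $\sup_i\|f_i\|\,\sup_j\|g_j\|=\|\varphi\|_{MS^\infty}$ is Grothendieck's theorem, chapter 5 of \cite{MR1818047}). Exhaust $X$ by sets of finite measure and choose an increasing sequence of finite measurable partitions $\mathcal P_n$ into atoms of positive finite measure whose atoms generate the $\sigma$-algebra modulo null sets. Let $\varphi_n=E[\varphi\mid\mathcal P_n\otimes\mathcal P_n]$ be the average of $\varphi$ over the product atoms. Lemma \ref{lemma=martingales} supplies exactly the two facts I need: $M_{\varphi_n}$ is a compression of $M_\varphi$ by conditional-expectation isometries, so $\|\varphi_n\|_{MS^\infty}\leq\|\varphi\|_{MS^\infty}=:C$, and $\varphi_n\to\varphi$ almost everywhere. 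Since the Schur multiplier norm on a purely atomic space is insensitive to the atom masses (diagonal rescaling is a unitary conjugation commuting with the multiplier), $\varphi_n$, read as a finite matrix indexed by the atoms of $\mathcal P_n$, has Schur norm $\leq C$ for the counting measure, and the discrete theorem yields Hilbert spaces $H_n$ and vectors $(f_n^a),(g_n^b)$ with $\langle f_n^a,g_n^b\rangle=\varphi_n(a,b)$ and $\sup_a\|f_n^a\|\leq\sqrt C$, $\sup_b\|g_n^b\|\leq\sqrt C$; I regard these as step functions $f_n,g_n\colon X\to H_n$, constant on atoms, with $\|f_n\|_\infty,\|g_n\|_\infty\leq\sqrt C$.

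The final step is to pass to the limit. Fixing a nonprincipal ultrafilter $\mathcal U$ on $\N$, form the Hilbert-space ultraproduct $H=(\prod_n H_n)_{\mathcal U}$ and let $f(x),g(x)\in H$ be the classes of $(f_n(x))_n$ and $(g_n(x))_n$. Then $\|f\|_\infty,\|g\|_\infty\leq\sqrt C$, and using $\varphi_n\to\varphi$ a.e. one gets, for almost every $(x,y)$,
\[
\langle f(x),g(y)\rangle_H=\lim_{\mathcal U}\langle f_n(x),g_n(y)\rangle_{H_n}=\lim_{\mathcal U}\varphi_n(x,y)=\varphi(x,y).
\]
Replacing $H$ by the closed subspace spanned by the essential ranges of $f$ and $g$ (separable, since strongly measurable functions have essentially separable range) produces the required separable factorization with $\|f\|_\infty\|g\|_\infty\leq C$. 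I expect the main obstacle to lie precisely in this last passage: verifying through Lemma \ref{lemma=martingales} that the averaged symbols genuinely satisfy $\|\varphi_n\|_{MS^\infty}\leq C$ and $\varphi_n\to\varphi$, and then ensuring that the ultraproduct-defined $f,g$ are strongly measurable and take values in a separable Hilbert space, so that $\langle f(\cdot),g(\cdot)\rangle$ is a legitimate almost-everywhere-defined function comparable to $\varphi$. The remaining bookkeeping—splitting $C=\sqrt C\cdot\sqrt C$ and the measure-independence of the discrete Schur norm—is routine once these points are settled.
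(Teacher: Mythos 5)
Your overall strategy (martingale approximation by finite sub-$\sigma$-algebras, the discrete Grothendieck theorem at each finite level, then an ultraproduct to pass to the limit) is exactly the skeleton of the paper's proof, and your upper-bound factorization $M_\varphi(A)=M_g^*(A\otimes\mathrm{id}_H)M_f$ is the standard, correct argument. But the last step, which you yourself flag as the main obstacle, is a genuine gap and not routine bookkeeping. The function $f(x)=[(f_n(x))_n]_{\mathcal U}$ is a pointwise ultrafilter limit, and ultrafilter limits of sequences of measurable functions need not be measurable: for instance, if $d_n(x)$ denotes the $n$-th binary digit of $x\in[0,1]$, then $h=\lim_{\mathcal U}d_n$ is a $\{0,1\}$-valued tail function satisfying $h(1-x)=1-h(x)$ a.e., hence non-measurable. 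Likewise $x\mapsto\|f(x)\|^2=\lim_{\mathcal U}\|f_n(x)\|^2$ and every scalar function $x\mapsto\langle f(x),\eta\rangle$ are ultrafilter limits, so nothing forces $f$ to be even weakly measurable, let alone essentially separably valued. Your proposed repair --- ``the essential range of a strongly measurable function is separable'' --- is circular: it assumes the strong measurability you are trying to establish. The fact that the mixed pairing $\langle f(x),g(y)\rangle$ agrees a.e.\ with the measurable function $\varphi$ does not help, since that identity holds only jointly almost everywhere and says nothing about the measurability of $f$ or $g$ themselves.

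The paper circumvents precisely this difficulty by performing the ultraproduct at the level of operators rather than of pointwise values: it reformulates the factorization as $\int\varphi(x,y)u(x)\overline{v(y)}\,d\mu(x)\,d\mu(y)=\langle a(u),b(v)\rangle$ for bounded linear maps $a,b\colon L^1(\mu)\to H$, defines $a(u)$ as the class of $\bigl(a_n(\mathbb E[u|\mathcal B_n])\bigr)_n$ in the ultraproduct (a perfectly well-defined bounded linear map, for which no measurability question arises, and whose norm is controlled by Lemma \ref{lemma=chgt_de_tribu}), and only then invokes the Radon--Nikodym property of Hilbert spaces, via the Riesz representation theorem of \cite{MR0453964}, to represent $a$ as $u\mapsto\int uf\,d\mu$ with $f\in L^\infty(X,\mu;H)$ measurable and essentially separably valued. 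That representation theorem is exactly the device that converts the limit object into a measurable, separably valued function; some such tool (RNP, or a lifting/measurable-selection argument) is unavoidable here, and without it the final paragraph of your proof does not go through.
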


For other values of $p$, there is no known characterization of Schur
multipliers. In particular, the following conjecture of Pisier is still
open.
\begin{conjecture}[\cite{MR1648908}, Conjecture
  8.1.12]\label{conjecture=Pisier} For $1<p<\infty$, $p \neq 2$, there
  exist Schur multipliers on $S^p=S^p(\ell^2)$ that are bounded but not
  completely bounded.
\end{conjecture}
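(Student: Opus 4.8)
The plan is to attack the conjecture by reducing it to a quantitative finite-dimensional gap and then assembling a single infinite symbol by a block-diagonal direct sum. Suppose we can produce a sequence of finite symbols $\varphi^{(n)} \in L^\infty(I_n \times I_n)$, with the index sets $I_n$ finite and pairwise disjoint, satisfying $\sup_n \|\varphi^{(n)}\|_{MS^p} \leq C < \infty$ while $\|\varphi^{(n)}\|_{cbMS^p} \to \infty$. Set $\varphi$ equal to $\varphi^{(n)}$ on each $I_n \times I_n$ and $0$ elsewhere in $\N \times \N$. Since $M_\varphi$ annihilates the off-block entries, it decomposes as a direct sum of the $M_{\varphi^{(n)}}$, and because pinching onto the block-diagonal is contractive on every $S^p$ one gets $\|M_\varphi\|_{MS^p} = \sup_n \|\varphi^{(n)}\|_{MS^p} \leq C$ while $\|M_\varphi\|_{cbMS^p} = \sup_n \|\varphi^{(n)}\|_{cbMS^p} = \infty$. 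By Lemma \ref{lemma=normeCBdesmult} the cb-norm of each block equals the ordinary $S^p$-norm of the inflated symbol $\widetilde{\varphi^{(n)}}$ on $I_n \times \N$, so concretely one must find finite symbols whose $S^p$ Schur norm stays uniformly bounded but whose norm after tensoring with an auxiliary index set blows up. This reduction is routine; everything reduces to exhibiting the gap.

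For the gap itself I would exploit the transference between Schur and Fourier multipliers, since that is essentially the only regime in which $S^p$-norms become computable away from $p \in \{1,2,\infty\}$. On $\Z$ (or $\Z_N$) a Toeplitz symbol $\varphi(i,j) = \psi(i-j)$ induces a Fourier multiplier on $L^p(\tore)$; because bounded Fourier multipliers on the commutative space $L^p(\tore)$ are automatically completely bounded, the Schur $S^p$-norm and cb-$S^p$-norm of such a $\varphi$ coincide, so no gap can appear in the abelian case. This is in harmony with the principle behind Theorem \ref{thm=norm_egal_cbnorm} that a gap requires atoms. To manufacture a gap one must therefore leave the abelian world: take a non-abelian finitely generated group $G$ and symbols $\varphi(g,h) = \psi(g^{-1}h)$ on $S^p(\ell^2(G))$, where the ordinary $S^p$ Schur norm and the cb Schur norm (equivalently, the $L^p$ cb-Fourier-multiplier norm of $\psi$) can genuinely differ. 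The natural testing ground is the free group $\F_k$ with $\psi$ radial in the word length, where sharp $L^p$-multiplier estimates are available; truncating to balls of growing radius would yield the finite blocks $\varphi^{(n)}$, and the hope is that the ordinary norm stays uniformly bounded while the inflated cb-norm grows.

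The hard part, and the reason the conjecture remains open, is precisely the upper bound $\sup_n \|\varphi^{(n)}\|_{MS^p} < \infty$. Away from $p = 1,2,\infty$ there is no analogue of the Grothendieck factorization of Theorem \ref{thm=caract_SchurMult}, so the only general route to bounding an $S^p$ Schur multiplier is complex interpolation between the endpoint factorization norm (at $p=1,\infty$) and the $L^\infty$ symbol norm (at $p=2$). But interpolation of this kind controls the inflated (cb) norm just as well as the ordinary norm, and therefore tends to wash out any gap; this is exactly the mechanism underlying the no-gap phenomenon in the non-atomic case. A successful proof must supply an estimate for $\|\varphi^{(n)}\|_{MS^p}$ that is strictly better than the corresponding estimate for the inflation $\|\widetilde{\varphi^{(n)}}\|_{MS^p}$, genuinely exploiting the atomic (discrete) structure of the index set rather than interpolation. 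Identifying such an asymmetric estimate is the central obstacle, and I would first look at the reformulation in Proposition \ref{prop=conj_Pisier_equivalent} as the most promising place to isolate a tractable special case, to be tested probabilistically or numerically before attempting the full statement.
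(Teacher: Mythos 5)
You were asked about Conjecture \ref{conjecture=Pisier}, and the first thing to say is that the paper does not prove it: this is Pisier's conjecture, explicitly left open there (the authors even record that not a single Schur multiplier on $S^p_n$ is known for which the norm and cb-norm differ); the paper's only contributions to it are the reformulation in Proposition \ref{prop=conj_Pisier_equivalent} and the complementary no-gap result, Theorem \ref{thm=norm_egal_cbnorm}, in the non-atomic case. Your text, to its credit, does not claim otherwise, but judged as a proof it has a genuine and in fact total gap: everything is conditional on producing finite symbols $\varphi^{(n)}$ with $\sup_n \|\varphi^{(n)}\|_{MS^p} \leq C$ and $\|\varphi^{(n)}\|_{cbMS^p} \to \infty$, and the existence of such blocks is \emph{equivalent} to the conjecture. (Your reduction itself is sound: the pinching onto the block diagonal is an average of conjugations by diagonal unitaries, hence completely contractive on every $S^p$; the block-diagonal part of $S^p$ is the $\ell^p$-sum of the $S^p(\ell^2(I_n))$; and compression to a block only decreases norms, so indeed $\|\varphi\|_{MS^p} = \sup_n \|\varphi^{(n)}\|_{MS^p}$ and $\|\varphi\|_{cbMS^p} = \sup_n \|\varphi^{(n)}\|_{cbMS^p}$, while conversely \eqref{eq=normeCB} extracts finite blocks from any infinite counterexample. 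So you have reduced the conjecture to itself, which is not progress; the missing idea is precisely the quantitative finite-dimensional gap, and that is the entire content of the problem.)

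Beyond the missing core, one intermediate claim is unjustified and likely circular: you assert that for Toeplitz symbols $\varphi(i,j)=\psi(i-j)$ on $\Z$ no gap can occur because bounded Fourier multipliers on the commutative space $L^p(\tore)$ are ``automatically completely bounded.'' In Pisier's operator space structure on $L^p$, complete boundedness of a Fourier multiplier means boundedness of $T \otimes id$ on the vector-valued space $L^p(\tore;S^p) = L^p(L^\infty(\tore)\,\bar\otimes\, B(\ell^2))$, which is not automatic from scalar boundedness and is in general genuinely stronger --- indeed, whether plain and cb norms coincide for Toeplitz Schur multipliers is itself a special case of the kind of question at stake. The paper's tools do not settle it: Theorem \ref{thm=mult_de_symb_continu} gives norm $=$ cb-norm only when the support of the measure has no isolated point (e.g.\ continuous Herz--Schur symbols on $\R$), and transference results in the spirit of \cite{NeuwithRicard} identify cb-norms with cb-norms, not the plain Schur norm with the cb one. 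So even the negative half of your heuristic --- that one must leave the abelian world --- would need a proof. The free-group radial candidate is a sensible place to look, but you supply no estimate for either norm, and the paper's remark that no $S^p_n$ example whatsoever is known indicates exactly where the difficulty sits; the honest summary is that your proposal is a research programme, correctly oriented (your final pointer to Proposition \ref{prop=conj_Pisier_equivalent} matches the paper's own suggestion), in which every substantive step remains to be carried out.
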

In fact there is not even an example of a Schur multiplier on $S^p_n$ (for
$n \in \N^*$ and $1<p<\infty$, $p \neq 2$) for which the norm and the
cb-norm are known to be different.

\begin{proof}[Proof of Theorem \ref{thm=caract_SchurMult}] First note that
  by Lemma \ref{lemma=change_of_measure} we can assume that $\mu$ is a
  finite measure. 

  We claim that the Theorem is equivalent to the following fact:
  \begin{equation} \label{eq=caract_Schur_S1}
\|\varphi\|_{cbMS^1(L^2(X))} = \|\varphi\|_{MS^1(L^2(X))} = \inf
  \|a\|_{L^1(\mu) \to H} \|b\|_{L^1(\mu) \to H}
\end{equation}
where the infimum runs over all Hilbert spaces $H$, all bounded linear maps
$a,b:L^1(\mu) \to H$ such that $\int \varphi(x,y) u(x) \overline{v(y)}
d\mu(x) d\mu(y) = \langle a(u),b(v)\rangle$.  

Indeed since Hilbert spaces have the Radon-Nikodym property, the Riesz
representation Theorem (\cite{MR0453964}, Chapter III) implies that a
linear map $a:L^1(\mu) \to H$ takes values in separable subspace of $H$
(hence we can assume that $H$ is separable), and $a$ is of the form
$u\mapsto \int u f d\mu$ for some map $f \in L^\infty(X,\mu;H)$ (note that
when $H$ is separable Bochner-measurable functions are simply usual
measurable functions). Then $\int \varphi(x,y) u(x) \overline{v(y)} d\mu(x)
d\mu(y) = \langle a(u),b(v)\rangle$ if and only if $\varphi(x,y) = \langle
f(x),g(y)\rangle$ almost everywhere.

Let us now prove \eqref{eq=caract_Schur_S1}. As explained before the
statement of the Theorem, we only derive the general case from the case
when $L^2(X)$ is finite dimensional. Note that the following inequalities
are easy:
\[ \|\varphi\|_{MS^1(L^2(X))} \leq \|\varphi\|_{cbMS^1(L^2(X))} \leq \inf
\|a\|_{L^1(\mu) \to H} \|b\|_{L^1(\mu) \to H}.\] The first is obvious and
the second inequality follows from Lemma \eqref{lemma=normeCBdesmult} and
from the fact that the unit ball of $S^1(L^2(X\times \N))$ is the closed
convex hull of the rank one operators in the unit ball. Let us prove the
remaining inequality. For this consider a filtration of finite
$\sigma$-subalgebras $\mathcal B_n$ such that the corresponding martingale
$\varphi_n = \mathbb E[\varphi|\mathcal B_n \otimes \mathcal B_n]$
converges almost surely to $\varphi$. For any $n$,
\eqref{eq=caract_Schur_S1} gives a Hilbert space $H_n$ and linear map $a_n,
b_n: L^1(X,\mathcal B_n,\mu) \to H_n$ such that $\int \varphi_n(x,y) u(x)
\overline{v(y)} d\mu(y) = \langle a_n(u),b_n(v)\rangle$ and such that
$\|a_n\|\|b_n\| \leq \|\varphi_n\|_{MS^1(L^2(\mathcal B_n))} +1/n$ (in fact
we can even take $\|a_n\| \|b_n\| = \|\varphi_n\|_{MS^1(L^2(\mathcal
  B_n))}$). We can and will assume that $\|a_n\|=\|b_n\|$. Take $\mathcal
U$ a non principal ultrafilter on $\N$, and let $H=\prod H_n /\mathcal U$
be the ultraproduct. It is a Hilbert space. For $u \in L^1(\mu)$ let
$u_n=\mathbb E[u|\mathcal B_n]$. If $a(u)$ (resp. $b(v)$) denotes the image
of $(a_n(u_n))_n$ (resp. $(b_n(v_n))_n$) in the ultraproduct, then $a$ and
$b$ are bounded linear maps of norm $\lim_{\mathcal U} \|a_n\|$ and
$\lim_\mathcal U \|b_n\|$. In particular by Lemma \ref{lemma=chgt_de_tribu}
$\|a\|\|b\| \leq \|\varphi\|_{MS^1(L^2(X))}$. Moreover by the dominated
convergence Theorem
\begin{eqnarray*}
\int \varphi(x,y) u(x) \overline{v(y)} d\mu(x) d\mu(y) & = & \lim_{\mathcal
  U} \int \varphi_n(x,y) u(x) \overline{v(y)} d\mu(x) d\mu(y)\\
& = & \lim_{\mathcal
  U} \int \varphi_n(x,y) u_n(x) \overline{v_n(y)} d\mu(x) d\mu(y)\\
& =& \lim_{\mathcal U} \langle a_n(u),b_n(v)\rangle = \langle a(u),b(v)\rangle.  
\end{eqnarray*}
This concludes the proof.
\end{proof}

\subsection{Change of measure.}
The first obvious remark is that for $\varphi\in L^\infty(X \times X,\mu
\otimes \mu)$, the norm (resp. cb-norm) of the corresponding Schur
multiplier on $S^p(L^2(X,\mu))$ only depends on the class of the measure
$\mu$. More precisely:
\begin{lemma} \label{lemma=change_of_measure} Let $\nu
  << \mu$ be two $\sigma$-finite measures on $X$ and $\varphi \in
  L^\infty(X\times X,\mu \otimes \mu)$. Then
  \[\|\varphi\|_{MS^p(L^2(X,\nu))} \leq
  \|\varphi\|_{MS^p(L^2(X,\mu))}.\]
The same holds for the cb-norm.
\end{lemma}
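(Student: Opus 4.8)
The plan is to realize $L^{2}(X,\nu)$ isometrically inside $L^{2}(X,\mu)$ in a way that intertwines the two Schur multipliers. Write $h = \frac{d\nu}{d\mu} \geq 0$ for the Radon--Nikodym derivative (which exists since both measures are $\sigma$-finite and $\nu \ll \mu$), and define $U : L^{2}(X,\nu) \to L^{2}(X,\mu)$ by $Uf = h^{1/2} f$. Since $\int |f|^{2}\, d\nu = \int |f|^{2} h\, d\mu = \int |Uf|^{2}\, d\mu$, the map $U$ is an isometry with $U^{*}U = \mathrm{Id}_{L^{2}(X,\nu)}$, and a direct computation gives $(U^{*}g)(y) = h(y)^{-1/2} g(y)$ on $\{h>0\}$ for $g \in L^{2}(X,\mu)$ (the set $\{h=0\}$ being $\nu$-null). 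In particular, conjugation $A \mapsto UAU^{*}$ is an isometric embedding of $S^{p}(L^{2}(X,\nu))$ into $S^{p}(L^{2}(X,\mu))$ for every $p$, because the nonzero singular values of $UAU^{*}$ coincide with those of $A$.

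Next I would compute the effect of this conjugation at the level of kernels. If $A \in S^{2}(L^{2}(X,\nu))$ corresponds to a kernel $K \in L^{2}(X\times X, \nu\otimes\nu)$, then $UAU^{*}$ corresponds to the $\mu\otimes\mu$-kernel $\widetilde K(x,y) = h(x)^{1/2} K(x,y) h(y)^{1/2}$; one checks $\|\widetilde K\|_{L^{2}(\mu\otimes\mu)} = \|K\|_{L^{2}(\nu\otimes\nu)}$, consistent with the isometry. The crucial point is that multiplying $\widetilde K$ by $\varphi(x,y)$ produces exactly $h(x)^{1/2}(\varphi K)(x,y) h(y)^{1/2}$, which is the kernel of $U M_{\varphi}^{(\nu)}(A) U^{*}$ (where I write $M_{\varphi}^{(\nu)}$, $M_{\varphi}^{(\mu)}$ for the multipliers on the $\nu$- and $\mu$-sides). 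This yields the intertwining identity
\begin{equation*}
M_{\varphi}^{(\mu)}(UAU^{*}) = U\, M_{\varphi}^{(\nu)}(A)\, U^{*},
\end{equation*}
valid for every $A \in S^{2} \cap S^{p}$ on the $\nu$-side, noting that $UAU^{*}$ lies in $S^{2}\cap S^{p}$ on the $\mu$-side so that $M_{\varphi}^{(\mu)}$ is defined on it.

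The norm inequality then follows immediately: assuming $\|\varphi\|_{MS^{p}(L^2(X,\mu))} < \infty$, for $A \in S^{2} \cap S^{p}$ on $L^{2}(X,\nu)$ we obtain
\begin{equation*}
\|M_{\varphi}^{(\nu)}(A)\|_{S^{p}(\nu)} = \|U M_{\varphi}^{(\nu)}(A) U^{*}\|_{S^{p}(\mu)} = \|M_{\varphi}^{(\mu)}(UAU^{*})\|_{S^{p}(\mu)} \leq \|\varphi\|_{MS^{p}(L^2(X,\mu))}\, \|A\|_{S^{p}(\nu)},
\end{equation*}
using that conjugation by $U$ is isometric on $S^{p}$. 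Taking the supremum over the dense set $S^{2}\cap S^{p}$ gives $\|\varphi\|_{MS^{p}(L^2(X,\nu))} \leq \|\varphi\|_{MS^{p}(L^2(X,\mu))}$.

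For the completely bounded norm I would avoid repeating the argument and instead invoke Lemma \ref{lemma=normeCBdesmult}. Setting $\widetilde X = X \times \N$ equipped with $\nu \otimes c$ (resp. $\mu \otimes c$), where $c$ is the counting measure, one has $\nu \otimes c \ll \mu \otimes c$, and the symbol $\widetilde\varphi(x,i,y,j) = \varphi(x,y)$ is unchanged. Applying the norm inequality already established on $\widetilde X$ gives $\|\widetilde\varphi\|_{MS^{p}(L^2(\widetilde X, \nu\otimes c))} \leq \|\widetilde\varphi\|_{MS^{p}(L^2(\widetilde X, \mu\otimes c))}$, and Lemma \ref{lemma=normeCBdesmult} identifies these two quantities with $\|\varphi\|_{cbMS^{p}(L^2(X,\nu))}$ and $\|\varphi\|_{cbMS^{p}(L^2(X,\mu))}$ respectively. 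The main technical care lies in the kernel bookkeeping and in checking that $M_{\varphi}^{(\mu)}$ preserves the corner associated with the range projection $UU^{*}$ (it does, since multiplication by $\varphi$ does not enlarge the support of $\widetilde K$); everything else is formal once the intertwining identity is in place.
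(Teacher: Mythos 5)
Your proof is correct and is essentially the paper's own argument: both conjugate by the isometry $U$ of multiplication by $\sqrt{d\nu/d\mu}$ and use the intertwining identity $M_\varphi(UAU^*) = U M_\varphi(A) U^*$ together with the fact that $A \mapsto UAU^*$ preserves $S^p$ norms. The only cosmetic difference is in the cb-norm: the paper simply observes that this embedding is \emph{completely} isometric, so the same argument applies, whereas you reduce the cb-norm to the plain norm on $X \times \N$ via Lemma \ref{lemma=normeCBdesmult}; both routes are immediate.
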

\begin{proof} If $f=d\nu/d\mu$ is the Radon-Nikodym derivative, and if $U$
  denotes the multiplication by $\sqrt f$ from $L^2(X,\nu)$ to $L^2(X,\mu)$
  ($U$ is an isometry), then $A\mapsto UAU^*$ defines a (completely)
  isometric embedding of $S^p(L^2(X,\nu))$ into $S^p(L^2(X,\nu))$ such that
  $M_\varphi(U A U^*) = U M_\varphi(A) U^*$.
\end{proof}

\subsection{Change of $\sigma$-algebra.} \label{sec=digression} We observe
basic properties of the Schur multipliers relative to conditional
expectations. Except from Lemma \ref{lemma=chgt_de_tribu} below, this
subsection is independent of the rest of the paper. We will mainly work in
the following situation:
\begin{equation}\label{eq=hyp_esp_mesures}
\begin{array}{c} \mathcal A \subset \mathcal B\textrm{ are
    $\sigma$-algebras on $X$}\\
\mu \textrm{ is a measure on $(X,\mathcal B)$ that is $\sigma$-finite on }
  (X,\mathcal A)
\end{array}
\end{equation}
Note that this allows us to talk about the conditional expectation
from $L^\infty(X,\mathcal B,\mu)$ to $L^\infty(X,\mathcal A,\mu)$
(resp. from $L^\infty(X\times X,\mathcal B \otimes \mathcal B,\mu
\otimes \mu)$ to $L^\infty(X\times X,\mathcal B \otimes \mathcal B,\mu
\otimes \mu)$). When no confusion is possible we will simply denote
$L^2(X,\mathcal B,\mu)$ by $L^2(\mathcal B)$ and $L^2(X,\mathcal
A,\mu)$ by $L^2(\mathcal A)$.

The following lemma is essentially obvious:
\begin{lemma}\label{lemma=chgt_de_tribu}
  In the situation of \eqref{eq=hyp_esp_mesures}, if $\varphi \in
  L^\infty(X \times X, \mathcal B \otimes \mathcal B,\mu \otimes \mu)$,
  \[ \|\mathbb E\left[\varphi|\mathcal A \otimes \mathcal
    A\right]\|_{MS^p(L^2(\mathcal A))} \leq \|\varphi\|_{MS^p(L^2(\mathcal
    B))} .\] The same holds for the cb-norm.
\end{lemma}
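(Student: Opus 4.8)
The plan is to realise the operation $\varphi \mapsto \mathbb E[\varphi|\mathcal A \otimes \mathcal A]$ at the level of the multiplier itself, as a compression by the conditional expectation, and then to invoke the fact that corner compressions are completely contractive on $S^p$. Since $\mu$ is $\sigma$-finite on $\mathcal A$, the space $L^2(\mathcal A)$ is a closed subspace of $L^2(\mathcal B)$, and the conditional expectation $\mathbb E[\cdot|\mathcal A]$ is precisely the orthogonal projection $P$ of $L^2(\mathcal B)$ onto $L^2(\mathcal A)$. Writing $\iota : L^2(\mathcal A) \hookrightarrow L^2(\mathcal B)$ for the inclusion, so that $P = \iota^*$, the heart of the argument is the identity
\[ M_{\mathbb E[\varphi|\mathcal A \otimes \mathcal A]}(A) = \iota^*\, M_\varphi(\iota A \iota^*)\, \iota \]
valid for every $A \in S^2 \cap S^p(L^2(\mathcal A))$.

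To prove this identity I would work in the Hilbert--Schmidt ($p=2$) picture, where an operator in $S^2(L^2(\mathcal A))$ is a kernel $a \in L^2(X \times X, \mathcal A \otimes \mathcal A)$ and $M_\varphi$ acts by pointwise multiplication of kernels. A direct computation shows that $\iota A \iota^*$ carries the same kernel $a$, now regarded as an $\mathcal A \otimes \mathcal A$-measurable kernel on $L^2(\mathcal B)$; here one uses that $a(x,\cdot)$ is $\mathcal A$-measurable, so that integrating it against $\mathbb E[f|\mathcal A]$ equals integrating it against $f$. Applying $M_\varphi$ multiplies the kernel by $\varphi$, and the outer compression $\iota^*(\cdot)\iota$ amounts to testing the resulting kernel $\varphi\cdot a$ against $\mathcal A$-measurable functions in \emph{both} variables: against a function of the form $w(x)\,a(x,y)\,g(y)$, which is $\mathcal A \otimes \mathcal A$-measurable, the integral of $\varphi\, w\, a\, g$ only sees $\mathbb E[\varphi|\mathcal A \otimes \mathcal A]$, by the defining property of conditional expectation. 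This gives the identity on $S^2 \cap S^p$.

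Granting the identity, the conclusion is immediate. The map $A \mapsto \iota A \iota^*$ places $A$ in a corner of $S^p(L^2(\mathcal B))$, relative to the decomposition $L^2(\mathcal B) = L^2(\mathcal A) \oplus L^2(\mathcal A)^\perp$, and is a complete isometry, while $B \mapsto \iota^* B \iota$ is the corner compression, hence a complete contraction on $S^p$; both facts are standard for the operator space structure on $S^p$. Composing, one obtains
\[\|\mathbb E[\varphi|\mathcal A \otimes \mathcal A]\|_{MS^p(L^2(\mathcal A))} \leq \|M_\varphi\|_{S^p(L^2(\mathcal B)) \to S^p(L^2(\mathcal B))} = \|\varphi\|_{MS^p(L^2(\mathcal B))},\]
and the same chain of complete contractions yields the cb-estimate. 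Since $M_{\mathbb E[\varphi|\mathcal A \otimes \mathcal A]}$ is by definition the extension from the dense subspace $S^2 \cap S^p$, extending the identity from $S^2 \cap S^p$ to all of $S^p$ by continuity is automatic.

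I expect the only genuinely delicate point to be the measure-theoretic bookkeeping in the kernel identity, namely arranging for \emph{both} variables to be conditioned correctly: the outer projection $\iota^*$ conditions the first variable, whereas the second variable gets conditioned ``for free'' because one integrates against $\mathcal A$-measurable kernels. The tower property $\mathbb E[\mathbb E[\varphi|\mathcal A \otimes \mathcal B]\,|\,\mathcal A \otimes \mathcal A] = \mathbb E[\varphi|\mathcal A \otimes \mathcal A]$ is what reconciles the two. Everything else---complete contractivity of corners and density of $S^2 \cap S^p$ in $S^p$---is routine, which is consistent with the lemma being ``essentially obvious''.
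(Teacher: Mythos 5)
Your proof is correct and is essentially the paper's own argument: the paper likewise conjugates by the inclusion isometry $V:L^2(\mathcal A)\to L^2(\mathcal B)$, uses that $T\mapsto VTV^*$ is completely isometric and $T\mapsto V^*TV$ completely contractive on $S^p$, and identifies $V^*$ with the conditional expectation so that the compression of $M_\varphi$ is exactly $M_{\mathbb E[\varphi|\mathcal A\otimes\mathcal A]}$. Your kernel computation just makes explicit the commuting-diagram step that the paper leaves as ``it remains to notice''.
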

\begin{proof}
Let $V:L^2(\mathcal A) \to L^2(\mathcal B)$ be the
  isometry corresponding to the inclusion map. The map $\iota:
  B(L^2(\mathcal A)) \to B(L^2(\mathcal B))$ which maps $T$ to $VTV^*$ is a
  trace preserving $*$-homomorphism (and hence induces a complete isometry
  $S^p(L^2(\mathcal A)) \to S^p(L^2(\mathcal B))$), and the projection
  $P:B(L^2(\mathcal B))) \to B(L^2(\mathcal A))$ mapping $T$ to $V^*TV$ is
  also completely contractive on $S^p$. It remains to notice that $V^*:
  L^2(\mathcal B) \to L^2(\mathcal A)$ corresponds to the conditional
  expectation on $\mathcal A$, which implies that the following diagram
  commutes:
\[ \xymatrix{ {S^p (L^2(\mathcal B))} \ar[r]^{M_\varphi}  & {S^p (L^2(\mathcal B))}\ar[d]^{P} \\
  {S^p(L^2(\mathcal A))} \ar[u]^{\iota}\ar[r]^{M_{\mathbb E\left[\varphi|\mathcal A \otimes \mathcal
    A\right]}} & {S^p(L^2(\mathcal A))} }.\]
\end{proof}

In the vocabulary of martingales, the previous ideas become:
\begin{lemma}\label{lemma=martingales} Let $(X,\mathcal B,\mu)$ be a
  measure space and $(\mathcal B_n)_{n \in \N}$ be a filtration. Assume
  that $\mu$ is $\sigma$-finite on $(X,\mathcal B_n)$ for all $n$, and that
  $\mathcal B$ is the $\sigma$-algebra generated by $ \cup_n \mathcal
  B_n$. For any $f \in L^\infty(X \times X,\mathcal B \otimes \mathcal
  B,\mu \otimes \mu)$ let $f_n \in L^\infty(X \times X,\mathcal B_n \otimes
  \mathcal B_n,\mu \otimes \mu)$ be the conditional expectation. Then
\begin{align}
\|f\|_{MS^p(L^2(\mathcal B))} & = \lim_{n\to \infty} \nearrow
\|f_n\|_{MS^p(L^2(\mathcal B_n))}\\
\|f\|_{cbMS^p(L^2(\mathcal B))} & = \lim_{n\to \infty} \nearrow
\|f_n\|_{cbMS^p(L^2(\mathcal B_n))}
\end{align}
\end{lemma}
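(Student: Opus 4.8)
The plan is to prove the two inequalities between $\|f\|_{MS^p(L^2(\mathcal B))}$ and $L:=\lim_n \|f_n\|_{MS^p(L^2(\mathcal B_n))}$, doing the norm statement first and deducing the cb-statement by a product trick. First I would establish that the sequence $\|f_n\|_{MS^p(L^2(\mathcal B_n))}$ is nondecreasing and bounded above by $\|f\|_{MS^p(L^2(\mathcal B))}$; this already yields the existence of the increasing limit $L$ and the bound $L\le\|f\|_{MS^p(L^2(\mathcal B))}$. It is a direct application of Lemma \ref{lemma=chgt_de_tribu}: since $\mathcal B_n\subset\mathcal B_{n+1}\subset\mathcal B$ and $\mu$ is $\sigma$-finite on each $\mathcal B_n$, the tower property gives $f_n=\mathbb E[f_{n+1}|\mathcal B_n\otimes\mathcal B_n]=\mathbb E[f|\mathcal B_n\otimes\mathcal B_n]$, whence $\|f_n\|_{MS^p(L^2(\mathcal B_n))}\le\|f_{n+1}\|_{MS^p(L^2(\mathcal B_{n+1}))}$ and $\|f_n\|_{MS^p(L^2(\mathcal B_n))}\le\|f\|_{MS^p(L^2(\mathcal B))}$ (and likewise for the cb-norms).

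The real content is the reverse inequality $\|f\|_{MS^p(L^2(\mathcal B))}\le L$, which I would obtain as a lower semicontinuity statement. Let $V_n:L^2(\mathcal B_n)\to L^2(\mathcal B)$ be the inclusion isometry and $P_n=V_nV_n^*$ the orthogonal projection onto $L^2(\mathcal B_n)$, which on $L^2$ is the conditional expectation $\mathbb E[\cdot|\mathcal B_n]$. Because $\mathcal B=\sigma(\bigcup_n\mathcal B_n)$ and $\mu$ is $\sigma$-finite on every $\mathcal B_n$ (hence on $\mathcal B$), the $L^2$ martingale convergence theorem shows $\bigcup_nL^2(\mathcal B_n)$ is dense in $L^2(\mathcal B)$, i.e. $P_n\to\Id$ strongly. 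Now fix $A\in S^2\cap S^p(L^2(\mathcal B))$ and set $C_n=V_n^*AV_n\in S^2\cap S^p(L^2(\mathcal B_n))$ and $A_n=V_nC_nV_n^*=P_nAP_n$. The commuting square in the proof of Lemma \ref{lemma=chgt_de_tribu} (with $\mathcal A=\mathcal B_n$) reads $M_{f_n}(C)=V_n^*M_f(V_nCV_n^*)V_n$, so taking $C=C_n$ gives $M_{f_n}(C_n)=V_n^*M_f(A_n)V_n$ and therefore
\[\|P_nM_f(A_n)P_n\|_p=\|M_{f_n}(C_n)\|_p\le\|f_n\|_{MS^p(L^2(\mathcal B_n))}\,\|C_n\|_p\le L\,\|A\|_p,\]
using $\|C_n\|_p\le\|A\|_p$. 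Note that this step never uses pointwise convergence $f_n\to f$; it exploits only the diagram applied to the full symbol $f$.

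It remains to let $n\to\infty$. Since $M_f$ acts on $S^2=L^2(X\times X)$ as multiplication by $f\in L^\infty$, it is bounded there with norm $\|f\|_{L^\infty}$, and $A_n\to A$ in $S^2$ (because $P_n\to\Id$ strongly), so $P_nM_f(A_n)P_n\to M_f(A)$ in $S^2$. The displayed inequality keeps this sequence bounded by $L\|A\|_p$ in $S^p$, so I would conclude that $M_f(A)\in S^p$ with $\|M_f(A)\|_p\le L\|A\|_p$ by lower semicontinuity of the $S^p$-norm under $S^2$-convergence of $S^p$-bounded sequences. As $S^2\cap S^p$ is dense in $S^p$, this shows $M_f$ extends to $S^p$ with $\|f\|_{MS^p(L^2(\mathcal B))}\le L$, which combined with the first paragraph gives equality. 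The cb-statement then follows by applying the norm statement on $\widetilde X=X\times\N$ to the symbol $\widetilde f(x,i,y,j)=f(x,y)$ with the filtration $\mathcal B_n\otimes 2^{\N}$: by Lemma \ref{lemma=normeCBdesmult} the cb-norms on $S^p(L^2(\mathcal B_n))$ are the norms on $S^p(L^2(\widetilde{\mathcal B}_n))$, and $\mathbb E[\widetilde f|\widetilde{\mathcal B}_n\otimes\widetilde{\mathcal B}_n]=\widetilde{f_n}$, so the cb-identity is exactly the norm-identity for $\widetilde f$.

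I expect the main obstacle to be that very last passage to the limit, namely justifying lower semicontinuity of $\|\cdot\|_p$ under $S^2$-convergence uniformly in $p$. For $1<p<\infty$ this is weak lower semicontinuity in the reflexive space $S^p$, identifying the weak limit with the $S^2$-limit through trace duality against finite rank operators; for $p=1$ one uses weak-$*$ compactness in $S^1=\mathcal K(\ell^2)^*$; and for $p=\infty$ it is immediate from $\|\cdot\|_\infty\le\|\cdot\|_2$, which turns $S^2$-convergence into operator-norm convergence. The only other point needing care is the strong convergence $P_n\to\Id$, i.e. the density/martingale statement under the stated $\sigma$-finiteness hypotheses.
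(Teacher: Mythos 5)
Your proof is correct, and its skeleton agrees with the paper's at the two ends: monotonicity and the upper bound $L\le\|f\|_{MS^p(L^2(\mathcal B))}$ come from Lemma \ref{lemma=chgt_de_tribu} exactly as in the paper, and the reduction of the cb-statement to the norm statement via $\widetilde X=X\times\N$ and Lemma \ref{lemma=normeCBdesmult} is also the paper's route. Where you genuinely diverge is the core inequality $\|f\|_{MS^p(L^2(\mathcal B))}\le L$. The paper dualizes: it fixes $A\in S^p\cap S^2$ and $B\in S^{p'}\cap S^2$, uses density of $\bigcup_n S^p(L^2(\mathcal B_n))$ to assume both are $\mathcal B_n\otimes\mathcal B_n$-measurable, and then exploits the \emph{exact} identity $Tr(M_f(A)B)=Tr(M_{f_n}(A)B)$ (immediate from \eqref{eq=lien_trace_integrale} and the defining property of conditional expectation), so that the bound $|Tr(M_f(A)B)|\le C\|A\|_p\|B\|_{p'}$ needs no limiting procedure at the operator level. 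You instead stay on the operator side: compress to $A_n=P_nAP_n$, read the commuting square of Lemma \ref{lemma=chgt_de_tribu} as the identity $M_{f_n}(V_n^*AV_n)=V_n^*M_f(A_n)V_n$, obtain the uniform bound $\|P_nM_f(A_n)P_n\|_p\le L\|A\|_p$, and conclude by lower semicontinuity of $\|\cdot\|_p$ along $S^2$-convergence. The two mechanisms are dual to one another (your semicontinuity step, for $p<\infty$, is itself proved by testing against finite-rank elements of $S^{p'}$, which is exactly the paper's bilinear-form computation), so neither is deeper; the paper's version is slightly leaner because it never has to discuss weak or weak-$*$ compactness, nor treat the cases $p=1,\infty$ separately, while yours has the merit of making explicit the compression $P_n\cdot P_n$ that the paper's density step ("we can therefore assume that $A$ belongs to $S^p\cap S^2(L^2(\mathcal B_n))$") leaves implicit — note that this reduction tacitly requires approximating $A$ simultaneously in $S^p$ and in $S^2$, which is precisely what $P_nAP_n$ does. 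Your flagged worries (semicontinuity uniformly in $p$, and $P_n\to\mathrm{Id}$ strongly from $L^2$ martingale convergence) are real but routine, and your sketches of them are sound.
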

By $l=\lim_{n\to \infty} \nearrow u_n$ we mean that the sequence $u_n$ is
non-decreasing and converging to $l$.
\begin{rem}
  This statement remains valid replacing $(\mathcal B_n)_{n \in \N}$ by a
    filtration $(\mathcal B_\alpha)_{\alpha \in A}$ with respect to any
    directed set $A$.
\end{rem}
\begin{proof}
  The equality for the cb-norm follows from the equality for the norm and
  Lemma \ref{lemma=normeCBdesmult}. So let us focus on the inequality for
  the norm. The fact that $\|f_n\|_{MS^p(L^2(\mathcal B_n))}$ grows with
  $n$ and stays smaller than $\|f\|_{MS^p(L^2(\mathcal B))}$ is Lemma
  \ref{lemma=chgt_de_tribu}. Denote by $C$ its limit. We have to prove that
  for any $A \in S^p \cap S^2(L^2(\mathcal B))$ and $B \in S^{p'} \cap
  S^2(L^2(\mathcal B))$, we have that
  \begin{equation}\label{eq=a_prouver}
    |Tr(M_f(A) B) | \leq C \|A\|_{p} \|B\|_{p'}.\end{equation}
  But by the assumption that $\cup_n \mathcal B_n$ generate $\mathcal B$,
  $\cup_n S^p(L^2(\mathcal B_n)) $ is dense (for the norm $\|\cdot\|_p$) in
  $S^p(L^2(\mathcal B))$. We can therefore assume that $A$ (resp. $B$)
  belongs to $S^p \cap S^2 (L^2(\mathcal B_n))$ (resp. $S^{p'} \cap S^2
  (L^2(\mathcal B_n))$). But then \eqref{eq=a_prouver} follows from the
  fact $Tr(M_f(A) B) = Tr(M_{f_n}(A) B)$, which can be checked directly: let $g_A$ and $g_B \in L^2(X\times X,\mathcal B_n
  \otimes B_n,\mu \otimes \mu)$ be the functions corresponding to $A$ and
  $B$ with the identification $S^2(L^2(X)) = L^2(X \otimes X)$. Then 
  \begin{align*} Tr(M_f(A) B) &=& \int f(x,y) g_A(x,y) g_B(y,x) d\mu(x)
    d\mu(y)\\
    & =& \int f_n(x,y) g_A(x,y) g_B(y,x) d\mu(x) d\mu(y)\\
& = & Tr(M_{f_n}(A) B).
\end{align*}
\end{proof}

For the cb-norm we even have the following generalization of Remark
\ref{rem=produit_cartesien}~:
\begin{lemma}\label{lem=cnnorm_inv_par_tribu}   In the situation of \eqref{eq=hyp_esp_mesures}, if $\varphi \in
  L^\infty(X \times X, \mathcal A \otimes \mathcal A,\mu \otimes \mu)$,
\[  \|\varphi\|_{cbMS^p(L^2(X,\mathcal A))} =
\|\varphi\|_{cbMS^p(L^2(X,\mathcal B))}.\]

Therefore, if $\varphi \in L^\infty(X \times X, \mathcal B \otimes \mathcal
B,\mu \otimes \mu)$, 
\[ \|\mathbb E\left[\varphi|\mathcal A \otimes \mathcal
    A\right]\|_{cbMS^p(L^2(X,\mathcal B,\mu))} \leq
\|\varphi\|_{cbMS^p(L^2(X,\mathcal B,\mu))}.\]
\end{lemma}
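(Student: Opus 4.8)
The plan is to establish first the equality for an $\mathcal{A}\otimes\mathcal{A}$-measurable symbol, and then to deduce the displayed inequality. For the latter, set $\psi=\mathbb E[\varphi|\mathcal{A}\otimes\mathcal{A}]$; applying the equality to the $\mathcal{A}\otimes\mathcal{A}$-measurable function $\psi$ gives $\|\psi\|_{cbMS^p(L^2(\mathcal{A}))}=\|\psi\|_{cbMS^p(L^2(\mathcal{B}))}$, while the cb-version of Lemma \ref{lemma=chgt_de_tribu} gives $\|\psi\|_{cbMS^p(L^2(\mathcal{A}))}\leq\|\varphi\|_{cbMS^p(L^2(\mathcal{B}))}$; combining the two yields the desired bound $\|\psi\|_{cbMS^p(L^2(\mathcal{B}))}\leq\|\varphi\|_{cbMS^p(L^2(\mathcal{B}))}$. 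As for the equality itself, one inequality is immediate: since $\mathbb E[\varphi|\mathcal{A}\otimes\mathcal{A}]=\varphi$, Lemma \ref{lemma=chgt_de_tribu} gives $\|\varphi\|_{cbMS^p(L^2(\mathcal{A}))}\leq\|\varphi\|_{cbMS^p(L^2(\mathcal{B}))}$. The whole lemma thus reduces to proving, for $\mathcal{A}\otimes\mathcal{A}$-measurable $\varphi$, the reverse inequality $\|\varphi\|_{cbMS^p(L^2(\mathcal{B}))}\leq\|\varphi\|_{cbMS^p(L^2(\mathcal{A}))}$, i.e. to upgrading Remark \ref{rem=produit_cartesien} from a genuine product $X\times\Omega$ to the ``fibered product'' of $\mathcal{B}$ over $\mathcal{A}$.

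For this reverse inequality I would use Lemma \ref{lemma=normeCBdesmult} to replace both completely bounded norms by the ordinary $MS^p$-norms of $\widetilde\varphi(x,i,y,j)=\varphi(x,y)$ on $L^2(\mathcal{B})\otimes\ell^2(\N)$ and on $L^2(\mathcal{A})\otimes\ell^2(\N)$. Disintegrating $L^2(X,\mathcal{B},\mu)$ over $(X,\mathcal{A},\mu)$ as a direct integral $\int^{\oplus}H_x\,d\mu(x)$ of fibers, I would write an operator $T$ on $L^2(\mathcal{B})\otimes\ell^2$ as a field of blocks $T_{x,y}\colon H_y\otimes\ell^2\to H_x\otimes\ell^2$ indexed by base points $(x,y)$; since $\widetilde\varphi$ does not depend on the fiber coordinates, $M_{\widetilde\varphi}$ acts simply by rescaling the block $T_{x,y}$ by the scalar $\varphi(x,y)$. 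This exhibits $M_{\widetilde\varphi}$ on the $\mathcal{B}$-side as a fiberwise amplification of the scalar multiplier $M_{\varphi}$ on $L^2(\mathcal{A})$. After tensoring with $\ell^2(\N)$ each nonzero fiber $H_x\otimes\ell^2$ is separable and infinite-dimensional, hence isometric to $\ell^2$; a measurable unitary field $U_x\colon H_x\otimes\ell^2\to\ell^2$ then assembles into a unitary $U$ from $L^2(\mathcal{B})\otimes\ell^2$ onto $L^2(\mathcal{A})\otimes\ell^2$ which is diagonal in the base point. Being block-diagonal over $\mathcal{A}$, $U$ satisfies $U M_{\widetilde\varphi}(T)U^*=M_{\widetilde\varphi}(UTU^*)$, and since $T\mapsto UTU^*$ is a complete isometry of $S^p$, the two $MS^p$-norms agree; Lemma \ref{lemma=normeCBdesmult} then returns the equality of the cb-norms.

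To make this rigorous while avoiding an explicit measurable selection, I would describe $U$ representation-theoretically: for a product symbol $\varphi(x,y)=a(x)\overline{b(y)}$ with $a,b\in L^\infty(\mathcal{A})$ one has $M_{\varphi}(T)=m_aT m_{\bar b}$, so $M_{\widetilde\varphi}$ lies in the von Neumann algebra generated by the left and right multiplications by $L^\infty(\mathcal{A})$; consequently any unitary $U$ intertwining the two multiplication representations of the abelian algebra $L^\infty(\mathcal{A})$ automatically conjugates $M_{\widetilde\varphi}$ on the $\mathcal{B}$-side to $M_{\widetilde\varphi}$ on the $\mathcal{A}$-side. Such a $U$ exists because, after tensoring with $\ell^2(\N)$, both representations have uniform infinite multiplicity, so the multiplicity theory of abelian von Neumann algebras makes them unitarily equivalent. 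The main obstacle is precisely this measure-theoretic input: the disintegration, the measurable choice of the $U_x$, and the multiplicity theory all require a standard Borel (countably generated) setting, whereas the lemma is stated for an arbitrary $\sigma$-finite $(X,\mathcal{B},\mu)$. I would remove this restriction by reduction: $\varphi$ is $\mathcal{A}_0\otimes\mathcal{A}_0$-measurable for some countably generated $\mathcal{A}_0\subseteq\mathcal{A}$, and by Lemma \ref{lemma=martingales} the norm $\|\widetilde\varphi\|_{MS^p(L^2(\mathcal{B})\otimes\ell^2)}$ is the increasing limit of the analogous norms over a filtration of countably generated sub-$\sigma$-algebras of $\mathcal{B}$ containing $\mathcal{A}_0$, so it suffices to treat the countably generated case. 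The remaining points --- passing from product symbols to general $\mathcal{A}\otimes\mathcal{A}$-measurable $\varphi$ by normality of $\varphi\mapsto M_{\widetilde\varphi}$, and checking that conjugation by the block-diagonal unitary is a complete $S^p$-isometry commuting with the block-scaling --- are routine.
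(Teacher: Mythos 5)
Your proof is correct, but its core runs along a genuinely different line from the paper's. The outer structure is the same in both: the displayed inequality follows from the equality together with the cb-version of Lemma \ref{lemma=chgt_de_tribu}; the inequality $\|\varphi\|_{cbMS^p(L^2(X,\mathcal A))}\leq\|\varphi\|_{cbMS^p(L^2(X,\mathcal B))}$ is Lemma \ref{lemma=chgt_de_tribu} applied to $\mathbb E[\varphi|\mathcal A\otimes\mathcal A]=\varphi$; and a martingale-type limit reduces the reverse inequality to a special case. The difference is which special case does the work. The paper reduces to \emph{finite} $\sigma$-algebras: it takes a filtration $(\mathcal B_n)$ of finite $\sigma$-subalgebras of $\mathcal A$ with $\varphi_n=\mathbb E[\varphi|\mathcal B_n\otimes\mathcal B_n]\to\varphi$ almost surely, observes that for finite $\mathcal B_n$ the equality $\|\varphi_n\|_{cbMS^p(L^2(X,\mathcal B_n))}=\|\varphi_n\|_{cbMS^p(L^2(X,\mathcal C))}$ (for $\mathcal C=\mathcal A$ or $\mathcal B$) is immediate --- the multiplier on the larger space is a blockwise inflation, which cb-norms do not detect --- and then passes to the limit by a dominated-convergence argument in the trace pairing. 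You reduce instead to \emph{countably generated} $\sigma$-algebras, using Lemma \ref{lemma=martingales} (with the remark on general directed filtrations) over the family of countably generated $\mathcal C$ with $\mathcal A_0\subset\mathcal C\subset\mathcal B$, all conditional expectations being $\varphi$ itself, and in that separable setting you invoke Hahn--Hellinger multiplicity theory: after tensoring with $\ell^2$, the two multiplication representations of $L^\infty(\mathcal A_0)$ have full central support and uniform infinite multiplicity, hence are unitarily equivalent, and any intertwiner conjugates one inflated multiplier onto the other --- first for product symbols $a(x)\overline{b(y)}$ with $a,b$ functions of the base variable only (which is exactly what the intertwining provides), then for all of $L^\infty(\mathcal A_0\otimes\mathcal A_0)$ by normality. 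The paper's route is more self-contained, using nothing beyond the lemmas of that section; yours yields a conceptually stronger conclusion --- the two multipliers are \emph{spatially} equivalent after stabilization, not merely of equal norm --- and delegates the limiting step to the already-proved Lemma \ref{lemma=martingales} instead of redoing it by hand. One detail to add to make your reduction airtight: before running it, enlarge $\mathcal A_0$ by a countable family of finite-measure sets of $\mathcal A$ covering $X$, so that $\mu$ is $\sigma$-finite on $\mathcal A_0$; this is needed both for the hypotheses of Lemma \ref{lemma=martingales} and for the separability of $L^2(X,\mathcal A_0,\mu)$ on which your appeal to multiplicity theory rests.
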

\begin{proof}
  The second statement is the combination of the first statement and of
  Lemma \ref{lemma=chgt_de_tribu} for the cb-norm. So let us focus on the
  first statement. It is immediate when $\mathcal A$ is finite.

To prove the general case we can first assume that $\mu$ is a finite
measure (replacing $\mu$ by $f\mu$ for some $\mathcal A$-measurable almost
everywhere positive function $f \in L^1(X,\mathcal A,\mu)$). Then consider
a filtration $(\mathcal B_n)_{n \geq 0}$ of finite $\sigma$-subalgebras of
$\mathcal A$, such that the corresponding martingale $(\varphi_n)_{n \geq
  0}$ converges almost surely. Since $\mathcal B_n$ is finite, we get,
using that $\mathcal B_n \subset \mathcal A$ (resp. $\mathcal B_n \subset
\mathcal B$) that
\begin{equation} \label{eq=egal_surABBn} \|\varphi_n\|_{cbMS^p(L^2(X,\mathcal A))} =
\|\varphi_n\|_{cbMS^p(L^2(X,\mathcal B_n))} =
\|\varphi_n\|_{cbMS^p(L^2(X,\mathcal B))}.
\end{equation} We claim that
$\|\varphi_n\|_{cbMS^p(L^2(X,\mathcal C))} \to
\|\varphi\|_{cbMS^p(L^2(X,\mathcal C))}$ for $\mathcal C=\mathcal A$ or
$\mathcal B$. This would conclude the proof. By Lemma \ref{lemma=chgt_de_tribu}
for the cb-norm and \eqref{eq=egal_surABBn}, it is enough to prove that
$\|\varphi\|_{cbMS^p(L^2(X,\mathcal C))} \leq \limsup_n
\|\varphi_n\|_{cbMS^p(L^2(X,\mathcal C))}$. Take $A \in S^2 \cap
S^p(L^2(X\times \N,\mathcal C\otimes \mathcal P(\N))$ and $B \in S^2 \cap
S^{p'}(L^2(X\times \N,\mathcal C\otimes \mathcal P(\N))$. Let $\widetilde
X=X\times \N$, and consider
$\widetilde \varphi_n \in L^\infty(\widetilde X\times \widetilde X)$ as in
Lemma \ref{lemma=normeCBdesmult}.
Since $\widetilde \varphi_n$ converges almost surely to $\widetilde
\varphi$ and $\sup_n \|\widetilde \varphi_n\|_{L^\infty} \leq
\|\varphi\|_\infty<\infty$, the dominated convergence Theorem and
\eqref{eq=lien_trace_integrale} imply that $\lim_n Tr(M_{\widetilde
  \varphi_n}(A) B) = Tr(M_\varphi(A) B)$. Hence,
  \[ \left| Tr(M_{\widetilde \varphi}(A) B) \right| \leq \limsup_n
  \|\widetilde \varphi_n\|_{MS^p(L^2(\widetilde X,\mathcal C))} \|A\|_p
  \|B\|{p'}.\] By Lemma \ref{lemma=normeCBdesmult}, this proves the claim
  because $S^2\cap S^{p}$ (resp. $S^2\cap S^{p'}$) is dense in $S^p$
  (resp. $S^{p'}$).
\end{proof}

We do not know the answer to the following question for $1<p\neq 2<\infty$,
although we suspect that the answer should be negative~:
\begin{question}\label{question=passage_a_esp_cond}
With the same assumptions as in Lemma \ref{lemma=chgt_de_tribu}, is it true
that
\[ \|\mathbb E\left[\varphi|\mathcal A \otimes \mathcal
    A\right]\|_{MS^p(L^2(X,\mathcal B,\mu))} \leq
\|\varphi\|_{MS^p(L^2(X,\mathcal B,\mu)}?\]
\end{question}

But we can prove that this question is related to Pisier's conjecture
\ref{conjecture=Pisier}~:
\begin{prop}\label{prop=conj_Pisier_equivalent} Fix $1 \leq p \leq \infty$ and $K \geq 1$. Then the following
  are equivalent:
\begin{enumerate}[(i)]
\item \label{ass_normcbdiscret} For all $n \in \N^*$, the norm and the
  cb-norm of a Schur multiplier on $S^p_n$ are equal.
\item \label{ass_normcbcont} For all $\sigma$-finite measure space
  $(X,\mathcal B,\mu)$ and $\varphi \in L^\infty(X \times X,\mathcal B
  \otimes \mathcal B,\mu \otimes \mu)$,
\[ \|\varphi\|_{MS^p(L^2(X,\mathcal B,\mu))} =
\|\varphi\|_{cbMS^p(L^2(X,\mathcal B,\mu))}.\]
\item \label{ass_esp_condi} For all measure spaces $(X,\mathcal B,\mu)$,
  all $\sigma$-subalgebras $\mathcal A \subset B$ such that $\mu$ is
  $\sigma$-finite on $(X,\mathcal A)$, and all $\varphi \in L^\infty(X
  \times X,\mathcal B \otimes \mathcal B,\mu \otimes \mu)$,
  \[ \|\mathbb E[\varphi|\mathcal A \otimes \mathcal A]
  \|_{MS^p(L^2(X,\mathcal B,\mu))} \leq \|\varphi\|_{MS^p(L^2(X,\mathcal
    B,\mu))}.\]
\end{enumerate}
\end{prop}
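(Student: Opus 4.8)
The plan is to establish the two-way equivalences $(\ref{ass_normcbdiscret})\Leftrightarrow(\ref{ass_normcbcont})$ and $(\ref{ass_normcbcont})\Leftrightarrow(\ref{ass_esp_condi})$. The implication $(\ref{ass_normcbcont})\Rightarrow(\ref{ass_normcbdiscret})$ is immediate: specialize to $X=\{1,\dots,n\}$ with the counting measure, for which $S^p(L^2(X))=S^p_n$ and the continuous and matricial notions of (cb-)multiplier norm coincide. For $(\ref{ass_normcbdiscret})\Rightarrow(\ref{ass_normcbcont})$ I would use Lemma~\ref{lemma=martingales}. Given a $\sigma$-finite $(X,\mathcal B,\mu)$ and $\varphi\in L^\infty(X\times X)$, by Lemma~\ref{lemma=change_of_measure} I may assume $\mu$ finite, and I choose a filtration of finite $\sigma$-subalgebras $\mathcal B_k$ generating $\mathcal B$, with conditional expectations $\varphi_k=\mathbb E[\varphi|\mathcal B_k\otimes\mathcal B_k]$. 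On each $\mathcal B_k$ the space $L^2(\mathcal B_k)$ is finite-dimensional, so after a change of measure (Lemma~\ref{lemma=change_of_measure}) the multiplier $M_{\varphi_k}$ is exactly a Schur multiplier on some $S^p_{m}$; hypothesis (\ref{ass_normcbdiscret}) then gives $\|\varphi_k\|_{MS^p(L^2(\mathcal B_k))}=\|\varphi_k\|_{cbMS^p(L^2(\mathcal B_k))}$. Passing to the increasing limit and invoking the two equalities of Lemma~\ref{lemma=martingales} yields $\|\varphi\|_{MS^p(L^2(\mathcal B))}=\|\varphi\|_{cbMS^p(L^2(\mathcal B))}$, i.e.\ (\ref{ass_normcbcont}).

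The implication $(\ref{ass_normcbcont})\Rightarrow(\ref{ass_esp_condi})$ is a formal chain. Set $\psi=\mathbb E[\varphi|\mathcal A\otimes\mathcal A]$. Since $\psi$ is $\mathcal A\otimes\mathcal A$-measurable, Lemma~\ref{lem=cnnorm_inv_par_tribu} gives $\|\psi\|_{cbMS^p(L^2(\mathcal B))}=\|\psi\|_{cbMS^p(L^2(\mathcal A))}$, and the cb-version of Lemma~\ref{lemma=chgt_de_tribu} gives $\|\psi\|_{cbMS^p(L^2(\mathcal A))}\le\|\varphi\|_{cbMS^p(L^2(\mathcal B))}$. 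Applying hypothesis (\ref{ass_normcbcont}) at both ends — to rewrite the cb-norm of $\psi$ on $\mathcal B$ as its norm and the cb-norm of $\varphi$ on $\mathcal B$ as its norm — produces $\|\psi\|_{MS^p(L^2(\mathcal B))}\le\|\varphi\|_{MS^p(L^2(\mathcal B))}$, which is (\ref{ass_esp_condi}).

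The remaining and decisive implication is $(\ref{ass_esp_condi})\Rightarrow(\ref{ass_normcbcont})$. Using Remark~\ref{rem=produit_cartesien} I would fix a probability space $(\Omega,\nu)$ with $L^2(\Omega,\nu)$ infinite-dimensional, put $\widetilde X=X\times\Omega$, let $\widetilde{\mathcal B}=\mathcal B\otimes\Sigma_\Omega$ be the full product algebra and $\mathcal A=\mathcal B\otimes\{\emptyset,\Omega\}$ the sub-$\sigma$-algebra forgetting the $\Omega$-coordinate. With $\widetilde\varphi(x,s,y,t)=\varphi(x,y)$, Remark~\ref{rem=produit_cartesien} identifies $\|\varphi\|_{cbMS^p(L^2(X))}$ with $\|\widetilde\varphi\|_{MS^p(L^2(\widetilde X,\widetilde{\mathcal B}))}$, while $\|\varphi\|_{MS^p(L^2(X))}=\|\widetilde\varphi\|_{MS^p(L^2(\widetilde X,\mathcal A))}$ because $L^2(\mathcal A)\simeq L^2(X)$ isometrically (here $\nu$ is a probability measure). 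As $\widetilde\varphi$ is $\mathcal A\otimes\mathcal A$-measurable, the content of (\ref{ass_normcbcont}) reduces to the \emph{reverse monotonicity} $\|\widetilde\varphi\|_{MS^p(L^2(\widetilde{\mathcal B}))}\le\|\widetilde\varphi\|_{MS^p(L^2(\mathcal A))}$, the opposite inequality being the trivial half of Lemma~\ref{lemma=chgt_de_tribu}. To extract this from (\ref{ass_esp_condi}), the natural route is to produce a symbol $\Phi$ on $\widetilde X\times\widetilde X$ whose fiberwise average $\mathbb E[\Phi|\mathcal A\otimes\mathcal A]$ equals $\widetilde\varphi$ while $\|\Phi\|_{MS^p(L^2(\widetilde{\mathcal B}))}\le\|\varphi\|_{MS^p(L^2(X))}$; then (\ref{ass_esp_condi}) applied to $\Phi$ closes the argument.

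The hard part will be exactly this lift $\Phi$, which is the real heart of the proposition. Tensor lifts $\Phi=\varphi\otimes g$ only reproduce the \emph{cb}-norm of $\varphi$, and the plain multiplier norm admits no known factorization — this is precisely the difficulty behind Pisier's Conjecture~\ref{conjecture=Pisier} and Question~\ref{question=passage_a_esp_cond}. The plan is therefore to realize $\Phi$ not as a tensor but by spreading a norming configuration for $M_\varphi$ across the infinite fibers of $\Omega$; equivalently, to transfer, through the duality \eqref{eq=lien_trace_integrale} and the density of $S^2\cap S^p$, a pair $(A,B)$ nearly norming $\|\widetilde\varphi\|_{MS^p(\widetilde{\mathcal B})}$ down to a pair on $L^2(X)$ of no larger $S^p$- and $S^{p'}$-norm, the fiber-averaging being controlled precisely by hypothesis (\ref{ass_esp_condi}). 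I expect this transfer step, rather than any of the formal manipulations above, to be where the argument must do genuine work.
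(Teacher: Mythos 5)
Your treatment of (\ref{ass_normcbdiscret})$\Leftrightarrow$(\ref{ass_normcbcont}) and of (\ref{ass_normcbcont})$\Rightarrow$(\ref{ass_esp_condi}) is correct and coincides with the paper's argument (one caveat: for a general $\sigma$-finite $(X,\mathcal B,\mu)$ there need not exist a \emph{sequence} of finite $\sigma$-subalgebras generating $\mathcal B$, so you should invoke the remark following Lemma \ref{lemma=martingales} and run the martingale argument over the directed family of \emph{all} finite $\sigma$-subalgebras, which is what the paper does). The genuine gap is in the decisive implication (\ref{ass_esp_condi})$\Rightarrow$(\ref{ass_normcbcont}). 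You reduce it, correctly, to producing a symbol $\Phi$ on $\widetilde X\times\widetilde X$ whose conditional expectation onto $\mathcal A\otimes\mathcal A$ recovers $\widetilde\varphi$ while $\|\Phi\|_{MS^p(L^2(\widetilde{\mathcal B}))}\leq\|\varphi\|_{MS^p(L^2(X))}$ --- but you never construct $\Phi$; you explicitly defer it as ``the real heart of the proposition'' and offer only a vague plan (``spreading a norming configuration'', ``transferring a norming pair $(A,B)$''). Since everything else in the proposition is formal bookkeeping, leaving this step open means the implication, and hence the proposition, is not proved.

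What closes the gap is much simpler than your plan suggests, and it contradicts your claim that ``tensor lifts $\Phi=\varphi\otimes g$ only reproduce the cb-norm'': the right lift \emph{is} a tensor lift, just not the constant one you dismissed. Take $\Omega=\N$, fix $\eps>0$, give $\N$ the probability $P_\eps(0)=1-\eps$, $P_\eps(i)=\eps 2^{-i}$ for $i>0$, and set $\Phi(x,i,y,j)=\varphi(x,y)$ if $i=j=0$ and $\Phi=0$ otherwise, \emph{i.e.} $\Phi=\varphi\otimes(\chi_{\{0\}}\otimes\chi_{\{0\}})$. Then $M_\Phi$ is the compression to the corner block $L^2(X\times\{0\})$ followed by a copy of $M_\varphi$, and since multiplier norms are insensitive to rescaling the measure (Lemma \ref{lemma=change_of_measure}), one gets the exact equality $\|\Phi\|_{MS^p(L^2(\widetilde X,\mu\otimes P_\eps))}=\|\varphi\|_{MS^p(L^2(X,\mu))}$: it is the constant lift $g\equiv 1$ that produces the cb-norm (Remark \ref{rem=produit_cartesien}), not this corner lift. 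On the other side, $\mathbb E[\Phi|\mathcal A\otimes\mathcal A]=(1-\eps)^2\,\widetilde\varphi$ (the exact constant is immaterial, as it tends to $1$), and since $P_\eps$ is equivalent to the counting measure, Lemma \ref{lemma=normeCBdesmult} together with Lemma \ref{lemma=change_of_measure} identifies $\|\widetilde\varphi\|_{MS^p(L^2(\widetilde X,\mu\otimes P_\eps))}$ with $\|\varphi\|_{cbMS^p(L^2(X,\mu))}$. Applying hypothesis (\ref{ass_esp_condi}) to $\Phi$ thus gives $(1-\eps)^2\|\varphi\|_{cbMS^p(L^2(X,\mu))}\leq\|\varphi\|_{MS^p(L^2(X,\mu))}$, and letting $\eps\to 0$ yields (\ref{ass_normcbcont}). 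So the correct relaxation of your reduction is to ask that $\mathbb E[\Phi|\mathcal A\otimes\mathcal A]$ equal $\widetilde\varphi$ only up to a factor tending to $1$; insisting on exact equality and on a ``transfer of norming pairs'' steers you toward a harder problem than the one that actually needs solving.
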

\begin{rem}
  In fact the proof shows more generally that Pisier's conjecture
  \ref{conjecture=Pisier} is equivalent to the fact that there exists
  $(X,\mu)$, $\mathcal A$, $\mathcal B$ and $\varphi$ as in
  (\ref{ass_esp_condi}) such that $\|\varphi\|_{MS^p(L^2(X,\mathcal
    B,\mu))}<\infty$ but the Schur multiplier with symbol
  $\mathbb E\left[\varphi|\mathcal A \otimes \mathcal
    A\right]$ is not bounded on $MS^p(L^2(X,\mathcal
    B,\mu))$.
\end{rem}
\begin{proof}
  First remark that since any $\sigma$-finite measure is equivalent to a
  probability measure, both assertions (\ref{ass_normcbcont}) and
  (\ref{ass_esp_condi}) are equivalent to the same assertions with $\mu$
  being a probability measure.

  The assertion (\ref{ass_normcbdiscret}) is just (\ref{ass_normcbcont})
  restricted to the case when $\mathcal B$ is finite. Thus
  (\ref{ass_normcbcont}) implies (\ref{ass_normcbdiscret}) and the other
  direction follows by Lemma \ref{lemma=martingales} (or rather the remark
  following, applied to the filtration of all finite $\sigma$-subalgebras
  of $\mathcal B$, provided that $\mu$ is finite).

  $(\ref{ass_normcbcont})\Rightarrow(\ref{ass_esp_condi})$ follows from
  Lemma \ref{lem=cnnorm_inv_par_tribu}.

  Let us prove now that
  $(\ref{ass_esp_condi})\Rightarrow(\ref{ass_normcbcont})$. Let
  $(X,\mathcal B,\mu)$ be a $\sigma$-finite measure space and $\varphi \in
  L^\infty(X \times X)$. Let $\widetilde X = X \times \N$ and define
  $\widetilde \varphi$ on $\widetilde X \times \widetilde X$ by
\[\widetilde \varphi(x,i,y,j)=\left\{\begin{array}{ll}\varphi(x,y) & \textrm{if
    }i=j=0\\ 0 & \textrm{otherwise.}\end{array}\right.\]
Fix $\eps>0$ and consider the probability measure $P_\eps$ on $\N$ such that
$P_\eps(0) = 1-\eps$ and $P_\eps(i) = \eps 2^{-i}$ if $i>0$. Let $\mathcal
B_1= \mathcal B \otimes \mathcal P(\N)$ and $\mathcal A_1 = \mathcal B
\otimes \{ \emptyset; \N\}$. Then the conditional expectation of
$\widetilde \varphi$ with respect to $P_\eps$ on $\mathcal A_1 \otimes \mathcal A_1$ is $\mathbb E[\widetilde \varphi |
\mathcal A_1 \otimes \mathcal A_1](x,i,y,j)=(1-\eps)\varphi(x,y)$. But the equality
\[\|\widetilde \varphi\|_{MS^p(L^2(\widetilde X,\mathcal B_1,\mu \otimes P_\eps))} =
\|\varphi\|_{MS^p(L^2(X,\mu))}\]
is obvious, whereas the equality
\[\|\mathbb E[\widetilde \varphi | \mathcal A_1 \otimes \mathcal
A_1]\|_{MS^p(L^2(\widetilde X, \mathcal B_1 \mu \otimes P_\eps))} =
(1-\eps) \|\varphi\|_{cbMS^p(L^2(X,\mathcal B,\mu))}\] follows from the
fact that $P_\eps$ is equivalent to the counting measure on $\N$ and from
Lemma \ref{lemma=normeCBdesmult}. The assumption (\ref{ass_esp_condi}) thus 
implies that
\[ (1-\eps)\|\varphi\|_{cbMS^p(L^2(X,\mathcal B,\mu))} \leq
\|\varphi\|_{cbMS^p(L^2(X,\mathcal B,\mu))}.\]
Making $\eps \to 0$ we get (\ref{ass_normcbcont}).
\end{proof}

The following Lemma gives a positive answer to question
\ref{question=passage_a_esp_cond}, in the setting when ``the conditional
expectation is implemented by random permutations''. By an atom in a
measure space $(X,\mathcal B,\mu)$, we mean a measurable subset that cannot
be partitioned into two subsets of positive measure.
\begin{lemma}\label{lem=random_permu} Let $\mathcal A \subset \mathcal B$
  be two \emph{finite} $\sigma$-algebras on $X$, $\mu$ a finite measure on
  $(X,\mathcal B)$ such that every atom of $\mathcal A$ is partitioned into
  atoms of $\mathcal B$ \emph{of same measure}. Then for any $\mathcal B
  \otimes \mathcal B$-measurable $\varphi:X \times X \to \C$, 
  \[ \|\mathbb E\left[\varphi|\mathcal A \otimes \mathcal
    A\right]\|_{MS^p(L^2(X,\mathcal B))} \leq
  \|\varphi\|_{MS^p(L^2(X,\mathcal B))}.\]
\end{lemma}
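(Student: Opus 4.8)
The plan is to reduce the statement to one about ordinary matrix Schur multipliers and then to produce the conditional expectation as an average over \emph{independent} permutations of the rows and of the columns. Write the atoms of $\mathcal A$ as $A_1,\dots,A_m$ and partition each into the $\mathcal B$-atoms $A_k = B_{k,1}\sqcup\cdots\sqcup B_{k,n_k}$, which by hypothesis all have the same measure $c_k:=\mu(B_{k,l})$. The family $e_{k,l} = c_k^{-1/2}\mathbf 1_{B_{k,l}}$ is then an orthonormal basis of the finite dimensional space $L^2(X,\mathcal B,\mu)$. In this basis an operator is an honest matrix, its $S^p$-norm is the usual Schatten norm, and since $\varphi$ is constant on each product of atoms, $M_\varphi$ is exactly the matrix Schur multiplier with symbol $\Phi$, where $\Phi_{(k,l),(k',l')}$ denotes the constant value of $\varphi$ on $B_{k,l}\times B_{k',l'}$. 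Thus $\|\varphi\|_{MS^p(L^2(X,\mathcal B))}$ is the ordinary Schur multiplier norm of $\Phi$ on $S^p$, and it remains to bound the Schur norm of the averaged symbol.

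For tuples of permutations $\pi=(\pi_k)_k$ and $\tau=(\tau_k)_k$ with $\pi_k,\tau_k\in\mathfrak S_{n_k}$, let $U_\pi$ (resp.\ $U_\tau$) be the unitary of $L^2(X,\mathcal B,\mu)$ permuting the basis vectors inside each block, $e_{k,l}\mapsto e_{k,\pi_k(l)}$. Because left and right multiplication by a unitary are (complete) isometries of $S^p$, the map $A\mapsto U_\pi^* M_\varphi(U_\pi A U_\tau^*) U_\tau$ has the same norm as $M_\varphi$, and a direct computation in the basis $(e_{k,l})$ shows that it is again a Schur multiplier, with symbol the permuted matrix $\Phi^{\pi,\tau}_{(k,l),(k',l')} = \varphi_{(k,\pi_k(l)),(k',\tau_{k'}(l'))}$. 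Hence $\|M_{\Phi^{\pi,\tau}}\|_{S^p\to S^p} = \|M_\Phi\|_{S^p\to S^p}$ for every $\pi,\tau$. The crucial point, and the reason a single measure preserving transformation of $X$ would not suffice, is that here the row index is permuted by $\pi$ and the column index by $\tau$ \emph{independently}: this is legitimate precisely because we may multiply on the left and on the right by \emph{different} unitaries. Averaging over a single permutation acting simultaneously on both indices would only symmetrize the off-diagonal and the diagonal of each diagonal block separately, and would not flatten the block.

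Now average $\Phi^{\pi,\tau}$ over all $(\pi,\tau)\in\big(\prod_k\mathfrak S_{n_k}\big)^2$ with the uniform probability. On the block $A_k\times A_{k'}$ the indices $\pi_k(l)$ and $\tau_{k'}(l')$ become uniform and independent, so the averaged symbol is constant on that block, equal to
\[ \frac{1}{n_k n_{k'}}\sum_{a=1}^{n_k}\sum_{b=1}^{n_{k'}} \varphi_{(k,a),(k',b)}. \]
Because all $\mathcal B$-atoms inside a fixed $\mathcal A$-atom have the same measure, this uniform average coincides with the measure weighted average $\mu(A_k)^{-1}\mu(A_{k'})^{-1}\int_{A_k\times A_{k'}}\varphi\,d(\mu\otimes\mu)$, which is exactly the symbol of $\mathbb E[\varphi|\mathcal A\otimes\mathcal A]$ on that block; this equal-measure hypothesis is used here and only here. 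Thus the symbol of $\mathbb E[\varphi|\mathcal A\otimes\mathcal A]$ is the average of the symbols $\Phi^{\pi,\tau}$. Since $\Phi\mapsto \|M_\Phi\|_{S^p\to S^p}$ is a seminorm and each $\|M_{\Phi^{\pi,\tau}}\|_{S^p\to S^p}$ equals $\|M_\Phi\|_{S^p\to S^p}$, convexity gives
\[ \|\mathbb E[\varphi|\mathcal A\otimes\mathcal A]\|_{MS^p(L^2(X,\mathcal B))} \le \|M_\Phi\|_{S^p\to S^p} = \|\varphi\|_{MS^p(L^2(X,\mathcal B))}, \]
which is the desired inequality.
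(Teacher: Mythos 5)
Your proof is correct and is essentially the paper's own argument: the paper likewise reduces to a finite set, permutes rows and columns by \emph{independent} random permutations preserving each atom of $\mathcal A$ (uniform within atoms), and concludes by the permutation invariance of the Schur multiplier norm on $S^p$ together with the triangle inequality, the equal-measure hypothesis being what identifies this uniform average with $\mathbb E[\varphi|\mathcal A\otimes\mathcal A]$. Your write-up merely makes the reduction to matrix Schur multipliers (via the normalized indicator basis) more explicit than the paper's one-line ``we can as well assume $X$ is finite and $\mathcal B=\mathcal P(X)$''.
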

\begin{proof}
  We can as well assume that $X$ is a finite set and $\mathcal B=\mathcal
  P(X)$. If $\sigma$ and $\sigma'$ are permutations of $X$, denote by
  $\varphi^{\sigma,\sigma'} (x,y) =\varphi(\sigma(x),\sigma'(y))$. Note that by
  invariance of the norm on $S^p(\ell^2(X))$ by permutation of rows and
  columns
  \begin{equation} \label{eq=mult_permutation}\|\varphi\|_{MS^p(L^2(X,\mathcal B))} =
    \|\varphi^{\sigma,\sigma'}\|_{MS^p(L^2(X,\mathcal B))}.\end{equation}

  Let now $\sigma$ be a random permutation of $X$ satisfying the following:
  for any atom $A$ of $\mathcal A$, $\sigma(A)=A$ and for any $x,y \in A$,
  the probability that $\sigma(x)=y$ is $1/|A|$. Let $\sigma'$ be an
  independent copy of $\sigma$. Then for any $x,y \in X$, $\mathbb
  E\left[\varphi|\mathcal A \otimes \mathcal A\right](x,y)$ is the expected
  value of $\varphi^{\sigma,\sigma'}(x,y)$, and the triangle inequality and
  \eqref{eq=mult_permutation} conclude the proof.
\end{proof}

We can thus conclude by the following result:
\begin{thm}\label{thm=norm_egal_cbnorm}
  Let $(X,\mathcal B,\mu)$ be a $\sigma$-finite measure space with no atom.
  Then for any $\varphi \in L^\infty(X \times X,\mathcal B \otimes \mathcal
  B,\mu \otimes \mu)$ and any $1 \leq p \leq \infty$,
  \[ \|\varphi\|_{MS^p(L^2(X,\mathcal B))} = \|\varphi\|_{cbMS^p(L^2(X,\mathcal B))} .\]
\end{thm}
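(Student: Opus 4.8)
The inequality $\|\varphi\|_{MS^p(L^2(\mathcal B))}\le\|\varphi\|_{cbMS^p(L^2(\mathcal B))}$ always holds (Remark \ref{rem=monotonie_normes_Sp}), so the plan is to prove the reverse inequality, and the first move is to reduce to a \emph{standard} (separable) non-atomic probability space. By Lemma \ref{lemma=change_of_measure} I may assume $\mu$ is a probability measure. Since $\varphi$ is a single measurable function it is $\mathcal B_0\otimes\mathcal B_0$-measurable for some countably generated $\mathcal B_0\subset\mathcal B$, and enlarging $\mathcal B_0$ by a countable chain of splittings (possible because $\mathcal B$ has no atom) I may take $\mathcal B_0$ countably generated \emph{and} non-atomic, hence isomorphic to $([0,1],\mathrm{Leb})$. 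Lemma \ref{lem=cnnorm_inv_par_tribu} gives $\|\varphi\|_{cbMS^p(L^2(\mathcal B))}=\|\varphi\|_{cbMS^p(L^2(\mathcal B_0))}$, while Lemma \ref{lemma=chgt_de_tribu} (with $\mathcal A=\mathcal B_0$, so that $\mathbb E[\varphi|\mathcal B_0\otimes\mathcal B_0]=\varphi$) gives $\|\varphi\|_{MS^p(L^2(\mathcal B_0))}\le\|\varphi\|_{MS^p(L^2(\mathcal B))}$. Thus, once the theorem is known for $\mathcal B_0$, the chain
\[\|\varphi\|_{MS^p(L^2(\mathcal B_0))}\le\|\varphi\|_{MS^p(L^2(\mathcal B))}\le\|\varphi\|_{cbMS^p(L^2(\mathcal B))}=\|\varphi\|_{cbMS^p(L^2(\mathcal B_0))}=\|\varphi\|_{MS^p(L^2(\mathcal B_0))}\]
forces equality throughout and settles the general case. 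So I may assume $(X,\mathcal B,\mu)\cong([0,1],\mathrm{Leb})$.

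The heart of the matter is then to prove that on such a standard non-atomic space the conditional expectation contracts the $MS^p$-norm, i.e.\ assertion (\ref{ass_esp_condi}) of Proposition \ref{prop=conj_Pisier_equivalent}: for every $\mathcal A\subset\mathcal B$ and every $\varphi$, $\|\mathbb E[\varphi|\mathcal A\otimes\mathcal A]\|_{MS^p(L^2(\mathcal B))}\le\|\varphi\|_{MS^p(L^2(\mathcal B))}$. I would first treat $\mathcal A$ \emph{finite}, with atoms $A_1,\dots,A_r$. Here non-atomicity lets me build an increasing filtration of finite algebras $\mathcal B_k\supset\mathcal A$ which can be chosen to generate $\mathcal B$ and in which each $A_s$ is subdivided into $\mathcal B_k$-atoms of \emph{equal measure} (halve each atom into two equal pieces at each step); this is exactly the hypothesis of Lemma \ref{lem=random_permu}. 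Applying that lemma to $\mathcal A\subset\mathcal B_k$ and to $\varphi_k:=\mathbb E[\varphi|\mathcal B_k\otimes\mathcal B_k]$, and using the tower property $\mathbb E[\varphi_k|\mathcal A\otimes\mathcal A]=\mathbb E[\varphi|\mathcal A\otimes\mathcal A]=:\psi$, yields $\|\psi\|_{MS^p(L^2(\mathcal B_k))}\le\|\varphi_k\|_{MS^p(L^2(\mathcal B_k))}$. Letting $k\to\infty$, Lemma \ref{lemma=martingales} makes the right-hand side increase to $\|\varphi\|_{MS^p(L^2(\mathcal B))}$ and, applied to the ($\mathcal B_k$-measurable) symbol $\psi$, makes the left-hand side increase to $\|\psi\|_{MS^p(L^2(\mathcal B))}$, giving the inequality for finite $\mathcal A$. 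Note this is not circular: Lemma \ref{lem=random_permu} is a genuine statement about the \emph{finite} algebras $\mathcal B_k$, proved by averaging permutation-conjugates, and makes no use of any identity between the norm and the cb-norm.

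For general $\mathcal A$ I would approximate by a filtration of finite $\mathcal A_m\uparrow\mathcal A$: the finite case gives $\|\mathbb E[\varphi|\mathcal A_m\otimes\mathcal A_m]\|_{MS^p(L^2(\mathcal B))}\le\|\varphi\|_{MS^p(L^2(\mathcal B))}$, and since $\mathbb E[\varphi|\mathcal A_m\otimes\mathcal A_m]\to\mathbb E[\varphi|\mathcal A\otimes\mathcal A]$ almost everywhere and boundedly, the lower semicontinuity of the $MS^p$-norm under bounded a.e.\ convergence of symbols (the dual-pairing plus dominated-convergence argument already used in the proof of Lemma \ref{lemma=martingales}, via \eqref{eq=lien_trace_integrale}) passes the bound to the limit. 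This establishes (\ref{ass_esp_condi}) for the standard space. To conclude $\|\varphi\|_{cbMS^p(L^2(\mathcal B))}\le\|\varphi\|_{MS^p(L^2(\mathcal B))}$ I would then rerun the implication $(\ref{ass_esp_condi})\Rightarrow(\ref{ass_normcbcont})$ of Proposition \ref{prop=conj_Pisier_equivalent}: its proof only applies the conditional-expectation inequality to $\widetilde X=X\times\N$ with $\mathcal B_1=\mathcal B\otimes\mathcal P(\N)$, which is again standard and non-atomic, so the inequality just proved applies and yields $(1-\varepsilon)\|\varphi\|_{cbMS^p(L^2(\mathcal B))}\le\|\varphi\|_{MS^p(L^2(\mathcal B))}$ for all $\varepsilon>0$.

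The step I expect to be most delicate is the filtration in the finite-$\mathcal A$ case: one needs a single increasing filtration that simultaneously generates $\mathcal B$ \emph{and} splits every $\mathcal A$-atom into equal-measure pieces at each stage, so that Lemma \ref{lem=random_permu} applies at every level. Exact equal-measure splitting is incompatible with an arbitrary generating filtration when the atom-measures involved are incommensurable, which is exactly why the reduction to $([0,1],\mathrm{Leb})$ — where a dyadic even splitting can be arranged to generate the whole $\sigma$-algebra — is carried out first. The second point requiring care is bookkeeping of \emph{which} conditional expectation each martingale limit computes, so that one recovers $\|\psi\|_{MS^p(L^2(\mathcal B))}$ on the full algebra and does not inadvertently collapse to the a priori smaller quantity $\|\psi\|_{MS^p(L^2(\mathcal A))}$ (whose equality with the former would amount to Pisier's Conjecture \ref{conjecture=Pisier}).
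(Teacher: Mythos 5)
Your proof is correct, and it relies on the same two key ingredients as the paper (the random--permutation Lemma \ref{lem=random_permu} and the martingale Lemma \ref{lemma=martingales}, with non-atomicity used to split atoms into pieces of equal measure), but the architecture is genuinely different. The paper never leaves the given measure space: after reducing to a probability measure, it uses Lemma \ref{lemma=martingales} (over the directed family of \emph{all} finite subalgebras $\mathcal A$) and then, for each fixed $n$, splits every atom of $\mathcal A$ into $n$ pieces of equal measure, producing one finite refinement $\mathcal B'$ with $L^2(\mathcal B')\simeq L^2(\mathcal A)\otimes\ell^2_n$; under this identification the $n$-norm of $M_{\mathbb E[\varphi|\mathcal A\otimes\mathcal A]}$ --- whose supremum over $n$ is the cb-norm, by \eqref{eq=normeCB} --- is exactly $\|\mathbb E[\varphi|\mathcal A\otimes\mathcal A]\|_{MS^p(L^2(\mathcal B'))}$, which Lemmas \ref{lem=random_permu} and \ref{lemma=chgt_de_tribu} bound by $\|\varphi\|_{MS^p(L^2(\mathcal B))}$. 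This direct conversion of matrix amplification into measure-space refinement is what you replace by the detour through assertion (\ref{ass_esp_condi}) of Proposition \ref{prop=conj_Pisier_equivalent} and its implication (\ref{ass_esp_condi})$\Rightarrow$(\ref{ass_normcbcont}). The cost of your route is exactly the extra machinery you identify: a preliminary reduction to a standard space (needed so that a single filtration can both generate $\mathcal B$ and split each $\mathcal A$-atom evenly), the extension of the conditional-expectation inequality to infinite subalgebras (needed for the $X\times\N$ step), and the $P_\eps$ perturbation; the paper needs none of these, and in particular requires no separability reduction, since splitting into $n$ equal pieces is a purely local, finite operation. What your route buys is the intermediate statement that conditional expectations contract the $MS^p$-norm on standard non-atomic spaces, i.e.\ a positive case of Question \ref{question=passage_a_esp_cond}; note, though, that this is in any case a corollary of the theorem itself combined with Lemma \ref{lem=cnnorm_inv_par_tribu}, so the added generality is more apparent than real.
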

\begin{proof}
Replacing $\mu$ by a probability measure which is equivalent, we can assume
that $\mu$ is a probability measure.

By Lemma \ref{lemma=martingales} it is enough to prove that for any finite
$\sigma$-subalgebra $\mathcal A\subset \mathcal B$, if $\varphi_{\mathcal
  A} = \mathbb E[\varphi|\mathcal A \otimes \mathcal A]$, then
\[ \|\mathbb E\left[\varphi|\mathcal A \otimes \mathcal
    A\right]\|_{cbMS^p(L^2(\mathcal A))}
\leq \|\varphi\|_{MS^p(L^2(\mathcal B))}.\] Fix such $\mathcal A$, and some
integer $n$. Use the assumption that $\mathcal B$ has no atom: every atom
$A$ of $\mathcal A$ can be partitioned into $n$ $\mathcal B$-measurable
subsets $A^{1},\dots,A^{n}$ of same measure $\mu(A)/n$. Let $\mathcal B'$
be the $\sigma$-algebra generated by the set $A^{i}$ for $1\leq i \leq n$
and $A$ atom of $\mathcal A$. Then by Lemma \ref{lem=random_permu},
\[ \|\mathbb E\left[\varphi|\mathcal A \otimes \mathcal
    A\right]\|_{MS^p(L^2(\mathcal B'))} \leq
\|\mathbb E\left[\varphi|\mathcal B' \otimes \mathcal
    B'\right]\|_{MS^p(L^2(\mathcal B'))}.\] But the left-hand
side is equal to the norm of $\mathbb E\left[\varphi|\mathcal A \otimes \mathcal
    A\right] \otimes id$ acting on
$S^p(L^2(\mathcal A) \otimes \ell^2_n)$, and the right-hand side is by
Lemma \ref{lemma=chgt_de_tribu} not greater than
$\|\varphi\|_{MS^p(L^2(X,\mathcal B))}$. Since $n$ was arbitrary, this
concludes the proof.
\end{proof}

\subsection{Multipliers with continuous symbol.} We now study Schur
multipliers in the setting when $X$ is a locally compact space, $\mu$ is a
$\sigma$-finite Radon measure, and the symbol $\varphi$ is continuous.

\begin{thm}\label{thm=mult_de_symb_continu}
  Let $\mu$ be a $\sigma$-finite Radon measure on a locally compact space
  $X$, and $\varphi:X \times X \to \C$ a continuous function. Let $1 \leq p
  \leq \infty$ and $C>0$. The following are equivalent:
\begin{enumerate}[(i)] 
\item \label{asser=norm_locCompact} $\varphi$ defines a bounded multiplier
  on $S^p(L^2(X,\mu))$ with norm less than $C$.
\item \label{asser=norm_parties_finies} For any finite subset
  $F=\{x_1,\dots,x_N\}$ in $X$ belonging to the support of $\mu$, the
  multiplier $(\varphi(x_i,x_j))$ is bounded on $S^p(\ell^2(F))$ with norm
  less than $C$.
\end{enumerate}
The same equivalence is true for the cb-norms.

In particular, the norm and cb-norm on $S^p$ of the multiplier with symbol
$\varphi$ only depends on the support of $\mu$, and if this support has no
isolated point, its norm and cb-norm coincide.
\end{thm}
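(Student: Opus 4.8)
The plan is to prove the two implications (\ref{asser=norm_locCompact})$\Rightarrow$(\ref{asser=norm_parties_finies}) and (\ref{asser=norm_parties_finies})$\Rightarrow$(\ref{asser=norm_locCompact}) for the ordinary norm; the cb-versions then follow from exactly the same arguments, because every tool I use (Lemmas \ref{lemma=chgt_de_tribu} and \ref{lemma=martingales}, and the fact that on a \emph{fixed} finite index set the Schur multiplier norm is a genuine norm on the space of symbols) holds verbatim for the cb-norm. I may assume $\mu$ is a probability measure, after replacing it by an equivalent one. On a finite set any two fully-supported measures are equivalent, so Lemma \ref{lemma=change_of_measure} applied both ways shows that $\|(\varphi(x_i,x_j))\|_{MS^p(\ell^2(F))}$ is independent of the weights and equals the counting-measure norm of (\ref{asser=norm_parties_finies}); this identification will be used freely. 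Note also that by Remark \ref{rem=monotonie_normes_Sp} either hypothesis forces $|\varphi|\le C$ on $\supp\mu\times\supp\mu$, so $\varphi$ is bounded and the averages below are well defined.

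For (\ref{asser=norm_locCompact})$\Rightarrow$(\ref{asser=norm_parties_finies}), fix $F=\{x_1,\dots,x_N\}\subset\supp\mu$ and choose shrinking disjoint neighborhoods $U_i^{(k)}\downarrow\{x_i\}$, each of positive measure since $x_i\in\supp\mu$. Let $\A_k$ be the finite $\sigma$-algebra they generate. By Lemma \ref{lemma=chgt_de_tribu}, $\|\mathbb E[\varphi|\A_k\otimes\A_k]\|_{MS^p(L^2(\A_k))}\le C$, and compressing to the $N$ atoms $U_i^{(k)}$ (a coordinate projection, hence a complete contraction on $S^p$) keeps the norm $\le C$. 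On these atoms the symbol is the average of $\varphi$ over $U_i^{(k)}\times U_j^{(k)}$, which tends to $\varphi(x_i,x_j)$ by continuity. Since on the fixed finite set $F$ the map (symbol)$\mapsto\|\cdot\|_{MS^p(\ell^2(F))}$ is a norm, hence continuous, passing to the limit gives $\|(\varphi(x_i,x_j))\|_{MS^p(\ell^2(F))}\le C$.

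The heart of the proof is (\ref{asser=norm_parties_finies})$\Rightarrow$(\ref{asser=norm_locCompact}). Choose an increasing net of finite $\sigma$-subalgebras $\A$ whose union generates $\B$ modulo $\mu$-null sets (available from $\sigma$-finiteness and inner regularity of the Radon measure); by Lemma \ref{lemma=martingales} it suffices to bound $\|\varphi_\A\|_{MS^p(L^2(\A))}\le C$ for each such $\A$, where $\varphi_\A=\mathbb E[\varphi|\A\otimes\A]$. Let $A_1,\dots,A_M$ be the atoms of positive measure and let $X_1,\dots,X_M$ be independent random points, $X_i$ distributed as the normalized restriction of $\mu$ to $A_i$. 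Because the $A_i$ are disjoint the points $X_i(\omega)$ are automatically distinct, and they lie in $\supp\mu$ almost surely; hence by (\ref{asser=norm_parties_finies}) the random symbol $(\varphi(X_i(\omega),X_j(\omega)))_{i,j}$ has $MS^p(\ell^2_M)$-norm $\le C$ for almost every $\omega$. By independence $\varphi_\A(i,j)=\mathbb E_\omega[\varphi(X_i,X_j)]$, so $\varphi_\A$ is the $\C^{M\times M}$-valued expectation of these symbols, and convexity of the norm (Jensen) gives $\|\varphi_\A\|_{MS^p(\ell^2_M)}\le\mathbb E_\omega\|(\varphi(X_i,X_j))\|_{MS^p(\ell^2_M)}\le C$. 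This is the continuous analogue of Lemma \ref{lem=random_permu}, and it is the step I expect to be the crux: it is exactly the inequality of Question \ref{question=passage_a_esp_cond}, which can \emph{fail} for general symbols, and here it is continuity (forcing the sampled finite symbols to be honest restrictions of $\varphi$) together with convexity that makes it go through.

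Finally the ``in particular'' assertions. Combining the two implications and their cb-versions yields $\|\varphi\|_{MS^p(L^2(X,\mu))}=\sup_F\|(\varphi(x_i,x_j))\|_{MS^p(\ell^2(F))}$ over finite $F\subset\supp\mu$, and likewise for the cb-norm; both right-hand sides depend only on $\supp\mu$. For the last claim, set $Y:=\supp\mu$ with no isolated point and $N(Y):=\sup_{F'}\|(\varphi|_{F'})\|_{MS^p}$; since always $\|\cdot\|_{MS^p}\le\|\cdot\|_{cbMS^p}$ it suffices to bound each $\|(\varphi|_F)\|_{cbMS^p(\ell^2(F))}$ by $N(Y)$. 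By Lemma \ref{lemma=normeCBdesmult} this cb-norm equals $\lim_m\nearrow\|\Psi_m\|_{MS^p}$, where $\Psi_m$ is the block-constant symbol on $F\times\{1,\dots,m\}$ equal to $\varphi(x_i,x_j)$ on the $(i,j)$ block. Using that each $x_i$ is non-isolated, pick for each $m$ disjoint neighborhoods of the $x_i$ and inside them $m$ distinct points $x_i^{(a)}\in Y$, close enough that the genuine finite symbol $\Phi_m=(\varphi(x_i^{(a)},x_j^{(b)}))$ satisfies $\|\Phi_m-\Psi_m\|_{MS^p}\le 1/m$ (possible since $\Phi_m-\Psi_m$ has entries $\le\delta_m$ and $\|M_{e_{ij}}\|_{MS^p}=1$ for matrix units, so $\delta_m(Nm)^2\le1/m$ suffices). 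As $\Phi_m$ ranges over a finite subset of $Y$, $\|\Phi_m\|_{MS^p}\le N(Y)$, whence $\|\Psi_m\|_{MS^p}\le N(Y)+1/m$ and, letting $m\to\infty$, $\|(\varphi|_F)\|_{cbMS^p}\le N(Y)$. The only delicate point here is balancing the perturbation $\delta_m$ against the growing dimension $Nm$, which is harmless because $\delta_m$ is ours to choose.
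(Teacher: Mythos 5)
Your reduction to a probability measure, your proof of (\ref{asser=norm_locCompact})$\Rightarrow$(\ref{asser=norm_parties_finies}) (shrinking disjoint neighbourhoods, Lemma \ref{lemma=chgt_de_tribu}, compression to the atoms, and continuity of the multiplier norm on a fixed finite set), and your treatment of the final ``no isolated point'' assertion are all sound, and in fact essentially coincide with the paper's own arguments. The problem lies in what you yourself identify as the crux, namely (\ref{asser=norm_parties_finies})$\Rightarrow$(\ref{asser=norm_locCompact}). The identity ``by independence $\varphi_{\mathcal A}(i,j)=\mathbb E_\omega[\varphi(X_i,X_j)]$'' is false on the diagonal: $X_i$ is not independent of itself, so $\mathbb E_\omega[\varphi(X_i,X_i)]=\mu(A_i)^{-1}\int_{A_i}\varphi(x,x)\,d\mu(x)$ is the average of $\varphi$ over the \emph{diagonal} of $A_i$, whereas $\varphi_{\mathcal A}(i,i)=\mu(A_i)^{-2}\int_{A_i\times A_i}\varphi\,d\mu\,d\mu$ is the average over the square $A_i\times A_i$. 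Hence the expectation of your random symbol is not $\varphi_{\mathcal A}$ but differs from it by a diagonal symbol whose entries are controlled only by the oscillation of $\varphi$ on $A_i\times A_i$; Jensen therefore bounds the wrong matrix. This error cannot be absorbed: along a net of finite $\sigma$-subalgebras whose union generates $\mathcal B$, the atoms cannot in general be made sets of small $\varphi$-oscillation (if $X$ is not compact, every finite partition has an atom that is not relatively compact), so the discrepancy does not tend to $0$ and Lemma \ref{lemma=martingales} cannot be applied with the constant $C$.

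The gap is repairable with one extra idea: in each atom $A_i$ sample \emph{two} independent points $X_i,Y_i$, each distributed as the normalized restriction of $\mu$, all $2M$ variables independent. Almost surely all these points lie in $\supp\mu$, and the rectangular matrix $(\varphi(X_i,Y_j))_{i,j}$ is a corner of the square symbol on the finite subset $\{X_1,\dots,X_M\}\cup\{Y_1,\dots,Y_M\}$ of $\supp\mu$ (this is true even if some $X_i$ coincides with some $Y_j$); since compressions do not increase the multiplier norm, hypothesis (\ref{asser=norm_parties_finies}) gives $\|(\varphi(X_i,Y_j))_{i,j}\|_{MS^p(\ell^2_M)}\leq C$ almost surely. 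Now independence does give $\mathbb E_\omega[\varphi(X_i,Y_j)]=\varphi_{\mathcal A}(i,j)$ for \emph{all} $(i,j)$, including $i=j$, and Jensen yields $\|\varphi_{\mathcal A}\|_{MS^p}\leq C$, after which Lemma \ref{lemma=martingales} concludes. With this repair your route is correct and genuinely different from the paper's: the paper proves (\ref{asser=norm_parties_finies})$\Rightarrow$(\ref{asser=norm_locCompact}) by fixing finite-rank operators with kernels in $C_c(X)\otimes C_c(X)$, approximating $\mu$ vaguely by finitely supported probability measures carried by $\supp\mu$, and checking by a Gram--Schmidt perturbation that the corresponding $S^p$-norms converge. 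Your (fixed) argument stays entirely within the conditional-expectation machinery of subsection \ref{sec=digression} and makes explicit that this implication is a continuous analogue of Lemma \ref{lem=random_permu}, i.e.\ an instance where Question \ref{question=passage_a_esp_cond} has a positive answer; the paper's argument avoids sampling altogether and never has to worry about the diagonal.
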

\begin{proof}
  Since any $\sigma$-finite Radon measure is equivalent to a finite
  measure, we can assume that $\mu$ is a probability measure.

  Let us first prove that
  (\ref{asser=norm_locCompact})$\Rightarrow$(\ref{asser=norm_parties_finies}).
  Assume (\ref{asser=norm_locCompact}) and fix a finite subset
  $F=\{x_1,\dots,x_N\}$ of the support of $\mu$. Then for any family
  $V_1,\dots,V_N$ of disjoint Borel subsets such that $x_i \in V_i$ and
  $\mu(V_i)>0$, we can consider $\mathcal A$ the $\sigma$-subalgebra of
  $\mathcal B$ generated by the $V_i$'s. By Lemma
  \ref{lemma=chgt_de_tribu}, we get that the norm on $S^p_n$ of the Schur
  multiplier with symbol given by
  \[ (i,j) \mapsto \textrm{average value of $\varphi$ on }V_i \times V_j\]
  is not greater than the norm on $S^p(L^2(X))$ of $M_\varphi$, \emph{i.e.}
  is not greater than $C$. But if the $V_i$'s are chosen to be contained in
  arbitrary small neighbourhouds of $x_i$ (which is possible because $x_i$
  belongs to the support of $\mu$), we get at the limit that the average
  value of $\varphi$ on $V_i \times V_j$ tends to $\varphi(x_i,x_j)$. This
  proves (\ref{asser=norm_parties_finies}).

  For the converse, assume (\ref{asser=norm_parties_finies}). By a density
  argument it is enough to prove that
  \[|Tr(M_\varphi(A)B)|\leq C \|A\|_p \|B\|_{p'}\] for finite rank
  operators on $A$ and $B$ on $L^2(X,\mu)$ that correspond to elements
  $g_A,g_B$ of $C_c(X) \otimes C_c(X)$ in the identification
  $S^2(L^2(X,\mu)) = L^2(X \times X,\mu \otimes \mu)$ (here $C_c(X)$
  denotes the continuous functions from $X$ to $\C$ with compact
  support). Find $(\mu_\alpha)$ a net a probability measures on $X$ with
  finite support contained in the support of $\mu$ converging vaguely to
  $\mu$ (\emph{i.e.} such that $\int f d\mu_\alpha \to \int f d\mu$ for all
  $f \in C_c(X)$). For the existence of such a net, see \cite{BourbakiInt},
  Chap. IV, \S 2, 4, Corollaire 2. Then for any $\alpha$ denote by
  $A_\alpha$ and $B_\alpha$ the operators on $L^2(X,\mu_\alpha)$
  corresponding to $g_A$ and $g_B$ viewed in $L^2(X,\mu_\alpha) \otimes
  L^2(X,\mu_\alpha)$. We claim that $\lim_\alpha \|A_\alpha\|_p = \|A\|_p$
  and $\lim_\alpha \|B_\alpha\|_{p'} = \|B\|_{p'}$. This would conclude the
  proof of
  (\ref{asser=norm_parties_finies})$\Rightarrow$(\ref{asser=norm_locCompact})
  since by \eqref{eq=lien_trace_integrale} and the vague convergence of
  $\mu_\alpha$ to $\mu$, we have that
\[Tr(M_f(A_\alpha) B_\alpha) \xrightarrow{\alpha} Tr(M_f(A)B).\]

To prove the claim (say for $A$), write (using the Gram-Schmidt
orthonormalization process) $g_A = \sum_{i,j=1}^N a_{i,j} f_i \otimes f_j$
for a family $f_i\in C_c(X)$ which is orthonormal in $L^2(\mu)$. Thus
$\|A\|_p = \|(a_{i,j})_{i,j \leq N}\|_{S^p_N}$. But by the vague
convergence of $\mu_\alpha$ to $\mu$, the family $f_1,\dots,f_N$ is almost
orthonormal in $L^2(X,\mu_\alpha)$, and thus it is close to an orthonormal
family $f^\alpha_i$, and thus we can write $g_A = \sum_{i,j=1}^N
a_{i,j}^\alpha f^\alpha_i \otimes f^\alpha_j$ with $a_{i,j}^\alpha$
converging to $a_{i,j}$. This indeed implies that
\[ \|A_\alpha\|_p = \|(a_{i,j}^\alpha)_{i,j \leq N}\|_{S^p_N}
\xrightarrow{\alpha} \|(a_{i,j})_{i,j \leq N}\|_{S^p_N} = \|A\|_p.\]

This proves
(\ref{asser=norm_locCompact})$\Leftrightarrow$(\ref{asser=norm_parties_finies}).
For the cb-norm, apply this equivalence with $X$ replaced by $X \times \N$
and use Lemma \ref{lemma=normeCBdesmult}.

It remains to note that when the support of $\mu$ has no isolated point,
the norm and cb-norm of a Schur multiplier coincide. We show that the best
$C$ such that (\ref{asser=norm_parties_finies}) holds is equal to the best
$C$ such that (\ref{asser=norm_parties_finies}) holds for the cb-norm. For
this, fix a finite subset $F=\{x_1,\dots,x_N\}$ in the support of $\mu$ and
an integer $n$. For any $1\leq i \leq N$, find $n$ nets
$(y_\alpha^{i,j})_{\alpha}$ for $j=1,\dots,n$ of elements of the support of
$\mu$ such that $y_\alpha^{i,j} \xrightarrow{\alpha} x_i$ 
and such that for fixed $\alpha$, the $y_\alpha^{i,j}$ for $1\leq i \leq N$
and $j=1,\dots,n$ are all disctinct. This is possible because the support
of $\mu$ has no isolated point. Note that
$\varphi(y_\alpha^{i,j},y_\alpha^{i',j'}) \xrightarrow{\alpha}
\varphi(x_i,x_{i'})$. Expressing, for any $\alpha$,
(\ref{asser=norm_parties_finies}) with the finite set
$\{y_\alpha^{i,j},1\leq i \leq N, 1 \leq j \leq n\}$, one gets at the limit
that the $n$-norm of the multiplier with symbol $\varphi(x_i,x_{i'})$ is
bounded by $C$. But $n$ was arbitrary.
\end{proof}

\section{Approximation by Schur multipliers}
\label{sect=approximation}
In this section locally compact groups will always be assumed to be second
countable. The reason is that we want to deal with $\sigma$-finite measure
spaces, and a Haar measure on a locally compact group is $\sigma$-finite if
and only if the group is second countable.

Recall that the Fourier algebra $A(G)$ of a locally compact group $G$ is
the set of coefficients of the left regular representation of $G$ and is
naturally identified with the predual of the von Neumann algebra of $G$.

\begin{notation}
For a locally compact and second countable group $G$ (say equipped with a
left Haar measure) and a function $\varphi \in L^\infty(G)$ we will denote
by $\chap \varphi \in L^\infty(G\times G)$ the function defined by $\chap
\varphi(g,h)=\varphi(g^{-1}h)$. The corresponding Schur multiplier is
sometimes called Toeplitz-Schur multiplier, or Herz-Schur
multiplier.
\end{notation}
Bo{\.z}ejko-Fendler's characterization \cite{MR753889} (see also
\cite{BozeFend}) states that for 
$\varphi:G \to \C$, the completely bounded norm on $VN(G)$ of the Fourier
multiplier $\lambda(g) \mapsto \varphi(g) \lambda(g)$, denoted by
$\|\varphi\|_{M_0A(G)}$ (by duality it is the cb-norm of the multiplication
by $\varphi$ on $A(G)$) is equal to the norm of the Schur multiplier $\chap
\varphi$~:
\[ \| \chap \varphi\|_{cbMS^\infty(L^2(\Gamma))} = \| \varphi(g)\|_{M_0A(G)}.\]

As defined in \cite{haagerup3}, $G$ is said to be weakly amenable if there
exists a constant $C$ and a net $\varphi_\alpha \in A(G)$ that converges
uniformly on compact subsets to $1$ and such that $\|
\varphi_\alpha(g)\|_{M_0A(G)}\leq C$. The infimum of such $C$ is denoted by
$\Lambda_G$.

We generalize this notion as follows~:
\begin{dfn}
  If $G$ is a locally compact second countable group and $1\leq p \leq
  \infty$, we say that $G$ has the \emph{property of completely bounded
    approximation by Schur multipliers on $S^p$} ($\SCHURCBAP{p}$) if there
  is a constant $C$, a net of functions $\varphi_\alpha \in A(G)$ such that
  $\varphi_\alpha \to 1$ uniformly on compact subsets of $G$ and such that
  $\|\chap \varphi_\alpha\|_{cbMS^p(L^2(G))} \leq C$. The infimum of such
  $C$ is denoted by $\CASch{p}{G}$.
\end{dfn}

Note that if $G$ is not discrete, Theorem \ref{thm=mult_de_symb_continu}
shows that the condition $\|\chap \varphi_\alpha\|_{cbMS^p(L^2(G))} \leq C$
is equivalent to $\|\chap \varphi_\alpha\|_{MS^p(L^2(G))} \leq C$.

Here are some basic properties of $\CASch{p}{G}$:
\begin{prop}\label{thm=basic_propertiesofCASch} For a locally compact
  second countable group $G$:
\begin{itemize}
\item For $p=\infty$, $G$ has the property of completely bounded
approximation by Schur multipliers on $S^p$ if and only if it is weakly
amenable, and
\[\CASch{\infty}{G} = \Lambda_{G}.\]
\item $\CASch{2}{G}=1$.
\item If $1 \leq p \leq \infty$, and $1/p + 1/p'=1$ then $\CASch{p}{G} =
\CASch{p'}{G}$.
\item If $2 \leq p \leq q \leq \infty$, then $\CASch{p}{G} \leq
\CASch{q}{G}$.
\item If $H$ is a closed subgroup of $G$ and $1 \leq p \leq \infty$,
  $\CASch{p}{H} \leq \CASch{p}{G}$.
\end{itemize}
\end{prop}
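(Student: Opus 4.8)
The plan is to handle the five assertions separately; three of them are immediate from results already recorded, and I will dispatch these first. For $p=\infty$, the statement is merely a rereading of the Bo{\.z}ejko--Fendler identity $\|\chap\varphi\|_{cbMS^\infty(L^2(G))}=\|\varphi\|_{M_0A(G)}$ recalled before the definition: a net $(\varphi_\alpha)$ realizing $\SCHURCBAP{\infty}$ with constant $C$ is exactly a net realizing weak amenability with constant $C$, so the two properties and their optimal constants coincide and $\CASch{\infty}{G}=\Lambda_G$. The duality assertion follows verbatim from Remark \ref{rem=duality}: for the \emph{same} net one has $\|\chap\varphi_\alpha\|_{cbMS^p(L^2(G))}=\|\chap\varphi_\alpha\|_{cbMS^{p'}(L^2(G))}$, whence the two infima agree and $\CASch{p}{G}=\CASch{p'}{G}$. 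The monotonicity assertion follows from Remark \ref{rem=monotonie_normes_Sp}, which gives $\|\chap\varphi_\alpha\|_{cbMS^p(L^2(G))}\le\|\chap\varphi_\alpha\|_{cbMS^q(L^2(G))}$ for $2\le p\le q$, so any net witnessing $\SCHURCBAP{q}$ with constant $C$ witnesses $\SCHURCBAP{p}$ with a constant at most $C$.

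For the value at $p=2$ I will use that $\|\chap\varphi\|_{cbMS^2(L^2(G))}=\|\varphi\|_{L^\infty(G)}$. For the upper bound I exhibit an explicit approximating net in $A(G)$ bounded by $1$ in sup-norm: fixing a left Haar measure $\mu$ and a relatively compact symmetric neighbourhood $V$ of the identity, for each compact $K$ set $u_K(s)=\mu(V)^{-1}\langle\lambda(s)\mathbf 1_V,\mathbf 1_{KV}\rangle=\mu(sV\cap KV)/\mu(V)$. This is a coefficient of the regular representation, hence lies in $A(G)$; left-invariance of $\mu$ yields $0\le u_K\le 1$ everywhere and $u_K\equiv 1$ on $K$, so $(u_K)_K$ converges to $1$ uniformly on compacta while $\|u_K\|_\infty\le 1$, proving $\CASch{2}{G}\le 1$. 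The reverse inequality is forced: if $(\varphi_\alpha)$ is any admissible net then uniform convergence on a fixed nonempty compact $K$ gives $\|\varphi_\alpha\|_\infty\ge\sup_K|\varphi_\alpha|\to 1$, so the constant is at least $1$.

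The last assertion is the substantial one. By Herz's restriction theorem the restriction $\varphi_\alpha|_H$ lies in $A(H)$ and still converges to $1$ uniformly on compacta of $H$, and the Herz--Schur symbol of $\varphi_\alpha|_H$ on $H\times H$ is precisely $\chap\varphi_\alpha|_{H\times H}$. Everything therefore reduces to the inequality $\|\chap\varphi|_{H\times H}\|_{cbMS^p(L^2(H))}\le\|\chap\varphi\|_{cbMS^p(L^2(G))}$ for continuous $\varphi$. When $G$ is discrete this is transparent: $L^2(H)\subset\ell^2(G)$ is the space of functions supported on the measurable subset $H$, compression to it is completely contractive on $S^p$ by Lemma \ref{lemma=chgt_de_tribu}, and the compressed multiplier has symbol exactly $\chap\varphi|_{H\times H}$.

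The main obstacle is the non-discrete case, where $H$ is Haar-null in $G$ and cannot be compressed onto directly. The plan is to thicken $H$ to a tube: choose a small relatively compact neighbourhood $V$ of $e$ so that $(\eta,v)\mapsto\eta v$ is a measurable isomorphism of $H\times V$ onto the positive-measure set $HV\subset G$, identifying $\mu|_{HV}$ with a measure in the class of the product measure on $H\times V$ (by Lemma \ref{lemma=change_of_measure} only the measure class matters). Compressing $\chap\varphi$ to $L^2(HV)$ is completely contractive by Lemma \ref{lemma=chgt_de_tribu}, and in the coordinates $(\eta,v)$ the resulting symbol is $\Phi_V\bigl((\eta,v),(\zeta,w)\bigr)=\varphi(v^{-1}\eta^{-1}\zeta w)$ on $L^2(H)\otimes L^2(V)$. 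As $V$ shrinks to $\{e\}$, continuity of $\varphi$ makes $\Phi_V$ converge, uniformly on compacta, to the fibre-constant symbol $\Psi\bigl((\eta,v),(\zeta,w)\bigr)=\varphi(\eta^{-1}\zeta)$, whose multiplier on $L^2(H)\otimes L^2(V)$ has norm $\|\chap\varphi|_{H\times H}\|_{cbMS^p(L^2(H))}$ by Remark \ref{rem=produit_cartesien} together with Lemma \ref{lemma=normeCBdesmult}. The delicate point is the lower semicontinuity $\|M_\Psi\|\le\limsup_V\|M_{\Phi_V}\|$, which I would extract from Theorem \ref{thm=mult_de_symb_continu}: the norm of $M_\Psi$ is the supremum over finite subsets of the support of the corresponding finite-section norms, each of which is a continuous function of the matrix entries and hence the limit of the analogous sections of $\Phi_V$. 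Chaining these three steps gives $\|\chap\varphi|_{H\times H}\|_{cbMS^p(L^2(H))}\le\|\chap\varphi\|_{cbMS^p(L^2(G))}$, and passing to the net yields $\CASch{p}{H}\le\CASch{p}{G}$.
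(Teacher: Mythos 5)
Your first four points are correct and essentially coincide with the paper's proof: $p=\infty$ is the Bo{\.z}ejko--Fendler identity \cite{MR753889} recalled before the definition, duality is Remark \ref{rem=duality}, monotonicity is Remark \ref{rem=monotonie_normes_Sp}, and for $p=2$ you make explicit the approximating net (the coefficients $u_K(s)=\mu(sV\cap KV)/\mu(V)$) that the paper's ``obvious'' leaves implicit. Your discrete case of the subgroup assertion is also fine, although the correct justification is compression $A\mapsto PAP$ by the orthogonal projection $P$ onto $\ell^2(H)$ (a complete contraction on $S^p$ whose compatibility with the multiplier is immediate), not Lemma \ref{lemma=chgt_de_tribu}, which concerns conditional expectations onto coarser $\sigma$-algebras rather than cut-downs by subsets.

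The non-discrete case of the last point, however, contains a genuine gap. Your tube construction requires $(\eta,v)\mapsto\eta v$ to be injective on $H\times V$, and from $\eta v=\eta'v'$ one gets $\eta'^{-1}\eta=v'v^{-1}\in H\cap VV^{-1}$; injectivity therefore forces $H\cap VV^{-1}=\{e\}$. When $H$ is a \emph{non-discrete} closed subgroup (for instance $SL_3(\R)$ inside $SL_r(\R)$, exactly the kind of subgroup relevant in section \ref{sect=Contre-exemple_R}), the identity is not isolated in $H$, so $H\cap V\neq\{e\}$ for every neighbourhood $V$ of $e$, and $HV$ is never a product $H\times V$: the identification of $\mu\vert_{HV}$ with a product measure class collapses, and with it the symbols $\Phi_V$ and the appeal to Remark \ref{rem=produit_cartesien}. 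Repairing this would require a Borel local cross-section of $H\backslash G$ together with Weil's disintegration of Haar measure, and even then your limiting step is ill-posed, since the space $H\times V$ changes with $V$, so ``the analogous finite sections of $\Phi_V$'' at fixed points is not defined; if one takes section points whose $V$-coordinate is the base point, the $\Phi_V$-sections equal the limiting sections exactly and no tube was needed in the first place. Indeed the entire detour is unnecessary, and this is what the paper does: writing $\psi=\varphi\vert_H$ (which lies in $A(H)$ by Herz's restriction theorem, a point you correctly flag and which the paper also needs), both $\chap{\psi}$ and $\chap{\varphi}$ are continuous symbols, and the Haar measures of $H$ and of $G$ have full support; Theorem \ref{thm=mult_de_symb_continu} thus expresses $\|\chap{\psi}\|_{cbMS^p(L^2(H))}$ as the supremum of finite-section cb-norms over finite subsets of $H$, each of which is also a finite subset of $G$ with the identical section matrix $(\varphi(x_i^{-1}x_j))$, whence $\|\chap{\psi}\|_{cbMS^p(L^2(H))}\leq\|\chap{\varphi}\|_{cbMS^p(L^2(G))}$ for every closed subgroup, discrete or not.
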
 
\begin{proof}
  The first point is by definition of weak amenability and of $\Lambda_G$.

  The second assertion is obvious because for any $\varphi \in L^\infty(G)$,
  $\|\chap \varphi\|_{cbMS^2(L^2(G))} = \|\varphi\|_\infty$. The next two
  assertions are consequences of Remarks \ref{rem=duality} and
  \ref{rem=monotonie_normes_Sp}. The last assertion is a consequence of
  Theorem \ref{thm=mult_de_symb_continu} (remember that $A(G) \subset C(G)$).
\end{proof}

It is also natural to study the approximation by continuous functions with
compact support. This yields to a property which might be weaker in general
but which is equivalent when the group is discrete (by the proof of Theorem
\ref{thm=characterzation}, we also get the same notion when $G$ contains a
lattice).
\begin{lemma}\label{lemma=def_equivalente_par_AG} Let $G$ be a locally
  compact second countable group, and $1 \leq p \leq \infty$. In the
  definition of $\CASch{p}{G}$ the functions $\varphi_\alpha$ can be taken
  in $A(G) \cap C_c(G)$.

In particular when $G$ is discrete $\CASch{p}{G}$ is the smallest $C$ such
  that there exists a net of functions with finite support
  $\varphi_\alpha:G \to \C$ such that $\varphi_\alpha(g) \to 1$ for all $g
  \in G$ and such that $\|\chap \varphi_\alpha\|_{cbMS^p(L^2(G))} \leq C$.
\end{lemma}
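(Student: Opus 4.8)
The plan is to begin from an arbitrary net realizing a constant $C>\CASch{p}{G}$ and replace each member by a compactly supported approximant in $A(G)$, exploiting that the $A(G)$-norm dominates the relevant Schur-multiplier norm. The decisive observation is the chain
\[ \|\chap u\|_{cbMS^p(L^2(G))} \leq \|\chap u\|_{cbMS^\infty(L^2(G))} = \|u\|_{M_0A(G)} \leq \|u\|_{A(G)}, \]
valid for every $u \in A(G)$ and every $1 \leq p \leq \infty$: the first inequality is Remark \ref{rem=monotonie_normes_Sp} (which holds for all $p$), the equality is Bo\.zejko--Fendler's identity recalled above, and the last is the standard contractive inclusion $A(G) \subset B(G) \subset M_0A(G)$. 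This is exactly what lets me convert $A(G)$-approximation into control of the cb-Schur-multiplier norm; note that a naive truncation of $\varphi_\alpha$ by an indicator or cutoff would destroy the cb-norm, so the approximation must be carried out inside $A(G)$ itself.

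Second, I would invoke the density of $A(G) \cap C_c(G)$ in $A(G)$. This is immediate from the definition of the Fourier algebra: every coefficient $u=\langle \lambda(\cdot)\xi,\eta\rangle$ with $\xi,\eta\in L^2(G)$ is an $A(G)$-norm limit of the coefficients $\langle \lambda(\cdot)\xi_n,\eta_n\rangle$ with $\xi_n,\eta_n$ compactly supported, and these are continuous with support contained in $\supp(\eta_n)\cdot \supp(\xi_n)^{-1}$, hence lie in $A(G)\cap C_c(G)$; since coefficients span a dense subspace of $A(G)$, the claim follows.

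Then the construction runs as follows. Fix $C>\CASch{p}{G}$ and a net $(\varphi_\alpha)_{\alpha\in A}$ in $A(G)$ converging to $1$ uniformly on compacta with $\sup_\alpha\|\chap\varphi_\alpha\|_{cbMS^p(L^2(G))}\leq C$, and fix $\delta>0$. On the directed set $A\times\N$ with the product order I choose, for each $(\alpha,n)$, some $\psi_{\alpha,n}\in A(G)\cap C_c(G)$ with $\|\varphi_\alpha-\psi_{\alpha,n}\|_{A(G)}<\min(\delta,1/n)$. The displayed inequalities give $\|\chap\psi_{\alpha,n}\|_{cbMS^p(L^2(G))}\leq C+\delta$ for \emph{all} $(\alpha,n)$, while on any compact $K$ one has $\sup_K|\psi_{\alpha,n}-1|\leq 1/n+\sup_K|\varphi_\alpha-1|$ (using $\|\cdot\|_\infty\leq\|\cdot\|_{A(G)}$), which tends to $0$ along $A\times\N$. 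Thus $(\psi_{\alpha,n})$ is an admissible net in $A(G)\cap C_c(G)$ with constant $\leq C+\delta$; letting $\delta\to 0$ and then $C\to\CASch{p}{G}$ proves the first assertion. The second assertion is the specialization to discrete $G$, where every finitely supported function lies in $A(G)$, $C_c(G)$ is precisely the finitely supported functions, and uniform convergence on compact (finite) sets is pointwise convergence, so it is a rereading of the first part.

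The step I expect to require the most care is the bookkeeping that produces the exact constant $\CASch{p}{G}$ rather than $\CASch{p}{G}+\eps$: this is the reason for the two-parameter directed set $A\times\N$ and the truncation error $\min(\delta,1/n)$, which simultaneously pins the global supremum of the cb-norms below $C+\delta$ and forces uniform-on-compacta convergence to $1$. Everything else is a formal consequence of the norm domination by $\|\cdot\|_{A(G)}$ and of the density of $A(G)\cap C_c(G)$.
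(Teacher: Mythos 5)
Your proposal is correct and takes essentially the same approach as the paper: the decisive chain $\|\chap u\|_{cbMS^p(L^2(G))} \leq \|u\|_{M_0A(G)} \leq \|u\|_{A(G)}$ (Remark \ref{rem=monotonie_normes_Sp} combined with Bo\.zejko--Fendler) together with the density of $A(G)\cap C_c(G)$ in $A(G)$ is precisely the paper's argument. You merely make explicit the net bookkeeping and the proof of the density fact, both of which the paper leaves implicit.
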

\begin{proof}
  The first point is because $C_c(G)$ is dense in $A(G)$ and because, by
  Remark \ref{rem=monotonie_normes_Sp} and the inequality
  $\|\cdot\|_{M_0A(G)} \leq \|\cdot\|_{A(G)}$, for any $\varphi \in A(G)$,
  \[ \|\varphi\|_\infty\leq \|\chap \varphi\|_{cbMS^p(L^2(G))} \leq \|\varphi\|_{M_0A(G)} \leq \|\varphi\|_{A(G)}.\]

The second statement is because $A(G)$ contains all functions with finite
support when $G$ is discrete.
\end{proof}

\subsection{From a lattice to the whole group}
In this subsection we prove that the property of completely bounded
approximation by Schur multipliers on $S^p$ for a group is equivalent
to the same property for a lattice in this group. This was proved in
\cite{haagerup2} for $p=\infty$. With the tools developped in section
\ref{sec=Schur_multi}, the proof is the very close to Haagerup's
proof. The main result is~:
\begin{thm}\label{thm=characterzation} Let $G$ be a locally compact second
  countable group and $\Gamma$ a lattice in $G$. Then for $1\leq p
  \leq \infty$
\[\CASch{p}{G} = \CASch{p}{\Gamma}.\]
\end{thm}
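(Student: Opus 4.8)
The plan is to prove the two inequalities separately. The inequality $\CASch{p}{\Gamma} \le \CASch{p}{G}$ is immediate from the last item of Proposition~\ref{thm=basic_propertiesofCASch}, since a lattice is in particular a closed (discrete) subgroup. The real content is the reverse inequality $\CASch{p}{G} \le \CASch{p}{\Gamma}$, which I would obtain by manufacturing, out of an approximating net on $\Gamma$, an approximating net on $G$ with no worse cb-Schur bound, following Haagerup's induction scheme but replacing the $M_0A$-computations of \cite{haagerup2} by the $cbMS^p$-tools of Section~\ref{sec=Schur_multi}. So fix $C > \CASch{p}{\Gamma}$ and a net $\psi_\alpha$ of finitely supported functions on $\Gamma$ with $\psi_\alpha \to 1$ pointwise and $\|\chap \psi_\alpha\|_{cbMS^p(\ell^2(\Gamma))} \le C$ (Lemma~\ref{lemma=def_equivalente_par_AG}); note that $\|\psi_\alpha\|_\infty \le C$ by Remark~\ref{rem=monotonie_normes_Sp}. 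I fix a Borel fundamental domain $\Omega$ for the left $\Gamma$-action, writing each $g \in G$ uniquely as $g = \beta(g)\{g\}$ with $\beta(g) \in \Gamma$, $\{g\} \in \Omega$ and $\beta(\gamma g) = \gamma\beta(g)$, so that $L^2(G) \simeq \ell^2(\Gamma) \otimes_2 L^2(\Omega)$.

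The first step is to transport $\chap\psi_\alpha$ to $G$ \emph{without averaging yet}. Define $\Phi_\alpha(g,h) = \psi_\alpha(\beta(g)^{-1}\beta(h))$ on $G \times G$. Under the identification $G \simeq \Gamma \times \Omega$ this is exactly the ``cylinder'' symbol $\widetilde{\chap\psi_\alpha}$ of Remark~\ref{rem=produit_cartesien} (taking there $(\Omega,\text{Haar})$ in the role of the auxiliary space), whence $\|\Phi_\alpha\|_{cbMS^p(L^2(G))} = \|\chap\psi_\alpha\|_{cbMS^p(\ell^2(\Gamma))} \le C$. Next I average over the translates: for $g_0 \in G$ set $\Phi_\alpha^{g_0}(g,h) = \Phi_\alpha(g_0 g, g_0 h)$. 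Since $(g,h) \mapsto (g_0 g, g_0 h)$ is implemented by conjugation by the unitary of left translation, and such conjugation is a complete isometry of $S^p(L^2(G))$, we have $\|\Phi_\alpha^{g_0}\|_{cbMS^p} = \|\Phi_\alpha\|_{cbMS^p} \le C$. The relation $\beta(\gamma g) = \gamma\beta(g)$ gives $\Phi_\alpha^{\gamma g_0} = \Phi_\alpha^{g_0}$, so $\Phi_\alpha^{g_0}$ depends only on $\Gamma g_0 \in \Gamma\backslash G$, and I may integrate against the finite $G$-invariant measure on $\Gamma\backslash G$ of total mass $V$. Testing against $A \in S^2 \cap S^p$, $B \in S^2 \cap S^{p'}$ via \eqref{eq=lien_trace_integrale} and Fubini, the integral triangle inequality yields that the averaged symbol has $cbMS^p$-norm $\le C$ (using Lemma~\ref{lemma=normeCBdesmult} to pass to the cb-norm). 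Because the quotient measure is right $G$-invariant, the change of variable $g_0 \mapsto g_0 a$ shows this averaged symbol is invariant under $(g,h) \mapsto (ag,ah)$, hence is of Herz--Schur form $\chap\varphi_\alpha$ with
\[
\varphi_\alpha(s) = \frac{1}{V}\int_{\Gamma\backslash G} \psi_\alpha\big(\beta(\{g_0\}s)\big)\, d(\Gamma g_0),
\qquad \|\chap\varphi_\alpha\|_{cbMS^p(L^2(G))} \le C .
\]

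It remains to check $\varphi_\alpha \to 1$ uniformly on compact sets, which is the main obstacle, precisely because for a non-uniform lattice $\Gamma\backslash G$ is not compact and $\beta(\{g_0\}s)$ runs through infinitely many elements of $\Gamma$. Pointwise convergence is easy: the integrand is bounded by $C+1$, converges to $0$ for each $g_0$, and $\Gamma\backslash G$ has finite measure, so dominated convergence applies. For uniformity on a compact $K$ I would control the mass of the tail: for a finite set $S \subset \Gamma$ the function $m_S(s) = \sum_{\gamma \in S}\mu(\Omega \cap \gamma\Omega s^{-1})$ is continuous in $s$ (continuity of translation in $L^1$), and as $S \uparrow \Gamma$ it increases to the constant $\mu(\Omega) = V$. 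By Dini's theorem the convergence is uniform on $K$, so given $\eps>0$ one may fix a finite $S$ with $V - m_S(s) < \eps$ for all $s \in K$; this bounds uniformly the contribution of $\{g_0 : \beta(\{g_0\}s) \notin S\}$ by $(C+1)\eps/V$, while on the complementary part $\max_{\gamma \in S}|\psi_\alpha(\gamma)-1| \to 0$ since $S$ is finite. Hence $\sup_{s\in K}|\varphi_\alpha(s)-1| \to 0$.

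Finally, the $\varphi_\alpha$ produced are continuous (dominated convergence, the jump locus of $\beta$ being $\mu$-null) but need not lie in $A(G)$ a~priori; a routine approximate-identity argument remedies this. Multiplying $\varphi_\alpha$ by a net $u_j \in A(G) \cap C_c(G)$ of normalized positive-definite functions tending to $1$ uniformly on compacta and using that the $cbMS^p$-Schur norm is submultiplicative under pointwise products of symbols (so $\|\chap{(\varphi_\alpha u_j)}\|_{cbMS^p} \le \|\chap\varphi_\alpha\|_{cbMS^p}\,\|u_j\|_{M_0A}$, with $\|u_j\|_{M_0A} = 1$), a diagonal extraction places the approximants in $A(G) \cap C_c(G)$ without increasing the limiting constant, as permitted by Lemma~\ref{lemma=def_equivalente_par_AG}. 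This gives $\CASch{p}{G} \le C$ for every $C > \CASch{p}{\Gamma}$, hence $\CASch{p}{G} \le \CASch{p}{\Gamma}$, and combined with the first paragraph, $\CASch{p}{G} = \CASch{p}{\Gamma}$.
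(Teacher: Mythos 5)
Your first three steps are sound and are, modulo repackaging, the paper's own argument: transporting $\chap\psi_\alpha$ to the cylinder symbol on $G\simeq\Gamma\times\Omega$ (Remark \ref{rem=produit_cartesien}), noting that each translate $\Phi_\alpha^{g_0}$ has the same cb-norm, and averaging over $\Gamma\backslash G$ by convexity is exactly the content of Lemma \ref{lemma=discrete_to_cont}, where the same average appears as $\int_\Omega\psi_\alpha(\gamma(g\omega')\gamma(h\omega')^{-1})\,d\mu(\omega')$ and is handled by Fubini. Your Dini/tail argument for uniform convergence on compacta is a genuinely different, and correct, substitute for the paper's device of convolving with some $h\in C_c(G)^+$; it even repairs a point your earlier sentence glosses over (dominated convergence is not valid for nets, but splitting the sum over a finite $S\subset\Gamma$ makes this moot).

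The last paragraph, however, contains a genuine gap. The net you require --- normalized positive-definite $u_j\in A(G)\cap C_c(G)$ with $u_j\to 1$ uniformly on compacta --- exists if and only if $G$ is amenable: a compactly supported continuous function of positive type is square-integrable, hence by Godement's theorem a coefficient of the regular representation, so such a net witnesses $1_G\prec\lambda_G$. In particular no such net exists for the groups to which this theorem is applied ($SL_r(F)$, $SL_r(\R)$, $r\geq 3$, and their lattices, which have property (T)). Weakening ``normalized positive definite'' to ``uniformly bounded in $M_0A(G)$'' does not help either: a net in $A(G)$, bounded in $M_0A(G)$, tending to $1$ uniformly on compacta is by definition weak amenability of $G$ --- precisely the property whose failure is at stake. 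Moreover, even granting such $u_j$, the product $\varphi_\alpha u_j$ lies in $C_c(G)$ but there is no reason it lies in $A(G)$: your $\varphi_\alpha$ is only known to be bounded, continuous, and a completely bounded Schur multiplier on $S^p$, and none of this makes it a multiplier of $A(G)$ (that would be the case $p=\infty$, i.e.\ exactly the implication that fails here); in general $C_b(G)\cdot A(G)\not\subset A(G)$. Since the definition of $\SCHURCBAP{p}$ and Lemma \ref{lemma=def_equivalente_par_AG} genuinely require membership in $A(G)$, the proof is incomplete at this point.

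The gap is repaired not by a cutoff but by the observation --- this is the role of the paper's Lemma \ref{lem=dansAGammaDansAG} --- that your averaged function already lies in $A(G)$. Indeed your own formula reads $\varphi_\alpha(s)=\frac{1}{V}\sum_{\gamma\in\Gamma}\psi_\alpha(\gamma)\,\mu(\Omega\cap\gamma\Omega s^{-1})$, i.e.\ $\varphi_\alpha=\frac{1}{V}\,\chi_{\Omega^{-1}}\conv(\psi_\alpha\mu_\Gamma)\conv\chi_\Omega$; writing $\psi_\alpha=f\conv\widetilde g$ with $f,g\in\ell^2(\Gamma)$ and transporting $f,g$ to $L^2(G)$ by convolution with the characteristic function of the fundamental domain (the mirror image, for your left-coset conventions, of the computation in that lemma) exhibits $\varphi_\alpha$ as a coefficient of the regular representation, with $\|\varphi_\alpha\|_{A(G)}\leq\|\psi_\alpha\|_{A(\Gamma)}<\infty$. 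With your last paragraph replaced by this, the proof is complete and matches the paper's.
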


We now fix $p$, and $G$, $\Gamma$ as in Theorem \ref{thm=characterzation}.
We denote by $\mu$ a Haar measure on $G$ ($\mu$ is a left and right Haar
measure because a group containing a lattice is unimodular). Let $\Omega$
be a Borel fundamental domain of the action of $\Gamma$ by
right-multiplication on $G$, \emph{i.e.} $\Omega$ is a Borel subset of $G$
such that the restriction of the quotient map $G \to G/\Gamma$ is
bijective. Since $\Gamma$ is a lattice, $\Omega$ has finite Haar measure,
and we can assume that it has measure $1$. For $g \in G$ denote by
$g=\omega(g) \gamma(g)$ the unique decomposition of $g$ with $\omega(g) \in
\Omega$ and $\gamma(g) \in \Gamma$.

For any bounded function $\psi:\Gamma \to \C$ we define $\varphi:G \to
\C$ by
\[\varphi = \chi_\Omega \conv \psi \mu_\Gamma \conv
\widetilde \chi_\Omega,\] where $\mu_\Gamma$ is the counting measure on
$\Gamma$, and $\chi_\Omega$ (resp. $\widetilde \chi_\Omega$) is the
characteristic function of $\Omega$ (resp. $\Omega^{-1}$). Equivalently,
\[\varphi(g) = \int_\Omega \psi(\gamma(g\omega)) d\mu(\omega).\]
\begin{lemma}\label{lemma=discrete_to_cont}
For $\psi:\Gamma \to \C$, 
\[\| \chap \varphi\|_{cbMS^p(L^2(G))} \leq \| \chap
\psi\|_{cbMS^p(L^2(\Gamma))}.\]
\end{lemma}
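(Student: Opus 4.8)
The plan is to reduce everything to the equality of an ordinary $S^p$-norm with a completely bounded norm, via the product structure of $G$ coming from the fundamental domain. Using the Borel isomorphism $\Omega\times\Gamma\to G$, $(\omega,\gamma)\mapsto\omega\gamma$ (measure-preserving for $\mu|_\Omega$ of mass $1$ and the counting measure on $\Gamma$), we identify $L^2(G)$ with $L^2(\Omega)\otimes\ell^2(\Gamma)$, the $\Gamma$-coordinate of $g$ being exactly $\gamma(g)$. Under this identification the symbol
\[ \Psi(g,h):=\psi\big(\gamma(g)^{-1}\gamma(h)\big)=\chap\psi\big(\gamma(g),\gamma(h)\big) \]
depends only on the two $\Gamma$-coordinates, so it is the function $\widetilde{\chap\psi}$ of Remark \ref{rem=produit_cartesien} (with $\Omega$ playing the role of the auxiliary infinite-dimensional factor, assuming $G$ non-discrete; the discrete case is immediate since then $\Gamma$ has finite index). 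That remark gives
\[ \|\Psi\|_{cbMS^p(L^2(G))}=\|\chap\psi\|_{cbMS^p(L^2(\Gamma))}. \]
It therefore suffices to prove $\|\chap\varphi\|_{cbMS^p(L^2(G))}\leq\|\Psi\|_{cbMS^p(L^2(G))}$.

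For this I would exploit that $\chap\varphi$ is a \emph{left-translation average} of $\Psi$. For $k\in G$ let $V_k$ be the Koopman unitary of left translation by $k$ on $L^2(G)$; since $G$ is unimodular this is well defined, and conjugation by $V_k$ (and by $V_k\otimes\mathrm{id}$ for the cb-norm) sends the Schur multiplier with symbol $\Psi$ to the one with symbol $\Psi^{(k)}(g,h):=\Psi(k^{-1}g,k^{-1}h)$, preserving the cb-norm, exactly as in \eqref{eq=mult_permutation} of Lemma \ref{lem=random_permu}. Granting the key identity
\[ \chap\varphi(g,h)=\int_\Omega \Psi^{(k)}(g,h)\,d\mu(k), \]
the triangle inequality for the cb-multiplier norm — phrased through the dual pairing $|Tr(M_{\,\cdot\,}(A)B)|$, which is linear in the symbol, as in the proofs of Lemma \ref{lemma=martingales} and Lemma \ref{lem=random_permu} — yields
\[ \|\chap\varphi\|_{cbMS^p(L^2(G))}\leq\int_\Omega\|\Psi^{(k)}\|_{cbMS^p(L^2(G))}\,d\mu(k)=\mu(\Omega)\,\|\Psi\|_{cbMS^p(L^2(G))}=\|\chap\psi\|_{cbMS^p(L^2(\Gamma))}, \]
which is the Lemma.

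The main obstacle is the verification of the averaging identity. Unwinding the definitions, $\Psi^{(k)}(g,h)=\psi\big(\gamma(k^{-1}g)^{-1}\gamma(k^{-1}h)\big)$, while the integral formula for $\varphi$ gives $\chap\varphi(g,h)=\int_\Omega\psi\big(\gamma(g^{-1}h\omega)\big)\,d\mu(\omega)$; so the identity amounts to showing that the two $\Gamma$-valued maps $k\mapsto\gamma(k^{-1}g)^{-1}\gamma(k^{-1}h)$ and $\omega\mapsto\gamma(g^{-1}h\omega)$ have the same distribution on the probability space $(\Omega,\mu)$. I would establish this by a measure-preserving change of variables, using only the left-invariance of $\mu$ and the right-$\Gamma$-equivariance $\gamma(x\delta)=\gamma(x)\delta$ of the section (together with the cocycle relation $\gamma(k^{-1}g)=\gamma(k^{-1}\omega(g))\gamma(g)$, which lets one reduce to the case $g,h\in\Omega$). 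Concretely one computes both push-forward measures as $\sum_{\delta\in\Gamma}\mu\big(g^{-1}h\Omega\cap\Omega\delta\big)\,\delta$ — a computation I would first sanity-check on $G=\R$, $\Gamma=\Z$, $\Omega=[0,1)$, where $\varphi$ is the piecewise-linear interpolation of $\psi$ and both sides return the same two-point distribution. This equality of distributions is precisely the point where Haagerup's lattice-to-group computation from \cite{haagerup2} is reused; the novelty here is only that Remark \ref{rem=produit_cartesien} has already converted the completely bounded statement into a statement about ordinary $S^p$-multipliers, so that this single change of variables suffices for all $p$.
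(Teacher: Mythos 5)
Your proposal has the same architecture as the paper's proof (a symbol depending only on the $\Gamma$-coordinates, whose cb-norm is identified by Remark \ref{rem=produit_cartesien}; conjugation by translation unitaries; convexity of the cb-multiplier norm in the symbol), but the step you yourself single out as ``the main obstacle'' --- the averaging identity $\chap\varphi(g,h)=\int_\Omega\Psi(k^{-1}g,k^{-1}h)\,d\mu(k)$ --- is false in general, and this is a genuine gap, not a technicality. The reason is a left/right mismatch: the section $g=\omega(g)\gamma(g)$ is equivariant for \emph{right} multiplication by $\Gamma$ (namely $\gamma(x\delta)=\gamma(x)\delta$), so symbols built from $\gamma$ behave well under \emph{right} translation of their arguments. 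That is exactly what the paper exploits: using the measure-preserving map $\omega'\mapsto\omega(h\omega')$ of $(\Omega,\mu)$ and the relation $gh^{-1}\omega(h\omega')=\omega(g\omega')\,\gamma(g\omega')\gamma(h\omega')^{-1}$, it proves
\[ \varphi(gh^{-1})=\int_\Omega\psi\bigl(\gamma(g\omega')\gamma(h\omega')^{-1}\bigr)\,d\mu(\omega'), \]
an average of \emph{right} translates of the symbol $(g,h)\mapsto\psi(\gamma(g)\gamma(h)^{-1})$. You instead average \emph{left} translates of $\Psi$ against this right-equivariant section. The cocycle relation you invoke exhibits the obstruction rather than removing it: $\gamma(k^{-1}g)^{-1}\gamma(k^{-1}h)=\gamma(g)^{-1}\gamma(k^{-1}\omega(g))^{-1}\gamma(k^{-1}\omega(h))\gamma(h)$, and the correction factors $\gamma(k^{-1}\omega(g))$, $\gamma(k^{-1}\omega(h))$ do not cancel; in particular your reduction to $g,h\in\Omega$ is not available.

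In fact the identity already fails with $g,h\in\Omega$. Take $G=S_3$, $\Gamma=\{e,(12)\}$ (a lattice, $G$ being compact), $\Omega=\{e,(123),(23)\}$, and $\mu$ the normalized counting measure; then $\gamma(x)=e$ for $x\in\Omega$ and $\gamma(x)=(12)$ for $x\in\{(12),(13),(132)\}$. Let $g=(123)$, $h=(23)$, so $g^{-1}h=(13)$. Since $(13)\Omega=\{(13),(12),(132)\}$, all of whose elements have $\gamma$-part $(12)$, one gets $\varphi(g^{-1}h)=\psi((12))$. On the other hand $k=e$ gives $\gamma(g)^{-1}\gamma(h)=e$, while $k=(123)$ gives $\gamma(e)^{-1}\gamma((13))=(12)$ and $k=(23)$ gives $\gamma((13))^{-1}\gamma(e)=(12)$, so
\[ \int_\Omega\psi\bigl(\gamma(k^{-1}g)^{-1}\gamma(k^{-1}h)\bigr)\,d\mu(k)=\tfrac13\psi(e)+\tfrac23\psi\bigl((12)\bigr), \]
which differs from $\varphi(g^{-1}h)$ (and also from $\varphi(gh^{-1})=\tfrac23\psi(e)+\tfrac13\psi((12))$, so no change of convention in $\chap{\ }$ rescues it) as soon as $\psi(e)\neq\psi((12))$. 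Thus the two $\Gamma$-valued maps are not equidistributed: your push-forward formula $\sum_{\delta}\mu(g^{-1}h\Omega\cap\Omega\delta)\,\delta$ is correct for $\omega\mapsto\gamma(g^{-1}h\omega)$ but wrong for $k\mapsto\gamma(k^{-1}g)^{-1}\gamma(k^{-1}h)$. Your sanity check on $G=\R$, $\Gamma=\Z$ could not detect this, because for abelian $G$ left and right translates coincide and the identity does hold there. The repair is to average right translates, i.e.\ to prove the displayed identity for $\varphi(gh^{-1})$ above; the multipliers with symbols $\varphi(g^{-1}h)$ and $\varphi(gh^{-1})$ have the same cb-norm since inversion preserves Haar measure, and after this substitution your remaining steps (Remark \ref{rem=produit_cartesien} applied, for each fixed $\omega'$, to the identification $g\leftrightarrow(\gamma(g\omega'),\omega(g\omega'))$; translation invariance; convexity of the norm in the symbol) go through verbatim --- at which point your argument has become exactly the proof in the paper.
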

\begin{proof}
Since for any $h \in G$, the measure $\mu{\left|_\Omega\right.}$ is
invariant under $ \omega' \mapsto \omega( h \omega')$, and since $ g
h^{-1} \omega(h\omega') = \omega(g\omega') \gamma(g \omega')
\gamma(h\omega')^{-1}$, we get that
\[\varphi(g h^{-1}) = \int_\Omega \psi(\gamma(g\omega')
\gamma(h\omega')^{-1}) d\mu(\omega').\] 
By Fubini's theorem it is enough to prove that for any $\omega' \in \Omega$
the Schur multiplier with symbol $(g,h) \mapsto \psi(\gamma(g\omega')
\gamma(h\omega')^{-1})$ has cb-norm on $S^p(L^2(G))$ not larger than the
cb-norm on $S^p(\ell^2(\Gamma))$ of the Schur multiplier with symbol
$(\gamma,\gamma') \mapsto \psi(\gamma \gamma'^{-1})$. But since
measure-theoretically, we have $G=\Gamma \times \Omega$ for the
identification of $g$ with $(\gamma(g\omega'),\omega(g\omega'))$
these Schur multipliers have in fact the same cb-norm, by Remark
\ref{rem=produit_cartesien}.
\end{proof}

We will also use the following Lemma from \cite{haagerup2}. Since
\cite{haagerup2} is not easily available we reproduce a proof.
\begin{lemma}\label{lem=dansAGammaDansAG}
$\|\varphi\|_{A(G)} \leq \|\psi\|_{A(\Gamma)}$.
\end{lemma}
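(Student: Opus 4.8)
The plan is to realize $\psi$ as a coefficient of the left regular representation of $\Gamma$, push this factorization through the convolution formula defining $\varphi$, and recognize the result as a coefficient of $\lambda_G$. Concretely, recall that $A(\Gamma)$ is the set of functions $\psi = a \conv \widetilde b$ with $a,b \in \ell^2(\Gamma)$ (convolution on $\Gamma$, and $\widetilde b(\gamma) = \overline{b(\gamma^{-1})}$), with $\|\psi\|_{A(\Gamma)} = \inf \|a\|_2 \|b\|_2$. Given $\eps>0$, I would fix such a factorization with $\|a\|_2\|b\|_2 \leq \|\psi\|_{A(\Gamma)}+\eps$. Since the inclusion $\Gamma \hookrightarrow G$ sends convolution of $\ell^1$-functions on $\Gamma$ to convolution of measures supported on $\Gamma$, one has $\psi \mu_\Gamma = (a\mu_\Gamma) \conv (\widetilde b \mu_\Gamma)$ as measures on $G$. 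Substituting into $\varphi = \chi_\Omega \conv \psi\mu_\Gamma \conv \widetilde \chi_\Omega$ and using associativity of convolution, I would regroup as $\varphi = u \conv w$, where
\[ u = \chi_\Omega \conv (a\mu_\Gamma), \qquad w = (\widetilde b \mu_\Gamma) \conv \widetilde\chi_\Omega .\]

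The next step is to check that $u,w \in L^2(G)$ and to compute their norms. A direct computation gives $u = \sum_{\gamma \in \Gamma} a(\gamma)\, \chi_{\Omega\gamma}$ and $w = \sum_{\gamma\in\Gamma} \widetilde b(\gamma)\, \chi_{\gamma\Omega^{-1}}$. Because $\Omega$ is a fundamental domain for the right $\Gamma$-action, the translates $\{\Omega\gamma\}_{\gamma \in \Gamma}$ partition $G$ into sets of measure $\mu(\Omega)=1$ (right invariance of $\mu$), so $\|u\|_{L^2(G)}^2 = \sum_\gamma |a(\gamma)|^2 = \|a\|_{\ell^2(\Gamma)}^2$. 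Likewise $\{\gamma\Omega^{-1}\}_{\gamma\in\Gamma}$ partitions $G$ into sets of measure $\mu(\Omega^{-1}) = \mu(\Omega) = 1$, where unimodularity of $G$ (automatic since $G$ contains a lattice) is used crucially; hence $\|w\|_{L^2(G)}^2 = \sum_\gamma |\widetilde b(\gamma)|^2 = \|b\|_{\ell^2(\Gamma)}^2$. Writing $w = \widetilde v$ with $v(g) = \overline{w(g^{-1})}$, so that $\|v\|_2 = \|w\|_2$, the identity $u \conv \widetilde v = \langle \lambda_G(\cdot) u, v\rangle$ exhibits $\varphi$ as a coefficient of $\lambda_G$. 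Therefore $\varphi \in A(G)$ and
\[ \|\varphi\|_{A(G)} \leq \|u\|_{L^2(G)}\,\|v\|_{L^2(G)} = \|a\|_{\ell^2(\Gamma)}\,\|b\|_{\ell^2(\Gamma)} \leq \|\psi\|_{A(\Gamma)} + \eps .\]
Letting $\eps \to 0$ yields the claim.

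The step I expect to require the most care is the bookkeeping of conventions: one must match the convolution convention used in the definition $\varphi = \chi_\Omega \conv \psi\mu_\Gamma \conv \widetilde\chi_\Omega$ with the coefficient formula $f \conv \widetilde g = \langle \lambda_G(\cdot) f, g\rangle$, so that the factorization of $\psi$ as a coefficient of $\lambda_\Gamma$ produces exactly the factorization of $\varphi$ as a coefficient of $\lambda_G$. In particular, a naive computation of $\langle \lambda_G(g)\widetilde\xi,\widetilde\eta\rangle$ for the lifted vectors $\widetilde\xi(g) = \xi(\gamma(g))$, $\widetilde\eta(g)=\eta(\gamma(g))$ lands on an integral involving $\gamma(g^{-1}\omega)^{-1}$ rather than $\gamma(g\omega)$, so one must absorb these inversions using the isometric invariance of the Fourier algebras under $f \mapsto \check f$ (where $\check f(x) = f(x^{-1})$) and under complex conjugation, both valid for unimodular $G$ and for $\Gamma$. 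Once the conventions are aligned, the only substantive inputs are the two measure-preserving partitions of $G$ by $\{\Omega\gamma\}$ and $\{\gamma\Omega^{-1}\}$, which rest on $\mu(\Omega)=1$ and on unimodularity.
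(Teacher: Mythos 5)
Your proof is correct and follows essentially the same route as the paper's: factor $\psi = a \conv \widetilde b$ with $a,b \in \ell^2(\Gamma)$, convolve with the indicator of the fundamental domain to lift this to two $L^2(G)$-functions supported on translates of $\Omega$, and recognize $\varphi$ as the resulting coefficient of $\lambda_G$, whose $L^2$-norms match $\|a\|_2$ and $\|b\|_2$ because the translates partition $G$ into sets of measure $1$. If anything, your bookkeeping (putting the measure $a\mu_\Gamma$, $\widetilde b\mu_\Gamma$ on the correct side of $\chi_\Omega$, and invoking unimodularity for the partition $\{\gamma\Omega^{-1}\}$) is more careful than the paper's brief reproduction of Haagerup's argument.
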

\begin{proof}
  If $\psi \in A(\Gamma)$ there exist $f,g \in \ell^2(\Gamma)$ such that
  $\|f\|_2 \|g\|_2 = \|\psi\|_{A(\Gamma)}$ and $\varphi = f \conv
  \widetilde g$ where $\widetilde g(\gamma) = g(\gamma^{-1})$. Put $f_1=f
  \mu_\Gamma \conv \chi_\Omega$ and $g_1=g \mu_\Gamma \conv
  \chi_\Omega$. Then $f_1 \conv \widetilde g_1= \varphi$ and hence
  \[\|\psi\|_{A(G)} \leq \|f_1\|_{L^2(G)} \|g_1\|_{L^2(G)} =
  \|f\|_{\ell^2(\Gamma)} \|g\|_{\ell^2(\Gamma)} = \|\psi\|_{A(\Gamma)}.\]
\end{proof}

\begin{proof}[Proof of Theorem \ref{thm=characterzation}]
  The inequality $\CASch{p}{G} \geq \CASch{p}{\Gamma}$ holds for any
  closed subgroup of $G$ by Proposition \ref{thm=basic_propertiesofCASch}.

  For the other inequality, let $\psi_\alpha \in A(\Gamma)$ converging
  pointwise to $1$ and such that $\sup_\alpha \|\chap
  \psi_\alpha\|_{cbMS^p(\ell^2(\Gamma))} = \CASch{p}{\Gamma}$. Use Lemma
  \ref{lemma=discrete_to_cont} and define $\varphi_\alpha^0 = \chi_\Omega
  \conv \psi_\alpha \mu_\Gamma \conv \widetilde \chi_\Omega$. Lemma
  \ref{lemma=discrete_to_cont} implies that $\| \chap
  \varphi_\alpha^0\|_{cbMS^p(L^2(G))} \leq \| \chap
  \psi_\alpha\|_{cbMS^p(\ell^2(\Gamma))}$. Also, by Lemma
  \ref{lem=dansAGammaDansAG}, $\varphi_\alpha^0 \in A(G)$.  However
  $\varphi_\alpha^0$ may not converge to $1$ uniformly on compact subsets.
  Take $h \in C_c(G)^+$ (a continuous nonnegative function with compact
  support) such that $\int h d\mu=1$, and define $\varphi_\alpha = h \conv
  \varphi_\alpha^0$. Then $\chap \varphi_\alpha$ is the average with
  respect to the probability measure $h(x) d\mu(x)$ of $(s,t)\mapsto
  \chap\varphi_\alpha(sx,t)$. But for any $x$, the Schur multiplier with
  symbol $(s,t)\mapsto \chap\varphi_\alpha(sx,t)$ has same norm as the
  multiplier with symbol $\chap \varphi_\alpha$. This implies that $\|
  \chap \varphi_\alpha\|_{cbMS^p(L^2(G))} \leq \| \chap
  \psi_\alpha\|_{cbMS^p(\ell^2(\Gamma))}$. In the same way, since left
  translations by $G$ act on $A(G)$ isometrically, $\varphi_\alpha \in
  A(G)$.  The fact that $\lim_\alpha \varphi_\alpha(g)=1$ follows from the
  dominated convergence Theorem in
\begin{equation*}
\varphi_\alpha(g) = \int_G \int_\Omega h(gs^{-1}) \psi_\alpha(
  \gamma(s\omega')) d\mu(\omega') d\mu(s)
\end{equation*}
 The convergence is uniform in compact subsets of $G$ because the family
$h(g\cdot)$, when $g$ belong to a compact subset of $G$, is relatively
compact in $L^1(G)$.
\end{proof}

\section{The case of discrete groups}
\label{sect=lien_OAP}
In this section we restrict ourselves to discrete groups and we study
the relation between the property of completely bounded approximation
by Schur multipliers on $S^p$ and various other approximation
properties. We prove that the AP of Haagerup and Kraus (see definition
\ref{def=AP_groupe}) implies $\SCHURCBAP{p}$ for any
$1<p<\infty$. We also prove that for such $p$, if the non-commutative
$L^p$-space associated to a discrete group has the OAP (or the
stronger property CBAP), then this group has the property
$\SCHURCBAP{p}$. When $G$ is hyperlinear, these results are
consequences of \cite{MR1971296}. Here we prove these results without
the hypothesis of hyperlinearity. Since we are working in $S^p$
instead of general non-commutative $L^p$-spaces, we are able to adapt
the argument of \cite{MR1971296} and give elementary proofs that avoid
some technicalities (in particular we avoid the use of the results
from the unpublished work \cite{junge}). The results in this section
are however certainly well-known to experts, and the proofs standard.
We also prove that, for discrete groups and $1<p<\infty$,
$\CASch{p}{G}$ can only take the two values $1$ or $\infty$. All the
aforementioned results are corollaries of a same result (Theorem
\ref{thm=approxiimplCB}) on the approximation, in the stable
point-norm topology (see below for definitions), of the identity on a
Schatten class.

For a discrete group $G$, we denote by $\tau_G$ the usual tracial state on
the von Neumann algebra of $G$, and by $L^p(\tau_G)$ the corresponding
non-commutative $L^p$ space (for $1 \leq p \leq \infty$). 

Before we give precise statements and proofs we have to recall some
basic facts on the stable point-norm topology.

\subsection{The stable point-norm topology}

For an operator space $V$, we recall the definition of the \emph{stable
  point-norm} topology $\mathcal T_{\mathrm n}$ on $CB(V,V)$~: $\mathcal
T_{\mathrm n}$ is the weakest topology making the seminorms $T\mapsto \|id
\otimes T(x)\|$ for $x \in \mathcal K(\ell^2) \otimes_{\mathrm{min}} V =
S^\infty[V]$ continuous. In this section we use the notation $S^\infty[V]$
for $\mathcal K(\ell^2) \otimes_{\mathrm{min}} V$.
 
We recall the definition of OAP, which was given in the introduction~:
\begin{dfn}
  An operator space $V$ has the operator space approximation property (OAP)
  if the identity on $V$ belongs to the $\mathcal T_{\mathrm n}$-closure of
  the space $F(V,V)$ of finite rank operators on $V$.
\end{dfn}

We wish to study this notion when $V$ is a non-commutative $L^p$-space
$L^p(\mathcal M,\tau)$. Non-commutative $L^p$ spaces indeed have a natural
operator space structure but, as explained in subsection
\ref{sec=cb_sur_LpNC}, this structure is more simply described in terms of
$L^p(B(\ell^2) \overline \otimes \mathcal M,Tr \otimes \tau)$ ($Tr$ denotes
the usual semi-finite trace on $B(\ell^2)$). Lemma
\ref{lem=equivalent_topology} below will allow us to give a simpler
equivalent definition of the topology $\mathcal T_{\mathrm{n}}$ in
Definition \ref{dfn=def_equival_T_n}.

Lemma \ref{lem=equivalent_topology} is a characterization of the topology
$\mathcal T_{\mathrm n}$, in terms of vector-valued Schatten classes
$S^p[V]$ defined in \cite{MR1648908}. Except in the following two lemmas,
in the remaining of the paper the notation $S^p[V]$ will only be used when
$V=S^p(H)$ or $V=L^p(\tau_G)$ for a discrete group $G$. In this case the
space $S^p[V]$ coincides with $S^p(\ell^2 \otimes H)$ or (if $p<\infty$)
$L^p(Tr \otimes \tau)$.

\begin{lemma}\label{lem=equivalent_topology}
Let $1 \leq p \leq \infty$. The topology $\mathcal T_{\mathrm n}$ on $CB(V,V)$
coincides with the topology defined by the family of seminorms $T
\mapsto \sup_i \|id_{S^p} \otimes T (x_i)\|_{S^p[V]}$, for all
$(x_i)_{i \geq 0} \in c_0(S^p[V])$.
\end{lemma}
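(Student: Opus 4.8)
The plan is to prove that the two families of seminorms on $CB(V,V)$ define the same locally convex topology, which amounts to showing that every seminorm of each family is dominated by (finitely many of) the other. I write $p_x(T)=\|id\otimes T(x)\|_{S^\infty[V]}$ for $x\in S^\infty[V]$ (these generate $\mathcal T_{\mathrm n}$) and $q_{(x_i)}(T)=\sup_i\|id\otimes T(x_i)\|_{S^p[V]}$ for $(x_i)\in c_0(S^p[V])$. The single tool behind both inclusions is the elementary commutation identity $id\otimes T\big((a\otimes1)z(b\otimes1)\big)=(a\otimes1)(id\otimes T)(z)(b\otimes1)$, valid because $T$ acts only on the factor $V$ while $a,b$ act on the $\ell^2$-leg, combined with Pisier's factorisation of the vector-valued Schatten spaces (\cite{MR1648908}) and the unitary invariance of the $S^p[V]$-norms. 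The case $p=\infty$ is immediate, since then $S^p[V]=S^\infty[V]$ and $p_x=q_{(x,0,0,\dots)}$, so I concentrate on $p<\infty$.

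For the inclusion $\{q_{(x_i)}\}\subseteq\mathcal T_{\mathrm n}$ I would \emph{route an entire null sequence through a single element of $S^\infty[V]$}. Given $(x_i)\in c_0(S^p[V])$, Pisier's theorem (exponents $2p,\infty,2p$) factors each $x_i=(c_i\otimes1)y_i(d_i\otimes1)$ with $\|y_i\|_{S^\infty[V]}=1$ and $\|c_i\|_{S^{2p}}=\|d_i\|_{S^{2p}}=\delta_i$, where $\delta_i^2\le\|x_i\|_{S^p[V]}+\varepsilon_i\to0$. Placing the $y_i$ in orthogonal blocks and setting $y=\bigoplus_i\delta_i y_i$, one gets $y\in S^\infty[V]$ precisely because the block-norms $\delta_i$ tend to $0$. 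Writing $U_i$ for the block inclusions, $x_i=(a_i\otimes1)y(b_i\otimes1)$ with $a_i=\delta_i^{-1}c_iU_i^*$ and $b_i=U_id_i$, so $\|a_i\|_{S^{2p}}\|b_i\|_{S^{2p}}=\delta_i\to0$; the commutation identity then gives $q_{(x_i)}\le(\sup_i\delta_i)\,p_y$, i.e.\ a single $\mathcal T_{\mathrm n}$-seminorm dominates $q_{(x_i)}$.

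The reverse inclusion $\mathcal T_{\mathrm n}\subseteq\{q_{(x_i)}\}$ is the delicate one and I expect it to be the main obstacle. Here I would first establish the \emph{dual factorisation}
\[\|w\|_{S^\infty[V]}=\sup\big\{\|(a\otimes1)w(b\otimes1)\|_{S^p[V]}:\|a\|_{S^{2p}}\le1,\ \|b\|_{S^{2p}}\le1\big\},\]
the nontrivial inequality $\ge$ coming from the duality $\langle S^p[V],S^{p'}[V^*]\rangle$ together with Pisier's factorisation of $S^1[V^*]$ with exponents $2p,p',2p$ (so that $S^{2p}\cdot S^{p'}[V^*]\cdot S^{2p}=S^1[V^*]$): a norming functional $\eta$ of $w$ factorises as $(\alpha\otimes1)\zeta(\beta\otimes1)$, and moving $\alpha,\beta$ across the pairing rewrites $\langle w,\eta\rangle$ as a compression of $w$ tested against $\zeta\in S^{p'}[V^*]$. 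Applying this to $w=id\otimes T(x)$ and using the commutation identity yields
\[p_x(T)=\sup_{\|a\|_{S^{2p}},\|b\|_{S^{2p}}\le1}\big\|id\otimes T\big((a\otimes1)x(b\otimes1)\big)\big\|_{S^p[V]}.\]

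The final step is a compactness reduction. By unitary invariance of the $S^p[V]$-norm, and the commutation of $id\otimes T$ with unitaries on the $\ell^2$-leg, I may replace $a,b$ by their positive parts, so the supremum above is taken over $K_x=\{(c\otimes1)x(d\otimes1):c,d\ge0,\ \|c\|_{S^{2p}},\|d\|_{S^{2p}}\le1\}$. For $x$ of finite rank $K_x$ is the image of a finite-dimensional ball, hence relatively compact in $S^p[V]$; for general $x\in S^\infty[V]$ one approximates $x$ by finite-rank truncations $x^{(n)}$ and notes that $\sup_{c,d}\|(c\otimes1)(x-x^{(n)})(d\otimes1)\|_{S^p[V]}\le\|x-x^{(n)}\|_{S^\infty[V]}\to0$, so $K_x$ is a uniform (Hausdorff) limit of the relatively compact $K_{x^{(n)}}$ and is therefore totally bounded, hence relatively compact. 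Invoking the classical fact that every compact subset of a Banach space lies in the closed absolutely convex hull of some sequence tending to $0$, I choose $(x_i)\in c_0(S^p[V])$ with $K_x\subseteq\overline{\mathrm{aco}}\{x_i\}$; linearity of $id\otimes T$ then gives $p_x(T)\le\sup_i\|id\otimes T(x_i)\|_{S^p[V]}=q_{(x_i)}(T)$, completing the argument. The hard part is thus the dual factorisation and the verification that $K_x$ is relatively compact; the rest is bookkeeping with Pisier's vector-valued $S^p$-theory.
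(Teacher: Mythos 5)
Your inclusion $\{q_{(x_i)}\}\subseteq\mathcal T_{\mathrm n}$ is correct and is essentially the paper's own argument: the paper also factors $x_i=a_i v_i b_i$ through Pisier's Theorem 1.5 with $v_i\to 0$ in $S^\infty[V]$, and your block-diagonal element $y=\oplus_i\delta_i y_i$ is the same device the paper uses to settle the case $p=\infty$. Your ``dual factorisation'' of $\|w\|_{S^\infty[V]}$ is also a true statement; it is the infinite-dimensional form of Pisier's Lemma 1.7, which the paper quotes in its finite-matrix form.

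The genuine gap is the compactness reduction, and as written it is fatal. For $x$ of finite rank the set $K_x=\{(c\otimes 1)x(d\otimes 1): c,d\ge 0,\ \|c\|_{S^{2p}}\le 1,\ \|d\|_{S^{2p}}\le 1\}$ is \emph{not} the image of a finite-dimensional ball and is \emph{not} relatively compact. Take $V=\C$, $x=d=e_1e_1^*$ and $c_k=\tfrac12(e_1+e_k)(e_1+e_k)^*$ for $k\ge 2$: each $c_k$ is a positive rank-one projection, so $\|c_k\|_{S^{2p}}=1$, and yet
\[ (c_k\otimes 1)\,x\,(d\otimes 1)=\tfrac12(e_1+e_k)e_1^*, \]
so $K_x$ contains an infinite set whose points are pairwise at distance $1/\sqrt 2$ in every $S^p$. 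The reason is that even when $x=(P\otimes 1)x(Q\otimes 1)$ with $P,Q$ finite-rank projections, the map $(c,d)\mapsto (c\otimes1)x(d\otimes1)$ factors only through the rectangular compressions $cP$ and $Qd$, and these still range over an \emph{infinite}-dimensional ball; positivity of $c,d$ does not remedy this. Since a subset of a Banach space lies in the closed absolutely convex hull of a null sequence \emph{if and only if} it is relatively compact (the closed absolutely convex hull of a null sequence is itself compact), your final covering step is impossible for this $x$; and since the defect already occurs for finite-rank $x$, the Hausdorff-limit argument for general $x$ has nothing to rest on. The missing idea --- and the paper's route --- is to gain finite-dimensionality in the \emph{compressing} variables, not only in $x$: first reduce, by density of $\cup_n M_n(V)$ in $S^\infty[V]$, to null sequences of finite matrices $x_i\in M_{n_i}(V)$, and then apply Pisier's Lemma 1.7 in its finite form, which computes $\|x_i\|_{M_{n_i}(V)}$ as a supremum over $a,b$ in the unit ball of the finite-dimensional space $S^{2p}_{n_i}$. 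Equivalently, in your notation: polar-decompose $cP=u\,(Pc^2P)^{1/2}$ and $Qd=(Qd^2Q)^{1/2}v$ and discard the partial isometries $u,v$, which pass across $id\otimes T$ and do not increase $S^p[V]$-norms; the surviving positive parts live in $S^{2p}(P\ell^2)$ and $S^{2p}(Q\ell^2)$. After that reduction the set $\{a x_i b\}$ really is the continuous image of a compact ball, and your null-sequence covering argument goes through exactly as in the paper.
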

\begin{rem}
  We view $S^p$ as the increasing union of $S^p_n$, $n\geq 1$.

  Let us denote by $\mathcal T_{\mathrm n}^p$ the topology described in this
  lemma.  Since $\cup_n S^p_n[V]$ is dense in $S^p[V]$, this topology
  $\mathcal T_{\mathrm n}^p$ coincides with the topology defined by the
  seminorms $T \mapsto \sup_i \|id_{S^p} \otimes T (x_i)\|_{S^p[V]}$
  for $(x_i)_{i \geq 0} \in c_0(\cup_n S^p_n[V])$.  We will use this
  elementary fact in the proof below.
\end{rem}
\begin{proof}
  We first consider the case $p=\infty$ (note that by definition,
  $S^\infty[V] = \mathcal K(\ell^2) \otimes_{\mathrm{min}}V$). The inclusion
  $\mathcal T_{\mathrm n} \subset \mathcal T_{\mathrm n}^\infty$ is obvious. The other direction
  is classical and follows very easily from the fact that $\mathcal
  K(\ell^2) \otimes_{\mathrm{min}} \mathcal K(\ell^2) \otimes_{\mathrm{min}} V =
  \mathcal K(\ell^2 \otimes_2 \ell^2) \otimes_{\mathrm{min}} V$. Indeed if $x_i
  \in \mathcal K(\ell^2) \otimes V$ converges to $0$, then $x =
  \oplus_{i} x_i$ belongs to $\mathcal K(\ell^2) \otimes_{\mathrm{min}}
  \mathcal K(\ell^2) \otimes_{\mathrm{min}} V$, and for any $T \in CB(V,V)$,
  $\|id \otimes T (x)\| = \sup_i \|id \otimes T (x_i)\|$.

  Assume now $p<\infty$. We prove first that $\mathcal T_{\mathrm n}^p \subset
  \mathcal T_{\mathrm n}^\infty$.  Take $(x_i)_{i \geq 0} \in c_0(S^p[V])$. By the
  properties of $S^p[V]$ (Theorem 1.5 in \cite{MR1648908}), $x_i$ can
  be written as $x_i=a_i \cdot v_i \cdot b_i$ with $a_i, b_i$ in the
  unit ball of $S^{2p}$ and $\|v_i\|_{S^\infty[V]} \leq 2
  \|x_i\|_{S^p[V]}$ ($\|x_i\|_{S^p[V]}$ is in fact equal to the
  infimum of $\|v_i\|_{S^\infty[V]}$ over all such decompositions). In
  particular, $\lim_i \|v_i\|_{S^\infty[V]}=0$, and moreover for any
  $T \in CB(V,V)$ and $n \in \N$, $\|id \otimes T(x_i)\|_{S^p[V]} \leq
  \|id \otimes T(v_i)\|_{S^\infty[V]}$.

  For the reverse inclusion $\mathcal T_{\mathrm n}^\infty \subset \mathcal T_{\mathrm n}^p$, we
  use the above remark for $p = \infty$. Let us consider $x_i \in
  M_{n_i}(V)$ such that $\|x_i\|_{M_{n_i}(V)} \to 0$. By Lemma 1.7
  in \cite{MR1648908}, we have that 
\[ \|x_i\|_{M_{n_i}(V)} = \sup \left\{ \|a x_i b\|_{S^p_{n_i}[V]}, a,b
  \textrm{ in the unit ball of }S^{2p}_{n_i}\right\}.\]
Consider a sequence $(y_{i,j})_{j \geq 0}$ in the ball of radius
$\|x_i\|_{M_{n_i}(V)}$ in $S^p_{n_i}[V]$ converging to $0$ and such
that for any $a,b$ in the unit ball of $S^{2p}_{n_i}$, $\|a x_i
b\|_{S^p_{n_i}[V]}$ belongs to the closed convex hull of $\{y_{i,j},j
\geq 0\}$. Then $\lim_{|i|+|j| \to \infty} \|y_{i,j}\|_{S^p[V]} = 0$
and for any $T \in CB(V,V)$, and $a,b$ in the unit ball of
$S^{2p}_{n_i}$, we have that
\[ \|(id \otimes T)(a x_i b)\|_{S^p_{n_i}[V]} \leq \sup_j \|(id
\otimes T)(y_{i,j})\|_{S^p_{n_i}[V]}.\]
Hence, 
\begin{eqnarray*}\sup_i \|(id \otimes T) (x_i)\|_{M_{n_i}(V)} 
& \leq &   \sup_i \sup_j \|(id \otimes T)(y_{i,j})\|_{S^p_{n_i}[V]}.
\end{eqnarray*}
This concludes the proof of $\mathcal T_{\mathrm n}^\infty \subset \mathcal T_{\mathrm n}^p$ and of
the Lemma.
\end{proof}

When $V=S^p$ or $V=L^p(\tau_G)$ (or more generally $V=L^p(\mathcal M,\tau)$
for a semi-finite normal faithful trace $\tau$ on $\mathcal M$), Lemma
\ref{lem=equivalent_topology} shows that the definition of the topology
$\mathcal T_{\mathrm{n}}$ and of the property $OAP$ is equivalent to the
following definition, which has the advantage not to rely on the precise
definition of the operator space structure on $V$. In this definition $G$
is a discrete group, and $H$ a Hilbert space.
\begin{dfn}\label{dfn=def_equival_T_n} Let $1 \leq p \leq \infty$.

  The topology $\mathcal T_{\mathrm{n}}$ on $CB(S^p(H),S^p(H))$ is the
  weakest topology making the seminorms $T\mapsto \sup_i\|id \otimes
  T(x_i)\|_{S^p(\ell^2 \otimes H)}$ for $(x_i)_{i\geq 0}\in
  c_0(S^p(\ell^2\otimes H))$ continuous.

  If $p<\infty$ the topology $\mathcal T_{\mathrm{n}}$ on
  $CB(L^p(\tau_G),L^p(\tau_G))$ is the weakest topology making the
  seminorms $T\mapsto \sup_i\|id \otimes T(x_i)\|_{L^p(Tr \otimes \tau_G)}$
  for $(x_i)_{i\geq 0}\in c_0(L^p(Tr \otimes \tau_G))$ continuous.

  $L^p(\tau_G)$ has OAP if the identity on $L^p(\tau_G)$ is in the
  $\mathcal T_{\mathrm{n}}$-closure of the space of finite rank operators
  on $L^p(\tau_G)$. 
\end{dfn}
The reader unfamiliar with the notions of vector-valued $S^p$ can start
with this definition, forget Lemma \ref{lem=equivalent_topology} which will
not be used later, and take in Lemma
\ref{lem=equivalent_topology_convexcase}, $V=S^p(H)$ of $L^p(\tau_G)$, so
that $S^p[V]$ is elementary.

Since the weak closure and the norm closure of a convex set coincide,
we even get~:
\begin{lemma}\label{lem=equivalent_topology_convexcase} Let $C$ be a convex
  subset of $CB(V,V)$, $u \in CB(V,V)$ and $1\leq p \leq \infty$. Then $u$
  belongs to the $\mathcal T_{\mathrm n}$-closure of $C$ if and only if for any $a
  \in S^p[V]$ and $b \in S^p[V]^*$, $\langle b, id \otimes u(a)\rangle$
  belongs to the closure of
\[ \left\{\langle b, id \otimes T(a)\rangle, T \in C \right\}.\]
\end{lemma}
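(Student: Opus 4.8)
The plan is to reduce everything to Mazur's theorem (the norm closure and the weak closure of a convex set in a Banach space coincide), applied not to $CB(V,V)$ directly but to the Banach spaces $c_0(S^p[V])$ through which the seminorms defining $\mathcal T_{\mathrm n}$ factor. Throughout I would use Lemma~\ref{lem=equivalent_topology}, so that $\mathcal T_{\mathrm n}=\mathcal T_{\mathrm n}^p$ is generated by the seminorms $T\mapsto\sup_i\|id\otimes T(x_i)\|_{S^p[V]}$ for $(x_i)_i\in c_0(S^p[V])$.

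One direction is immediate: if $u$ lies in the $\mathcal T_{\mathrm n}$-closure of $C$, then applying the seminorm attached to the sequence $(a,0,0,\dots)$ to a net $T_\alpha\in C$ gives $\|id\otimes(T_\alpha-u)(a)\|_{S^p[V]}\to0$, whence $\langle b, id\otimes T_\alpha(a)\rangle\to\langle b, id\otimes u(a)\rangle$ for every $b\in S^p[V]^*$. For the converse I would first reduce to a single Banach space. A basic $\mathcal T_{\mathrm n}$-neighbourhood of $u$ is cut out by finitely many seminorms, which can be merged into one by interleaving the corresponding sequences; so it suffices to show that for each fixed $(x_i)_i\in c_0(S^p[V])$ the element $\Phi(u)$ lies in the norm closure of $\Phi(C)$, where $\Phi\colon CB(V,V)\to c_0(S^p[V])$ is the linear map $T\mapsto(id\otimes T(x_i))_i$ (well defined into $c_0$ since $\|id\otimes T(x_i)\|\le\|T\|_{cb}\|x_i\|\to0$). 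As $\Phi(C)$ is convex, Mazur's theorem replaces the norm closure by the weak closure, and a point $y_0$ of a Banach space $Y$ lies in the weak closure of a convex $K$ iff $\lambda(y_0)\in\overline{\lambda(K)}$ for every $\lambda\in Y^*$ (Hahn--Banach separation). Since $c_0(S^p[V])^*=\ell^1(S^p[V]^*)$, it remains to prove that for every $(b_i)_i$ with $\sum_i\|b_i\|<\infty$ one has $\sum_i\langle b_i, id\otimes u(x_i)\rangle\in\overline{\{\sum_i\langle b_i, id\otimes T(x_i)\rangle\colon T\in C\}}$.

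The final, and only delicate, step is to recognise such a countable sum as a \emph{single} instance of the hypothesis. Here I would set $\lambda_i=\|b_i\|^{1/p}$ and $\mu_i=\|b_i\|^{-1/p}$ (and $\lambda_i=\mu_i=0$ when $b_i=0$) and assemble the block-diagonal elements $a=\bigoplus_i\lambda_i x_i$ and $b=\bigoplus_i\mu_i b_i$, absorbing the extra summation index into the stabilising factor of $S^p[V]$ (in the concrete cases $V=S^p(H)$ or $L^p(\tau_G)$ this is $S^p(\ell^2\otimes\ell^2\otimes H)\simeq S^p(\ell^2\otimes H)$, resp. $L^p(B(\ell^2)\bar\otimes B(\ell^2)\bar\otimes VN(G))\simeq L^p(B(\ell^2)\bar\otimes VN(G))$). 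The exponent $1/p$ is forced by Hölder duality and is exactly what makes both elements honest: $\|a\|_{S^p[V]}^p=\sum_i\|b_i\|\,\|x_i\|_{S^p[V]}^p<\infty$ (as $(x_i)$ is bounded and $(b_i)\in\ell^1$) and $\|b\|_{S^p[V]^*}^{p'}=\sum_i\|b_i\|<\infty$, with the usual conventions at $p=1,\infty$. Since $id\otimes T$ acts blockwise and the trace pairing of block-diagonal elements is the sum of the block pairings, $\langle b, id\otimes T(a)\rangle=\sum_i\langle b_i, id\otimes T(x_i)\rangle$ for all $T$. Feeding this particular pair $(a,b)$ into the hypothesis yields precisely the required membership.

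I expect the main obstacle to be exactly this balancing of the two block-diagonal norms: one wants $\lambda_i=\|b_i\|^{\alpha}$ with $\alpha p\ge1$ to control $a$ and $(1-\alpha)p'\ge1$ to control $b$, and these two inequalities are compatible only at $\alpha=1/p$, the value singled out by the relation $1/p+1/p'=1$. The verification that $S^p[V]$ genuinely supports these $\ell^p$/$\ell^{p'}$ block constructions is routine in the two concrete cases to which the paper restricts.
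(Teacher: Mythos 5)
Your proof is correct and is essentially the paper's own argument: the reduction via Lemma~\ref{lem=equivalent_topology} to norm closures in $c_0(S^p[V])$, Mazur's theorem plus Hahn--Banach separation, the duality $c_0(S^p[V])^*=\ell^1(S^p[V]^*)$, and the rebalanced block-diagonal pair $\widetilde x_i=\|b_i\|^{1/p}x_i$, $\widetilde b_i=\|b_i\|^{-1/p}b_i$ all appear there in the same order and with the same exponents. The single point of divergence is your last step, where you call the block-diagonal identification ``routine in the two concrete cases'': the paper states and proves the lemma for an \emph{arbitrary} operator space $V$, and there this step is not routine --- it is precisely where the paper invokes Pisier's result (\cite{MR1324838}, Theorem 0.1) to show that $\ell^p(S^p[V])$ sits as a \emph{complemented} subspace of $S^p(\ell^2\otimes\ell^2)[V]$, which is what allows $\widetilde b\in\ell^{p'}(S^p[V]^*)$ to act as a functional on all of $S^p[V]$ rather than only on block-diagonal elements. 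Since the paper's own remark preceding Definition~\ref{dfn=def_equival_T_n} invites the reader to take $V=S^p(H)$ or $V=L^p(\tau_G)$ in this lemma, and every later application uses only these cases, your restriction is a loss of generality relative to the statement as written, not a gap in what the paper actually needs.
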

\begin{proof} By Lemma \ref{lem=equivalent_topology}, $u$ belongs to the
  $\mathcal T_{\mathrm n}$-closure of $C$ if and only if for any $(x_i)_{i \geq 0}
    \in c_0(S^p[V])$, $(id \otimes u(x_i))_{i \geq 0}$ belongs to the norm
    closure in $c_0(S^p[V])$ of
    \[ \left\{ \left(id \otimes T)(x_i) \right)_{i \geq 0}, T \in
      C\right\}.\] Since this latter set is convex, this is equivalent to
    saying that $((id \otimes u)(x_i))_{i \geq 0}$ belongs to its weak
    closure, \emph{i.e.}  that $\sum_i \langle b_i,(id\otimes
    u)(x_i)\rangle$ belongs to the closure of
\[    \left\{ \sum_i \langle b_i , (id \otimes T)(x_i)\rangle, T \in
      C\right\}\]
 for every $b_i \in (S^{p}[V])^*$ such that $\sum_i
  \|b_i\|_{(S^{p}[V])^*} < \infty$. Fix such $(x_i)_{i} \in c_0(S^p[V])$
  and $(b_i)_i \in \ell^1(S^p[V]^*)$. We now construct $\widetilde b \in
  S^p[V]^*$ and $\widetilde x \in S^p[V]$ such that for any $T \in
  CB(V,V)$, 
  \begin{equation}\label{eq=convexite}
\sum_i \langle b_i , (id \otimes T)(x_i)\rangle = \langle \widetilde b,(id
\otimes T)(\widetilde x)\rangle
\end{equation} 
This will conclude the proof. Let $\lambda_i = \|b_i\|_{(S^{p'}[V])^*}$,
and $\widetilde b_i = \lambda_i^{-1/p} b_i$ (with $0^{-1/p}0=0$) and
$\widetilde x_i = \lambda_i^{1/p} x_i$. If $X$ denotes the space $\ell^{p}
(S^{p}[V])$ (if $p<\infty$) or $ c_0 (S^{\infty}[V])$ (if $p=\infty$), we
therefore have that $\widetilde b = (\widetilde b_i)_{i \geq 0} \in
\ell^{p'} (S^{p}[V]^*) \simeq X^*$ and $\widetilde x= (\widetilde x_i)_{i
  \geq 0} \in X$. Note that the space $X$ is naturally contained in
$S^p(\ell^2 \otimes \ell^2)[V]$ as a complemented subspace. Indeed, if
$p<\infty$, $\ell^p(S^p)$ is naturally embedded in $S^p(\ell^2 \otimes
\ell^2)$, and there is a completely positive projection $P: S^p(\ell^2
\otimes \ell^2) \to \ell^p(S^p)$ (the conditional expectation). By
\cite{MR1324838}, Theorem 0.1, $P \otimes id_V$ extends to a bounded map on
the vector-valued spaces. The same proof holds for $p=\infty$. The element
$\widetilde a \in X^*$ therefore defines an element in the dual of
$S^{p}(\ell^{2} \otimes \ell^2)[V]$ (by $x \mapsto \widetilde a(P x)$), and
with these identifications, \eqref{eq=convexite} is easy to check.
\end{proof}

\subsection{AP for groups and approximation on $S^p$}
For facts on AP (Haagerup's and Kraus' approximation property) for discrete
groups, see \cite{MR2391387}, Appendix D. For a discrete group $G$ and a
function $\varphi:G \to \C$ we denote by $m_\varphi$ the corresponding
Fourier multiplier on $C^*_{\mathrm{red}}(G)$ defined by $m_\varphi
\lambda(s) =\varphi(s) \lambda(s)$. Recall that we denote also by $M_{\chap
  \varphi}$ the corresponding Schur multiplier.

\begin{dfn}\label{def=AP_groupe} A discrete group $G$ is said to have the
  approximation property (AP) if there is a net $\varphi_\alpha$ of
  functions from $G$ to $\C$ with finite support and such that for any $a
  \in \mathcal K(\ell^2) \otimes_{\mathrm{min}} C^*_{\mathrm{red}}(G)$ and $f \in
  L^1(Tr \otimes \tau_G)$,
\[\lim_\alpha \langle f, id \otimes m_{\varphi_\alpha} (a) \rangle =
\langle a,f\rangle.\]
\end{dfn}
\begin{rem}\label{rk=AP_implique_approxcompact}
  The AP for a discrete group $G$ implies that $id_{\mathcal K(\ell^2G)}$
  belongs to the $\mathcal T_{\mathrm n}$-closure in $CB(\mathcal K(\ell^2 G),
  \mathcal K(\ell^2 G))$ of
\[\{M_{\chap \varphi}, \varphi:G \to \C \textrm{ of finite support.} \}.\]
\end{rem}
\begin{proof}
  By Lemma \ref{lem=equivalent_topology_convexcase}, we have to prove that
  for any $a \in \mathcal K(\ell^2) \otimes_{\mathrm{min}} \mathcal K(\ell^2 G)] = \mathcal K(\ell^2 \otimes
  \ell^2 G)$, and $b \in \mathcal K(\ell^2 \otimes \ell^2 G)^*= S^1(\ell^2
  \otimes \ell^2 G)$, $\langle a,b \rangle$ belongs to the closure of 
  \[\{\langle b, (id \otimes M_{\chap \varphi}) (a) \rangle, \varphi
  \textrm{ of finite support}\}.\] (we choose to denote by $\langle
  a,b\rangle$ the duality bracket $Tr(a b)$).

  To do this consider the trace-preserving embedding $i:\mathcal K(\ell^2
  G) \to \mathcal K(\ell^2 G) \otimes_{\mathrm{min}} C^*_{\mathrm{red}}(G)$ defined
  on the dense subspace spanned by the elementary matrices $e_{s,t}$ for
  $s,t \in G$ by $i(e_{s,t}) = e_{s,t} \otimes \lambda(s^{-1}t)$. Let $E$
  be the conditional expectation. Then $E \circ id \otimes m_\varphi \circ
  i$ corresponds to the Schur multiplier with symbol $\chap \varphi$. Hence
  for $a \in \mathcal K(\ell^2 \otimes \ell^2 G)$ and $b \in S^1(\ell^2
  \otimes \ell^2 G)$,
  \[ \langle b, id \otimes M_{\chap \varphi} (a) \rangle = \langle (id
  \otimes i)(b), (id \otimes m_\varphi)\circ (id \otimes i)(a)\rangle.\]
  Since $id \otimes i (a)$ (resp. $id \otimes i(b)$) belongs to $\mathcal
  K(\ell^2 \otimes \ell^2(G)) \otimes_{\mathrm{min}} C^*_{red}(G)$ (resp. $L^1(Tr
  \otimes \tau_G)$, where $Tr$ denotes the usual trace on $B(\ell^2 \otimes
  \ell^2 G)$), this proves the claim.
\end{proof}

Combining the above proof and the proof in \cite{MR1220905} that the OAP
for $C^*_{\mathrm{red}}(G)$ implies the AP for $G$ (the same idea was
already used in \cite{haagerup2}, Theorem 2.6, to prove that the CBAP for
$C^*_{\mathrm{red}}(G)$ implies the weak amenability for $G$), we get the
following Proposition~:
\begin{prop}\label{thm=OAP_implique_MultSchur}
  Let $G$ be a discrete group and $1\leq p <\infty$. If $L^p(\tau_G)$ has
  the OAP, then the identity on $S^p(\ell^2 G)$ belongs to the $\mathcal
  T_{\mathrm n}$-closure of the space
  \[\left \{ M_{\chap \varphi}, \varphi:G \to \C \textrm{ of finite
      support}\right\}.\]
\end{prop}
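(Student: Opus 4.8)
The plan is to mimic the proof of Remark \ref{rk=AP_implique_approxcompact}, replacing the compact operators $\mathcal{K}(\ell^2 G) = C^*_{\mathrm{red}}(G)$-type setting by the non-commutative $L^p$-space $L^p(\tau_G)$, and transferring the OAP-approximating finite rank maps on $L^p(\tau_G)$ to Schur multipliers on $S^p(\ell^2 G)$ via a trace-preserving embedding and a conditional expectation.

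First I would set up the analogue of the embedding $i$ used in the proof of Remark \ref{rk=AP_implique_approxcompact}. Recall that there we embedded $\mathcal{K}(\ell^2 G)$ into $\mathcal{K}(\ell^2 G) \otimes_{\mathrm{min}} C^*_{\mathrm{red}}(G)$ by $e_{s,t} \mapsto e_{s,t} \otimes \lambda(s^{-1}t)$. For the $L^p$ setting, the right object is a trace-preserving embedding $i: S^p(\ell^2 G) \to S^p(\ell^2 G) \,\overline{\otimes}\, L^p(\tau_G)$, i.e. into $L^p(B(\ell^2 G) \,\overline{\otimes}\, VN(G), Tr \otimes \tau_G)$, given on matrix units by the same formula $e_{s,t} \mapsto e_{s,t} \otimes \lambda(s^{-1}t)$. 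Since $i$ is trace-preserving and $*$-preserving, it extends to a complete isometry on the $L^p$-level, and it admits a conditional expectation $E$ as a left inverse. The key algebraic identity, exactly as before, is that $E \circ (id \otimes m_\varphi) \circ i = M_{\chap\varphi}$ as maps on $S^p(\ell^2 G)$, because conjugating by $i$ turns the Fourier multiplier $m_\varphi$ into the Schur multiplier with symbol $\chap\varphi(s,t) = \varphi(s^{-1}t)$.

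Next I would use Lemma \ref{lem=equivalent_topology_convexcase} to reduce the $\mathcal{T}_{\mathrm n}$-closure statement to a scalar statement: the identity on $S^p(\ell^2 G)$ lies in the $\mathcal{T}_{\mathrm n}$-closure of $\{M_{\chap\varphi}\}$ if and only if for every $a \in S^p[S^p(\ell^2 G)] = S^p(\ell^2 \otimes \ell^2 G)$ and every $b$ in the dual, the scalar $\langle b, id \otimes a\rangle$ lies in the closure of $\{\langle b, (id \otimes M_{\chap\varphi})(a)\rangle\}$. Applying $id \otimes i$ to $a$ and $b$, I would push the problem forward to the $L^p(\tau_G)$-side: $(id \otimes i)(a)$ lands in the stable $L^p$-space of $L^p(\tau_G)$ and $(id \otimes i)(b)$ in its predual, where by hypothesis the OAP of $L^p(\tau_G)$ provides finite rank maps $T_\alpha$ with $\langle (id\otimes i)(b), (id \otimes T_\alpha)(id\otimes i)(a)\rangle \to \langle (id\otimes i)(b), (id\otimes i)(a)\rangle$. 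The difficulty is that the $T_\alpha$ are arbitrary finite rank maps, not Fourier multipliers, so I cannot directly write $E \circ (id\otimes T_\alpha)\circ i = M_{\chap\varphi_\alpha}$; instead I would follow the averaging argument of \cite{MR1220905}, averaging $T_\alpha$ over the translation action of $G$ (using that $VN(G)$ carries the trace-preserving $G$-action by conjugation) to produce genuine Fourier multipliers, and then transfer back through $E$ to land on Schur multipliers $M_{\chap\varphi_\alpha}$ with $\varphi_\alpha$ of finite support.

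The main obstacle I expect is precisely this last transfer: verifying that averaging the finite rank OAP-maps over $G$ and composing with $i$ and $E$ genuinely yields Schur multipliers of the required form while preserving the limiting behavior against the fixed $a$ and $b$. This requires care that the averaging converges in the appropriate stable point-norm / weak sense and that the resulting $\varphi_\alpha$ have finite support (which comes from the finite rank of $T_\alpha$ together with the structure of the Fourier basis). The estimates $\|id \otimes T(x)\|_{S^p} \le \|id \otimes T(\cdot)\|$-type bounds and the trace-preservation of $i$ and $E$ should make the analytic part routine, so the crux is the algebraic bookkeeping of the averaging step, which is exactly the point already handled in \cite{MR1220905} and \cite{haagerup2} and which we are here adapting from $p=\infty$ to general $1 \le p < \infty$.
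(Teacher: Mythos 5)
Your framework is the right one (the embedding $i(e_{s,t}) = e_{s,t}\otimes\lambda(s^{-1}t)$, the conditional expectation $E$, and the reduction to scalar pairings via Lemma \ref{lem=equivalent_topology_convexcase}), but the step you flag as ``the main obstacle'' is exactly where your argument breaks, and your proposed repair would not work. You claim that because the OAP maps $T_\alpha$ are arbitrary finite rank maps rather than Fourier multipliers, one cannot write $E \circ (id \otimes T_\alpha) \circ i$ as a Schur multiplier, and you propose to fix this by averaging $T_\alpha$ over the translation action of $G$. Such an averaging requires an invariant mean on $G$, i.e.\ amenability; the groups this Proposition is ultimately applied to (lattices in $SL_r$) are non-amenable, and neither \cite{MR1220905} nor \cite{haagerup2} performs any averaging over the group. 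In fact no averaging is needed: for an \emph{arbitrary} bounded map $T$ on $L^p(\tau_G)$, a direct computation on matrix units gives
\[ E \circ (id \otimes T) \circ i \;=\; M_{\chap \varphi_T}, \qquad \varphi_T(g) = \tau_G\bigl( T(\lambda(g))\lambda(g)^* \bigr), \]
because the conditional expectation (equivalently, the pairing against $(id\otimes i)(b)$) extracts in each entry precisely the diagonal coefficient $\tau_G\bigl(T(\lambda(s^{-1}t))\lambda(s^{-1}t)^*\bigr)$. This is the Haagerup/Haagerup--Kraus trick, and it is the key point of the paper's proof: the conditional expectation itself plays the role of the group average, which is why the argument works for all discrete groups.

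There is a second gap: you assert that the symbols produced from finite rank $T_\alpha$ have finite support ``from the finite rank of $T_\alpha$ together with the structure of the Fourier basis''. This is false: for a rank one map $T = \xi(\cdot)\,a$ one gets $\varphi_T(g) = \xi(\lambda(g))\,\tau_G(a\lambda(g)^*)$, which is not finitely supported for general $a \in L^p(\tau_G)$. What is true (and what the paper proves, splitting the cases $p \geq 2$ and $p < 2$) is that $\varphi_T \in \ell^2(G) \subset A(G)$; one then uses that finitely supported functions are dense in $A(G)$ together with the inequality $\|\chap \psi\|_{cbMS^p(L^2(G))} \leq \|\psi\|_{A(G)}$ (as in the proof of Lemma \ref{lemma=def_equivalente_par_AG}) to pass from the multipliers $M_{\chap \varphi_T}$ to multipliers with finitely supported symbols in the closure statement. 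With these two corrections your outline collapses onto the paper's proof; without them it does not go through.
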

\begin{proof}
  We use again Lemma \ref{lem=equivalent_topology_convexcase}. Since (see
  the proof of Lemma \ref{lemma=def_equivalente_par_AG}) the space $\left
    \{ M_{\chap \varphi}, \varphi:G \to \C \textrm{ of finite
      support}\right\}$ is norm-dense (for the cb-norm of linear maps on
  $S^p(\ell^2G)$) in $\left \{ M_{\chap \varphi}, \varphi \in
    A(G)\right\}$, it is in fact enough to prove that for any $a \in
  S^p[S^p(\ell^2 G)] = S^p(\ell^2 \otimes \ell^2 G)$, and $b \in S^p(\ell^2
  \otimes \ell^2 G)^*$, $\langle a,b \rangle$ belongs to the closure of
  \[\{\langle b, (id \otimes M_{\chap \varphi}) (a) \rangle, \varphi \in
  A(G)\}.\] 

  For any finite rank map $T:L^p(\tau_G) \to L^p(\tau_G)$ define
  $\varphi_T:G \to \C$ by
\begin{equation}\label{eq=def_de_phi_apartirT}\varphi_T(g) = \tau\left( T (\lambda(g))
    \lambda(g)^*\right).\end{equation} 
We claim that $\varphi_T \in A(G)$. We even prove that $\varphi_T \in
\ell^2(G)$. To prove this we can assume that $T$ has rank one, \emph{i.e.}
is of the form $x \mapsto \xi(x) a$ for some $\xi \in L^p(\tau_G)^*$ and $a
\in L^p(\tau_G)$. Then $\varphi_T(g) = \xi(\lambda(g)) \tau_G(a
\lambda(g)^*)$. If $p \geq 2$, then $|\xi(\lambda(g))| \leq \|\xi\|$ and
  \[\|\left(\tau_G(a \lambda(g)^*)\right)_g\|_{\ell^2(G)} =
  \|a\|_{L^2(\tau_G)} \leq \|a\|_{L^p(\tau_G)}.\] 
  If $p<2$ then since $L^p(\tau_G)^* \sim L^{p'}(\tau_G)$ with $1/p'+1/p=1$
  (note $p'>2$), the previous computation implies that
  $(\xi(\lambda(g)))_g$ belongs to $\ell^2(G)$ and $\tau_G(a \lambda(g)^*)$
  is bounded.

We now prove that for any $a \in S^p[S^p(\ell^2 G)] = S^p(\ell^2 \otimes \ell^2 G)$, and $b \in S^p(\ell^2
  \otimes \ell^2 G)^*$, $\langle a,b \rangle$ belongs to the closure of
  \[\{\langle b, (id \otimes M_{\chap \varphi_T}) (a) \rangle, T \in
  F(L^p(\tau_G), L^p(\tau_G))\}.\] For simplicity of notation we prove the
  case $p>1$. Then $S^p(\ell^2 \otimes \ell^2 G)^* = S^{p'}(\ell^2 \otimes
  \ell^2 G)$. The proof for $p=1$ is the same, except that $S^{p'}(\ell^2
  \otimes \ell^2 G)$ has to be replaced by $B(\ell^2 \otimes \ell^2
  G)$. The inclusion $i$ in the proof of Remark
  \ref{rk=AP_implique_approxcompact} induces a completely contractive map
  (that we still denote by the same letter) $i:S^p(\ell^2 G) \to L^p(Tr
  \otimes \tau_G)$. Here $Tr$ denotes the usual semi-finite trace on
  $S^p(\ell^2 G)$. The same holds for $p'$. Moreover we have, for $a \in
  S^p(\ell^2 \otimes \ell^2 G)$ and $b \in S^{p'}(\ell^2 \otimes \ell^2
  G)$,
  \[ \langle b, id \otimes M_{\chap \varphi_T} (a) \rangle = \langle (id
  \otimes i)(b), (id \otimes T)\circ (id \otimes i)(a)\rangle.\] But $(id
  \otimes i)(a)$ belongs to $S^p(\ell^2 \otimes \ell^2G)[L^p(\tau_G)]$ and
  $(id \otimes i)(b)$ belongs to its dual space $S^{p'}(\ell^2 \otimes
  \ell^2G)[L^{p'}(\tau_G)]$. Therefore, by the assumption that $L^p(\tau_G)$
  has the OAP and by Lemma \ref{lem=equivalent_topology_convexcase},
  $\langle b,a\rangle = \langle (id \otimes i)(b), (id \otimes T)\circ (id
  \otimes i)(a)\rangle$ belongs to the closure of
\[\left\{ \langle (id \otimes i)(b), (id \otimes T)\circ (id \otimes
  i)(a)\rangle, T \in F(L^p(\tau_G), L^p(\tau_G))\right\}.\]
This proves the Proposition.
\end{proof}

The proof of the following Proposition is very close to the proof of
Theorem 1.1 in \cite{MR1971296}. In fact this Proposition also follows from
Theorem 1.1 in \cite{MR1971296} and from Proposition
\ref{thm=OAP_implique_MultSchur}.
\begin{prop}\label{prop=APimpliqueAPp}
  Let $G$ be a discrete group with AP and $1<p<\infty$. Then the identity
  on $S^p(\ell^2 G)$ belongs to the $\mathcal T_{\mathrm n}$-closure of the
  space
  \[\left \{ M_{\chap \varphi}, \varphi:G \to \C \textrm{ of finite
      support}\right\}.\]
\end{prop}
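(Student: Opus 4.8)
The cleanest route is to invoke Theorem~1.1 of \cite{MR1971296}, which asserts that the AP of $G$ forces $L^p(\tau_G)$ to have the OAP, and then to quote Proposition~\ref{thm=OAP_implique_MultSchur}. I will instead sketch a direct argument, parallel to the proof of that theorem, that stays within the elementary $S^p$ framework.

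First I would apply Lemma~\ref{lem=equivalent_topology_convexcase} with $V=S^p(\ell^2 G)$ to reduce the claim to a scalar statement: for every $a\in S^p(\ell^2\otimes\ell^2 G)$ and $b\in S^{p'}(\ell^2\otimes\ell^2 G)$ (with $1/p+1/p'=1$), the number $\langle a,b\rangle$ must lie in the closure of $\{F(\varphi):\varphi\textrm{ finitely supported}\}$, where $F(\varphi)=\langle b,(id\otimes M_{\chap\varphi})(a)\rangle$. Exactly as in the proofs of Remark~\ref{rk=AP_implique_approxcompact} and Proposition~\ref{thm=OAP_implique_MultSchur}, I would use the trace-preserving complete isometry $i:S^p(\ell^2 G)\to L^p(\mathrm{Tr}\otimes\tau_G)$ sending $e_{s,t}$ to $e_{s,t}\otimes\lambda(s^{-1}t)$ and satisfying $M_{\chap\varphi}=E\circ(id\otimes m_\varphi)\circ i$, to rewrite $F(\varphi)=\langle(id\otimes i)(b),(id\otimes m_\varphi)(id\otimes i)(a)\rangle$. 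In particular $F$ depends on $\varphi$ only through the single function $c(g)=\langle b,(id\otimes M_{\chap{\delta_g}})(a)\rangle$ on $G$, via $F(\varphi)=\sum_g\varphi(g)c(g)$; note that $\langle a,b\rangle=\sum_g c(g)$ since $M_{\chap{\mathbf 1}}=id$.

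The heart of the matter is to recognise $c$ as genuine Fourier data. Writing it out gives $c(g)=\sum_{x\in G}T_{xg,x}$, a coefficient of the regular representation attached to the operator $T$ on $\ell^2 G$ obtained from $a,b$ by the ``block Hilbert--Schmidt pairing'' $T_{y,x}=\sum_{s,t}a_{(s,x),(t,y)}\overline{b_{(s,x),(t,y)}}$. I would establish the key estimate $\|T\|_{S^1(\ell^2 G)}\le\|a\|_{S^p}\|b\|_{S^{p'}}$ by bilinear complex interpolation of $(a,b)\mapsto T$ between its two endpoints, which are elementary: at $(p,p')=(2,2)$ one bounds $|\mathrm{Tr}(TC^*)|$ for $\|C\|\le 1$ by Cauchy--Schwarz after a Schur multiplication, and at $(p,p')=(\infty,1)$ one factors a rank-one $b$ to write $T$ as $R^*aS$ with $R,S\in S^2$. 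Thus $c$ is a coefficient of a trace-class operator, hence lies in $A(G)$ and defines an element of the predual against which the AP provides convergence; since $G$ has the AP, the net $\varphi_\alpha$ of Definition~\ref{def=AP_groupe} then yields $F(\varphi_\alpha)=\sum_g\varphi_\alpha(g)c(g)\to\sum_g c(g)=\langle a,b\rangle$, which is the required membership in the closure.

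The step I expect to be the main obstacle is precisely this last passage. The AP net is \emph{not} uniformly bounded as a net of multipliers, so one cannot approximate $a$ and $b$ by finitely supported operators and control the error by a multiplier norm: any such attempt produces a factor $\|M_{\chap\varphi}\|_{cb}$ that the AP leaves uncontrolled. The whole purpose of compressing the pair $(a,b)$ into the single trace-class datum $T$ is to produce a functional $\varphi\mapsto F(\varphi)$ that is continuous for the weak topology in which the AP asserts convergence. Making this rigorous amounts to matching the functional $\varphi\mapsto\sum_g\varphi(g)c(g)$ with one of the functionals appearing in Definition~\ref{def=AP_groupe} (equivalently, in Remark~\ref{rk=AP_implique_approxcompact} through Lemma~\ref{lem=equivalent_topology_convexcase} at $p=\infty$, where the pairing is governed by the same bilinear map into $S^1(\ell^2 G)$), and it is here that the argument follows \cite{MR1971296} most closely.
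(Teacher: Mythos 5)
Your reduction via Lemma \ref{lem=equivalent_topology_convexcase}, the identity $F(\varphi)=\sum_g \varphi(g)c(g)$ for finitely supported $\varphi$, and the bilinear interpolation bound $\|T\|_{S^1(\ell^2 G)}\le \|a\|_{S^p}\|b\|_{S^{p'}}$ (hence $c\in A(G)$ with controlled norm) are all sound; and your fallback route through Theorem 1.1 of \cite{MR1971296} plus Proposition \ref{thm=OAP_implique_MultSchur} is legitimate --- the paper itself notes it --- though it rests on the unpublished \cite{junge}, which this section is written to avoid. The genuine gap is the step you yourself flag as ``the main obstacle'', and it cannot be closed along the lines you indicate, because the statement you would need is false: the AP does \emph{not} provide convergence against the functional $\varphi\mapsto\sum_g\varphi(g)c(g)$ attached to a single $c\in A(G)$. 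Every functional appearing in Definition \ref{def=AP_groupe} is bounded for the norm $\|\cdot\|_{M_0A(G)}$, since $|\langle f,id\otimes m_\varphi(a)\rangle|\le \|f\|_{L^1}\,\|a\|\,\|m_\varphi\|_{cb}=\|f\|_{L^1}\,\|a\|\,\|\varphi\|_{M_0A(G)}$; your functional need not be. Concretely, take $G=\Z$ (which has AP) and $c=\hat h$ with $h\in L^1(\tore)\setminus L^\infty(\tore)$. A finitely supported $\varphi$ is the Fourier transform of the trigonometric polynomial $P_\varphi(\theta)=\sum_n\varphi(n)e^{in\theta}$, so $\|\varphi\|_{M_0A(\Z)}\le\|P_\varphi\|_{L^1(\tore)}$ (Fourier transforms of measures are completely bounded multipliers of norm at most the total variation), while $\sum_n\varphi(n)c(n)=\int_{\tore} P_\varphi(-\theta)h(\theta)\,d\theta$. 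Since $h\notin L^\infty$, one can choose finitely supported $\varphi_j$ with $\|\varphi_j\|_{M_0A(\Z)}\to 0$ but $\sum_n\varphi_j(n)c(n)=1$ for all $j$. Adding these to any AP net yields again an AP net (the convergence required in Definition \ref{def=AP_groupe} is unaffected by perturbations small in $M_0A$-norm), along which your pairing is shifted by $1$. Hence AP places no constraint at all on $\lim_\alpha\sum_g\varphi_\alpha(g)c(g)$, and the ``matching with Definition \ref{def=AP_groupe}'' you hope for does not exist.

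The root cause is that compressing $(a,b)$ into the single datum $T$ discards exactly the stabilization that makes AP usable: in Definition \ref{def=AP_groupe} a multiplier is tested jointly against an element of $\mathcal K(\ell^2)\otimes_{\mathrm{min}}C^*_{\mathrm{red}}(G)$ \emph{and} an element of $L^1(Tr\otimes\tau_G)$, whereas your functional keeps only the trace-class half; the missing half would have to be the non-existent element $\sum_g\lambda(g)$ of $C^*_{\mathrm{red}}(G)$. (A symptom of the same loss: the identity $\sum_g c(g)=\langle a,b\rangle$ is also unjustified, as it amounts to $\langle b,(id\otimes M_{\chap{\chi_E}})(a)\rangle\to\langle b,a\rangle$ along finite sets $E\uparrow G$, which would require a uniform bound on $\|M_{\chap{\chi_E}}\|_{S^p\to S^p}$; it is clear only when $a$ has finitely many nonzero diagonals.) The paper's proof keeps the pair $(a,b)$ and instead converts the $(S^p,S^{p'})$-pairing into $(S^\infty,S^1)$-pairings, where AP genuinely applies through Remark \ref{rk=AP_implique_approxcompact}: it writes $a=f(1/p)$, $b=g(1/p)$ for explicit holomorphic $S^\infty(H)$-valued functions on the strip $0<\mathrm{Re}(z)<1$ with $g(it)$ and $f(1+it)$ trace class, applies the AP uniformly on the compact set $\{0\}\cup\{f(it),t\in\R\}\cup\{g(1+it)^T,t\in\R\}$ in $S^\infty(H)$ (a compact set lies in the closed convex hull of a null sequence, so $\mathcal T_{\mathrm n}$-approximation of the identity gives uniform approximation on it), and then uses the maximum principle to transport the two boundary estimates to $z=1/p$. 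Some such device for passing from the $S^\infty$--$S^1$ duality, where the hypothesis lives, to the $S^p$--$S^{p'}$ duality, where the conclusion lives --- applied to the operators themselves rather than to a scalar compression --- is the idea your argument is missing.
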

\begin{proof}
  Denote $H = \ell^2 \otimes \ell^2 G$. By Lemma
  \ref{lem=equivalent_topology_convexcase}, it is enough to prove that for
  any $a \in S^{p}(H)$ and $b \in S^{p'}(H)$, $Tr(ab) = \langle a,b\rangle$
  belongs to the closure of
\[\left\{ \langle a,(id \otimes M_{\chap \varphi})(b)\rangle, \varphi:G \to \C \textrm{
    with finite support}\right\}.\]

We prove this using the complex variable. We use the notation $S^\infty(H)
= \mathcal K(H)$. Let $S$ be the strip $\{z \in \C, 0<Re(z)<1\}$ and
consider maps $f$, $g$ in $C_0(\overline S;S^\infty(H))$ that are
holomorphic on $S$, such that $f(1/p)=a$, $g(1/p)=b$ and such that $t
\mapsto f(1+it)$ belongs to $C_0(\R; S^1(H))$ and $t \mapsto g(it)$ belongs
to $C_0(\R; S^1(H))$. Such maps exist because $S^p(H)$ coincides with the
complex interpolation space $[S^\infty(H),S^1(H)]_{1/p}$, but they can be
constructed explicitely. To construct $f$, write $a= a_0 a_1$ with $a_0 \in
S^\infty(H)$ and $a_1$ a positive element in $S^p(H)$, and take
$f(z)=e^{(z-1/p)^2} a_0 a_1^{pz}$. In the same way, write $b= b_0 b_1$ with
$b_0 \in S^\infty(H)$ and $b_1$ a positive element in $S^{p'}(H)$, and take
$g(z)=e^{(z-1/p)^2} b_0 b_1^{p'(1-z)}$.

Then the set $K=0 \cup \{ g(1+it)^T,t \in \R \} \cup \{ f(it),t \in \R \}$
is a compact subset of $S^\infty(H)$ ($\cdot ^T$ denotes the transpose
map). It is classical that any compact subset containing $0$ in a Banach
space is contained in the closed convex hull of a sequence converging to
$0$. By the assumption that $G$ has AP and by Remark
\ref{rk=AP_implique_approxcompact}, for any $\varepsilon>0$, there is a
$\varphi:G \to \C$ of finite support such that for any $x \in K$
\[ \|(id \otimes M_{\chap \varphi}) x - x \|_{S^\infty(H)}<\varepsilon.\] In
particular, if $Re(z)=0$
\[ \left|\langle g(z),(id \otimes M_{\chap \varphi}) f(z) -
  f(z)\rangle\right| < \varepsilon \|g(z)\|_{S^1(H)}.\] In the same way,
since $Tr( x (id \otimes M_{\varphi})(y)) = Tr( y^T (id \otimes
M_{\varphi})(x^T))$, we get that for $Re(z)=1$ 
\[ \left|\langle g(z),(id \otimes M_{\chap \varphi}) f(z) -
  f(z)\rangle\right| < \varepsilon \|f(z)\|_{S^1(H)}.\] By the maximum
principle, if $C=\max(\sup_{t \in R} \|g(it)\|_{S^1(H)},\sup_{t \in R}
\|f(1+it)\|_{S^1(H)} )$, we get that for $z=1/p$,
\[ \left|\langle b,(id \otimes M_{\chap \varphi}) a\rangle - \langle
  b,a\rangle\right| < \varepsilon C.\]
Since $\varepsilon$ is arbitrary, this concludes the proof. 
\end{proof}

\subsection{Different approximation properties on $S^p$}

The main result of this section is the following Theorem (and its
corollaries). This is in the same spirit as the theorem of
Grothendieck which states that for a separable dual Banach space, the
approximation property implies the metric approximation property. Its
proof is an adaptation of Grothendieck's argument to the stable
topology.

\begin{thm}\label{thm=approxiimplCB}
  Let $H$ be a Hilbert space and let $F_0$ be a subspace of the space
  $F(S^p(H),S^p(H))$ of bounded finite rank operators on $S^p(H)$, such that
  $id_{S^p}$ belongs to the $\mathcal T_{\mathrm n}$-closure of $F_0$. Then
  $id_{S^p}$ belongs to the $\mathcal T_{\mathrm n}$-closure of $\{T \in
  F_0, \|T\|_{cb} \leq 1\}$.
\end{thm}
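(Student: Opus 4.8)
The plan is to run a Hahn--Banach separation argument in the locally convex space $(CB(V,V),\mathcal{T}_{\mathrm{n}})$, transport it to a trace inequality, and then adapt Grothendieck's proof that bounded approximation forces metric approximation. Throughout write $V=S^p(H)$ and $C=\{T\in F_0:\|T\|_{cb}\le 1\}$, which is convex and balanced.

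First I would argue by contradiction. If $id_{S^p}\notin\overline{C}^{\mathcal{T}_{\mathrm{n}}}$, then since $\mathcal{T}_{\mathrm{n}}$ is locally convex and $\overline{C}^{\mathcal{T}_{\mathrm{n}}}$ is closed and convex, Hahn--Banach produces a $\mathcal{T}_{\mathrm{n}}$-continuous functional $\phi$ with $\mathrm{Re}\,\phi(id_{S^p})>\sup_{T\in C}\mathrm{Re}\,\phi(T)$; as $C$ is balanced the right-hand side equals $M:=\sup_{T\in F_0,\,\|T\|_{cb}\le 1}|\phi(T)|$. By Lemma~\ref{lem=equivalent_topology} the $\mathcal{T}_{\mathrm{n}}$-continuous functionals are exactly those of the form $T\mapsto\sum_i\langle b_i,(id\otimes T)(x_i)\rangle$ with $(x_i)\in c_0(S^p[V])$ and $(b_i)\in\ell^1(S^p[V]^*)$; folding the sequences into single elements as in the proof of Lemma~\ref{lem=equivalent_topology_convexcase}, I may assume $\phi(T)=\langle b,(id\otimes T)(a)\rangle$ for single $a\in S^p[V]$, $b\in S^p[V]^*$.

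Next I would reinterpret $\phi$ as a trace. Taking the partial trace over the auxiliary $\ell^2$ identifies $\phi$ with $T\mapsto\mathrm{Tr}(wT)$ for a nuclear operator $w$ on $V$, so that $\phi(id_{S^p})=\mathrm{Tr}(w)$ and $M=\sup_{T\in F_0,\,\|T\|_{cb}\le 1}|\mathrm{Tr}(wT)|$; moreover $w$ admits a nuclear decomposition $w=\sum_n\xi_n\otimes\eta_n$ with $\eta_n\to 0$ in $V$ and $\sum_n\|\xi_n\|<\infty$. The hypothesis gives a net $R_\alpha\in F_0$ with $R_\alpha\to id_{S^p}$ in $\mathcal{T}_{\mathrm{n}}$; applying the seminorm attached to the null sequence $(\eta_n)\in c_0(V)$ yields $\sup_n\|R_\alpha\eta_n-\eta_n\|\to 0$, hence $\mathrm{Tr}(wR_\alpha)=\sum_n\langle\xi_n,R_\alpha\eta_n\rangle\to\mathrm{Tr}(w)$, i.e. $\phi(id_{S^p})=\lim_\alpha\phi(R_\alpha)$. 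The goal is thus the Grothendieck-type estimate $\mathrm{Re}\,\mathrm{Tr}(w)\le M$, which contradicts the separation.

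The hard part is exactly this last inequality: the $R_\alpha$ need not be $\|\cdot\|_{cb}$-bounded, so one cannot simply pass $|\phi(R_\alpha)|\le M\|R_\alpha\|_{cb}$ to the limit. Here I would use that $V=S^p(H)$ is reflexive for $1<p<\infty$, so that by Pisier's duality for vector-valued Schatten classes (\cite{MR1648908}) the space $CB(V,V)$ is the dual of a predual $Q$ whose weak-$*$ topology agrees, on bounded sets, with $\mathcal{T}_{\mathrm{n}}$, and whose $\|\cdot\|_{cb}$-unit ball is accordingly weak-$*$ compact. Since $\|id_{S^p}\|_{cb}=1$, a Goldstine/bidual argument should then realize $\mathrm{Tr}(w)=\phi(id_{S^p})$ as the value of $id_{S^p}$ against a \emph{norm-preserving} extension of $\phi|_{F_0}$ chosen \emph{inside} $Q$, giving $|\phi(id_{S^p})|\le M\,\|id_{S^p}\|_{cb}=M$. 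Producing this extension within the predual --- equivalently, showing that the $\mathcal{T}_{\mathrm{n}}$-approximation constant of $id_{S^p}$ by $F_0$ equals its completely bounded norm --- is the main obstacle, and is precisely where Grothendieck's original argument is transposed to the stable topology. Finally the endpoints $p\in\{1,\infty\}$, where reflexivity fails, would be treated separately: $p=\infty$ is the classical Grothendieck theorem applied to the compact operators, and $p=1$ follows by transposition.
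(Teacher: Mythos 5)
Your outer frame is the same as the paper's: the paper also runs Hahn--Banach in the locally convex space $(CB(S^p,S^p),\mathcal T_{\mathrm n})$ (phrased via the bipolar theorem: take any $\mathcal T_{\mathrm n}$-continuous $\Phi$ with $|\Phi(T)|\le\|T\|_{cb}$ on $F_0$ and show $|\Phi(id_{S^p})|\le 1$), and your folding of a $\mathcal T_{\mathrm n}$-continuous functional into a single pair $T\mapsto\langle b,(id\otimes T)(a)\rangle$, $a\in S^p(\ell^2\otimes H)$, $b\in S^p(\ell^2\otimes H)^*$, is exactly Lemma \ref{lem=equivalent_topology_convexcase}. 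After that, two things go wrong. First, your ``partial trace'' step is false: such a functional need not be $T\mapsto\mathrm{Tr}(wT)$ for a \emph{nuclear} $w$ on $S^p(H)$. A nuclear $w$ would give $|\phi(T)|\le\bigl(\sum_n\|\xi_n\|\,\|\eta_n\|\bigr)\|T\|_{op}$; since the functionals $\langle b,(id\otimes T)(a)\rangle$ with $\|a\|\,\|b\|\le1$ norm the cb-norm, a closed-graph argument between the two Banach spaces of functionals would then make $\|\cdot\|_{cb}$ and $\|\cdot\|_{op}$ equivalent on finite-rank maps of $S^p(H)$, which fails for $p\neq 2$: compression to an $n\times n$ corner followed by the transpose has operator norm $1$ but cb-norm at least $n^{|1-2/p|}$. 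This error is inessential to your plan (the convergence $\phi(R_\alpha)\to\phi(id_{S^p})$ already follows from $\mathcal T_{\mathrm n}$-continuity of $\phi$), but the ``nuclear picture'' you lean on is simply not available.

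The fatal gap is the step you yourself label ``the main obstacle'' and do not prove: extending $\phi|_{F_0}$, of norm $M$ for the cb-norm, to a $\mathcal T_{\mathrm n}$-continuous functional on $CB(S^p,S^p)$ of norm at most $M(1+\varepsilon)$. This is precisely the paper's Lemma \ref{lem=integralImpliquenucleaire} (every cb-norm-bounded functional on $F(S^p,S^p)$ is completely nuclear, i.e. of the form $T\mapsto\langle y,(id\otimes T)(x)\rangle$ with $\|x\|_p\|y\|_{p'}$ controlled, hence $\mathcal T_{\mathrm n}$-continuous), and it is the entire mathematical substance of the theorem. It does not follow from the soft duality you invoke: granting that $CB(S^p,S^p)$ is a dual space with predual $Q$, what you need is that the restriction map $Q\to F_0^*$ is a metric surjection, which by duality amounts to the weak-$*$ density, with cb-norm control, of the unit ball of $F_0$ in the unit ball of its weak-$*$ closure --- essentially the conclusion you are trying to prove. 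Goldstine gives density of the ball of $F_0$ in the ball of its \emph{bidual}, and identifying that bidual ball inside $CB(S^p,S^p)$ isometrically is a principle of local reflexivity with completely bounded control, which fails for general operator spaces and is not a formal consequence of reflexivity of $S^p$. The paper's proof of the lemma uses the structure of Schatten classes in an essential way: the finite-dimensional case is plain duality, and the general case follows because the compressions $\Psi_K$ to finite-dimensional subspaces $K\subset H$ form a Cauchy net for the completely nuclear norm, via the inequality $\bigl(\sum_n\|q_n x\|_p^q\bigr)^{1/q}\le\|x\|_p$ with $q=\max(p,2)<\infty$. Without this lemma or a proved substitute, your proposal stops exactly where the paper's proof begins in earnest; note also that this lemma (and the intended scope of the theorem) is for $1<p<\infty$, so your appeal to ``classical Grothendieck'' at $p=\infty$, which concerns the Banach-space approximation property rather than cb-norm approximation in $\mathcal T_{\mathrm n}$, is not a correct treatment of an endpoint case.
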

Before we give the proof of this Theorem, let us state three corollaries.
\begin{corollary}\label{coro=CASch1ouRien}
  If $G$ is a discrete group and $1<p<\infty$, then $\CASch{p}{G}=1$ or
  $\CASch{p}{G}=\infty$.
\end{corollary}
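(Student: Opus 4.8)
The plan is to show that $\CASch{p}{G}<\infty$ forces $\CASch{p}{G}\leq 1$. Since by Remark \ref{rem=monotonie_normes_Sp} one has $\|\chap\varphi\|_{cbMS^p(L^2(G))}\geq\|\varphi\|_\infty\geq|\varphi(e)|$, any net admissible in the definition of $\CASch{p}{G}$ satisfies $\liminf_\alpha\|\chap\varphi_\alpha\|_{cbMS^p(L^2(G))}\geq 1$, so $\CASch{p}{G}\geq 1$ always; the two bounds then give equality.

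First I would reformulate the hypothesis in the stable point-norm topology. A net $\varphi_\alpha$ of finitely supported functions with $\varphi_\alpha\to 1$ pointwise and $\sup_\alpha\|\chap\varphi_\alpha\|_{cbMS^p(L^2(G))}\leq C$ produces Schur multipliers $M_{\chap\varphi_\alpha}$ on $S^p(\ell^2 G)$ that converge to $id_{S^p(\ell^2 G)}$ for $\mathcal T_{\mathrm n}$. Indeed, on finitely supported matrices $id\otimes M_{\chap\varphi_\alpha}$ converges in $S^p$-norm by pointwise convergence of the symbol, and a density plus uniform-boundedness $\varepsilon/3$ argument (using $p<\infty$, so finite compressions converge to the identity in $S^p$-norm) upgrades this to the seminorms defining $\mathcal T_{\mathrm n}$. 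Thus $id_{S^p(\ell^2 G)}$ lies in the $\mathcal T_{\mathrm n}$-closure of the finitely supported Toeplitz--Schur multipliers.

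To feed this into Theorem \ref{thm=approxiimplCB}, which requires genuine finite-rank maps, I would note that each $M_{\chap\varphi}$ (with $\varphi$ of finite support) is itself the $\mathcal T_{\mathrm n}$-limit of its compressions $a\mapsto P_E M_{\chap\varphi}(a)P_E$ to finite subsets $E\subset G$; these are finite rank, have cb-norm $\leq\|\chap\varphi\|_{cbMS^p(L^2(G))}$, and converge by the same $\varepsilon/3$ argument. Since the $\mathcal T_{\mathrm n}$-closure of the subspace $F_0=F(S^p(\ell^2 G))$ of all finite-rank maps is closed, it already contains every $M_{\chap\varphi}$ and hence their limit $id_{S^p(\ell^2 G)}$. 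Theorem \ref{thm=approxiimplCB} then yields that $id_{S^p(\ell^2 G)}$ belongs to the $\mathcal T_{\mathrm n}$-closure of $\{T\in F(S^p(\ell^2 G)):\|T\|_{cb}\leq 1\}$.

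The final step projects these finite-rank contractions back to Herz--Schur multipliers. To a finite-rank $T$ on $S^p(\ell^2 G)$ I associate, through the trace-preserving embedding $i$ and conditional expectation used in the proof of Proposition \ref{thm=OAP_implique_MultSchur} together with formula \eqref{eq=def_de_phi_apartirT}, a function $\varphi_T\in A(G)$. The key properties I must verify, and this is the main obstacle, are that $T\mapsto M_{\chap\varphi_T}$ is completely contractive on $S^p$, i.e. $\|\chap\varphi_T\|_{cbMS^p(L^2(G))}\leq\|T\|_{cb}$, that it fixes the identity ($\varphi_{id}=1$), and that it respects the topology enough to transfer convergence. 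The complete contractivity is the $S^p$-analogue of Haagerup's estimate $\|\varphi_T\|_{M_0A(G)}\leq\|T\|_{cb}$, obtained by realizing $M_{\chap\varphi_T}$ as $T$ sandwiched between the completely contractive comultiplication $\lambda(g)\mapsto\lambda(g)\otimes\lambda(g)$ and a trace slice; this averaging uses the group structure rather than an invariant mean and so does not require amenability. Granting these properties, a net $T_\beta\to id_{S^p(\ell^2 G)}$ in $\mathcal T_{\mathrm n}$ with $\|T_\beta\|_{cb}\leq 1$ gives $\varphi_{T_\beta}\in A(G)$ with $\|\chap\varphi_{T_\beta}\|_{cbMS^p(L^2(G))}\leq 1$ and, since $\mathcal T_{\mathrm n}$-convergence implies point-norm convergence, $\varphi_{T_\beta}(g)\to\varphi_{id}(g)=1$ for every $g$. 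By Lemma \ref{lemma=def_equivalente_par_AG} this exactly exhibits $\CASch{p}{G}\leq 1$, completing the proof.
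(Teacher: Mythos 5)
Your first two paragraphs are correct and coincide with the paper's first step: $\CASch{p}{G}\geq 1$ always, and $\CASch{p}{G}\leq c$ is equivalent to $id_{S^p(\ell^2 G)}$ lying in the $\mathcal T_{\mathrm n}$-closure of $\{M_{\chap\varphi}:\varphi\text{ finitely supported}\}\cap\{T:\|T\|_{cb}\leq c\}$. The gap is in your third and fourth paragraphs, where you replace the paper's choice of $F_0$ (the span of the $M_{\chap\varphi}$ themselves) by the space of \emph{all} finite-rank maps $F(S^p(\ell^2 G))$. After that substitution your intermediate conclusion, namely that $id_{S^p(\ell^2 G)}$ lies in the $\mathcal T_{\mathrm n}$-closure of the finite-rank complete contractions, is true for \emph{every} discrete group with no hypothesis at all: the compressions $a\mapsto \chi_E a\chi_E$, $E\subset G$ finite, are already finite-rank complete contractions converging to the identity in $\mathcal T_{\mathrm n}$ (this is just the CCAP of $S^p(H)$, and it is also why Theorem \ref{thm=approxiimplCB} applied to $F_0=F(S^p(\ell^2 G))$ has vacuous content). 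So at that point the hypothesis $\CASch{p}{G}<\infty$ has been discarded, and your final step would have to show that \emph{every} discrete group has $\CASch{p}{G}=1$. That contradicts the paper's main results (Theorems \ref{thm=main_thm_continu} and \ref{thm=main_thm_continu_R} together with Theorem \ref{thm=characterzation}: lattices in $SL_r(F)$ or $SL_r(\R)$ have $\CASch{p}{\Gamma}=\infty$ for suitable $p\neq 2$). Hence your fourth paragraph cannot be repaired: any mechanism with the properties you list, applied to the compression net, would prove a false statement.

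Concretely, the mechanism does not exist. Formula \eqref{eq=def_de_phi_apartirT} defines $\varphi_T$ only for $T$ acting on $L^p(\tau_G)$, where $\lambda(g)$ is an element of the space and $\tau_G$ is a \emph{finite} trace; for $T$ acting on $S^p(\ell^2 G)$ with $G$ infinite, $\lambda(g)\notin S^p(\ell^2 G)$ and the trace on $B(\ell^2 G)$ is semifinite, so there is no analogue of $\tau(T(\lambda(g))\lambda(g)^*)$. Extracting a Toeplitz symbol from a general map on $S^p(\ell^2 G)$ amounts to averaging $s\mapsto\langle\delta_s,T(e_{s,sg})\delta_{sg}\rangle$ over $s\in G$, i.e. to an invariant mean on $G$ --- exactly the amenability that is unavailable here; the comultiplication/trace-slice argument you cite (Haagerup's, and its $L^p$ avatar in Proposition \ref{thm=OAP_implique_MultSchur}) goes in the \emph{opposite} direction, from maps on the group ($L^p$-)algebra to Schur multipliers. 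Your starting observation is fair: for infinite $G$ the multipliers $M_{\chap\varphi}$ have infinite rank, so Theorem \ref{thm=approxiimplCB} as stated does not literally apply to the paper's $F_0$. But the whole point of the paper's argument is that the Hahn--Banach/nuclearity machinery is run with $F_0$ consisting of Herz--Schur multipliers, so that the cb-contractions it produces \emph{are} multipliers $M_{\chap\varphi}$ with $\|M_{\chap\varphi}\|_{cb}\leq 1$, and the reformulation of the first paragraph immediately yields $\CASch{p}{G}\leq 1$; any repair of the finite-rank issue must preserve this feature, rather than enlarge $F_0$ to all finite-rank maps, which is precisely what destroys the content of the hypothesis.
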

\begin{proof}
  Note that $\CASch{p}{G}\leq c$ if and only if $id_{S^p(\ell^2G)}$ belongs
  to the $\mathcal T_{\mathrm n}$-closure in $CB(S^p(\ell^2G),S^p(\ell^2G))$ of $\{
  M_{\chap \varphi},\varphi:G \to \C \textrm{ with finite support}\} \cap
  \{T, \|T\|_{cb} \leq c\}$. This Corollary therefore follows from Theorem
  \ref{thm=approxiimplCB} applied for the space $F_0$ consisting of the
  $M_{\chap \varphi}$ for all $\varphi:G \to \C$ with finite support.
\end{proof}

\begin{corollary}\label{coro=AP_implique_notrePte}
  If $G$ is a discrete group with AP and $1<p<\infty$, then
  $\CASch{p}{G}=1$.
\end{corollary}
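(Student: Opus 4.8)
The plan is to feed Proposition~\ref{prop=APimpliqueAPp} into Theorem~\ref{thm=approxiimplCB}, in exactly the way Corollary~\ref{coro=CASch1ouRien} was obtained. First I would let $F_0$ be the space of Schur multipliers $M_{\chap\varphi}$ with $\varphi:G\to\C$ finitely supported, viewed inside $CB(S^p(\ell^2 G),S^p(\ell^2 G))$; since symbols of Schur multipliers add and rescale, $F_0$ is a linear subspace, and it is precisely the space to which Theorem~\ref{thm=approxiimplCB} was applied in the proof of Corollary~\ref{coro=CASch1ouRien}. Because $G$ has AP and $1<p<\infty$, Proposition~\ref{prop=APimpliqueAPp} then gives directly that $id_{S^p(\ell^2 G)}$ belongs to the $\mathcal T_{\mathrm n}$-closure of $F_0$.

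Next I would invoke Theorem~\ref{thm=approxiimplCB} with this $F_0$ to upgrade the approximation to the completely contractive regime: it yields that $id_{S^p(\ell^2 G)}$ lies in the $\mathcal T_{\mathrm n}$-closure of $\{T\in F_0:\|T\|_{cb}\leq 1\}$, which is exactly the set of multipliers $M_{\chap\varphi}$ with $\varphi$ of finite support and $\|M_{\chap\varphi}\|_{cb}\leq 1$. Now I would read off the conclusion from the characterization recorded at the beginning of the proof of Corollary~\ref{coro=CASch1ouRien}, namely that $\CASch{p}{G}\leq c$ if and only if $id_{S^p(\ell^2 G)}$ is in the $\mathcal T_{\mathrm n}$-closure of $\{M_{\chap\varphi}\}\cap\{T:\|T\|_{cb}\leq c\}$. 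Taking $c=1$ gives $\CASch{p}{G}\leq 1$.

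For the reverse inequality I would argue that any admissible net $\varphi_\alpha$ with $\varphi_\alpha\to 1$ uniformly on compact sets and $\|\chap\varphi_\alpha\|_{cbMS^p(L^2(G))}\leq C$ satisfies $\|\varphi_\alpha\|_\infty\leq\|\chap\varphi_\alpha\|_{cbMS^p(L^2(G))}\leq C$ by Remark~\ref{rem=monotonie_normes_Sp}, so evaluating at a fixed $g$ and letting $\alpha$ grow forces $C\geq 1$; hence $\CASch{p}{G}\geq 1$ and therefore $\CASch{p}{G}=1$. Alternatively one may simply cite Corollary~\ref{coro=CASch1ouRien}, which already confines $\CASch{p}{G}$ to $\{1,\infty\}$, so the finiteness just established suffices. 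I do not expect any genuine obstacle here: the argument is entirely mechanical, and all the real work is already carried by Proposition~\ref{prop=APimpliqueAPp} and Theorem~\ref{thm=approxiimplCB}. The only point requiring care is to match the two $\mathcal T_{\mathrm n}$-closure statements correctly with the definition of $\CASch{p}{G}$, so that the cb-contractive refinement supplied by Theorem~\ref{thm=approxiimplCB} translates into the bound $\CASch{p}{G}\leq 1$ rather than a merely finite bound.
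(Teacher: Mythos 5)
Your proposal is correct and follows essentially the same route as the paper: the paper's proof is exactly to apply Theorem~\ref{thm=approxiimplCB} with $F_0$ the space of finite-support Schur multipliers $M_{\chap\varphi}$, using Proposition~\ref{prop=APimpliqueAPp} to verify its hypothesis, and then to read off $\CASch{p}{G}=1$ via the characterization from Corollary~\ref{coro=CASch1ouRien}. Your additional remarks (linearity of $F_0$, the lower bound $\CASch{p}{G}\geq 1$ via $\|\varphi_\alpha\|_\infty\leq\|\chap\varphi_\alpha\|_{cbMS^p(L^2(G))}$) merely make explicit what the paper leaves implicit.
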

\begin{proof} This follows from Proposition \ref{prop=APimpliqueAPp} and
  from Theorem \ref{thm=approxiimplCB} applied for the space $F_0$
  consisting of the $M_{\chap \varphi}$ for all $\varphi:G \to \C$ with
  finite support.
\end{proof}
\begin{corollary}\label{coro=OAP_Lp_implieque_notrePte}
  If $1<p<\infty$ and $G$ is a discrete group such that $L^p(\tau_G)$ has
  the OAP (or the CBAP), then $\CASch{p}{G}=1$.
\end{corollary}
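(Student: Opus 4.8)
The plan is to mirror the proof of Corollary \ref{coro=AP_implique_notrePte}, feeding in the OAP hypothesis through Proposition \ref{thm=OAP_implique_MultSchur} in place of the AP hypothesis used there. First I would observe that, since the CBAP is stronger than the OAP (as recalled in the introduction), it suffices to treat the case where $L^p(\tau_G)$ has the OAP; the CBAP case then follows immediately.

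Next I would introduce the space $F_0 = \{ M_{\chap \varphi}, \varphi:G \to \C \textrm{ of finite support}\}$, which is a subspace of $CB(S^p(\ell^2 G), S^p(\ell^2 G))$ because $\varphi \mapsto M_{\chap \varphi}$ is linear. By Proposition \ref{thm=OAP_implique_MultSchur}, the OAP of $L^p(\tau_G)$ guarantees that $id_{S^p(\ell^2 G)}$ lies in the $\mathcal T_{\mathrm n}$-closure of $F_0$. This is exactly the hypothesis required to invoke Theorem \ref{thm=approxiimplCB} for this $F_0$, whose conclusion is that $id_{S^p(\ell^2 G)}$ already belongs to the $\mathcal T_{\mathrm n}$-closure of $\{T \in F_0, \|T\|_{cb} \leq 1\}$.

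Finally I would translate this back into a statement about the constant $\CASch{p}{G}$, using the characterization recalled in the proof of Corollary \ref{coro=CASch1ouRien}: for $c \geq 1$, one has $\CASch{p}{G} \leq c$ precisely when $id_{S^p(\ell^2 G)}$ belongs to the $\mathcal T_{\mathrm n}$-closure of $F_0 \cap \{T, \|T\|_{cb}\leq c\}$. Taking $c = 1$ yields $\CASch{p}{G} \leq 1$, and the reverse inequality $\CASch{p}{G} \geq 1$ is clear (for any admissible net, $\|\chap \varphi_\alpha\|_{cbMS^p(L^2(G))} \geq \|\varphi_\alpha\|_\infty \geq |\varphi_\alpha(e)| \to 1$ by Remark \ref{rem=monotonie_normes_Sp}), giving the equality $\CASch{p}{G} = 1$.

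There is essentially no hard step here: the result is a formal combination of Proposition \ref{thm=OAP_implique_MultSchur} and Theorem \ref{thm=approxiimplCB}, structurally identical to Corollary \ref{coro=AP_implique_notrePte}. The only point requiring a moment of care is verifying that Theorem \ref{thm=approxiimplCB} applies to $F_0$ as taken, namely that $F_0$ is genuinely a subspace and that the $\mathcal T_{\mathrm n}$-closure condition produced by Proposition \ref{thm=OAP_implique_MultSchur} is literally the hypothesis demanded by Theorem \ref{thm=approxiimplCB}; both checks are immediate.
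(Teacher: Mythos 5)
Your proof is correct and is essentially the paper's own argument: reduce the CBAP case to the OAP case, use Proposition \ref{thm=OAP_implique_MultSchur} to verify the hypothesis of Theorem \ref{thm=approxiimplCB} for $F_0 = \{M_{\chap \varphi}, \varphi \textrm{ of finite support}\}$, and translate the conclusion into $\CASch{p}{G}=1$ via the characterization used in the proof of Corollary \ref{coro=CASch1ouRien}. The only addition is your explicit check of the trivial lower bound $\CASch{p}{G} \geq 1$, which the paper leaves implicit.
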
 
\begin{proof}
  The CBAP is stronger than OAP. If $L^p(\tau_G)$ has the OAP, then by
  Proposition \ref{thm=OAP_implique_MultSchur}, the hypothesis in Theorem
  \ref{thm=approxiimplCB} holds with the space $F_0$ consisting of the
  $M_{\chap \varphi}$ for all $\varphi:G \to \C$ with finite support. This
  implies that $\CASch{p}{G}=1$. 
\end{proof}

The main tool in the proof of Theorem \ref{thm=approxiimplCB} will be the
following Lemma, which expresses (in the vocabulary of \cite{MR1793753},
chapter 12) that a completely integral map on $S^p$ is completely
nuclear. Junge proved in the unpublished paper \cite{junge} that this holds
for any non-commutative $L^p$-space (with $1<p<\infty$) of a QWEP von
Neumann algebra. We give an elementary statement and an elementary proof,
due to Gilles Pisier~:
\begin{lemma}\label{lem=integralImpliquenucleaire}
  Let $1<p<\infty$ and let $H_1$, $H_2$ be Hilbert spaces and $\Psi$ a
  linear map of norm less than $1$ on $F(S^p(H_1),S^p(H_2))$ equipped with
  the completely bounded norm. Then there exist $x \in S^p(\ell^2\otimes
  H_1)$ and $y \in S^{p'}(\ell^2\otimes H_2)$ satisfying $\|x\|_p
  \|y\|_{p'} < 1$ and such that
\[\Psi(T) = \langle y, (id \otimes T)(x)\rangle \textrm{ for any }T \in F(S^p(H_1),S^p(H_2)).\]
In particular, $\Psi$ extends to a $\mathcal T_{\mathrm n}$-continuous linear map on
$CB(S^p(H_1),S^p(H_2))$ of norm less than $1$.
\end{lemma}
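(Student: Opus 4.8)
The plan is to trade the completely bounded norm for an ordinary operator norm by amplification, to represent $\Psi$ by a measure on a product of two balls, and then to glue a single pair $(x,y)$ out of that measure by a diagonal (direct integral) construction.

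First I would invoke \eqref{eq=normeCB}. Setting $E=S^p(\ell^2\otimes H_1)$, $G=S^p(\ell^2\otimes H_2)$ and $\hat T=\mathrm{id}_{S^p(\ell^2)}\otimes T$, one has $\|T\|_{cb}=\|\hat T\|_{B(E,G)}$, so $T\mapsto\hat T$ is an isometry from $(F(S^p(H_1),S^p(H_2)),\|\cdot\|_{cb})$ onto a subspace $\mathcal S\subset B(E,G)$, and $\Psi$ becomes a functional on $\mathcal S$ of norm $<1$ for the operator norm. I extend it by Hahn--Banach to $\tilde\Psi\in B(E,G)^*$, $\|\tilde\Psi\|<1$. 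Since $1<p<\infty$, both $E$ and $G^*=S^{p'}(\ell^2\otimes H_2)$ are reflexive, hence $K=B_E\times B_{G^*}$ is compact for the weak$\,\times\,$weak-$*$ topology. The goal is a genuine (countably additive) Radon measure $\mu$ on $K$, which after absorbing a unimodular density we may take $\ge 0$, with $\mu(K)<1$ and
\[\Psi(T)=\int_K\langle\eta,\hat T\xi\rangle\,d\mu(\xi,\eta)\qquad\text{for every finite rank }T.\]

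Granting such a $\mu$, the conclusion is immediate. I view the two coordinate maps $(\xi,\eta)\mapsto\xi$ and $(\xi,\eta)\mapsto\eta$ as bounded measurable fields and form the corresponding diagonal (multiplication) operators $x$ and $y$ on $L^2(K,\mu)\otimes\ell^2\otimes H_1$ and $L^2(K,\mu)\otimes\ell^2\otimes H_2$. Assuming, as we may since $T$ is finite rank, that $\mu$ has separable support, the identification $L^2(K,\mu)\otimes\ell^2\cong\ell^2$ places $x\in S^p(\ell^2\otimes H_1)$ and $y\in S^{p'}(\ell^2\otimes H_2)$, with $\|x\|_p^p=\int_K\|\xi\|_p^p\,d\mu\le\mu(K)$ and $\|y\|_{p'}^{p'}=\int_K\|\eta\|_{p'}^{p'}\,d\mu\le\mu(K)$, so that $\|x\|_p\|y\|_{p'}\le\mu(K)<1$. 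As $\mathrm{id}\otimes T$ acts diagonally and the duality pairing of two diagonal operators is the integral of the pointwise pairings, the common index variable keeps $\xi$ and $\eta$ matched and yields $\langle y,(\mathrm{id}\otimes T)x\rangle=\int_K\langle\eta,\hat T\xi\rangle\,d\mu=\Psi(T)$. The final assertion then follows, since $T\mapsto\langle y,(\mathrm{id}\otimes T)x\rangle$ is defined for all $T\in CB$, is dominated by $\|y\|_{p'}\,\|(\mathrm{id}\otimes T)x\|_p$, and is therefore $\mathcal T_{\mathrm n}$-continuous of norm $<1$, the seminorm $T\mapsto\|(\mathrm{id}\otimes T)x\|_p$ being one of those defining $\mathcal T_{\mathrm n}$.

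The main obstacle is the construction of the \emph{countably additive} $\mu$, and this is precisely where $1<p<\infty$ is indispensable, through the reflexivity (Radon--Nikodym property) of $S^{p'}$. The naive idea---embed $B(E,G)\hookrightarrow C(K)$ by $u\mapsto\big((\xi,\eta)\mapsto\langle\eta,u\xi\rangle\big)$ and apply the Riesz representation theorem to $\tilde\Psi$---does not work: this map is isometric but its range is not contained in $C(K)$, because $(\xi,\eta)\mapsto\langle\eta,u\xi\rangle$ is only separately, not jointly, weak/weak-$*$ continuous (already $\langle\eta,\xi\rangle$ on $B_{\ell^2}\times B_{\ell^2}$ fails to be continuous at the origin along an orthonormal sequence). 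Hahn--Banach on $\ell^\infty(K)$ yields only a finitely additive measure, which is not enough for the direct integral assembly. Upgrading finite to countable additivity---equivalently, turning the scalar representation into a bona fide vector integral with matched integrands---is the heart of the proof, and I would carry it out as in the classical theorem that an integral operator into a space with the Radon--Nikodym property is nuclear, here with target $S^{p'}$. A more hands-on variant is to discretize: each finite partition $\pi$ of $K$ gives a finitely supported $\mu_\pi$, hence by block-summing a finite nuclear representation with $\|x_\pi\|_p\|y_\pi\|_{p'}\le\mu(K)<1$; but passing to the limit again meets the failure of joint weak continuity and so must be organized as a martingale/RNP argument rather than a bare weak-compactness extraction.
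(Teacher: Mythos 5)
Your proof has a genuine gap, and it sits exactly where you place "the heart of the proof": the existence of a \emph{countably additive} Radon measure $\mu$ on $K=B_E\times B_{G^*}$ with $\Psi(T)=\int_K\langle\eta,\hat T\xi\rangle\,d\mu$ is never established, and the tool you invoke cannot establish it. The classical theorem ``an integral operator into a space with the Radon--Nikodym property is nuclear'' takes the countably additive representing measure (i.e.\ integrality) as its \emph{hypothesis}; it upgrades integral to nuclear, it does not manufacture the measure. Nor is there another route visible in your sketch: the subspace $\mathcal S=\{\hat T=\mathrm{id}\otimes T\}\subset B(E,G)$ on which $\Psi$ lives contains no nonzero compact operator (since $\mathrm{id}_{S^p(\ell^2)}$ is not compact), so the classical duality identifying $(G\otimes_{\epsilon}E^*)^*$ with Radon measures on $B_{G^*}\times B_{E^{**}}$ never applies to the operators $\hat T$. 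If you extend $\tilde\Psi$ by Hahn--Banach to $B(E,G)$ and restrict to finite-rank operators $E\to G$, you do get a Radon measure, but the formula $u\mapsto\int\langle\eta,u\xi\rangle\,d\mu$ is then only known to agree with $\tilde\Psi$ on finite-rank $u$, and $\hat T$ is not an operator-norm limit of finite-rank operators, so nothing forces $\Psi(T)=\int\langle\eta,\hat T\xi\rangle\,d\mu$. As you yourself note, extensions through $\ell^\infty(K)$ give only finitely additive objects; the upgrade to countable additivity is precisely what is missing, and your two proposed fixes (partition/discretization, ``martingale/RNP organization'') are gestured at but not carried out -- indeed the discretization $\mu_\pi$ you mention presupposes the measure $\mu$ it is supposed to construct.

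There is a second, independent error: even granting $\mu$, your gluing step is wrong as written. A diagonal (direct-integral) operator over a non-atomic $(K,\mu)$ is not compact, let alone in $S^p$: for a constant field $\xi_k\equiv\xi_0$ it equals $1_{L^2(\mu)}\otimes\xi_0$, so the identity $\|x\|_p^p=\int_K\|\xi\|_p^p\,d\mu$ fails badly. Even for atomic $\mu$ the plain multiplication operator gives $x=\oplus_i\xi_i$ with $\|x\|_p^p=\sum_i\|\xi_i\|_p^p$ and $\langle y,(\mathrm{id}\otimes T)x\rangle=\sum_i\langle\eta_i,\hat T\xi_i\rangle$, both missing the weights $\mu_i$; one must insert $\mu_i^{1/p}$ into $x$ and $\mu_i^{1/p'}$ into $y$ by hand, and this repair is only available after discretizing to an atomic measure. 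For comparison, the paper's proof avoids measures entirely: it sets $N_1(\Psi)=\sup\{|\Psi(T)|:\|T\|_{cb}\le1\}$ and $N_2(\Psi)=\inf\|x\|_p\|y\|_{p'}$ over representations, proves $N_1=N_2$ by trace duality when $H_1$ is finite-dimensional, and then exhausts $H_1$: with $\Psi_K(T)=\Psi(TP_K)$ for finite-dimensional $K\subset H_1$ and $q=\max(p,2)$, the Schatten-class inequalities $\bigl(\sum_n\|q_nx\|_p^q\bigr)^{1/q}\le\|x\|_p$ and $\bigl(\sum_n\|xq_n\|_p^q\bigr)^{1/q}\le\|x\|_p$ yield $\frac12\bigl(\sum_n N_2(\Psi_{K_n}-\Psi_{K_{n-1}})^q\bigr)^{1/q}\le N_1(\Psi)$, so the net $(\Psi_K)$ is $N_2$-Cauchy and its $N_2$-limit is $\Psi$, giving $N_2(\Psi)\le N_1(\Psi)$. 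The finiteness of $q$ (hence $p<\infty$) is what does the convergence work you were hoping RNP would do; if you want to salvage your outline, it is this kind of quantitative block-decomposition estimate, not a vector-measure argument, that has to replace the missing construction of $\mu$.
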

\begin{proof}
For any linear map $\Psi: F(S^p(H_1),S^p(H_2)) \to \C$, denote
\begin{gather*}
N_1(\Psi) = \sup_{T \in F(S^p(H_1),S^p(H_2)), \|T\|_{cb} \leq 1}
|\Psi(T)|\\ N_2(\Psi)=\inf_{x\in S^p(\ell^2\otimes H_1),y \in
  S^{p'}(\ell^2\otimes H_2)}\|x\|_p \|y\|_{p'},
\end{gather*}
where the infimum is taken over all $x,y$ satisfying $\Psi(T)=\langle y, id
\otimes T(x)\rangle$ for all $T \in F(S^p(H_1),S^p(H_2))$.  For $i=1,2$,
$N_i$ is a norm which makes $\{\Psi, N_i(\Psi)<\infty\}$ a Banach space,
and obviously $N_1 \leq N_2$. We prove that $N_1=N_2$. When $H_1$ or $H_2$
is finite dimensional, this is classical and very easy~: namely, for $i=1$
or $2$, the space $\{\Psi: F(S^p(H_1),S^p(H_2)) = CB(S^p(H_2),S^p(H_2)) \to
\C\textrm{ linear bounded}\}$ coincides (as a vector space) with $S^p(H_1)
\otimes S^p(H_2)^*$, and when equipped with the norm $N_i$, its dual space
is naturally $CB(S^p(H_1),S^p(H_2))$ with the norm $\|\cdot\|_{cb}$.

If $K$ is a closed subspace of $H_1$, denote by $e_K \in B(H_1)$ the
orthogonal projection on $K$ and $P_K:A \in S^p(H_1) \mapsto e_K A e_K \in
S^p(H_1)$. Denote also by $\Psi_K$ the map $T \in F(S^p(H_1),S^p(H_2)) \to
\Psi(T P_K)$. By the case $\dim(H_1)<\infty$, we have that $N_1(\Psi_K) =
N_2(\Psi_K)$ for any finite dimensional subspace $K$ of $H_1$. If
$\{0\}=K_0 \subset K_1\subset K_2 \subset \dots K_N$ is an increasing
family of orthogonal finite dimensional subspaces of $H_1$ and if
$q=\max(p,2)$, we claim that
\begin{equation}\label{eq=restriction_de_Cauchy} \frac 1 2\left(\sum_{n=1}^N N_2(\Psi_{K_n} - \Psi_{K_{n-1}})^q\right)^{1/q} \leq
N_2(\Psi_{K_N}) = N_1(\Psi_{K_N}) \leq N_1(\Psi).\end{equation} The middle
equality has already been proved, and the second inequality is obvious. The
first inequality follows from the following inequality valid for any $x \in
S^p(\ell^2 \otimes H_1)$:
\[ \left(\sum_{n=1}^N \|(id \otimes P_n - id \otimes P_{n-1})(x)\|_p^q
\right)^{1/q} \leq 2 \|x\|_p,\] which follows from the inequalities,
valid for any family $(q_n)_{n \geq 1}$ of orthogonal projections on
$\ell^2 \otimes H_1$
\begin{gather*}
 \left(\sum_{n=1}^N \| q_n x\|_p^q \right)^{1/q} \leq \|x\|_p\\
\left(\sum_{n=1}^N \| x q_n\|_p^q \right)^{1/q} \leq
\|x\|_p.\end{gather*} When $p \geq 2$ this can be proved using the triangle
inequality in $S^{p/2}$. When $p=1$, this can be proved using the fact that
the unit ball in $S^1$ is the closed convex hull of rank one operators, and
for $p<2$, this follows by interpolation between $p=1$ and $p=2$.

\eqref{eq=restriction_de_Cauchy} then implies that the net $(\Psi_K)$ (for
$K$ a finite dimensional subspace of $H_1$) is Cauchy for $N_2$,
\emph{i.e.} for any $\varepsilon$ there exists a finite dimensional
subspace $K_\varepsilon$ such that for any finite dimensional $K$
containing $K_\varepsilon$, $N_2(\Psi_{K} - \Psi_{K_\varepsilon}) <
\varepsilon$. This implies that it converges for the norm $N_2$ to an
element of $N_2$-norm not greater than $N_1(\Psi)$. This limit is $\Psi$,
which shows that $N_2(\Psi) \leq N_1(\Psi)$ and which concludes the proof
of $N_2 = N_1$.

The second statement of the Lemma is then immediate, because for $x \in
S^p(\ell_2 \otimes H_1)$ and $y \in S^{p'}(\ell^2\otimes H_2)$, the
formula $T \mapsto \langle y, (id \otimes T)(x) \rangle$ defines a
$\mathcal T_{\mathrm n}$-continuous map on $CB(S^p(H_1), S^p(H_2))$.
\end{proof}

\begin{proof}[Proof of Theorem \ref{thm=approxiimplCB}]
  This proof relies on the Hahn-Banach Theorem. For convenience we denote
  $S^p(H)$ simply by $S^p$. Let $\Phi:CB(S^p,S^p) \to \C$ be a $\mathcal
  T_{\mathrm n}$-continuous linear form such that $|\Phi(T)| \leq 1$ for all $T \in
  F_0$ with $\|T\|_{cb}\leq 1$ (equivalently $|\Phi(T)| \leq \|T\|_{cb}$
  for all $T \in F_0$). The aim is to prove that $|\Phi(id_V)| \leq 1$. For
  this we show that for any $\varepsilon>0$, $\Phi$ coincides on the space
  $F_0$ with a linear map $\Psi$ on $CB(S^p,S^p)$, which is also $\mathcal
  T_{\mathrm n}$-continuous and for which $\|\Psi\| \leq 1+\varepsilon$. This would
  conclude the proof because then $\Psi=\Phi$ on the $\mathcal T_{\mathrm n}$-closure
  of $F_0$, and in particular $\Phi(id_{S^p}) = \Psi(id_{S^p})$ is less
  than $1+\varepsilon$.

  The restriction of $\Phi$ to $F_0$ is of norm $1$. By Hahn-Banach it
  extends to a norm $1$ functional $\Phi_1$ on $F(S^p,S^p)$. By Lemma
  \ref{lem=integralImpliquenucleaire}, for any $\varepsilon>0$, $\Phi_1$
  extends to a $\mathcal T_{\mathrm n}$-continuous map $\Psi$ on $CB(S^p,S^p)$ of
  norm less than $1+\varepsilon$.
\end{proof}

\section{Case of $SL_{r+1}(F)$}
\label{sect=Contre-exemple}
The aim of this section is to prove Theorem
\ref{thm=main_thm_continu}. This is done at the end of this section, as a
consequence of Proposition \ref{prop-unites-approchees}. 

Let $p>2$.  Let $n\in \N^{*}$ such that $p>2+\frac{2}{n}$.  Set
\[\eps=n(\frac{1}{2}-\frac{1}{p})-\frac{1}{p}=\frac{n}{2p}\big( p-(2+\frac{2}{n})\big)\in \R_{+}^{*}.\]
Let $r\in \N^{*}$ such that $r\geq 2n$, $F$ be a non-archimedian local
field and $\mathcal O$ its ring of integers. Let $G=SL_{r+1}(F)$ and
$K=SL_{r+1}(\mathcal O)$ which is a maximal compact subgroup of $G$.

\begin{prop}\label{prop-unites-approchees}
  The constant function $1$ on $G$ cannot be approximated (for the topology
  of uniform convergence on compact subsets) by functions $f$ in $C_0(G)$
  such that $\|\chap f\|_{MS^p(L^2(G))}$ is bounded uniformly. In particular,
\[\CASch{p}{SL_{r+1}(F)} = \infty.\]
\end{prop}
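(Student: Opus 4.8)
The plan is to exploit the rich subgroup structure of $G=SL_{r+1}(F)$, in particular its copies of $SL_2(F)$ and its unipotent/parabolic subgroups, to show that any function $f$ on $G$ with $\|\chap f\|_{MS^p(L^2(G))}\le C$ must satisfy rigidity constraints forcing it away from the constant $1$. Following the strategy of strong property (T) for $SL_3(F)$ from the reference \cite{duke}, I would first restrict attention to the action of $G$ on suitable homogeneous or boundary spaces and reformulate the uniform bound on the Schur $S^p$-multiplier norm of $\chap f$ in representation-theoretic terms. The key mechanism is that a uniform bound on $\|\chap f\|_{MS^p}$ controls how $f$ can vary along the directions picked out by the semisimple and unipotent parts of $G$, and the exponent $\eps=\frac{n}{2p}(p-(2+\frac2n))>0$ quantifies a gain coming from the interplay between $p$ and the rank parameter $n$ (via $r\ge 2n$).

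\textbf{Key steps.} First I would set up, for a well-chosen one-parameter or multi-parameter family of elements (coming from the diagonal torus acting on root subgroups), an averaging operator and estimate its effect on $f$ using the $S^p$-multiplier bound. The plan is to produce enough independent copies of $SL_2(F)$ inside $SL_{r+1}(F)$ — precisely $r\ge 2n$ allows $n$ such commuting or suitably positioned copies — so that a tensor-power / $\ell^p$-type amplification converts the single bound $C$ into a bound that improves with $n$. Concretely, on each $SL_2(F)$ factor one analyzes how the Schur multiplier interacts with the Iwasawa/Cartan decomposition, extracting a decay estimate on matrix coefficients. Second, I would combine the $n$ factors: the condition $p>2+\frac2n$, rewritten as $\eps>0$, is exactly the threshold at which the accumulated estimate over the $n$ copies becomes summable/contractive, yielding that $f$ restricted to a large compact set cannot stay close to $1$ while keeping $\|\chap f\|_{MS^p}\le C$. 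Third, I would pass from the multiplier estimate to a statement about $f$ itself by testing against explicit rank-one operators in $S^2\cap S^p(L^2(G))$ built from characteristic functions of small neighbourhoods, using \eqref{eq=lien_trace_integrale} to read off values of $\chap f$; Theorem \ref{thm=mult_de_symb_continu} guarantees that for the non-discrete group $G$ the norm and cb-norm coincide and are detected on finite subsets of the support, which legitimizes these finite-dimensional test computations.

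The final conclusion $\CASch{p}{SL_{r+1}(F)}=\infty$ then follows formally: by definition of $\SCHURCBAP{p}$ and Lemma \ref{lemma=def_equivalente_par_AG}, a finite constant would furnish a net $\varphi_\alpha\in A(G)$ with $\varphi_\alpha\to 1$ uniformly on compact sets and $\sup_\alpha\|\chap\varphi_\alpha\|_{cbMS^p}<\infty$; since $A(G)\subset C_0(G)$ (coefficients of the regular representation vanish at infinity) and since on a non-discrete group the cb-norm equals the norm by Theorem \ref{thm=mult_de_symb_continu}, such a net is exactly the kind of approximation the first part of the Proposition rules out. Hence no finite bound is possible and $\CASch{p}{SL_{r+1}(F)}=\infty$.

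\textbf{Main obstacle.} The hard part will be the quantitative combination of the $n$ copies of $SL_2(F)$: establishing the precise decay estimate on each factor and then showing that the exponent $\eps$ controls the aggregate so that the estimate degenerates exactly when $p$ crosses $2+\frac2n$. This requires a careful choice of test vectors adapted to the $S^p$-geometry (rather than the Hilbert-space $S^2$-geometry, where everything would be much softer) and a delicate interpolation/amplification argument to transfer the single-multiplier bound through the $n$-fold product structure. Controlling the constants uniformly as $\alpha$ varies, and ensuring the finite-rank test operators genuinely detect the failure of $f$ to approximate $1$, is where the bulk of the technical work lies.
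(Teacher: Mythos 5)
Your scaffolding is right (the method of \cite{duke}, the finite-subset testing permitted by Theorem \ref{thm=mult_de_symb_continu}, and the formal deduction of $\CASch{p}{SL_{r+1}(F)}=\infty$ from the first statement via $A(G)\subset C_0(G)$, which is exactly the paper's ``in particular''), but the engine you propose cannot work. You want to restrict attention to $n$ ``independent/commuting copies of $SL_2(F)$'' and amplify a per-factor decay estimate. Any estimate obtained by evaluating $\chap f$ at configurations lying inside a closed subgroup $H$ only involves $f|_H$ and the quantity $\|\chap{(f|_H)}\|_{MS^p(L^2(H))}\leq \|\chap f\|_{MS^p(L^2(G))}$ (Theorem \ref{thm=mult_de_symb_continu}, or Proposition \ref{thm=basic_propertiesofCASch}). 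Now $H\cong SL_2(F)^n$ acts properly on a product of trees and is weakly amenable with constant $1$, so $\CASch{p}{H}=1$ for every $p$: there exist functions on $H$ converging to $1$ uniformly on compact sets with uniformly bounded multiplier norms. Consequently no rigidity constraint derivable from tests confined to such subgroups can ever contradict ``$f\approx 1$ on large compact sets''; the obstruction is a genuinely higher-rank phenomenon, invisible to products of $SL_2$'s. Similarly, ``decay of matrix coefficients'' along a Cartan/Iwasawa decomposition is a representation-theoretic statement that the Schur-multiplier hypothesis does not directly provide.

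What the paper actually does, after averaging to reduce to $K$-biinvariant $f$ (Proposition \ref{prop-unites-approchees-Kinv}), is to prove a difference estimate along the Cartan decomposition: Lemma \ref{lemTrenf} shows that $f(D(\lam_1,\dots,\lam_r))$ and $f(D(\lam_1,\dots,\lam_i+1,\dots,\lam_r))$ differ by at most $Cq^{-\eps m}\|\chap f\|_{MS^p(L^2(G))}$, where $m$ is a break of the polygon. The test objects are not copies of $SL_2$ but the pairing $(a_1,\dots,a_n,b),(x_1,\dots,x_n,y)\mapsto y-\sum_i a_ix_i-b$ over $\OO/\p^m\OO$, realized by explicit matrices $\alpha,\beta$ with values in the Borel subgroup whose products land in $KD(\lam)K$ or $KD(\lam')K$ according to the value of that pairing (this is where an $(n+2)\times(n+2)$ block is needed, and $r\geq 2n$ serves to position this block so that a large break is available on one side, via \eqref{est-Trenf1} or \eqref{est-Trenf2} --- not to fit commuting subgroups). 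The $S^p$-smallness of the test operators, $\|T_m-T_{m-1}\|_{S^p}\leq 2q^{-\eps m}$ (Lemma \ref{lem-Tkk-1}), comes from Fourier analysis on $\OO/\p^m\OO$: nondegenerate characters give unitary Fourier matrices of normalized $S^p$-norm $q^{-m/2+m/p}$, there are about $q^m$ of them, and the $n$-fold tensor-power structure over the coordinates $a_ix_i$ is precisely where $\eps=n(\tfrac12-\tfrac1p)-\tfrac1p$ comes from; the lower bound $\|uT_m-vT_{m-1}\|_{S^p}\geq|u-v|$ (the constant eigenvector) converts this into the difference estimate via Lemma \ref{M0AG-Schur} and Lemma \ref{lem-combine}. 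Finally a telescoping along $\lam^m_i=mi(r+1-i)$ (Lemma \ref{lemTrenf-sauts}) yields $|f(D(\lam^m))|\leq Cq^{-2\eps m}\|\chap f\|_{MS^p(L^2(G))}$, which is incompatible with $f$ being close to $1$ on large compact sets. If you replace your amplification-over-$SL_2$'s step by this unipotent-pairing/Fourier-matrix mechanism, your outline becomes the paper's proof; without it, the central decay estimate has no valid source.
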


This proposition follows from
\begin{prop}\label{prop-unites-approchees-Kinv}
  The constant function $1$ on $G$ cannot be approximated (for the topology
  of uniform convergence on compact subsets) by $K$-biinvariant functions
  $f$ in $C_0(G)$ such that $\|\chap f\|_{MS^p(L^2(G))}$ is bounded
  uniformly.
\end{prop}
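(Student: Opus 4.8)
The plan is to derive a contradiction from a single quantitative, per-radius lower bound on radial Schur multipliers, combined with the finite-section description of the multiplier norm and a telescoping argument. Fix a uniformizer $\pi$ of $F$ and write $q$ for the cardinality of the residue field. Using the hypothesis $r\ge 2n$ (so that $r+1\ge 2n+1$), for each integer $\ell\ge 0$ let $a_\ell\in G$ be the diagonal matrix with entries $\pi^\ell$ ($n$ times), $1$ (once) and $\pi^{-\ell}$ ($n$ times), padded by further $1$'s; since $f$ is $K$-biinvariant the number $f(a_\ell)$ depends only on the double coset $Ka_\ell K$. Suppose, for contradiction, that there is a net $(f_\alpha)$ of $K$-biinvariant functions in $C_0(G)$ with $f_\alpha\to 1$ uniformly on compact sets and $\sup_\alpha\|\chap f_\alpha\|_{MS^p(L^2(G))}=C<\infty$. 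The first, routine, ingredient is the reduction to finite matrices: since $\chap f$ is continuous and the support of the Haar measure is all of $G$, Theorem \ref{thm=mult_de_symb_continu} gives, for every finite $F=\{x_1,\dots,x_N\}\subset G$,
\[ \big\|\big(f(x_i^{-1}x_j)\big)_{i,j}\big\|_{MS^p(\ell^2(F))}\;\le\;\|\chap f\|_{MS^p(L^2(G))}. \]
Thus any lower bound for the left-hand side, for a well-chosen finite configuration, is a lower bound for the global multiplier norm.

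The heart of the argument is the following estimate, which I would prove by the method of \cite{duke}: there is a constant $c=c(p,n,q)>0$ such that for every $K$-biinvariant $\psi\in C_0(G)$ and every $\ell\ge 0$,
\begin{equation}\label{eq=plan_keyestimate}
q^{\eps\ell}\,\big|\psi(a_\ell)-\psi(a_{\ell+1})\big|\;\le\;c\,\|\chap\psi\|_{MS^p(L^2(G))}.
\end{equation}
To establish \eqref{eq=plan_keyestimate} I would, for each $\ell$, build a pair of finite-rank operators $A_\ell\in S^p$ and $B_\ell\in S^{p'}$, supported on the finite configuration obtained from the $K$-orbit of a vertex at ``radius $\ell$'' in the building of $G$, so that the pairing $\mathrm{Tr}\big(M_{\chap\psi}(A_\ell)B_\ell\big)$ reproduces exactly the increment $\psi(a_\ell)-\psi(a_{\ell+1})$ while $\|A_\ell\|_p\|B_\ell\|_{p'}\le c\,q^{-\eps\ell}$. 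The factor $q^{-\eps\ell}$ is where the precise value $\eps=n(\tfrac12-\tfrac1p)-\tfrac1p$ is produced: the $n(\tfrac12-\tfrac1p)$ comes from the exponential growth $q^\ell$ of each of the $n$ ``directions'' of $a_\ell$, weighed with the gap between the $S^2$- and $S^p$-norms on the corresponding spheres, while the remaining $-\tfrac1p$ is the deficit contributed by the single central coordinate, which is exactly where the extra dimension $r+1\ge 2n+1$ is used. Controlling $\|A_\ell\|_p$ and $\|B_\ell\|_{p'}$ sharply is the main obstacle: one computes the relevant $S^2$-norms explicitly from the orthogonality and growth of spherical matrix coefficients (Macdonald's formula) and then interpolates this computable $S^2$ bound against a crude $S^\infty$ bound to reach $S^p$. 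This is precisely the step where the genuine higher-rank structure of $G$ enters, in contrast to a product of rank-one groups, for which no such rigidity holds.

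Granting \eqref{eq=plan_keyestimate}, the conclusion is immediate. Applying it to each $f_\alpha$ gives $|f_\alpha(a_\ell)-f_\alpha(a_{\ell+1})|\le cC\,q^{-\eps\ell}$ for all $\ell$. Since $f_\alpha\in C_0(G)$ and $a_\ell$ leaves every compact set as $\ell\to\infty$, we have $f_\alpha(a_\ell)\to 0$, so for every $L$,
\[ |f_\alpha(a_L)|\;=\;\Big|\sum_{\ell\ge L}\big(f_\alpha(a_\ell)-f_\alpha(a_{\ell+1})\big)\Big|\;\le\;cC\sum_{\ell\ge L}q^{-\eps\ell}\;=\;\frac{cC}{1-q^{-\eps}}\,q^{-\eps L}. \]
The right-hand side tends to $0$ as $L\to\infty$ because $\eps>0$ and $q>1$; fix $L$ so large that it is $<\tfrac12$. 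On the other hand $\{a_L\}$ is compact, so for $\alpha$ large enough $f_\alpha(a_L)>\tfrac12$. This contradiction proves Proposition \ref{prop-unites-approchees-Kinv}.

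Finally, the passage from the $K$-biinvariant Proposition \ref{prop-unites-approchees-Kinv} to Proposition \ref{prop-unites-approchees} is the expected averaging $f\mapsto\widetilde f$, with $\widetilde f(g)=\int_K\int_K f(kgk')\,dk\,dk'$: this keeps $\widetilde f$ in $C_0(G)$, preserves uniform-on-compacts convergence to $1$, and does not increase the multiplier norm, because its Herz--Schur symbol $(s,t)\mapsto\widetilde f(s^{-1}t)$ is an average over $k,k'\in K$ of the symbols $(s,t)\mapsto\chap f(sk^{-1},tk')$, and each of these has the same $MS^p(L^2(G))$-norm as $\chap f$, being obtained from it by the measure-preserving right translations $s\mapsto sk^{-1}$ and $t\mapsto tk'$.
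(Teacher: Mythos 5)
Your proposal has the same skeleton as the paper's proof: reduction to finite sections via Theorem \ref{thm=mult_de_symb_continu}, a quantitative estimate on increments of $f$ along a sequence of double cosets going to infinity, and a telescoping argument using $f\in C_0(G)$ to contradict $f_\alpha\to 1$ on the compact set $\{a_L\}$. The soft parts (finite-section reduction, telescoping, final contradiction, and the closing averaging remark) are correct. The problem is that the entire mathematical content of the proposition lies in your key estimate $q^{\eps\ell}\,|\psi(a_\ell)-\psi(a_{\ell+1})|\leq c\,\|\chap\psi\|_{MS^p(L^2(G))}$, and you do not prove it: you only restate what the operators $A_\ell\in S^p$ and $B_\ell\in S^{p'}$ would have to satisfy and appeal to ``the method of \cite{duke}''. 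Producing such a pair is exactly what the paper spends Lemmas \ref{lem-Tkk-1}, \ref{M0AG-Schur}, \ref{lem-combine} and \ref{lemTrenf} on: the matrices $T_m,T_{m-1}$ built from the incidence relation $y=\sum_i a_ix_i+b$ in $\OO/\pi^m\OO$, whose $S^p$-distance is computed by Fourier analysis on $\OO/\pi^m\OO$, and the explicit injections $\alpha,\beta$ into the Borel subgroup whose products are verified, through valuations of minors, to land in exactly two prescribed double cosets. Announcing the required conclusion is not a construction, so the proof has a hole precisely where the difficulty is.

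Worse, the estimate you chose as the pivot is not of the kind this method delivers, so the hole cannot be filled by a routine adaptation. The perturbation trick ($w=0$ versus $w=\pi^{m-1}$) compares two double cosets differing by a \emph{unit} increment of a \emph{single} $\lambda_i$, and the decay it yields is $q^{-\eps b}$ where $b$ is the \brisure{} of the polygon at a vertex adjacent to $i$ --- not the distance to the origin. Your path $a_\ell=\mathrm{diag}(\pi^{\ell}I_n,1,\pi^{-\ell}I_n,I_{r-2n})$ has a polygon that is flat except at the two vertices $n$ and $n+1$, and the jump $a_\ell\to a_{\ell+1}$ moves many invariant factors at once, which no single perturbation of this type produces. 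Nor can you reduce to unit increments: incrementing $\lambda_i$ within $\Lambda$ requires strictly positive \brisures{} at the vertices $i-1$ and $i+1$, and since $a_\ell$ has only two adjacent nonzero \brisures{}, for $n\geq 3$ \emph{no} unit increment from $a_\ell$ even stays in $\Lambda$; where increments are admissible, the relevant neighbouring \brisures{} are $0$ or bounded, so Lemma \ref{lemTrenf} gives $Cq^{-\eps\cdot 0}=C$, i.e.\ no decay in $\ell$. This is exactly why the paper works along $\lam^m_i=mi(r+1-i)$, whose \brisures{} all equal $2m$, and passes from $\lam^m$ to $\lam^{m+1}$ through intermediate points keeping every \brisure{} $\geq 2m-2$ (Lemma \ref{lemTrenf-sauts}); the choice of path is forced by the structure of the increment estimate, not a convenience. (A smaller inaccuracy: the hypothesis $r+1\geq 2n+1$ is used so that every index $i$ satisfies $r-i\geq n$ or $i-1\geq n$, making one of the two one-sided estimates of Lemma \ref{lemTrenf} available, rather than anything about the ``central coordinate''.)
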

\begin{proof}[Proof of Proposition \ref{prop-unites-approchees-Kinv} using
 Proposition \ref{prop-unites-approchees}]
Averaging on the left and on the right by $K$ one sees that it is enough to
show that one cannot approximate $1$ by $K$-biinvariant functions in
$C_0(G)$ uniformly bounded for $\|\chap f\|_{MS^p(L^2(G))}$. 
\end{proof}

Let $\pi$ be a uniformizer of $\OO$, and let $\OO^\times$ denote the units
(or invertibles) of $\OO$. Denote by $\F = \OO/\pi\OO$ the residue field of
$F$.  To define an absolute value $|\cdot|$ on $F$ we have to choose
$|\pi|\in (0,1)$. Then $|\cdot|$ is defined in the following way~:
$|x|=|\pi|^{\lam}$ if $x\in \pi^{\lam}\OO^{\times}$ for $\lam \in \Z$ and
$|x|=0$ if $x=0$. The standard choice is to take $|\pi|=q^{-1}$, because
with this choice $d(xa)=|x| da$ for any $x \in F$, where $da$ denotes a
Haar measure on $F$. Since we do not use this property, we prefer to keep
the choice of $|\pi|\in (0,1)$ arbitrary.  The coefficients of the matrices
below are easier to understand if they are written as powers of $\pi^{-1}$
instead of powers of $\pi$.  To keep the size of matrices reasonnable we
introdude the notation $e=\pi^{-1}$, so that $|e|=|\pi|^{-1}$ is an
arbitrary number in $ (1,\infty)$.  The important property of $|\cdot|$ is
that it is non-archimedian, \emph{i.e.} the triangle inequality has the
stronger form $|x+y|\leq \max(|x|,|y|)$ for any $x,y \in F$.

\begin{rem}
  The reader unfamiliar with these notions can consider the special case where $q$ is a prime 
  number and  $F = \Q_q$ (we avoid the usual notation $\Q_p$ because
  the letter $p$ is already used). Note that $\Q_q$ is the field obtained
  by completion of $\Q$ for the distance given by the absolute value on
  $\Q$, $|a/b|=|q|^{v_q(a)-v_q(b)}$, where $|q|\in (0,1)$ is arbitrary and
  $v_q(a)$ is the greatest $k$ such that $q^k$ divides $a$ (the resulting
  field does not depend on the choice of $|q|\in (0,1)$). In the special case where  $F = \Q_q$, $\mathcal O$
  is $\Z_q$, the unit ball in $\Q_q$ (or equivalently the closure of $\Z$),
  a convenient choice for $\pi$ is to simply take $\pi=q$ and the residue
  field is $\Z/q\Z$.
\end{rem}
 
Let \begin{gather*}\Lambda=\{(\lam_{1},...,\lam_{r})\in \N^{r},
  \lam_{1}\geq \lam_{2}-\lam_{1}\geq \lam_{3}-\lam_{2} \geq ... \geq \lam_{r}-\lam_{r-1} \geq -\lam_{r}
  \}.\end{gather*} 
  
For $(\lam_{1},...,\lam_{r})\in \N^{r}$ denote by
$P(\lam_{1},...,\lam_{r})$ the polygon whose vertices are the points
$(i,\lam_{i})$ for $i\in \{0,...,r+1\}$, setting $\lam_{0}=0$ and
$\lam_{r+1}=0$. Then $\Lambda$ is the set of $(\lam_{1},...,\lam_{r})\in
\N^{r}$ such that $P(\lam_{1},...,\lam_{r})$ is convex (or equivalently
such that the piecewise affine map on $[0,r+1]$ taking values $\lambda_i$
on $i$ is concave). The $\lam_{i+1}-\lam_{i}$ for $i\in \{0,...,r\}$ are
the slopes of the polygon and $2\lam_{i}-\lam_{i-1}-\lam_{i+1}$ is called
\brisure{} at vertex $i$, for $i\in \{1,...,r\}$.  A polygon is convex if
all its \brisures{} are nonnegative. The picture below gives an example for
$r=4$.

  \ifx\JPicScale\undefined\def\JPicScale{1}\fi
\unitlength \JPicScale mm
\begin{picture}(110,50)(0,5)
\linethickness{0.3mm}
\put(10,10){\line(1,0){100}}
\linethickness{0.3mm}
\multiput(10,10)(0.12,0.15){167}{\line(0,1){0.15}}
\linethickness{0.3mm}
\multiput(30,35)(0.24,0.12){83}{\line(1,0){0.24}}
\linethickness{0.3mm}
\multiput(50,45)(0.48,-0.12){42}{\line(1,0){0.48}}
\linethickness{0.3mm}
\multiput(70,40)(0.24,-0.12){83}{\line(1,0){0.24}}
\linethickness{0.3mm}
\multiput(90,30)(0.12,-0.12){167}{\line(1,0){0.12}}
\put(10,20){\makebox(0,0)[cc]{$(0,0)$}}

\put(110,20){\makebox(0,0)[cc]{$(5,0)$}}

\put(30,40){\makebox(0,0)[cc]{$(1,\lam_1)$}}

\put(50,50){\makebox(0,0)[cc]{$(2,\lam_2)$}}

\put(70,45){\makebox(0,0)[cc]{$(3,\lam_3)$}}

\put(90,35){\makebox(0,0)[cc]{$(4,\lam_4)$}}

\end{picture}

For $(\lam_{1},...,\lam_{r})\in  \Lambda$ denote 
\[D(\lam_{1},...,\lam_{r})=\begin{pmatrix} e^{\lam_{1}} & 0 & 0& \dots & 0 \\
  0 & e^{\lam_{2}-\lam_{1}} & \ddots& \ddots & \vdots \\
  0 & \ddots & \ddots & \ddots & 0 \\
  \vdots & \ddots & \ddots& e^{\lam_{r}-\lam_{r-1}}& 0 \\
  0 & \dots & 0& 0 & e^{-\lam_{r}} \\
\end{pmatrix}\in G,\]
where the exponants of $e$ are the slopes of the polygon
$P(\lam_{1},...,\lam_{r})$.
                               
The map associating $KD(\lam_{1},...,\lam_{r})K$ to
$(\lam_{1},...,\lam_{r})\in \Lambda$ induces a bijection between $\Lambda$
et $K\backslash G/K$.
 
For a matrix $A=(a_{kl})$ denote $\|A\|=\max(|a_{kl}|)$. Then for $A\in G$,
 \begin{gather}\nonumber A
 \in KD(\lam_{1},...,\lam_{r})K
 \\
 \label{double-norme}
 \textrm{ if and only if }\|\Lambda^{i}A\|=|e|^{\lam_{i}}\textrm{ for all } i\in
 \{1,...,r\} .\end{gather}
  More concretely $\|\Lambda^{i}A\|$ is the maximum of the norms  of all $i \times i$-minors of $A$. 
When $A\in KD(\lam_{1},...,\lam_{r})K$ one says that
$P(\lam_{1},...,\lam_{r})$ is the  polygon of $A$. The reason why we introduce these polygons is that the $\lambda_{i}$ are more convenient parameters than the slopes $\lambda_{i+1}-\lambda_{i}$ (see \eqref{double-norme} above and lemma~\ref{lemTrenf} below) and that the convexity condition satisfied by the $\lambda_{i}$ is best seen by drawing the polygon. 

Denote by $B$ the Borel subgroup of $G$ (formed of upper-triangular matrices). 

\begin{prop}
  For any function $f\in C_{c}(G)$, let $g=f\vert_B \in C_c(B)$ be the
  restriction of $f$ to $B$. Then \[\|\chap
  g\|_{MS^p(L^{2}(B))}\leq \|\chap f\|_{MS^{p}(L^{2}(G))}.\]

  If $f$ is $K$-biinvariant it is an equality: $\|\chap
  g\|_{MS^p(L^{2}(B))} = \|\chap f\|_{MS^{p}(L^{2}(G))}$.
\end{prop}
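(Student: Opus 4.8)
The plan is to reduce both inequalities to the finite-subset characterization of Schur multiplier norms for continuous symbols, namely Theorem \ref{thm=mult_de_symb_continu}. Both $G=SL_{r+1}(F)$ and its Borel subgroup $B$ are non-discrete, locally compact, second countable groups, and their Haar measures are $\sigma$-finite Radon measures with full support and no isolated point. Hence for $f\in C_c(G)$ (so that $\chap f$ is continuous) and $g=f|_B$, that theorem gives
\[\|\chap f\|_{MS^{p}(L^{2}(G))}=\sup_{F}\big\|(\chap f(x_i,x_j))_{i,j}\big\|_{S^p(\ell^2(F))},\qquad \|\chap g\|_{MS^{p}(L^{2}(B))}=\sup_{F'}\big\|(\chap g(x_i,x_j))_{i,j}\big\|_{S^p(\ell^2(F'))},\]
where $F$ ranges over finite subsets of $G$ and $F'$ over finite subsets of $B$; moreover, since neither support has an isolated point, norms and cb-norms coincide on both groups.

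For the inequality, I would take a finite subset $F'=\{x_1,\dots,x_N\}\subset B$. Since $x_i^{-1}x_j\in B$ and $g=f|_B$, one has $\chap g(x_i,x_j)=\chap f(x_i,x_j)$, so the matrix is the same whether computed from $g$ or from $f$. As $F'$ is also a finite subset of $G$, the characterization on $G$ bounds $\|(\chap f(x_i,x_j))\|_{S^p(\ell^2(F'))}$ by $\|\chap f\|_{MS^{p}(L^{2}(G))}$. Taking the supremum over all finite $F'\subset B$ yields $\|\chap g\|_{MS^{p}(L^{2}(B))}\le\|\chap f\|_{MS^{p}(L^{2}(G))}$. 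Note this direction does not use $K$-biinvariance.

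For the reverse inequality (when $f$ is $K$-biinvariant) I would use the Iwasawa decomposition. From $G=KB$ one gets $G=BK$ by taking inverses, so every $x_i$ in a finite subset $F=\{x_1,\dots,x_N\}\subset G$ factors as $x_i=b_i k_i$ with $b_i\in B$, $k_i\in K$. Then $x_i^{-1}x_j=k_i^{-1}(b_i^{-1}b_j)k_j$ with $b_i^{-1}b_j\in B$, and $K$-biinvariance of $f$ gives $\chap f(x_i,x_j)=f(b_i^{-1}b_j)=\chap g(b_i,b_j)$. The $b_i$ need not be distinct, so to stay inside the finite-subset characterization on $B$ I would embed into $\widetilde B=B\times\N$: the points $(b_i,i)$ are pairwise distinct, and $\chap g(b_i,b_j)=\widetilde{\chap g}\big((b_i,i),(b_j,j)\big)$ with $\widetilde{\chap g}$ as in Lemma \ref{lemma=normeCBdesmult}. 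Applying Theorem \ref{thm=mult_de_symb_continu} on $\widetilde B$, then Lemma \ref{lemma=normeCBdesmult}, then norm\,$=$\,cb-norm on the non-discrete group $B$,
\[\big\|(\chap f(x_i,x_j))_{i,j}\big\|_{S^p(\ell^2(F))}\le\|\widetilde{\chap g}\|_{MS^{p}(L^{2}(\widetilde B))}=\|\chap g\|_{cbMS^{p}(L^{2}(B))}=\|\chap g\|_{MS^{p}(L^{2}(B))}.\]
Taking the supremum over finite $F\subset G$ gives $\|\chap f\|_{MS^{p}(L^{2}(G))}\le\|\chap g\|_{MS^{p}(L^{2}(B))}$, and combined with the first part this is the desired equality.

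I expect the main obstacle to be precisely the equality direction: one must arrange the $K$-factors to sit on the \emph{outside} of $x_i^{-1}x_j$ so that $K$-biinvariance can be applied, which is why I use $G=BK$ rather than $G=KB$; and one must absorb the possible coincidences among the $b_i$, which I handle by passing to $B\times\N$ and invoking that a non-discrete $B$ forces the Schur norm and cb-norm to agree. The easy inequality, by contrast, is a direct consequence of restricting the continuous symbol to $B\times B$.
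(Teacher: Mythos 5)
Your proof is correct, and while the easy inequality is argued exactly as in the paper (restrict the continuous symbol and invoke the finite-subset characterization of Theorem \ref{thm=mult_de_symb_continu}), your equality direction takes a genuinely different route. The paper never returns to finite subsets for that step: it observes that, $f$ being $K$-biinvariant, the symbol $\chap f$ is measurable with respect to the $\sigma$-algebra pulled back from $G/K \times G/K$ (and likewise $\chap g$ with respect to the one pulled back from $B/(B\cap K) \times B/(B\cap K)$), and then applies Lemma \ref{lem=cnnorm_inv_par_tribu} together with the identification $G/K = B/(B\cap K)$ to conclude that $\|\chap g\|_{cbMS^p(L^{2}(B))}$ and $\|\chap f\|_{cbMS^{p}(L^{2}(G))}$ are both equal to the single quantity $\|\chap f\|_{cbMS^{p}(L^{2}(G/K))}$; the statement for the norms then follows because norm and cb-norm agree on the non-discrete groups $G$ and $B$. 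You instead implement the same underlying structural fact, the Iwasawa decomposition $G=BK$, pointwise: factor $x_i=b_ik_i$, use biinvariance to get $\chap f(x_i,x_j)=\chap g(b_i,b_j)$, and absorb possible coincidences among the $b_i$ by passing to $B\times \N$ via Lemma \ref{lemma=normeCBdesmult}. That repeated-points step is the one non-obvious point your route requires, and you handle it correctly (it mirrors how the paper itself deduces the cb-part of Theorem \ref{thm=mult_de_symb_continu} from the scalar part). What the paper's argument buys is brevity and a cleaner intermediate object: everything is identified with a multiplier norm on the quotient $L^2(G/K)$, with no bookkeeping about distinctness of points. What yours buys is a more hands-on argument that stays at the level of finite matrices, makes completely explicit where $K$-biinvariance and $G=BK$ enter, and avoids the measure-theoretic identification of the quotient measure spaces.
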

\begin{rem} The notation $\chap f$ was introduced at the beginning of
  section \ref{sect=approximation}. Note that by Theorem
  \ref{thm=mult_de_symb_continu}, the norms of all the multipliers
  appearing in this proposition are equal to their cb-norms.
\end{rem}
\begin{proof}
  For $p=\infty$ this is proved in proposition 1.6 of~\cite{haagerup3}. 

  For general $p$ it is a consequence of the results in section
  \ref{sec=Schur_multi}.  The first point follows from Theorem
  \ref{thm=mult_de_symb_continu}. Moreover since $B$ and $G$ are both
  without isolated points (and the Haar measure has full support) Theorem
  \ref{thm=mult_de_symb_continu} implies that $\|\chap g\|_{MS^p(L^{2}(B))}
  = \|\chap g\|_{cbMS^p(L^{2}(B))}$ and $\|\chap f\|_{MS^{p}(L^{2}(G))} =
  \|\chap f\|_{cbMS^{p}(L^{2}(G))}$. But by Lemma
  \ref{lem=cnnorm_inv_par_tribu}, since $G/K=B/(B \cap K)$, both terms
  $\|\chap g\|_{cbMS^p(L^{2}(B))}$ and $\|\chap f\|_{cbMS^{p}(L^{2}(G))}$
  are equal to $\|\chap f\|_{cbMS^{p}(L^{2}(G/K))}$.
\end{proof}

\begin{lemma}\label{lemTrenf}
  There is a constant $C$ such that for all $K$-biinvariant $f\in
  C_{c}(G)$, for $(\lam_{1},...,\lam_{r})\in \Lambda$ and $i\in
  \{1,...,r\}$ such that
\[(\lam_{1},...,\lam_{i-1},\lam_{i}+1,\lam_{i+1},...,\lam_{r})\in \Lambda
,\] one has
  \begin{gather}\nonumber \big|  f(D(\lam_{1},...,\lam_{r}))-f(D(\lam_{1},...,\lam_{i-1},\lam_{i}+1,\lam_{i+1},...,\lam_{r}))
                               \big|   
                                     \\
                                     \label{est-Trenf1}
\leq C q^{-\eps (2\lam_{i+1}-\lam_{i}-\lam_{i+2})}
                               \|\chap f\|_{MS_{p}(L^{2}(G))}
                                                          \text{\  if \ \ }
                               r-i\geq n 
        \\ \nonumber 
        \text{and \ \ } \big|  f(D(\lam_{1},...,\lam_{r}))-f(D(\lam_{1},...,\lam_{i-1},\lam_{i}+1,\lam_{i+1},...,\lam_{r}))
                               \big|   
                                     \\
                                     \label{est-Trenf2}
\leq C q^{-\eps (2\lam_{i-1}-\lam_{i-2}-\lam_{i})}
                               \|\chap f\|_{MS_{p}(L^{2}(G))}
                                                          \text{\ \ for \ }
                               i-1\geq n. \end{gather}                  
\end{lemma}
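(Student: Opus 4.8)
The plan is to derive both inequalities from the local description of Schur multiplier norms in Theorem~\ref{thm=mult_de_symb_continu}, by exhibiting, for each pair of adjacent double cosets, an explicit finite configuration of points of $G$ on which the matrix of $\chap f$ ``sees'' exactly the difference to be bounded. Since $f$ is $K$-biinvariant its value on $KD(\lam)K$ is well defined, and for any finite set $\{g_1,\dots,g_N\}\subset G$ the matrix $(f(g_a^{-1}g_b))_{a,b}$ is a Schur multiplier on $S^p_N$ of norm at most $\|\chap f\|_{MS^p(L^2(G))}$. I would first record the identity $D(\lam_1,\dots,\lam_i+1,\dots,\lam_r)=t\,D(\lam)$ with $t=\mathrm{diag}(\dots,e,e^{-1},\dots)$ supported in the $(i,i{+}1)$ block, so that passing from one coset to the next is a rank-one type move in that block, and I would use the minor description \eqref{double-norme}: the two cosets differ only in $\|\Lambda^{i}(\cdot)\|$, which is $|e|^{\lam_i}$ for one and $|e|^{\lam_i+1}$ for the other.

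Next I would build the configuration from unipotent translates. For \eqref{est-Trenf1} (the case $r-i\ge n$) the $n$ coordinates to the right of position $i$ are available; I would take points $g_v=u(v)h$, where $u(v)$ runs over an $n$-dimensional lattice of unipotents with entries in the root positions joining the $(i,i{+}1)$ block to positions $i{+}2,\dots,i{+}1{+}n$, and $h$ is a fixed diagonal element. Then $g_v^{-1}g_w=h^{-1}u(w-v)h$, and conjugation by $h$ rescales the relevant entries by the gap $|e|^{b}$ between the diagonal entries in positions $i{+}1$ and $i{+}2$, i.e.\ by the break $b=2\lam_{i+1}-\lam_i-\lam_{i+2}$; this is what makes the Cartan projection stay equal to $P(\lam)$ while $w-v$ is small and jump to $P(\lam_1,\dots,\lam_i+1,\dots,\lam_r)$ once $w-v$ crosses a threshold set by $b$. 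A direct computation of the minors via \eqref{double-norme} would confirm that across this threshold \emph{only} $\|\Lambda^{i}\|$ changes, so that $(f(g_v^{-1}g_w))$ is governed by a symbol $\chi$ on the finite abelian group $V$ of parameters which is constant on the shells $\{|w-v|=\text{const}\}$ and jumps by exactly $f(D(\lam))-f(D(\lam_1,\dots,\lam_i+1,\dots,\lam_r))$ across the relevant threshold. For \eqref{est-Trenf2} the symmetric construction uses the $n$ coordinates to the left, with the threshold set by the break $2\lam_{i-1}-\lam_{i-2}-\lam_i$.

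To extract the jump I would test the Schur multiplier against matrices $A,B$ on $\ell^2(V)$ built from characters of $V$ and normalised so that $Tr(AB)=0$, which cancels the background value of $f$ and leaves $Tr(M_{\chap f}(A)B)$ equal to the desired increment times an explicit coefficient; one then gets
\[
\big|f(D(\lam))-f(D(\lam_1,\dots,\lam_i+1,\dots,\lam_r))\big|\ \le\ \frac{\|A\|_p\,\|B\|_{p'}}{|\text{coefficient}|}\,\|\chap f\|_{MS^p(L^2(G))}.
\]
Because $\chi$ depends only on $w-v$, the optimal $A,B$ are Fourier characters of $V\cong(\OO/\pi^{b}\OO)^{\,n}$, and the ratio is estimated by the Hausdorff--Young inequality on $V$: each of the $n$ coordinates contributes a factor $q^{-b(\frac12-\frac1p)}$ and there is a single normalising factor $q^{b/p}$, producing exactly $q^{-b\left(n(\frac12-\frac1p)-\frac1p\right)}=q^{-\eps b}$.

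The main obstacle is the precise non-archimedean bookkeeping of the second step: one must compute the Cartan projection of $h^{-1}u(z)h$ uniformly in $z\in V$ through \eqref{double-norme}, check that across the threshold only the $i$-th exterior-power norm jumps (all other $\|\Lambda^{j}\|$ staying fixed, so that no third double coset interferes), and pin down the threshold as exactly the break. Coupled to this is making the Hausdorff--Young estimate sharp enough to yield the exact constant $\eps=n(\frac12-\frac1p)-\frac1p$ rather than a weaker exponent. The hypotheses $r-i\ge n$ and $i-1\ge n$ are used precisely to provide the $n$ free coordinates on the correct side, and $r\ge 2n$ guarantees that for every $i$ at least one of the two estimates applies.
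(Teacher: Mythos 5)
Your proposal reproduces several correct surface features of the argument (restriction to finite configurations via Theorem~\ref{thm=mult_de_symb_continu}, the break $m=2\lam_{i+1}-\lam_{i}-\lam_{i+2}$ as the relevant scale, the requirement that only the $i$-th exterior power norm jump, and the numerology producing $\eps$), but its core mechanism fails. The first, concrete defect: with a single family $g_v=u(v)h$ you get $g_v^{-1}g_w=h^{-1}u(w-v)h$, a conjugate of a unipotent matrix. In particular $g_v^{-1}g_v=1$, and for $w-v$ small the product stays in a bounded neighbourhood of $K$; moreover, when the entries of $u(\cdot)$ sit in a single row, every minor of $h^{-1}u(z)h$ equals $0$, $\pm 1$ or $\pm$(a rescaled entry), so by \eqref{double-norme} its Cartan data has the degenerate form $(\nu,\nu,\dots,\nu)$. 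Such products are \emph{never} in $KD(\lam_{1},\dots,\lam_{r})K$ or $KD(\lam_{1},\dots,\lam_{i}+1,\dots,\lam_{r})K$ for the general $\lam\in\Lambda$ of the lemma, so your matrix $(f(g_v^{-1}g_w))$ does not see the two values whose difference must be bounded, and the claimed minor computation (``only $\|\Lambda^{i}\|$ changes across the threshold'') is false for this configuration. The paper avoids this by using two \emph{different} injective maps $\alpha,\beta:(\OO/\p^{m}\OO)^{n+1}\to B$ (Lemma~\ref{M0AG-Schur} allows matrices $(f(\alpha(i)\beta(j)))$), with the diagonal data built into $\alpha$ so that all products lie in exactly the two target cosets.

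The second defect is fatal even after such a repair: any configuration whose symbol is Toeplitz, \emph{i.e.} of the form $\chi(w-v)$ with $v,w$ in a finite abelian group $V$, cannot yield a decaying bound, whatever test matrices $A,B$ you choose. The only information inherited from $f$ is that $(\chi(w-v))_{v,w}$ is a Schur multiplier of norm at most $\|\chap f\|_{MS^p(L^2(G))}$ on $S^p(\ell^2(V))$; but the symbol equal to $1$ when $w-v$ lies in a subgroup $H$ (the ``below-threshold ball'') and $0$ otherwise is a Schur multiplier of norm $1$ on \emph{every} $S^p$ --- it is the conditional expectation onto block-diagonal matrices, an average of unitary conjugations --- while having jump $1$ across the threshold. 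So the best bound extractable from a Toeplitz structure is $|u-v|\leq C\|\chap f\|_{MS^p(L^2(G))}$, with no decay in $m$. The Hausdorff--Young gain you invoke is illusory: in the Toeplitz setting the character matrices are $(\eta(w-v))_{v,w}=(\overline{\eta(v)}\,\eta(w))_{v,w}$, which are rank one and hence have the same norm in all $S^p$. The decay $q^{-\eps m}$ in the paper comes instead from the \emph{full-rank} Gauss-sum matrices $(\eta(ax))_{a,x}$, unitary after normalization, each contributing the factor $q^{-m(1/2-1/p)}$ (Lemma~\ref{lem-Tkk-1}); and these appear only because the corner entry of $\alpha(a_{1},\dots,a_{n},b)\beta(x_{1},\dots,x_{n},y)$ is, up to scaling, the \emph{bilinear} expression $\sigma(y)-\sum_{i}\sigma(a_{i})\sigma(x_{i})-\sigma(b)$, whose value modulo $\p^{m}$ decides the double coset (Lemma~\ref{lem-combine}). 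This bilinear pairing between the row-index family and the column-index family is the key idea missing from your proposal, and it cannot be simulated by any structure depending only on the difference $w-v$.
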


The following lemma is very close to Lemma 5.5 
 in~\cite{anniversaire} (and of the estimates following). 
 
 Let $m\in \N^{*}$.  For $k\in \{0,...,m\}$ let us denote by \[T_{k}=
 ((T_{k})_{(a_{1},...,a_{n},b),(x_{1},...,x_{n},y)})_{(a_{1},...,a_{n},b)\in
   (\OO/\p^{m}\OO)^{n+1}, (x_{1},...,x_{n},y)\in (\OO/\p^{m}\OO)^{n+1}}\] the
 matrix defined by
\begin{eqnarray*}
(T_{k})_{(a_{1},...,a_{n},b),(x_{1},...,x_{n},y)}&=&q^{-mn} \textrm{ if } 
y=\sum_{i=1}^{n}a_{i}x_{i}+b+\p^k
\textrm{\ \ in\ \ } \OO/\p^{m}\OO, \\
&=&0\textrm{ otherwise.}
\end{eqnarray*}

\begin{lemma}\label{lem-Tkk-1}
One has
\begin{gather}\label{ineg1-lem-Tkk-1}
\|T_{m}-T_{m-1}\|_{S_{p}}\leq 2
q^{-\eps m}\end{gather} and for $u,v\in \C$ one has \begin{gather}\label{ineg2-lem-Tkk-1}
\|uT_{m}-vT_{m-1}\|_{S_{p}}\geq |u-v|.\end{gather}
\end{lemma}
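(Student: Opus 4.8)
The plan is to diagonalize the operators $T_k$ simultaneously by Fourier analysis on the finite ring $A=\OO/\pi^m\OO$ (of cardinality $q^m$), which is self-dual. First I would fix a non-degenerate additive character $\psi_0$ of $A$, nontrivial on $\pi^{m-1}\OO/\pi^m\OO$, so that $\beta\mapsto\psi_\beta:=\psi_0(\beta\,\cdot\,)$ identifies $A$ with its dual. Viewing $T_k$ as an operator on $\ell^2(A^{n+1})$ with column variable $(x_1,\dots,x_n,y)$ and row variable $(a_1,\dots,a_n,b)$, one checks that $T_k$ intertwines translation by $c$ in $y$ with translation by $c$ in $b$; hence it is block diagonal for the orthogonal decomposition $\ell^2(A^{n+1})=\bigoplus_{\beta\in A}\ell^2(A^n)\otimes\C\psi_\beta$. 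A direct substitution $\psi_\beta(\langle a,x\rangle+b+\pi^k)=\psi_0(\beta\pi^k)\psi_\beta(b)\prod_i\psi_0(\beta a_i x_i)$ then shows that on the $\beta$-block $T_k$ acts as $q^{-mn}\psi_0(\beta\pi^k)\,\mathcal F_\beta^{\otimes n}$, where $\mathcal F_\beta$ is the transform $(\mathcal F_\beta g)(a)=\sum_{x\in A}g(x)\psi_0(\beta a x)$ on $\ell^2(A)$.

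For the lower bound \eqref{ineg2-lem-Tkk-1} I would bypass the spectral analysis: the normalized constant function $e$ on $A^{n+1}$ is a common unit eigenvector, $T_m e=T_{m-1}e=e$, because each row of $T_k$ has exactly $q^{mn}$ entries equal to $q^{-mn}$. Hence $(uT_m-vT_{m-1})e=(u-v)e$, and since the $S^p$-norm dominates the operator norm, $\|uT_m-vT_{m-1}\|_{S^p}\ge|\langle (uT_m-vT_{m-1})e,e\rangle|=|u-v|$.

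For the upper bound \eqref{ineg1-lem-Tkk-1} the key is a cancellation in the block scalars. Since $\pi^m=0$ in $A$ we have $\psi_0(\beta\pi^m)=1$ for all $\beta$, while $\psi_0(\beta\pi^{m-1})=1$ \emph{unless} $\beta$ is a unit, because $\beta\pi^{m-1}=0$ in $A$ as soon as $\beta\in\pi\OO$. Thus $T_m-T_{m-1}$ is supported on the blocks with $\beta\in\OO^\times$, where it acts as $q^{-mn}\bigl(1-\psi_0(\beta\pi^{m-1})\bigr)\mathcal F_\beta^{\otimes n}$. For a unit $\beta$ the pairing $(a,x)\mapsto\psi_0(\beta a x)$ is non-degenerate, so $\mathcal F_\beta$ is $q^{m/2}$ times a unitary and $\mathcal F_\beta^{\otimes n}$ is $q^{mn/2}$ times a unitary of $\ell^2(A^n)$; consequently each unit block contributes $q^{mn}$ singular values all equal to $|1-\psi_0(\beta\pi^{m-1})|\,q^{-mn/2}\le 2q^{-mn/2}$. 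Summing $p$-th powers over the fewer than $q^m$ unit blocks gives
\[\|T_m-T_{m-1}\|_{S^p}^p\le q^m\cdot q^{mn}\cdot(2q^{-mn/2})^p=2^p\,q^{\,m+mn(1-p/2)}.\]
The proof then closes by the numerical identity $1+n(1-p/2)=-\eps p$, which is exactly how $\eps=n(\tfrac12-\tfrac1p)-\tfrac1p$ was chosen, yielding $\|T_m-T_{m-1}\|_{S^p}^p\le 2^p q^{-\eps pm}$.

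The genuinely delicate points are setting up the self-duality of $\OO/\pi^m\OO$ (choosing $\psi_0$ non-degenerate and nontrivial precisely on $\pi^{m-1}\OO/\pi^m\OO$) and tracking the normalization $q^{-mn}$ and the singular-value multiplicities on the unit blocks. I expect the main obstacle to be exactly this bookkeeping of powers of $q$: the exponent lands on $-\eps p$ with no slack, so any miscounted factor would break the estimate. Note that the degenerate blocks ($\beta\notin\OO^\times$, $\beta\neq0$) never need to be analyzed, since they cancel in \eqref{ineg1-lem-Tkk-1} and are irrelevant to \eqref{ineg2-lem-Tkk-1}.
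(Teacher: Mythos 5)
Your proof is correct and follows essentially the same route as the paper: both diagonalize $T_m-T_{m-1}$ by Fourier analysis on $\OO/\pi^m\OO$ (your unit blocks $\beta\in\OO^\times$ are exactly the paper's non-degenerate characters $\eta$ with $1-\eta(\pi^{m-1})\neq 0$, whose Fourier matrices are $q^{m/2}$ times unitaries), and both get the lower bound from the common all-ones eigenvector. Your power-counting, including the identity $1+n(1-p/2)=-\eps p$, matches the paper's computation exactly.
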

\begin{proof}
  Since  \[(T_{m})_{(a_{1},...,a_{n},b),(x_{1},...,x_{n},y)}\text{ and
  }(T_{m-1})_{(a_{1},...,a_{n},b),(x_{1},...,x_{n},y)}\] only depend on
  $y-b$ one has
\begin{gather*}\|T_{m}-T_{m-1}\|_{S_{p}}^{p}
\\
=\sum_{\eta\in \widehat{\OO/\p^{m}\OO}}
\big|1-\eta(\p^{m-1}) \big|^{p} 
\Big\|q^{-mn} (\eta(\sum_{i=1}^{n}a_{i}x_{i}))_{(a_{1},...,a_{n}),(x_{1},...,x_{n})\in ( \OO/\p^{m}\OO)^{n}}  \Big\| _{S_{p}}^{p}
\\
=\sum_{\eta\in \widehat{\OO/\p^{m}\OO}}
\big|1-\eta(\p^{m-1}) \big|^{p}
\Big\|q^{-m} (\eta(ax))_{a,x\in  \OO/\p^{m}\OO}  \Big\| _{S_{p}}^{pn}.
\end{gather*}
If $1-\eta(\p^{m-1})\neq 0$ one has $|1-\eta(\p^{m-1})|\leq 2 $ and $\eta$
is a nondegenerated character of $\OO/\p^{m}\OO$. But for such a character 
\[
\big\| q^{-m}(\eta(ax))_{a,x\in \OO/\p^{m}\OO} \big\| _{S_{p}}=
q^{-\frac{m}{2}+\frac{m}{p}}\] because the
matrix \[q^{-\frac{m}{2}}(\eta(ax))_{a,x\in \OO/\p^{m}\OO} \] is unitary (as
a matrix of a Fourier transform).  But there are exactly
$(1-\frac{1}{q})q^{m}$ non degenerated characters of $\OO/\p^{m}\OO$. One
thus has
\begin{gather*}\|T_{m}-T_{m-1}\|_{S_{p}}^{p}
\leq  (1-\frac{1}{q})q^{m} 2^{p} q^{-\frac{npm}{2}+nm}
\leq  2^{p}q^{(1-\frac{np}{2}+n)m}=2^{p}q^{-p\eps m},
\end{gather*}
which proves \eqref{ineg1-lem-Tkk-1}.  

The inequality \eqref{ineg2-lem-Tkk-1} holds because the vector in
$\ell^{2}((\OO/\pi^m \OO)^{n+1})$ with coordinates all equal to $1$ is an
eigenvector for $T_m$ and $T_{m-1}$ with eigenvalue $1$. Hence it is an
eigenvector of $uT_m - vT_{m-1}$ with eigenvalue $u-v$.
\end{proof}
 
The following Lemma is a rephrasing of Theorem \ref{thm=mult_de_symb_continu}.
\begin{lemma}\label{M0AG-Schur}
Let $k\in \N$, $A\in M_{k}(\C)$, $H$ a locally compact group,
$f\in C_{c}(H)$ and  $\alpha, \beta:\{1,...,k\}\to H$ two injective maps. Then 
\[\big\|\big(f(\alpha(i)\beta(j))A_{ij}\big)_{i,j\in
  \{1,...,k\}}\big\|_{S^p}\leq \|\chap{f}\|_{MS^p(L^{2}(H))}\|A\|_{S^p}.\]
\end{lemma}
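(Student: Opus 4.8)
The plan is to read this lemma directly off Theorem~\ref{thm=mult_de_symb_continu}, by recognizing the given matrix as a rectangular corner of a genuine (square) Schur multiplier on a finite subset of $H$. Recall that $\chap f(g,h)=f(g^{-1}h)$, so that if I take the ``row point'' $u_i=\alpha(i)^{-1}$ and the ``column point'' $v_j=\beta(j)$, then $\chap f(u_i,v_j)=f(\alpha(i)\beta(j))$. This is the only algebraic identity needed; everything else is a compression argument.

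First I would fix the finite set $F=\{\alpha(1)^{-1},\dots,\alpha(k)^{-1}\}\cup\{\beta(1),\dots,\beta(k)\}\subset H$. Since a Haar measure $\mu$ on $H$ has full support, every point of $F$ lies in the support of $\mu$, so the implication $(\ref{asser=norm_locCompact})\Rightarrow(\ref{asser=norm_parties_finies})$ of Theorem~\ref{thm=mult_de_symb_continu}, applied with $X=H$, $\varphi=\chap f$, and $C=\|\chap f\|_{MS^p(L^2(H))}$, gives that the Schur multiplier $M_\Psi$ on $S^p(\ell^2(F))$ with symbol $\Psi(u,v)=f(u^{-1}v)$ satisfies $\|M_\Psi\|\le \|\chap f\|_{MS^p(L^2(H))}$.

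The main step is then to exhibit the matrix $(f(\alpha(i)\beta(j))A_{ij})_{ij}$ as a block of $M_\Psi(X)$ for a suitable $X$. Here I use that $\alpha$ and $\beta$ are injective: the points $u_i$ are pairwise distinct, the points $v_j$ are pairwise distinct, and hence the positions $(u_i,v_j)\in F\times F$ are pairwise distinct. I would therefore define $X\in S^p(\ell^2(F))$ by $X_{u_i,v_j}=A_{ij}$ and $X_{u,v}=0$ otherwise. Zero-padding does not change singular values, so $\|X\|_{S^p(\ell^2(F))}=\|A\|_{S^p}$. Since Schur multiplication is entrywise, $M_\Psi(X)$ is supported on the same positions and equals $f(\alpha(i)\beta(j))A_{ij}$ there; reading off this block (again a support-preserving operation at the level of singular values) gives $\|(f(\alpha(i)\beta(j))A_{ij})_{ij}\|_{S^p}=\|M_\Psi(X)\|_{S^p(\ell^2(F))}$. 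Combining the three facts yields $\|(f(\alpha(i)\beta(j))A_{ij})_{ij}\|_{S^p}\le\|M_\Psi\|\,\|X\|_{S^p(\ell^2(F))}\le\|\chap f\|_{MS^p(L^2(H))}\,\|A\|_{S^p}$, as desired.

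The only point requiring care, and the place where injectivity of $\alpha$ and $\beta$ is genuinely used, is this compression step: one must check that restricting a square Schur multiplier to a rectangular set of rows and columns does not increase the $S^p$ norm. This is standard and follows from the fact that the $S^p$ norm depends only on the singular values, which are unchanged both when a $k\times k$ block is embedded into $F\times F$ by zeros and when a block is read off a matrix already supported on it; for $p=\infty$ the same holds with the operator norm. No interpolation or duality beyond Theorem~\ref{thm=mult_de_symb_continu} is needed.
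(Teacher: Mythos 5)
Your proof is correct and takes essentially the same approach as the paper: the paper's own (one-line) proof also invokes Theorem~\ref{thm=mult_de_symb_continu}, observing that it gives the statement with $f(\alpha(i)^{-1}\beta(j))$ in place of $f(\alpha(i)\beta(j))$ and that the two versions are equivalent, which is exactly your substitution $u_i=\alpha(i)^{-1}$. Your write-up merely makes explicit the rectangular-compression step (zero-padding into $F\times F$ and reading off the block via isometries) that the paper leaves implicit.
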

\begin{proof} Theorem \ref{thm=mult_de_symb_continu} implies this
  with $f(\alpha(i)^{-1}\beta(j))$ instead of $f(\alpha(i)\beta(j))$, but
  the two versions are equivalent.
\end{proof}

We use a combination of the two preceding lemmas.
\begin{lemma}\label{lem-combine}
  Let $m\in \N^{*}$. Let $H$ be a locally compact group and $f\in
  C_{c}(H)$.  Let $\alpha,\beta :(\OO/\p^{m}\OO)^{n+1}\to H$ be two injective
  applications and $u,v\in \C$ such that
\begin{gather}\label{cond1-lem-combine}
f(\alpha(a_{1},...,a_{n},b)\beta(x_{1},...,x_{n},y))=u \text{ if } 
y=\sum_{i=1}^{n}a_{i}x_{i}+b
\text{ in } \OO/\p^{m}\OO
\\\label{cond2-lem-combine}
 f(\alpha(a_{1},...,a_{n},b)\beta(x_{1},...,x_{n},y))=v \text{  if  } 
y=\sum_{i=1}^{n}a_{i}x_{i}+b+\p^{m-1}
\text{  in  } \OO/\p^{m}\OO.
\end{gather}
Then $|u-v|\leq 2 q^{-\eps m} \|\chap f\|_{MS_{p}(L^{2}(H))}$.
\end{lemma}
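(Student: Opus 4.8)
The plan is to combine Lemma \ref{lem-Tkk-1} and Lemma \ref{M0AG-Schur} directly, the key observation being that the hypotheses \eqref{cond1-lem-combine} and \eqref{cond2-lem-combine} encode exactly the supports of the matrices $T_m$ and $T_{m-1}$. Indeed, since $\p^m=0$ in $\OO/\p^m\OO$, the matrix $T_m$ is supported on $\{y=\sum_i a_ix_i+b\}$ and $T_{m-1}$ on $\{y=\sum_i a_ix_i+b+\p^{m-1}\}$. Identifying the finite index set $(\OO/\p^m\OO)^{n+1}$ with $\{1,\dots,k\}$ for $k=q^{m(n+1)}$, and writing $i=(a_1,\dots,a_n,b)$, $j=(x_1,\dots,x_n,y)$, I would set
\[ A = q^{mn}(T_m - T_{m-1}). \]
The two supports are disjoint (they would coincide only if $\p^{m-1}=0$ in $\OO/\p^m\OO$, i.e. $m-1\ge m$, which is false), so $A$ has entries $+1$ on the support of $T_m$, $-1$ on that of $T_{m-1}$, and $0$ elsewhere.

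Next I would apply the Schur multiplication with symbol $f(\alpha(i)\beta(j))$ to $A$. By \eqref{cond1-lem-combine} this symbol equals $u$ on the support of $T_m$ and by \eqref{cond2-lem-combine} it equals $v$ on the support of $T_{m-1}$, so the entrywise product is
\[ \big(f(\alpha(i)\beta(j))A_{ij}\big)_{i,j} = u\,q^{mn}T_m - v\,q^{mn}T_{m-1} = q^{mn}(uT_m - vT_{m-1}). \]
Lemma \ref{M0AG-Schur}, applicable since $\alpha$ and $\beta$ are injective, then yields
\[ q^{mn}\|uT_m - vT_{m-1}\|_{S_p} \le \|\chap f\|_{MS_p(L^2(H))}\,\|A\|_{S_p} = \|\chap f\|_{MS_p(L^2(H))}\,q^{mn}\|T_m - T_{m-1}\|_{S_p}. \]
After cancelling the common factor $q^{mn}$, I would chain the two inequalities of Lemma \ref{lem-Tkk-1}: the left-hand side is bounded below by $|u-v|$ via \eqref{ineg2-lem-Tkk-1}, while $\|T_m-T_{m-1}\|_{S_p}\le 2q^{-\eps m}$ via \eqref{ineg1-lem-Tkk-1}. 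Combining these gives $|u-v| \le 2q^{-\eps m}\|\chap f\|_{MS_p(L^2(H))}$, which is the claim.

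The only point that requires genuine care—and the place where a careless attempt would fail—is the choice of sign in $A=q^{mn}(T_m-T_{m-1})$ rather than $T_m+T_{m-1}$: it is precisely this difference that makes the Schur-multiplied matrix equal to $q^{mn}(uT_m-vT_{m-1})$, so that the lower bound \eqref{ineg2-lem-Tkk-1} on $\|uT_m-vT_{m-1}\|_{S_p}$ can be invoked at the end. Everything else is bookkeeping: matching the congruence condition defining $T_k$ at $k=m$ and $k=m-1$ with the two cases of the hypothesis, verifying disjointness of supports, and checking that the scalar factors $q^{mn}$ cancel.
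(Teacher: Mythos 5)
Your proof is correct and follows essentially the same route as the paper's: apply Lemma \ref{M0AG-Schur} to the matrix $T_m - T_{m-1}$ (your rescaling by $q^{mn}$ is immaterial since it cancels), use the disjointness of the two supports so that the Schur product equals $uT_m - vT_{m-1}$, and then chain the lower bound \eqref{ineg2-lem-Tkk-1} with the upper bound \eqref{ineg1-lem-Tkk-1} of Lemma \ref{lem-Tkk-1}. The paper leaves the support and sign bookkeeping implicit, which you spell out explicitly, but the argument is the same.
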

\begin{proof}
  By Lemma~\ref{M0AG-Schur} applied to $A=T_{m}-T_{m-1}$, one has $\|u
  T_{m}-v T_{m-1}\|_{S_{p}}\leq \|\chap f\|_{MS_{p}(L^{2}(H))}\|T_{m}-
  T_{m-1}\|_{S_{p}}$. One then applies the inequalities
  \eqref{ineg1-lem-Tkk-1} and \eqref{ineg2-lem-Tkk-1} of
  Lemma~\ref{lem-Tkk-1}. 
\end{proof}

\begin{proof}[Proof of Lemma ~\ref{lemTrenf}.] 
The estimate  (\ref{est-Trenf2}) can be deduced from the estimate
(\ref{est-Trenf1}) by the automorphism 
\[\theta :  A \mapsto \begin{pmatrix}    
  0   & \dots & 0 & 1  \\
  \vdots  & \diagup & \diagup  &  0\\
 0  &\diagup &\diagup &\vdots  \\
 1  & 0& \dots & 0
   \end{pmatrix} {}^{t}A^{-1}\begin{pmatrix}    
  0   & \dots & 0 & 1  \\
  \vdots  & \diagup & \diagup  &  0\\
 0  &\diagup &\diagup &\vdots  \\
 1  & 0& \dots & 0
\end{pmatrix} \] of $G$, which preserves $K$ and $B$. Indeed
$\theta(D(\lam_{1},...,\lam_{r}))=D(\lam_{r},...,\lam_{1})$. It is thus
enough to prove (\ref{est-Trenf1}).

Let
$(\lam_{1},...,\lam_{r})\in \Lambda$ and $i\in \{1,...,r-n\}$ such that
\begin{gather}\label{concavite+1}(\lam_{1},...,\lam_{i-1},\lam_{i}+1,\lam_{i+1},...,\lam_{r})\in \Lambda .\end{gather}

Set $\lam_{0}=0$ and $\lam_{r+1}=0$. Denote by $\mu_{1},...,\mu_{r+1}$ the
slopes of the polygon $P(\lam_{1},...,\lam_{r})$, \emph{i.e.}
$\mu_{i}=\lam_{i}-\lam_{i-1}$. Since  $(\lam_{1},...,\lam_{r})\in \Lambda$
one has $\mu_{1}\geq \mu_{2}\geq ... \geq \mu_{r+1}$ and moreover
$\sum_{i=1}^{r+1}\mu_{i}=0$. The condition \eqref{concavite+1} is
equivalent to 
\begin{gather}\label{concavite+1-bis} \mu_{i-1}> \mu_{i} \text{ \ \ and \ \ }
\mu_{i+1}>  \mu_{i+2}\end{gather}
because the slopes of the polygon \[P(\lam_{1},...,\lam_{i-1},\lam_{i}+1,\lam_{i+1},...,\lam_{r})\]
are \[(\mu_{1},..., \mu_{i-1}, \mu_{i}+1, \mu_{i+1}-1, \mu_{i+2} ... ,  \mu_{r+1}). \]
 
We are going to apply Lemma~\ref{lem-combine}
with \begin{gather}\label{def-m} H=B \text{ \ and \ }
  m=\mu_{i+1}-\mu_{i+2}=2\lam_{i+1}-\lam_{i}-\lam_{i+2}\in
  \N^{*}. \end{gather} In other words, $m$ is the \brisure{} of
$P(\lam_{1},...,\lam_{r})$ at vertex $i+1$.

Let us fix a section $\sigma:\OO/\p^{m}\OO\to \OO$ of the projection $\OO\to
\OO/\p^{m}\OO$. The choice of this section has no importance. 

Let us define two maps
$\alpha,\beta :(\OO/\p^{m}\OO)^{n+1}\to B$ (where $B$ is the subgroup of
upper-triangular matrices in $SL_{r+1}$) 
in the following way~:  
\begin{gather*}\alpha(a_{1},...,a_{n},b)= \\
\begin{pmatrix} e^{\mu_{1}} & 0 & \dots& \dots & \dots&\dots&0 \\ 
  0 & \ddots& \ddots& \ddots & \ddots &\ddots&\vdots\\ 
  \vdots & \ddots & e^{\mu_{i-1}}  & \ddots & \ddots &\ddots&\vdots\\ 
        \vdots & \ddots & \ddots& \alpha'(a_{1},...,a_{n},b)& \ddots &\ddots&\vdots\\ 
          \vdots & \ddots & \ddots& \ddots & e^{\mu_{i+n+2}} &\ddots&\vdots\\         
            \vdots & \ddots & \ddots& \ddots & \ddots & \ddots&0\\ 
              0 & \dots & \dots& \dots & \dots &0&e^{\mu_{r+1}}         
                       \end{pmatrix}
                       \\ \text{ and \ \ \ }
                       \beta(x_{1},...,x_{n},y)= \\
\begin{pmatrix} 1 & 0 & \dots& \dots & \dots&\dots&0 \\ 
  0 & \ddots& \ddots& \ddots & \ddots &\ddots&\vdots\\ 
  \vdots & \ddots & 1  & \ddots & \ddots &\ddots&\vdots\\ 
        \vdots & \ddots & \ddots& \beta'(x_{1},...,x_{n},y)& \ddots &\ddots&\vdots\\ 
          \vdots & \ddots & \ddots& \ddots & 1 &\ddots&\vdots\\         
            \vdots & \ddots & \ddots& \ddots & \ddots & \ddots&0\\ 
              0 & \dots & \dots& \dots & \dots &0&1         
                       \end{pmatrix}
\end{gather*}
where the matrices are block-diagonal with all blocks of size $1$ except
the blocks $\alpha'(a_{1},...,a_{n},b)$ and $\beta'(x_{1},...,x_{n},y)$
which are square matrices of size $n+2$. The position of the block
$\beta'(x_{1},...,x_{n},y)$ is the same as the position of the block
$\alpha'(a_{1},...,a_{n},b)$, so that
\begin{gather*}\alpha(a_{1},...,a_{n},b)\beta(x_{1},...,x_{n},y) = \\
\begin{pmatrix} e^{\mu_{1}} & 0 & \dots& \dots & \dots&\dots&0 \\ 
  0 & \ddots& \ddots& \ddots & \ddots &\ddots&\vdots\\ 
  \vdots & \ddots & e^{\mu_{i-1}}  & \ddots & \ddots &\ddots&\vdots\\ 
        \vdots & \ddots & \ddots& \alpha'(a_{1},...,a_{n},b)\beta'(x_{1},...,x_{n},y)& \ddots &\ddots&\vdots\\ 
          \vdots & \ddots & \ddots& \ddots & e^{\mu_{i+n+2}} &\ddots&\vdots\\         
            \vdots & \ddots & \ddots& \ddots & \ddots & \ddots&0\\ 
              0 & \dots & \dots& \dots & \dots &0&e^{\mu_{r+1}}         
                       \end{pmatrix}.
\end{gather*}
The matrices $\alpha'(a_{1},...,a_{n},b)$ and $\beta'(x_{1},...,x_{n},y)$
are defined by
\begin{gather*}\alpha'(a_{1},...,a_{n},b)= \\
\begin{pmatrix} e^{\mu_{i}} & -e^{\mu_{i}}\sigma(a_{1}) & -e^{\mu_{i}}\sigma(a_{2})& \dots &-e^{\mu_{i}}\sigma(a_{n}) &-e^{\mu_{i}+\mu_{i+1}-\mu_{i+2}}\sigma(b) \\ 
  0 & e^{\mu_{i+2}}& 0& \dots & \dots &0\\ 
  \vdots & \ddots & e^{\mu_{i+3}}  & \ddots &\ddots&\vdots\\ 
        \vdots & \ddots & \ddots&  \ddots&\ddots &\vdots\\ 
          \vdots & \ddots & \ddots& \ddots & e^{\mu_{i+n+1}} &0\\         
                    0 & \dots & \dots& \dots & 0 &e^{\mu_{i+1}}         
                       \end{pmatrix}
                       \\ \text{ and \ \ \ }
                     \beta'(x_{1},...,x_{n},y)= \\
\begin{pmatrix} 1 & 0& \dots & \dots &0&e^{\mu_{i+1}-\mu_{i+2}}\sigma(y) \\ 
  0 & 1& \ddots& \ddots & \vdots &e^{\mu_{i+1}-\mu_{i+2}}\sigma(x_{1})\\ 
  \vdots & \ddots & 1 & \ddots &\vdots&e^{\mu_{i+1}-\mu_{i+2}}\sigma(x_{2})\\ 
        \vdots & \ddots & \ddots&  \ddots&0 &\vdots\\ 
          \vdots & \ddots & \ddots& \ddots & 1 &e^{\mu_{i+1}-\mu_{i+2}}\sigma(x_{n})\\         
                    0 & \dots & \dots& \dots & 0 &1         
                       \end{pmatrix}
 \end{gather*}
Let us compute
\begin{gather*}\alpha'(a_{1},...,a_{n},b)\beta'(x_{1},...,x_{n},y)=\\
\begin{pmatrix} e^{\mu_{i}} & -e^{\mu_{i}}\sigma(a_{1}) & -e^{\mu_{i}}\sigma(a_{2})& \dots &-e^{\mu_{i}}\sigma(a_{n}) &e^{\mu_{i}+\mu_{i+1}-\mu_{i+2}}w \\ 
  0 & e^{\mu_{i+2}}& 0& \dots & \dots &e^{\mu_{i+1}}\sigma(x_{1})\\ 
  \vdots & \ddots & e^{\mu_{i+3}}  & \ddots &\ddots&e^{\mu_{i+1}-\mu_{i+2}+\mu_{i+3}}\sigma(x_{2})\\ 
        \vdots & \ddots & \ddots&  \ddots&\ddots &\vdots\\ 
          \vdots & \ddots & \ddots& \ddots & e^{\mu_{i+n+1}} &e^{\mu_{i+1}-\mu_{i+2}+\mu_{i+n+1}}\sigma(x_{n})\\         
                    0 & \dots & \dots& \dots & 0 &e^{\mu_{i+1}}         
                       \end{pmatrix}
 \end{gather*}
 with
 $w=\sigma(y)-\big(\sum_{i=1}^{n}\sigma(a_{i})\sigma(x_{i})+\sigma(b)\big)\in
 \OO$.

We are going to study the following cases~: 
\begin{itemize}
\item i) 
  $w=0 \mod \p^{m}\OO$ 
  \item ii) 
   $w=\p^{m-1} \mod  \p^{m}\OO$.
   \end{itemize} 

   Since $m=\mu_{i+1}-\mu_{i+2}$ by \eqref{def-m} and $e=\pi^{-1}$, one has
  \begin{gather*}e^{\mu_{i}+\mu_{i+1}-\mu_{i+2}}w\in
    \p^{-\mu_{i}}\OO\text{ \  in case i)} \\
    \text{ and \ } e^{\mu_{i}+\mu_{i+1}-\mu_{i+2}}w\in
    \p^{-\mu_{i}-1}+\p^{-\mu_{i}}\OO \text{ \ in case ii)}.\end{gather*}
  Since $\mu_{i}\geq \mu_{i+1}\geq ... \geq \mu_{i+n+1}$, it follows that
  in case i)
  \[\|\alpha'(a_{1},...,a_{n},b)\beta'(x_{1},...,x_{n},y)\|=|e|^{\mu_{i}}\]
  whereas in case ii) 
  \[\|\alpha'(a_{1},...,a_{n},b)\beta'(x_{1},...,x_{n},y)\|=|e|^{\mu_{i}+1}.\]
  Thanks to the second inequality in \eqref{concavite+1-bis} one checks
  that in both cases, for all $j\in \{2,...,n+2\}$,
 \[\big\|\Lambda^{j} \big(\alpha'(a_{1},...,a_{n},b)\beta'(x_{1},...,x_{n},y)\big)\big\|=|e|^{\mu_{i}+\mu_{i+1}+...+\mu_{i+j-1}}. \]
As a consequence, 
$\alpha'(a_{1},...,a_{n},b)\beta'(x_{1},...,x_{n},y)$ belongs to  
\[GL_{n+2}(\OO)\begin{pmatrix} e^{\mu_{i}} & 0 & \dots& \dots&0 \\ 
  0 & e^{\mu_{i+1}}& \ddots& \ddots & \vdots\\ 
  \vdots & \ddots & e^{\mu_{i+2}}  & \ddots & \vdots\\ 
            \vdots & \ddots & \ddots& \ddots &0\\ 
              0 & \dots & \dots &0&e^{\mu_{i+n+1}}         
                       \end{pmatrix}
GL_{n+2}(\OO)\]
in case i) and to  
\[GL_{n+2}(\OO)\begin{pmatrix} e^{\mu_{i}+1} & 0 & \dots& \dots&0 \\ 
  0 & e^{\mu_{i+1}-1}& \ddots& \ddots & \vdots\\ 
  \vdots & \ddots & e^{\mu_{i+2}}  & \ddots & \vdots\\ 
            \vdots & \ddots & \ddots& \ddots &0\\ 
              0 & \dots & \dots &0&e^{\mu_{i+n+1}}         
                       \end{pmatrix}
GL_{n+2}(\OO)\]
in case ii).

Thanks to condition
\eqref{concavite+1-bis}, it follows that  
$\alpha(a_{1},...,a_{n},b)\beta(x_{1},...,x_{n},y)$ belongs to  
\begin{gather*}KD(\lam_{1},...,\lam_{r})K\text{ \ 
in case i) }
\\
\text{  and to \ }KD(\lam_{1},...,\lam_{i-1},\lam_{i}+1,\lam_{i+1},...,\lam_{r})K
\text{ \  
 in case ii).}\end{gather*} The hypotheses of Lemma~\ref{cond1-lem-combine}
are therefore satisfied with
 \begin{gather*}H=B, 
 u=f(D(\lam_{1},...,\lam_{r}))\text{ and }v=f(D(\lam_{1},...,\lam_{i-1},\lam_{i}+1,\lam_{i+1},...,\lam_{r})).\end{gather*} 
This concludes the proof of Lemma~\ref{lemTrenf}.
\end{proof}
 
For all $m\in \N^{*}$ denote $\lam^{m}=(\lam^{m}_{1}, ..., \lam^{m}_{r})\in
\Lambda$ the element defined by $\lam^{m}_{i}=mi(r+1-i)$. Note that all the
\brisures{} of the associated polygon are equal to $2m$. One has
 \[D(\lam^{m})=\begin{pmatrix} e^{mr} & 0 & \dots& \dots&0 \\ 
  0 & e^{m(r-2)}& \ddots& \ddots & \vdots\\ 
  \vdots & \ddots & \ddots & \ddots & \vdots\\ 
            \vdots & \ddots &  \ddots &e^{m(2-r)}& 0\\ 
              0 & \dots & \dots &0&e^{m(-r)}        
                       \end{pmatrix}\]
 
 \begin{lemma}\label{lemTrenf-sauts}
   There is a constant $C$ such that for all $K$-biinvariant function $f\in
   C_{c}(G)$, for all $m\in \N^{*}$ one has
   \begin{gather} \label{est-Trenf-sauts}\big| f(D(\lam^{m})) \big| \leq C
     q^{-2\eps m } \|\chap f\|_{MS_{p}(L^{2}(G))} . \end{gather}
\end{lemma}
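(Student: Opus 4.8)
The plan is to bound $f(D(\lam^m))$ by telescoping along the ray $(\lam^k)_{k\ge m}$ and summing a geometric series. Since $f\in C_c(G)$ and $D(\lam^k)$ leaves every compact subset of $G$ as $k\to\infty$, we have $f(D(\lam^k))=0$ for $k$ large, so
\[ f(D(\lam^m)) = \sum_{k=m}^{\infty}\big(f(D(\lam^{k}))-f(D(\lam^{k+1}))\big), \]
a finite sum. I would estimate each difference $|f(D(\lam^{k}))-f(D(\lam^{k+1}))|$ separately, show it is at most $C' q^{-2\eps k}\|\chap f\|_{MS_{p}(L^{2}(G))}$ for a constant $C'$ independent of $k$ and $f$, and then sum $\sum_{k\ge m}q^{-2\eps k}=q^{-2\eps m}/(1-q^{-2\eps})$. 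The point that makes the bookkeeping uniform is that $\lam^{k+1}-\lam^k$ has $i$-th coordinate $i(r+1-i)$, \emph{independent of $k$}; thus the two endpoints differ by a fixed vector $\delta=(i(r+1-i))_i$ and can be joined by a path of exactly $N:=\sum_{i=1}^{r} i(r+1-i)$ unit increments (each adding $1$ to a single coordinate).

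The core of the argument is to choose such a path $\lam^k=\nu^{(0)},\nu^{(1)},\dots,\nu^{(N)}=\lam^{k+1}$ inside $\Lambda$ along which Lemma~\ref{lemTrenf} applies with a good bound at every step. Both endpoints have all \brisures{} equal, namely $2k$ and $2k+2$, and I would force all \brisures{} of every $\nu^{(t)}$ to stay $\ge 2k-c_0$ for an absolute constant $c_0$. This is exactly where the factor $2$ in the exponent originates and is the heart of the matter: if the relevant \brisure{} were only of order $k$ we would obtain $q^{-\eps m}$ instead of $q^{-2\eps m}$, so the \brisures{} must be forced to hover near $2k$, not merely stay positive. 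Concretely, when a step increments coordinate $i$, the hypothesis $r\ge 2n$ guarantees that either $r-i\ge n$ (apply \eqref{est-Trenf1}, whose bound involves the \brisure{} at vertex $i+1$) or $i-1\ge n$ (apply \eqref{est-Trenf2}, whose bound involves the \brisure{} at vertex $i-1$); in both cases the relevant \brisure{} is $\ge 2k-c_0$, so that step contributes at most $C_1 q^{-\eps(2k-c_0)}\|\chap f\|$, where $C_1$ is the constant of Lemma~\ref{lemTrenf}. Summing over the $N$ steps gives $|f(D(\lam^{k}))-f(D(\lam^{k+1}))|\le N C_1 q^{\eps c_0}\, q^{-2\eps k}\|\chap f\|$, as required.

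The hard part is producing the path while keeping all \brisures{} $\ge 2k-c_0$. Incrementing $\lambda_i$ raises the \brisure{} at vertex $i$ by $2$ and lowers those at $i\pm1$ by $1$, and since $\delta$ has constant (discrete) second difference $-2$, the net effect of the whole transition is to raise every \brisure{} by exactly $2$. I would therefore interleave the $N$ unit increments \emph{proportionally}: schedule them so that after $t$ steps the number $a_i(t)$ of increments already performed on coordinate $i$ satisfies $|a_i(t)-(t/N)\delta_i|\le D$ for an absolute constant $D$ (such a low-discrepancy schedule exists by a standard greedy apportionment argument, incrementing at each step the coordinate currently most behind its target share). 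Then the \brisure{} at vertex $j$ after $t$ steps equals $2k+2a_j(t)-a_{j-1}(t)-a_{j+1}(t)=2k+(2/N)t+O(D)\ge 2k-c_0$ with $c_0=O(D)$. In particular all \brisures{} are $\ge 2k-c_0\ge 1$ once $k$ exceeds an absolute threshold $m_0$, which simultaneously guarantees that each $\nu^{(t)}$ lies in $\Lambda$ and that every unit increment is an admissible move (the convexity conditions \eqref{concavite+1-bis} hold); since this lower bound is pure arithmetic in the schedule, there is no circularity with the convexity requirement.

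Finally I would dispose of the finitely many small values $m\le m_0$ by the trivial estimate: by Remark~\ref{rem=monotonie_normes_Sp}, $|f(D(\lam^m))|\le\|f\|_\infty\le\|\chap f\|_{MS_{p}(L^{2}(G))}$, so enlarging the constant by the factor $q^{2\eps m_0}$ absorbs these cases. Combining this with the geometric-series bound for $m\ge m_0$ produces a single constant $C$, depending only on $G$ and $p$ (through $r,n,\eps,q$) and not on $f$ or $m$, which completes the proof.
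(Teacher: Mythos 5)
Your proof takes essentially the same route as the paper's: telescope $f(D(\lam^m))=\sum_{k\ge m}\big(f(D(\lam^{k}))-f(D(\lam^{k+1}))\big)$ using compact support, pass from $\lam^{k}$ to $\lam^{k+1}$ by $\sum_{i=1}^{r}i(r+1-i)$ unit increments, apply \eqref{est-Trenf1} or \eqref{est-Trenf2} according to whether $r-i\ge n$ or $i-1\ge n$, and keep all \brisures{} within an additive constant of $2k$ so that each step costs $O(q^{-2\eps k})$. The only difference is one of detail: where the paper simply asserts that ``one can manage to keep all the \brisures{} $\geq 2m-2$,'' you justify it with an explicit low-discrepancy scheduling of the increments (plus the trivial bound for finitely many small $m$), which fills in, rather than replaces, the paper's argument.
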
 
\begin{proof}[Proof of Lemma ~\ref{lemTrenf-sauts}.] It is enough to prove
  that there exists $C$ such that for all $K$-biinvariant function $f \in
  C_c(G)$, for all $m\in \N^*$ one has
\[\big| f(D(\lam^{m})) - f(D(\lam^{m+1}))\big| \leq C
q^{-2\eps m } \|\chap f\|_{MS_{p}(L^{2}(G))} .\] 
This inequality follows from Lemma~\ref{lemTrenf}. One can indeed pass from
$\lam^{m}$ to $\lam^{m+1}$ by $\sum_{i=1}^{r}i(r+1-i)$ successive
transformations consisting in increasing by $1$ the $i^{\text{th}}$
coefficient and letting the others fixed. One applies
 
  \noindent \eqref{est-Trenf1} if $i\leq \frac{r+1}{2}$ (which implies that
  $i\leq r-n$ thanks to the hypothesis $r\geq 2n+1$)
 
  \noindent and \eqref{est-Trenf2} if $i\geq \frac{r+1}{2}$ (which implies
  that $i-1\geq n$ thanks to the hypothesis $r\geq 2n+1$).
  
  Moreover one can manage the keep all the \brisures{} $\geq 2m-2$. If $C$
  is the constant in Lemma~\ref{lemTrenf} one thus gets 
\[\big|  f(D(\lam^{m}))-f(D(\lam^{m+1}))
                               \big|   
                      \leq C\big(\sum_{i=1}^{r}i(r+1-i)\big) q^{-\eps (2m-2) }
                               \|\chap f\|_{MS_{p}(L^{2}(G))}\]
and this concludes the proof of Lemma~\ref{lemTrenf-sauts}.
\end{proof}
 
\begin{proof}[Proof of Proposition ~\ref{prop-unites-approchees-Kinv}.] If is
  an immediate consequence of Lemma ~\ref{lemTrenf-sauts}.
\end{proof}
\begin{rem}
In \eqref{est-Trenf-sauts}, the function $m \mapsto f(D(\lam^m))$ is
exponentially small when $m \to \infty$ whereas the proof of Haagerup in
\cite{haagerup2} (in the case $G=SL_2(\R) \ltimes \R^2$ and $p=\infty$)
does not imply such a result. For more on this, see \cite{vincentremarque}.
\end{rem}

We are now able to prove the main results of the introduction in the
non-archimedian case.

\begin{proof}[Proof of Theorem \ref{thm=main_thm_continu}.]
The statement for $p>2+2/n$ is an immediate consequence of Proposition
\ref{prop-unites-approchees}. If $p<2-2/(n+2)$, notice that $p'>2+2/n$
if $p'$ is the conjugate exponent of $p$~: $1/p+1/p'=1$. Proposition
\ref{prop-unites-approchees} and hence Theorem
\ref{thm=main_thm_continu} therefore also hold, by Remark
\ref{rem=duality}.
\end{proof}

\begin{proof}[Proof of Theorem \ref{thm=main_theorem}.]
By Theorem \ref{thm=main_thm_continu} and Theorem
\ref{thm=characterzation}, $\Gamma$ does not have
$\SCHURCBAP{p}$. The statement follows from Corollary
\ref{coro=OAP_Lp_implieque_notrePte}.
\end{proof}

\begin{proof}[Proof of Theorem \ref{thm=main_theorem_AP} (non-archimedian case).]
If $4<p<\infty$, as in the proof above, $\Gamma$ does not have
$\SCHURCBAP{p}$. The theorem thus from Corollary
\ref{coro=AP_implique_notrePte}.
\end{proof}

\section{Case of $SL_{r}(\R)$}
\label{sect=Contre-exemple_R}

This section is devoted to the proof of Theorem
\ref{thm=main_thm_continu_R} and its consequences. This will be deduced at
the end of this section from the following Proposition.

\begin{prop}\label{prop=SLrR} Let $r \geq 3$ and $G=SL_r(\R)$. Let $1 \leq
  p \leq\infty$ such that $p>4$ or $p<4/3$.  The constant function $1$ on
  $G$ cannot be approximated (for the topology of uniform convergence on
  compact subsets) by functions $f$ in $C_0(G)$ such that $\|\chap
  f\|_{MS^p(L^2(G))}$ is bounded uniformly~:
\[\CASch{p}{SL_{r}(\R)} = \infty.\]
\end{prop}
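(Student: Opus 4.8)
The plan is to transpose, almost verbatim, the strategy behind Proposition~\ref{prop-unites-approchees}, replacing the finite rings $\OO/\p^{m}\OO$ by the real line $\R$ and keeping only the value $n=1$ (which is exactly why one reaches $p>4$ rather than $p$ close to $2$). First I would reduce to $K$-biinvariant functions, now with $K=SO(r)$, by averaging on both sides as in the deduction of Proposition~\ref{prop-unites-approchees-Kinv}. Using the Iwasawa decomposition $G=KB$, where $B$ is the group of upper-triangular matrices with positive diagonal, one has $G/K=B/(B\cap K)=B$, so Theorem~\ref{thm=mult_de_symb_continu} together with Lemma~\ref{lem=cnnorm_inv_par_tribu} gives, exactly as in the non-archimedian case, that restriction to $B$ does not increase the (cb-)$MS^p$-norm and is an equality on $K$-biinvariant functions. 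I would work directly in $SL_r(\R)$ with a single $(n+2)\times(n+2)=3\times 3$ block placed in positions $i,i+1,i+2$ of $B$ (this uses only $r\geq 2n+1=3$); the range $p<4/3$ then follows by the duality of Remark~\ref{rem=duality}, and $p=\infty$ is the classical failure of weak amenability of higher rank groups.

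The analytic heart is the real substitute for Lemma~\ref{lem-Tkk-1}. For a fixed Schwartz function $\phi$ and a real shift $s$, I would consider the operator $T_{s}$ on $L^{2}(\R^{2})$ which, informally, sends $g(x,y)$ to $(a,b)\mapsto\phi(a)\int_{\R}\phi(x)\,g(x,ax+b+s)\,dx$. Taking the Fourier transform in the variable dual to $b$ and $y$ diagonalizes it and exhibits $T_s$ as a direct integral $T_{s}=\int^{\oplus}_{\R}e^{2\pi i\eta s}K_{\eta}\,d\eta$, where $K_{\eta}$ is the oscillatory operator on $L^{2}(\R)$ with kernel $\phi(a)\phi(x)e^{2\pi i\eta a x}$. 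The key computation is the standard estimate for the Schatten norm of a rescaled Fourier kernel,
\[\|K_{\eta}\|_{S^{p}}\asymp\langle\eta\rangle^{\frac1p-\frac12},\qquad \langle\eta\rangle=(1+\eta^{2})^{1/2},\]
which gives, since the Fourier transform is unitary,
\[\|T_{s}-T_{0}\|_{S^{p}}^{p}=\int_{\R}\bigl|e^{2\pi i\eta s}-1\bigr|^{p}\,\|K_{\eta}\|_{S^{p}}^{p}\,d\eta\asymp\int_{\R}\bigl|e^{2\pi i\eta s}-1\bigr|^{p}\langle\eta\rangle^{1-\frac p2}\,d\eta.\]
This integral converges at infinity precisely when $1-\tfrac p2<-1$, i.e. $p>4$, which is exactly the condition $\eps=\tfrac12-\tfrac2p>0$ for $n=1$; a scaling in $\eta$ then yields $\|T_{s}-T_{0}\|_{S^{p}}\asymp|s|^{\eps}$ for $|s|\leq1$. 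For the lower bound replacing \eqref{ineg2-lem-Tkk-1}, since over $\R$ there is no normalizable common eigenvector, I would instead use $\|uT_{0}-vT_{s}\|_{S^{p}}\geq|u-v|\,\|T_{0}\|_{S^{p}}-|v|\,\|T_{0}-T_{s}\|_{S^{p}}$ together with $\|f\|_{\infty}\leq\|\chap f\|_{MS^{p}}$ and the positivity of the constant $\|T_{0}\|_{S^{p}}$, to recover a bound of the form $|u-v|\leq C\,\|T_{0}-T_{s}\|_{S^{p}}\,\|\chap f\|_{MS^{p}}$.

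Next I would carry out the geometric transfer, the real analogue of Lemma~\ref{lemTrenf}, through explicit maps $\alpha,\beta:\R^{2}\to B$ given by the same block matrices as in the non-archimedian proof, with real entries and powers of $e$ in place of the $\sigma$-sections; here the relevant shift in $w=y-(ax+b)$ lives at scale $s\sim e^{-m}$, where $m$ is the break $2\lam_{i+1}-\lam_{i}-\lam_{i+2}$, so that $\|T_{s}-T_{0}\|_{S^{p}}\asymp e^{-\eps m}$ reproduces the decay of \eqref{est-Trenf1}. Combining this with Lemma~\ref{M0AG-Schur} (the continuous rephrasing of Theorem~\ref{thm=mult_de_symb_continu}) would give a difference estimate for $f$ along a ray in $A$ whose breaks all grow linearly in a parameter $t$; telescoping as in Lemma~\ref{lemTrenf-sauts} yields $|f(D(\lam^{t}))|\leq Ce^{-\eps' t}\|\chap f\|_{MS^{p}}$. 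Finally, if $1$ were approximated uniformly on compacta by $f_{\alpha}\in C_{0}(G)$ with $\|\chap f_{\alpha}\|_{MS^{p}}\leq C_{0}$, then fixing $t$ and letting $\alpha$ vary gives $1=\lim_{\alpha}f_{\alpha}(D(\lam^{t}))\leq Ce^{-\eps' t}C_{0}$ for every $t$, a contradiction as $t\to\infty$; this proves Proposition~\ref{prop=SLrR}, whence Theorem~\ref{thm=main_thm_continu_R}.

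I expect the genuine difficulty to lie in this last transfer step, not in the oscillatory estimate. Over a non-archimedian field the Cartan projection of $\alpha(\cdot)\beta(\cdot)$ is locally constant, so $f$ is \emph{exactly} constant on the support of each $T_{m}$; over $\R$ the singular values of the $3\times 3$ block vary continuously (by bounded factors) as the Heisenberg variables $a,x,b$ range over order-one sets, so $f$ is only approximately constant there. The main work will therefore be to confine these variables to $O(1)$ and the shift to scale $e^{-m}$, and to argue that the high-frequency operator $T_{s}-T_{0}$ only probes $f$ near the intended double coset, so that the clean dichotomy ``case i / case ii'' of Lemma~\ref{lemTrenf} survives in an averaged form. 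This is precisely the point where the archimedian argument must depart from \cite{anniversaire} and where the extra care alluded to after Theorem~\ref{thm=main_thm_continu_R} is needed.
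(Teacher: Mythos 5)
Your overall architecture (reduce to $K$-biinvariant functions by averaging, prove an exponential decay estimate along a ray in the Cartan subgroup, pass from $r=3$ to $r>3$ by restriction to a closed subgroup, and get $p<4/3$ by duality) matches the paper. But the analytic heart of your proposal contains a genuine gap that invalidates the whole scheme. Your operators $T_s$ on $L^2(\R^2)$ commute with the joint translations $g(x,y)\mapsto g(x,y-c)$, a one-parameter unitary group with Lebesgue spectrum; consequently $T_s$, $T_0$ and $T_s-T_0$ are decomposable operators in the direct-integral sense, and a direct integral $\int_\R^{\oplus}A_\eta\,d\eta$ over a diffuse measure space is \emph{never} compact unless $A_\eta=0$ a.e. Hence $T_s-T_0$ is not in any Schatten class, and the identity
\[
\|T_s-T_0\|_{S^p}^p=\int_\R\bigl|e^{2\pi i\eta s}-1\bigr|^p\,\|K_\eta\|_{S^p}^p\,d\eta
\]
is false: Schatten norms distribute like this over direct \emph{sums}, not direct integrals. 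This is precisely the feature that makes the non-archimedian computation of Lemma \ref{lem-Tkk-1} work — there the dual group $\widehat{\OO/\p^{m}\OO}$ is finite, so the decomposition is a finite direct sum of finite matrices — and it has no verbatim counterpart over $\R$. Your substitute lower bound is also moot, since it requires $\|T_0\|_{S^p}<\infty$, which fails for the same reason.

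This is exactly where the paper's proof departs from a direct transposition: it replaces the Heisenberg-type picture by a \emph{compact} one. The operators used are the circle-averaging operators $T_\delta$ on $L^2(\sphere)$ (average of $f$ over $\{y:\langle x,y\rangle=\delta\}$), taken from \cite{duke}. These decompose as a discrete direct sum over the spherical harmonic subspaces $H_n$, on which $T_\delta$ acts by the scalar $P_n(\delta)$, so the Schatten norm is honestly computable: $\|T_0-T_\delta\|_{S^p}^p=\sum_{n\geq 0}(2n+1)|P_n(0)-P_n(\delta)|^p\lesssim|\delta|^{p(1/2-2/p)}$, which is where the threshold $p>4$ really comes from (Lemma \ref{lem-Tkk-1_R}). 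The geometric transfer then uses maps $\alpha,\beta:\sphere\to G/K$ from Lemma 2.7 of \cite{duke} for which the Cartan projection of $\alpha(x)^{-1}\beta(y)$ is \emph{exactly} $D(s,t)$ when $\langle x,y\rangle=0$ and exactly $D(s',t')$ when $\langle x,y\rangle=\delta$; so the difficulty you flag in your last paragraph (only approximate constancy of the Cartan projection in the archimedian setting) simply does not arise in the paper's setup — compactness of the sphere both rescues the Schatten computation and restores exact constancy, and it supplies the constant function $1\in L^2(\sphere)$ as a genuine unit eigenvector for the lower bound $\|aT_0-bT_\delta\|_{S^p}\geq|a-b|$. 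Finally, the concluding decay estimate (Lemma \ref{lemma=SL3R_main_estimate}) is not obtained by telescoping along a single ray with controlled breaks as in Lemma \ref{lemTrenf-sauts}, but by a two-step zigzag applying Lemma \ref{lemma=key_estimate_SL3R} to $(s,t)=(2v-u,2u-v)$, $(s',t')=(u,u)$ and then to $(s,t)=(v,v)$, $(s',t')=(2v-u,2u-v)$ with $u/v$ close to $1$. If you want to salvage your approach, you must replace $\R$ by a compact homogeneous space with discrete spectral decomposition — which is, in substance, what the paper does.
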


This main tool to prove the Proposition is 
\begin{lemma}\label{lemma=SL3R_main_estimate}
Let $G= SL_3(\R)$, $K=SO_3(\R)$, and $4<p\leq\infty$. Let $0<\varepsilon <
1/2-2/p$. There is a constant $C>0$ such that for any $K$-biinvariant
function $\varphi \in C_0(G)$, and any $t>0$
\[ \left|\varphi\begin{pmatrix} e^t & 0 & 0 \\ 0 &1&0\\0&0&e^{-t}
\end{pmatrix}\right| \leq C e^{-\varepsilon t} \|\chap \varphi\|_{MS^p(L^2(G))}.\]
\end{lemma}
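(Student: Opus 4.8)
The plan is to transport the non-archimedian argument of Lemmas~\ref{lemTrenf}--\ref{lemTrenf-sauts} (in the case $n=1$, which is exactly the range $p>4$) to $SL_3(\R)$, the only genuinely new ingredient being the replacement of the finite-field Fourier computation of Lemma~\ref{lem-Tkk-1} by an oscillatory-integral (chirp) estimate over $\R$. Throughout I work with the Cartan decomposition $G=K\overline{A^+}K$, so that a $K$-biinvariant $\varphi$ is a function of the singular values, and I use Theorem~\ref{thm=mult_de_symb_continu} to identify norm and cb-norm of Schur multipliers (both $G$ and its Borel subgroup $B$ have full-support Haar measure and no isolated points). The quantity $\|\chap\varphi\|_{MS^p(L^2(G))}$ will be fed into the continuous form of Lemma~\ref{M0AG-Schur}.

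First I would introduce the real analogue of the embeddings $\alpha',\beta'$ of the proof of Lemma~\ref{lemTrenf} (for $n=1$, target $D(t)=\mathrm{diag}(e^t,1,e^{-t})$, whose slopes are $(t,0,-t)$, so the relevant \brisure{} equals $t$): the two two-parameter families
\[ \alpha(a,b)=\begin{pmatrix} e^{t} & -e^{t}a & -e^{2t}b \\ 0 & e^{-t} & 0 \\ 0 & 0 & 1 \end{pmatrix},\qquad \beta(x,y)=\begin{pmatrix} 1 & 0 & e^{t}y \\ 0 & 1 & e^{t}x \\ 0 & 0 & 1 \end{pmatrix}\in B, \]
whose product has $(1,3)$-entry $e^{2t}w$ with $w=y-ax-b$. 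Computing the minors $\|\Lambda^{j}(\alpha(a,b)\beta(x,y))\|$, exactly as in the proof of Lemma~\ref{lemTrenf} but now with ordinary archimedian absolute values, shows that for $a,x$ in a fixed bounded box the singular values of the product are $\sim(e^{t},1,e^{-t})$ when $|w|\lesssim e^{-t}$, and that enlarging the scale of $w$ moves the Cartan projection in a controlled direction of the Weyl chamber. This realizes cases i) and ii) of Lemma~\ref{lemTrenf} as two disjoint localizations of the variable $w$, at the natural coset scale $\delta\sim e^{-t}$.

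The heart of the matter, and the main obstacle, is the replacement of Lemma~\ref{lem-Tkk-1}. I would discretize $a,b,x,y$ to a fine grid in a bounded box (so everything is a finite matrix, just as $\OO/\p^{m}\OO$ is finite in the $p$-adic case) and form, on the $(x,y)$-variables, the averaging (stochastic) operator $A$ whose kernel is a smooth localization of $\mathbb 1[w\approx0]-\mathbb 1[w\approx\delta]$. Diagonalizing $A$ by the Fourier transform in $b$ (and $y$), the block at frequency $\xi$ is a chirp operator with kernel $\propto e^{2\pi i\xi ax}$, weighted by the finite-difference factor $|1-e^{2\pi i\xi\delta}|$; by the unitarity of the rescaled Fourier transform on $L^2(\R)$ this chirp has, on the box, about $\xi$ singular values of size $\sim\xi^{-1/2}$, hence $S^p$-norm $\sim\xi^{1/p-1/2}$. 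Summing the blocks,
\[ \|A\|_{S^p}^{p}\;\sim\;\int^{1/\delta}\bigl|1-e^{2\pi i\xi\delta}\bigr|^{p}\,\xi^{\,1-p/2}\,d\xi\;\sim\;\delta^{\,p/2-2}, \]
the last integral being dominated by $\xi\sim1/\delta$ and decaying precisely when $p>4$; thus $\|A\|_{S^p}\sim\delta^{1/2-2/p}\sim e^{-(1/2-2/p)t}$, the exact analogue of \eqref{ineg1-lem-Tkk-1} and the reason for the restriction $\varepsilon<1/2-2/p$. A constant (or Gaussian) test vector, an approximate eigenvector of eigenvalue $\approx1$ for both localizations, gives the matching lower bound $\|uA_0-vA_\delta\|_{S^p}\gtrsim|u-v|$, the analogue of \eqref{ineg2-lem-Tkk-1}.

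With these two estimates, applying Lemma~\ref{M0AG-Schur} to $A$ and to the symbol $(u,v)\mapsto\varphi(\alpha(u)\beta(v))$ yields a difference estimate $\bigl|\varphi(D(\lambda))-\varphi(D(\lambda'))\bigr|\lesssim e^{-\varepsilon\,(\text{break})}\,\|\chap\varphi\|_{MS^p(L^2(G))}$ between two cosets one Weyl-chamber step apart, exactly parallel to Lemma~\ref{lemTrenf}. I would then telescope along a path in the chamber running from $D(t)$ out to infinity, decomposing each unit increase of the chamber parameter into finitely many steps of \brisure{} $\gtrsim t$ as in the proof of Lemma~\ref{lemTrenf-sauts}, and use $\varphi\in C_0(G)$ together with $\sum_{s\ge t}e^{-\varepsilon s}\lesssim e^{-\varepsilon t}$ to conclude. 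The principal difficulty, beyond the chirp $S^p$-estimate itself (truncation/boundary effects must be controlled so as not to spoil the exponent $1/2-2/p$ that the finite-field computation produces exactly), is that, unlike the ultrametric case, the Cartan projection is \emph{not} locally constant, so $\varphi(\alpha(u)\beta(v))$ varies over the $O(1)$-size chamber region swept out by the localization; I would handle this by first replacing $\varphi$ by $h\conv\varphi$ for a fixed $h\in C_c(G)^{+}$ — which leaves $\|\chap\varphi\|_{MS^p}$ unchanged by the translation-invariance used in the proof of Theorem~\ref{thm=characterzation} and makes $\varphi$ uniformly continuous — and then controlling this variation within the telescoping.
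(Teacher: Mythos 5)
Your skeleton (two localized kernels at separation $\delta\sim e^{-t}$, an $S^p$ upper bound on their difference plus a lower bound via a common eigenvector, transfer through Lemma \ref{M0AG-Schur}, then chaining to infinity using $\varphi\in C_0(G)$) is indeed the structure of the paper's proof, and your chirp heuristic even produces the correct exponent $1/2-2/p$ and the correct threshold $p>4$. But there is a genuine gap exactly at the point you flag as ``the principal difficulty'', and your proposed fix does not repair it. Lemma \ref{M0AG-Schur} (via Lemma \ref{lem-combine}) requires the symbol $\varphi(\alpha(\cdot)\beta(\cdot))$ to be \emph{exactly} constant on the support of each localized kernel: only then does $M_{\chap \varphi}$ act on $A_0$ (resp.\ $A_\delta$) as multiplication by the scalar $u$ (resp.\ $v$), which is what converts the operator inequality into the scalar estimate $|u-v|\leq \|\chap \varphi\|_{MS^p}\|A_0-A_\delta\|_{S^p}$. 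In the non-archimedian case this exactness is free, by ultrametricity: perturbing an entry by anything of absolute value at most the maximum changes no $\|\Lambda^{i}A\|$, so the Cartan projection is literally constant on each coset $\{w\equiv 0\}$, $\{w\equiv \p^{m-1}\}$. Over $\R$, with your matrices, on the set $\{w=0\}$ the product is
\[ \begin{pmatrix} e^{t} & -e^{t}a & 0 \\ 0 & e^{-t} & x \\ 0 & 0 & 1 \end{pmatrix}, \]
whose singular values genuinely depend on $(a,x)$: as $(a,x)$ ranges over the unit box the Cartan projection moves by a multiplicative factor bounded away from $1$ \emph{independently of $t$ and $\delta$}. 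Hence the symbol varies by $O(1)$ in its argument over the support of your kernel, and replacing $\varphi$ by $h\conv\varphi$ only tames oscillation over \emph{small} neighbourhoods; it does nothing against a spread of fixed size. Nor can the error be absorbed: writing $\varphi(\alpha(i)\beta(j))=u+\eps_{ij}$, the term $(\eps_{ij}(A_0)_{ij})_{ij}$ is a Schur product by a non-constant symbol, and for $p\neq 2$ a small sup-norm of $\eps$ gives no control of the $S^p$ norm of this product (that inequality goes the wrong way, cf.\ Remark \ref{rem=monotonie_normes_Sp}); the only cheap bound goes through $\|A_0\|_{S^2}\sim\delta^{-1/2}$, which forces the Cartan spread to be exponentially small, and shrinking the $(a,x)$-box to achieve that destroys the chirp estimate, since the number of significant singular values at frequency $\xi$ scales with the box size. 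So the approach, as described, breaks at its key step.

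What is missing is the geometric input of \cite{duke} that the paper uses precisely to restore exactness: replace the affine parametrization by the maps $\alpha=q_{-(s+t)}$, $\beta=q_{t}:\sphere\to G/K$ of Lemma 2.7 of \cite{duke}, which are $K$-equivariant; since $K=SO_3(\R)$ acts transitively on pairs of points of $\sphere$ with prescribed inner product, the $K$-double coset of $\alpha(x)^{-1}\beta(y)$ depends \emph{only} on $\langle x,y\rangle$ -- an invariance that simply does not exist for the level sets $\{y-ax-b=\mathrm{const}\}$ in the Borel picture. Correspondingly, the finite Fourier matrices are replaced not by discretized chirps but by the circle-averaging operators $T_\delta$ on $L^2(\sphere)$, estimated by diagonalizing on spherical harmonics: $T_\delta$ acts on $H_n$ as $P_n(\delta)$, whence $\|T_0-T_\delta\|_{S^p}^p=\sum_n(2n+1)|P_n(0)-P_n(\delta)|^p\leq C|\delta|^{p/2-2}$ (the same exponent your chirp computation predicts), while the constant function is an \emph{exact} common eigenvector giving the lower bound $\|aT_0-bT_\delta\|_{S^p}\geq|a-b|$. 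Once this substitution is made, the remainder of your plan (two comparisons through an intermediate point of the chamber, then chaining out to infinity using $\varphi\in C_0$) is essentially the paper's proof of Lemmas \ref{lemma=key_estimate_SL3R} and \ref{lemma=SL3R_main_estimate}.
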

We first deduce Proposition \ref{prop=SLrR} from this Lemma.
\begin{proof}[Proof of Proposition \ref{prop=SLrR}.]
  Lemma \ref{lemma=SL3R_main_estimate} implies that, if $4<p\leq\infty$,
  the function $1$ on $SL_3(\R)$ cannot be approximated (pointwise) by
  $SO_3(\R)$-biinvariant functions such $f$ in $C_0(G)$ such that $\|\chap
  f\|_{MS^p(L^2(G))}$ is bounded uniformly. By the same averaging argument
  as in the proof of Proposition \ref{prop-unites-approchees}, we deduce
  Proposition \ref{prop=SLrR} in the case $r=3$ and $p>4$. The case $r=3$
  and $p<4/3$ follows from Remark \ref{rem=duality}.

For $r>3$, the map
\[ A \mapsto \begin{pmatrix} A & 0\\0& 1_{r-3}\end{pmatrix}\]
realizes $SL_3(\R)$ as a closed subgroup of $SL_r(\R)$. Theorem
\ref{thm=mult_de_symb_continu} implies that  Proposition
\ref{prop=SLrR} holds also for $r>3$.
\end{proof}

Lemma \ref{lemma=SL3R_main_estimate} is proved as in section
\ref{sect=Contre-exemple}, using the same techniques as in the proof of
strong property $(T)$ for $SL_3(\R)$ in \cite{duke}. From now on we fix
$G$, $K$, $p>4$ and $\varepsilon>0$ as in Lemma
\ref{lemma=SL3R_main_estimate}. We use some notation and facts from
\cite{duke}, section $2$. We denote by $\sphere$ the unit sphere in $\R^3$,
equipped with its usual probability measure denoted by $dx$. For any
$\delta \in [-1,1]$, we denote by $T_\delta$ the operator on $L^2(\sphere)$
defined, for a continuous function $f:\sphere \to \C$ in the following way
(and extended by continuity to a norm $1$ operator). If $x \in \sphere$,
$T_\delta f(x)$ is the average of $f$ on the circle $\{y \in \sphere,
\langle x,y\rangle=\delta\}$. We first state the analogue of Lemma
\ref{lem-Tkk-1} of this paper.
\begin{lemma}\label{lem-Tkk-1_R}
There is a constant $C_1$ such that for $\delta \in [-1/2,1/2]$
\[ \left\|T_0 - T_\delta\right\|_{S^p(L^2(\sphere))} \leq C_1
|\delta|^{1/2-2/p}.\]
Moreover for any $a,b \in \C$,
\[ \left\|a T_0 - b T_\delta\right\|_{S^p(L^2(\sphere))} \geq |a-b|.\]
\end{lemma}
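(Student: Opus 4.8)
The plan is to diagonalize all the operators $T_\delta$ simultaneously and reduce the computation of the Schatten norm to a single sum over the spaces of spherical harmonics. Recall that $L^2(\sphere)$ decomposes as the orthogonal direct sum $\bigoplus_{\ell\geq 0} H_\ell$, where $H_\ell$ is the space of restrictions to $\sphere$ of harmonic homogeneous polynomials of degree $\ell$, with $\dim H_\ell = 2\ell+1$. Since $T_\delta$ commutes with the action of $SO_3(\R)$ and is given by a zonal averaging kernel, the Funk--Hecke formula (as used in \cite{duke}, section $2$) shows that $T_\delta$ acts on $H_\ell$ as multiplication by the scalar $P_\ell(\delta)$, where $P_\ell$ is the Legendre polynomial of degree $\ell$ normalized by $P_\ell(1)=1$. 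In particular all the $T_\delta$ commute and are self-adjoint, and raising the first inequality to the power $p$ turns it into the concrete estimate
\[ \|T_0-T_\delta\|_{S^p(L^2(\sphere))}^p = \sum_{\ell\geq 0}(2\ell+1)\,\big|P_\ell(0)-P_\ell(\delta)\big|^p \leq C_1^p\,|\delta|^{p/2-2}, \]
so that everything is reduced to an estimate on Legendre polynomials.

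For the second (lower bound) inequality I would simply observe that the constant function $\mathbf 1$ is fixed by every $T_\delta$ (the average of a constant is that constant). Hence $a T_0 - b T_\delta$ admits $\mathbf 1$ as an eigenvector with eigenvalue $a-b$; since $\|\mathbf 1\|_{L^2(\sphere)}=1$ and the $S^p$-norm dominates the operator norm ($\|A\|_{S^p}\geq\|A\|_{S^\infty}$ for all $p$), this gives $\|aT_0-bT_\delta\|_{S^p(L^2(\sphere))}\geq |a-b|$.

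For the upper bound I would use two classical estimates for Legendre polynomials, uniform for $\delta$ in the compact range $[-1/2,1/2]$ (where $\arccos\delta$ stays away from $0$ and $\pi$, so $\sin\arccos\delta\geq\sqrt3/2$): the pointwise bound $|P_\ell(\delta)|\leq C\ell^{-1/2}$ and the derivative bound $|P_\ell'(\delta)|\leq C\ell^{1/2}$, both following from the Laplace--Heine asymptotics $P_\ell(\cos\theta)\sim(2/\pi\ell\sin\theta)^{1/2}\cos((\ell+\tfrac12)\theta-\tfrac\pi4)$. I would then split the sum at $L=\lfloor 1/|\delta|\rfloor$. For $\ell\leq L$ the mean value theorem gives $|P_\ell(0)-P_\ell(\delta)|\leq C|\delta|\,\ell^{1/2}$, whence
\[ \sum_{\ell\leq L}(2\ell+1)\big(C|\delta|\,\ell^{1/2}\big)^p \leq C'\,|\delta|^p\,L^{2+p/2} \leq C''\,|\delta|^{p/2-2}. \]
For $\ell>L$ the triangle inequality gives $|P_\ell(0)-P_\ell(\delta)|\leq C\ell^{-1/2}$, so
\[ \sum_{\ell> L}(2\ell+1)\big(C\ell^{-1/2}\big)^p \leq C'\sum_{\ell>L}\ell^{1-p/2} \leq C''\,L^{2-p/2} \leq C'''\,|\delta|^{p/2-2}, \]
the convergence of the tail being exactly where the hypothesis $p>4$ (equivalently $1-p/2<-1$) intervenes. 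Taking $p$-th roots yields $\|T_0-T_\delta\|_{S^p}\leq C_1|\delta|^{1/2-2/p}$, and the case $p=\infty$ follows either by letting $p\to\infty$ or by estimating the operator norm directly as $\sup_\ell|P_\ell(0)-P_\ell(\delta)|\leq C|\delta|^{1/2}$, the supremum being attained near $\ell\sim 1/|\delta|$.

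The main obstacle is assembling the correct Legendre estimates and checking that the two halves of the sum balance precisely at the threshold $\ell\sim 1/|\delta|$, producing the \emph{same} exponent $|\delta|^{p/2-2}$ on both sides; the assumption $p>4$ is needed exactly to make the high-frequency tail summable, which is why the sharp constant $4$ appears in the statement.
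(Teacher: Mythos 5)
Your proposal is correct and follows essentially the same route as the paper: diagonalize the $T_\delta$ on the spherical-harmonic decomposition $L^2(\sphere)=\oplus_\ell H_\ell$ so that $\|T_0-T_\delta\|_{S^p}^p=\sum_\ell(2\ell+1)|P_\ell(0)-P_\ell(\delta)|^p$, bound the Legendre differences by $C\min(\ell|\delta|,1)/\sqrt{\ell+1}$, and get the lower bound from the constant function being a common eigenvector with eigenvalue $1$. The only difference is that the paper cites the proof of Lemma 2.2 of \cite{duke} for this Legendre estimate, whereas you rederive it from the Laplace--Heine asymptotics and the mean value theorem and carry out the summation at the threshold $\ell\sim 1/|\delta|$ explicitly, which is a legitimate filling-in of the cited details.
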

\begin{proof}[Sketch of proof]
Let $P_n$ be the $n$-th Legendre polynomial normalized by $P_n(1)=1$.  It
follows the proof of Lemma 2.2 in \cite{duke}, that
\[ \left\|T_0 - T_\delta\right\|_{S^p(L^2(\sphere))} = \left( \sum_{n \geq
  0} (2n+1) |P_n(0)-P_n(\delta)|^p\right)^{1/p}.\] Here $2n+1$ appears as
the dimension of the space $H_n$ of restrictions to $\sphere$ of the
harmonic homogeneous polynomials of degree $n$ on $\R^3$ (more precisely
$L^2(\sphere)$ decomposes as $\oplus_{n\geq 0} H_n$, and $T_\delta$ acts
as the multiplication by $P_n(\delta)$ on $H_n$).

If $|\delta|\leq 1/2$, the estimate $|P_n(0)-P_n(\delta)| \leq C \min(n
|\delta|,1)/\sqrt{n+1}$ for some constant $C$ was proved in the proof of
Lemma 2.2 in \cite{duke} and implies the first inequality of Lemma
\ref{lem-Tkk-1_R}.

The second inequality holds because the function $1$ on $\sphere$ is an
eigenvector with eigenvalue $1$ for all the $T_\delta$'s.
\end{proof}

For any $s,t \in \R_+$ (the non-negative real numbers) , we denote
\[D(s,t) = e^{-\frac{s+2t}{3}}\begin{pmatrix} e^{s+t} & 0&0\\0& e^t & 0 \\
  0&0&1\end{pmatrix}.\]

\begin{lemma}\label{lemma=key_estimate_SL3R}Let $\varphi \in C(G)$ be a $K$-biinvariant function,
  $s,t,s',t' \in \R_+$, and $C_1$ the constant in Lemma \ref{lem-Tkk-1_R}.
\begin{itemize}
\item If $s+2t=s'+2t'$ and $0 \leq t' \leq t \leq s+t\leq s'+t' \leq s+2t$,
then
\[|\varphi(D(s,t)) - \varphi(D(s',t'))| \leq C_1 e^{-(1/2-2/p) t'} \|\chap \varphi\|_{MS^p(L^2(G))}.\]
\item If $2s+t=2s'+t'$ and $0 \leq s' \leq s \leq s+t\leq s'+t' \leq 2s+t$,
then
\[|\varphi(D(s,t)) - \varphi(D(s',t'))| \leq C_1 e^{-(1/2-2/p) s'} \|\chap \varphi\|_{MS^p(L^2(G))}.\]
\end{itemize}
\end{lemma}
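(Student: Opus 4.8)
The plan is to mimic the non-archimedean argument (Lemma~\ref{lemTrenf}): realise the operators $T_\delta$ of Lemma~\ref{lem-Tkk-1_R} as the values of a Schur multiplier obtained by restricting $\chap\varphi$ along two injective maps from $\sphere$ into $G$, and then feed in the two inequalities of Lemma~\ref{lem-Tkk-1_R}. First I would reduce to the \emph{second} estimate only. The two estimates are exchanged by the automorphism $\theta\colon g\mapsto w\,{}^{t}g^{-1}w$ of $G$, where $w\in SO_3(\R)$ is the rotation exchanging $e_1$ and $e_3$ (and fixing the line $\R e_2$): then $\theta$ preserves $K$, satisfies $\theta(D(s,t))=D(t,s)$, and, being an automorphism of the unimodular group $G$, leaves $\|\chap\varphi\|_{MS^p(L^2(G))}$ unchanged (pullback of $\chap\varphi$ by $(\theta,\theta)$). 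Exchanging $s\leftrightarrow t$ and $s'\leftrightarrow t'$ then turns the second estimate for $\varphi\circ\theta$ into the first estimate for $\varphi$.

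For the second estimate, write $u=\varphi(D(s,t))$ and $v=\varphi(D(s',t'))$, fix a measurable choice of $k_x\in SO_3(\R)$ with $k_xe_3=x$ for $x\in\sphere$, and set $a=e^{(s+t)/3}$, $b=e^{s/3}$, $A=\mathrm{diag}(a,a,a^{-2})$, $B=\mathrm{diag}(b,b,b^{-2})$. Define injective maps $\alpha,\beta\colon\sphere\to G$ by $\alpha(x)=Ak_x^{-1}$ and $\beta(y)=k_yB$. Since $A$ and $B$ commute with the stabiliser $SO_2$ of $e_3$, for $k,k'\in SO_2$ one has $A(k_xk)^{-1}(k_yk')B=k^{-1}\bigl(\alpha(x)\beta(y)\bigr)k'$, so the double coset $K\alpha(x)\beta(y)K$ is well defined and depends only on the $SO_2$-double coset of $k_x^{-1}k_y$, i.e.\ only on $\langle e_3,k_x^{-1}k_ye_3\rangle=\langle x,y\rangle$. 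Hence $\varphi(\alpha(x)\beta(y))=:\Phi(\langle x,y\rangle)$ is a continuous function of $\langle x,y\rangle$ alone; by Lemma~\ref{M0AG-Schur} every finite restriction of the symbol $(x,y)\mapsto\Phi(\langle x,y\rangle)$ has $S^p$-multiplier norm at most $\|\chap\varphi\|_{MS^p(L^2(G))}$, so by the finite-subset characterisation of Theorem~\ref{thm=mult_de_symb_continu} (now on $X=\sphere$) the corresponding Schur multiplier on $S^p(L^2(\sphere))$ has norm at most $\|\chap\varphi\|_{MS^p(L^2(G))}$.

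The heart of the matter is the singular-value computation, for which I take the representative rotation $R_\theta$ in the $(1,3)$-plane with $\cos\theta=\delta$. Then $\alpha(x)\beta(y)=AR_\theta B$ has $e_2$-singular value $ab=e^{(2s+t)/3}$, independent of $\delta$, while the $2\times 2$ block in the directions $e_1,e_3$ has determinant $a^{-1}b^{-1}=e^{-(2s+t)/3}$ and larger singular value $\sigma_+(\theta)$. At $\delta=0$ the three singular values are $e^{(2s+t)/3},e^{(t-s)/3},e^{-(s+2t)/3}$, so $\alpha(x)\beta(y)\in KD(s,t)K$ and $\Phi(0)=u$. Using $2s+t=2s'+t'$ and the chain $0\le s'\le s\le s+t\le s'+t'\le 2s+t$ to check the ordering, solving $\sigma_+(\theta)=e^{(t'-s')/3}$ produces $\delta^{*}=\cos\theta^{*}$ with $\alpha(x)\beta(y)\in KD(s',t')K$, hence $\Phi(\delta^{*})=v$; the explicit form of $\mathrm{tr}(MM^{t})$ then gives $|\delta^{*}|\le e^{-s'}$. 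This parameter matching, together with the check that $|\delta^{*}|\le 1/2$ (so that Lemma~\ref{lem-Tkk-1_R} applies, reducing if necessary to that regime), is the step I expect to be the main obstacle.

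Finally, since $T_\delta$ is carried by $\{(x,y):\langle x,y\rangle=\delta\}$, the multiplier above sends $T_0-T_{\delta^{*}}$ to $u\,T_0-v\,T_{\delta^{*}}$; combining the multiplier bound with the two inequalities of Lemma~\ref{lem-Tkk-1_R} yields
\[
|\varphi(D(s,t))-\varphi(D(s',t'))|\le\|u\,T_0-v\,T_{\delta^{*}}\|_{S^p}\le\|\chap\varphi\|_{MS^p(L^2(G))}\,\|T_0-T_{\delta^{*}}\|_{S^p}\le C_1\,e^{-(1/2-2/p)s'}\,\|\chap\varphi\|_{MS^p(L^2(G))},
\]
which is the second estimate; the first follows by applying this to $\varphi\circ\theta$ as in the reduction above.
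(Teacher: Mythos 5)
Your proposal is correct, and its skeleton coincides with the paper's: reduce one bullet to the other by the automorphism $g\mapsto w\,{}^{t}g^{-1}w$ (which indeed fixes $K$ and sends $D(s,t)$ to $D(t,s)$), build two injective maps of $\sphere$ into the group so that pairs with $\langle x,y\rangle=0$ land in $KD(s,t)K$ while pairs with $\langle x,y\rangle=\delta^{*}$ land in $KD(s',t')K$ for an exponentially small $\delta^{*}$, transfer the Schur bound, and invoke the two inequalities of Lemma \ref{lem-Tkk-1_R}. The differences are in execution, and they are worth recording. Where you construct $\alpha(x)=Ak_x^{-1}$, $\beta(y)=k_yB$ explicitly and match singular values by hand, the paper simply quotes Lemma 2.7 of \cite{duke} (with $\alpha=q_{-(s+t)}$, $\beta=q_t$) for precisely this geometric input, including the bound $\delta\le e^{-t'}$; your construction is essentially the one behind that citation, and your parameter matching does work out: with the $e_2$-singular value $ab$ pinned by $2s+t=2s'+t'$, $\sigma_+$ is increasing in $\delta^2$, the intermediate value theorem produces $\delta^{*}$, and the trace identity gives $\delta^{*2}\le e^{-2s'}$ exactly --- after expansion this reduces to $e^{-2(2s+t)}\left(1+e^{2s'}\right)\le e^{-2s}+e^{-2(s+t)}$, which follows from $s'\le s$ and $t\ge 0$. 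Second, you transfer the multiplier bound to $S^p(L^2(\sphere))$ through the finite-subset characterization (Lemma \ref{M0AG-Schur} together with Theorem \ref{thm=mult_de_symb_continu} on $X=\sphere$), which mirrors the non-archimedean section, whereas the paper uses a change-of-measure embedding $T\mapsto U_\alpha TU_\beta^{*}$ of $S^p(L^2(\sphere))$ into $S^p(L^2(G/K,\mu))$; both are legitimate. Two points you should still make explicit: (a) since $T_\delta\notin S^2(L^2(\sphere))$, the assertion that the multiplier sends $T_0-T_{\delta^{*}}$ to $uT_0-vT_{\delta^{*}}$ requires an approximation argument (for instance replace $T_c$ by $\frac{1}{2\epsilon}\int_{c-\epsilon}^{c+\epsilon}T_r\,dr\in S^2$ and use the continuity of $\Phi$ and the uniform bound $\sup_{|r|\le 3/4}\|T_r\|_{S^p}<\infty$, valid for $p>4$); the paper's own sketch glosses over this same point; (b) Lemma \ref{lem-Tkk-1_R} only applies for $|\delta^{*}|\le 1/2$, so the regime $e^{-s'}>1/2$ must be disposed of by the trivial bound $|u-v|\le 2\|\varphi\|_\infty\le 2\|\chap \varphi\|_{MS^p(L^2(G))}$, exactly as the paper does when $e^{-t'}>1/2$; this only enlarges the constant in the statement, harmlessly for the application.
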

\begin{proof}[Sketch of proof] As in the proof of Lemma \ref{lemTrenf}, the
  second inequality follows from the first by inversing the role of $s,s'$
  and $t,t'$.

  Let us now fix $s,t,s',t'$ as in the first inequality. We can assume that
  $e^{-t'} \leq 1/2$ because otherwise the inequality $\|\varphi\|_\infty
  \leq \|\chap \varphi\|_{MS^p(L^2(G))}$ implies that the desired
  inequality holds with $C_1=2$. In \cite{duke} the first author
  constructed two continuous injective maps $\alpha,\beta:\sphere \to G/K$
  such that there is some for some $0 \leq \delta \leq e^{-t'}$
  satisfying~:
\begin{gather}
\label{eq=alpha_beta_orth_R}\alpha(x)^{-1} \beta(y) = K D(s,t) K \textrm{
  if } \langle x,y \rangle =0\\
\label{eq=alpha_beta_Nonorth_R} \alpha(x)^{-1} \beta(y) = K D(s',t') K
\textrm{ if }\langle x,y \rangle =\delta
\end{gather}
This is contained in Lemma 2.7 in \cite{duke}, with $\alpha(\cdot) =
q_{-(s+t)}(\cdot)$ and $\beta(\cdot)= q_t(\cdot)$.

Let $\mu$ be some Radon measure on $G/K$ with full support such that the
image measures of the measure $dx$ on $S^2$ by $\alpha$ and $\beta$ are
absolutely continuous with respect to $\mu$. By Theorem
\ref{thm=mult_de_symb_continu} we have that
\[\|\chap \varphi\|_{cbMS^p(L^2(G/K,\mu))} \leq \|\chap
\varphi\|_{cbMS^p(L^2(G))} \leq \|\chap \varphi\|_{MS^p(L^2(G))}.\] 
The image measures of $dx$ by $\alpha$ and $\beta$ are absolutely
continuous with respect to $\mu$, and since $\sphere$ is compact $\alpha$
and $\beta$ are homeomorphisms onto their images. Therefore, as in Lemma
\ref{lemma=change_of_measure}, $\alpha$ and $\beta$ induce isometries
$U_\alpha,U_\beta:L^2(\sphere) \to L^2(G/K,\mu)$ and hence an isometric
embedding $i:S^p(L^2(\sphere)) \to S^p(L^2(G/K),\mu)$ given by
$i(T)=U_\alpha T U_\beta^*$. It is straightforward to see that
\eqref{eq=alpha_beta_orth_R} (resp. \eqref{eq=alpha_beta_Nonorth_R})
implies $M_{\chap \varphi}(i(T_0))=\varphi(D(s,t)) i(T_0)$ (resp. $M_{\chap
  \varphi}(i(T_\delta))=\varphi(D(s',t')) i(T_\delta)$). We thus get
\[ \|\varphi(D(s,t)) T_0 - \varphi(D(s',t')) T_\delta\|_p \leq \|\chap
\varphi\|_{MS^p(L^2(G))} \|T_0 - T_\delta\|_p.\]
Lemma \ref{lem-Tkk-1_R} and the inequality $ |\delta| \leq e^{-t'}$ allows
to conclude the proof.
\end{proof}

\begin{proof}[Proof of Lemma \ref{lemma=SL3R_main_estimate}.]
We copy the proof of \cite{duke}, Proposition 2.3. Take $\varphi \in
C_0(G)$. Assume for simplicity $\|\chap \varphi\|_{MS^p(L^2(G))}=1$. Let
$u,v \in \R_+$ such that $u/v \in ]1,2[$. Apply the first part of Lemma
\ref{lemma=key_estimate_SL3R} to $(s,t)=(2v-u,2u-v)$ and $(s',t') = (u,u)$
and get
\[ |\varphi(D(u,u)) - \varphi(D(2v-u,2u-v))| \leq C_1 e^{-(1/2-2/p) u}.\]
Apply the second part of Lemma
\ref{lemma=key_estimate_SL3R} to $(s,t)=(v,v)$ and $(s',t') = (2v-u,2u-v) $
and get
\[ |\varphi(D(v,v)) - \varphi(D(2v-u,2u-v))| \leq C_1 e^{-(1/2-2/p) (2v
  -u)}.\]
Hence,
\[|\varphi(D(v,v)) - \varphi(D(u,u))| \leq C_1 \left( e^{-(1/2-2/p) u} + e^{-(1/2-2/p) (2v
  -u)}\right).\] Taking $u/v$ close enough to $1$, we can have
$(1/2-2/p)(2v-u) \geq \varepsilon v$ and we deduce easily Lemma
\ref{lemma=SL3R_main_estimate}.
\end{proof}

We are now able to prove the main results of the introduction in the
real case.

\begin{proof}[Proof of Theorem \ref{thm=main_thm_continu_R}.]
This is immediate from Proposition \ref{prop=SLrR}.
\end{proof}

\begin{proof}[Proof of Theorem \ref{thm=main_theorem_R}.]
Theorem \ref{thm=main_thm_continu_R} and Theorem
\ref{thm=characterzation} imply that $\Gamma$ does not have
$\SCHURCBAP{p}$. We conclude using Corollary
\ref{coro=OAP_Lp_implieque_notrePte}.
\end{proof}

\begin{proof}[Proof of Theorem \ref{thm=main_theorem_AP} (real case).]
If $4<p<\infty$, as in the proof above, $\Gamma$ does not have
$\SCHURCBAP{p}$. The theorem thus from Corollary
\ref{coro=AP_implique_notrePte}.
\end{proof}

\end{document}